\def\be#1{\begin{equation}\label{#1}}
\def\bas{\begin{align*}}
\def\eas{\end{align*}}
\def\bi{\begin{itemize}}
\def\ei{\end{itemize}}
\theoremstyle{plain}
   \newtheorem{theorem}[subsection]{Theorem}
   \newtheorem{proposition}[subsection]{Proposition}
   \newtheorem{lemma}[subsection]{Lemma}
   \newtheorem{corollary}[subsection]{Corollary}
\newenvironment{proof}{\noindent {\bf Proof} }{\endprf\par}
\def \endprf{\hfill  {\vrule height6pt width6pt depth0pt}\medskip}
\def\emph#1{{\it #1}}
\def\textbf#1{{\bf #1}}
\theoremstyle{remark}
\theoremstyle{definition}
   \newtheorem{definition}[subsection]{Definition}
\begin{document}

\author{James Wright}
\address{School of Mathematics and Maxwell Institute for Mathematical Sciences, University of
Edinburgh, JCMB, King's Buildings, Peter Guthrie Tait Road, Edinburgh EH9 3FD, Scotland}
\email{J.R.Wright@@ed.ac.uk}

\subjclass{42B05, 42B08, 42B20}


\title[On a problem of Kahane]{On a problem of Kahane in
higher dimensions}

\maketitle

\centerline{\it Dedicated to Mike Christ on his 60th birthday}

\begin{abstract}
We characterise those real analytic mappings from ${\Bbb T}^k$ to ${\Bbb T}^d$ which
map absolutely convergent Fourier series on ${\Bbb T}^d$ to uniformly
convergent Fourier series via composition. We do this with respect to rectangular summation
on ${\Bbb T}^k$ (more precisely, unrestricted rectangular summation). We
also investigate uniform convergence with respect to square sums and highlight the differences
which arise.
\end{abstract}


\section{Introduction}
The famous Beurling-Helson theorem \cite{BH} from the early 1950's
states that the only mappings of the
circle $\Phi : {\Bbb T} \to {\Bbb T}$ which preserve the space $A({\mathbb T})$
of absolutely convergent Fourier series are affine maps
 (see also \cite{L} and \cite{K}). More precisely, if
$\Phi : {\mathbb T} \to {\mathbb T}$ has the property that 
$$
f\circ \Phi \in A({\mathbb T}) \ \ {\rm whenever} \ \  f\in A({\mathbb T}), \ {\rm then}\ \
\Phi(e^{2\pi i t} ) \ = \ e^{2\pi i (c + k t)}
$$ 
for
some constant $c\in {\mathbb R}$ and integer $k\in {\mathbb Z}$. In a remarkable paper \cite{C},  P. Cohen
extended this to
mappings between any two locally compact abelian groups $G_1$ and $G_2$
with the same rigidity outcome;  only {\it piecewise affine} maps $\Phi : G_1 \to G_2$
transport $A(G_2)$ to $A(G_1)$ via composition with $\Phi$.

Kahane then asked
what happens when we relax the condition of absolute convergence to uniform convergence;
see for example, \cite{K-1}.
Are there nonlinear mappings $\Phi : {\mathbb T} \to {\mathbb T}$ (and if so, which ones) that  carry
the space $A({\mathbb T})$ to the space of uniformly convergent Fourier series $U({\mathbb T})$ via composition
with $\Phi$? In this direction, we have the following nice result of L. Alp\'ar \cite{A}.

{\bf Alp\'ar's Theorem} {\it For any real-analytic mapping $\Phi : {\mathbb T} \to {\mathbb T}$, we have
$f\circ \Phi \in U({\mathbb T})$ whenever $f\in A({\mathbb T})$. Furthermore this property
can fail for smooth $\Phi$; more precisely, there exists $\Phi \in C^{\infty}({\mathbb T})$ and
$f\in A({\mathbb T})$ such that $f\circ \Phi \notin U({\mathbb T})$.}

Alp\'ar's original proof was very complicated but along the way (as  people asked what happens between the real-analytic and $C^{\infty}$ categories
of mapping of the circle; see for example,  \cite{Kau}, \cite{Santos} and \cite{W}) simplifications were made.
Most notable is the work of R. Kaufman \cite{Kau} where some interesting connections were established
between Kahane's
problem and the theory of thin sets in harmonic analysis. 
 
In this paper, we investigate what happens with mappings $\Phi : {\mathbb T}^k \to {\mathbb T}^d$
between higher dimensional tori. Since ${\mathbb T}^k$ and ${\mathbb T}^d$ are two examples
of compact abelian groups, Cohen's result applies and we see that the only mappings $\Phi$
which preserve the space of absolutely convergent Fourier series are affine ones. As Kahane proposed, we
ask here what happens when we relax the condition of absolute convergence to uniform convergence. We
begin by stating a simple extension of Alp\'ar's Theorem.

\begin{proposition}\label{alpar-ext} Let $\Phi : {\mathbb T} \to {\mathbb T}^d$ be any real-analytic
mapping. Then $f\circ\Phi \in U({\mathbb T})$ whenever $f\in A({\mathbb T}^d)$. 
\end{proposition}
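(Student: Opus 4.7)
The plan is to reduce to the one-dimensional Alp\'ar Theorem via the absolutely convergent Fourier expansion of $f$. Writing
$$f(y) \ = \ \sum_{n \in {\mathbb Z}^d} \widehat{f}(n)\, e^{2\pi i n\cdot y}, \qquad \sum_{n} |\widehat{f}(n)|\ <\ \infty,$$
and setting $g_n(t) := e^{2\pi i n \cdot \Phi(t)}$, I would first observe that for each fixed $n \in {\mathbb Z}^d$ the function $t \mapsto n \cdot \Phi(t) = \sum_{j=1}^d n_j \Phi_j(t)$ is a ${\mathbb Z}$-linear combination of the real-analytic maps $\Phi_j : {\mathbb T} \to {\mathbb T}$, hence is itself a real-analytic map ${\mathbb T} \to {\mathbb T}$. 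Applying Alp\'ar's Theorem to this map, together with the (trivially absolutely convergent) series $y \mapsto e^{2\pi i y}$ in $A({\mathbb T})$, yields $g_n \in U({\mathbb T})$ for every $n$.

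Since $\|g_n\|_\infty = 1$ and $\widehat{f} \in \ell^1({\mathbb Z}^d)$, the series $\sum_n \widehat{f}(n)\, g_n$ converges to $f \circ \Phi$ absolutely and uniformly on ${\mathbb T}$; interchanging the $N$-th Dirichlet partial sum $S_N$ with the $n$-sum,
$$S_N(f \circ \Phi) - f \circ \Phi \ = \ \sum_{n \in {\mathbb Z}^d} \widehat{f}(n)\, \bigl[S_N g_n - g_n\bigr],$$
and by the previous paragraph each summand tends to zero in $L^\infty({\mathbb T})$ as $N \to \infty$.

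The main step is to exchange this limit with the $n$-sum. The closed graph theorem, applied to Alp\'ar's Theorem in one variable, gives $\|h \circ \Psi\|_{U({\mathbb T})} \leq C(\Psi)\, \|h\|_{A({\mathbb T})}$ for every real-analytic $\Psi : {\mathbb T} \to {\mathbb T}$, so $\sup_N \|S_N g_n\|_\infty \leq C(n \cdot \Phi)$ for each fixed $n$. The crucial quantitative ingredient, and the main obstacle, is to show that this constant is uniformly bounded in $n$:
$$\sup_{N \geq 1,\ n \in {\mathbb Z}^d}\, \|S_N g_n\|_\infty \ \leq \ C(\Phi) \ < \ \infty.$$
Once this is in hand, the summand in the displayed identity is dominated by $2 C(\Phi)\, |\widehat{f}(n)| \in \ell^1({\mathbb Z}^d)$, and dominated convergence on ${\mathbb Z}^d$ gives $\|S_N(f \circ \Phi) - f \circ \Phi\|_\infty \to 0$, that is, $f \circ \Phi \in U({\mathbb T})$.

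To obtain the uniform bound I would exploit the \emph{common} holomorphic extension of $\Phi$: each $\Phi_j$ extends to a strip $|\mathrm{Im}\, z| < \delta$, so $g_n$ does too, with $|g_n(z)| \leq e^{C|n|}$ in a slightly smaller strip. Paley--Wiener then yields $|\widehat{g_n}(k)| \leq e^{C|n| - 2\pi\delta|k|}$, which controls the Dirichlet tail of $g_n$ outside a symmetric window of radius $\sim |n|$. Inside this window one estimates the oscillatory integrals $\int e^{2\pi i(n \cdot \Phi(t) - kt)}\, dt$ by van der Corput / stationary-phase arguments, using the finite-order vanishing of $(n \cdot \Phi)'$ that comes from real-analyticity; this is where the analyticity of $\Phi$ is used in an essential way, and is the technical core of the extension beyond Alp\'ar's original one-dimensional theorem.
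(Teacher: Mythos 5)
Your reduction is the same as the paper's: by the closed graph theorem and the triangle inequality applied to $f\circ\Phi=\sum_n \widehat f(n)\,g_n$, everything hinges on the single uniform estimate $\sup_{N,\,n}\|S_N g_n\|_{L^\infty}<\infty$ (this is \eqref{main-est} in the text). The problem is that your proposed proof of that estimate has a genuine gap at exactly the step you flag as the technical core. Bounding the individual Fourier coefficients $\widehat{g_n}(k)=\int e^{2\pi i(n\cdot\Phi(t)-kt)}\,dt$ by van der Corput or stationary phase and then summing them over the window $|k|\lesssim |n|$ cannot work: writing $n=\lambda\omega$, the best pointwise bound van der Corput gives (using that some derivative $\partial_t^{k_0}(\omega\cdot\overline{\phi})$ of order $k_0\ge 2$ is bounded below) is $|\widehat{g_n}(k)|\lesssim \lambda^{-1/k_0}$, and even nondegenerate stationary phase only gives $\lambda^{-1/2}$; summing absolutely over the $\sim\lambda$ frequencies in the window produces $\lambda^{1-1/k_0}$ or $\lambda^{1/2}$, which diverges. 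No coefficientwise estimate can close the argument, because the uniform boundedness of $S_N g_n$ relies on cancellation \emph{between} the frequencies, not on smallness of each one.

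The paper avoids this by never decomposing into individual coefficients: it writes $S_N g_n$ as a single oscillatory integral $p.v.\int e^{i[\lambda\omega\cdot\overline{\phi}(x-t)+\rho t]}\,dt/t$ against the singular kernel $1/t$ coming from the Dirichlet kernel \eqref{Dirichlet}. The region $|t|\ge|\lambda|^{-1/k_0}$ is handled by van der Corput plus integration by parts (after a compactness argument over $(x,\omega)\in{\mathbb T}\times{\mathbb S}^{d-1}$ stabilising $k_0$ and the lower derivative bound, and an induction on $d$ to dispose of the degenerate directions $\omega$ with $\omega\cdot\overline{\phi}$ constant). On the complementary region the phase is replaced by its Taylor polynomial, and the resulting integral $\int e^{iP(t)}\,dt/t$ is controlled by the Stein--Wainger estimate \eqref{SW}, which is uniform in the coefficients of $P$. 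It is this exploitation of the cancellation of $p.v.\,1/t$ multiplied by a polynomial oscillation --- rather than any decay of individual coefficients --- that makes the bound uniform in $n$. Your Paley--Wiener step for the tail $|k|\gtrsim|n|$ is fine, and your observation that each $g_n\in U({\mathbb T})$ follows from Alp\'ar is correct but yields no uniformity; the missing ingredient is the singular-integral cancellation, and without it the window contribution does not close.
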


The proof of Proposition \ref{alpar-ext} is an adaptation of  Kaufman's proof of Alp\'ar's theorem.
We give the proof in the next section. Much more interesting is what happens when the domain ${\mathbb T}^k$
is higher dimensional; that is, when $k\ge 2$. In this paper, we will restrict ourselves to 
$k=2$. Already in this case, we will see most of the new phenomena and challenges. 
The situation for larger values of $k$ is similar but matters become much more technical. As we will
see, the analysis is already quite involved for $k=2$ but the statements of the main results
in this setting are simple. At the end of the paper we will discuss what happens when $k\ge 3$. 

When $k=2$, we want to understand when
$f\circ \Phi \in U({\mathbb T}^2)$ whenever $f\in A({\mathbb T}^d)$ but now we need to stipulate how we
are summing the Fourier series
$$
f\circ \Phi(x,y) \ = \ \sum_{(k,\ell)\in {\mathbb Z}^2} c_{k,\ell} \, e^{2\pi i [ k x + \ell y]};
$$
here $c_{k,\ell}$ denote the Fourier coefficients of $f\circ\Phi$. That is, the definition of the subspace
$U({\mathbb T}^2)$
of continuous functions $C({\mathbb T}^2)$ on the 2-torus ${\mathbb T}^2$ which have a uniformly convergent
Fourier series depends on how we sum the series (this of course is not needed when we consider the
subspace $A({\mathbb T}^2) \subset C({\mathbb T}^2)$ of absolutely convergent Fourier series). 

We will be mainly interested
in rectangular summation or what is more precisely referred to as {\it unrestricted rectangular summation}. This
is the space of continuous functions $g\in C({\mathbb T}^2)$ which have the property that the rectangular Fourier
partial sums
$$
S_{M,N}g(x,y) = \sum_{|k|\le M, |\ell|\le N} {\widehat g}(k,\ell) \, e^{2\pi i [ k x + \ell y]} =
\int\!\!\!\int_{{\mathbb T}^2} g(x-s,y-t) D_M (s) D_N (t) \, ds dt
$$
converges uniformly to $g$;  $\|S_{M,N} g - g \|_{L^{\infty}({\mathbb T}^2)} \to 0$ as $\min(M,N) \to \infty$.
Here $D_N(y) = \sin(2\pi(N+1/2)y)/\sin(\pi y)$ is the $N$th Dirichlet kernel. We will denote this subspace
as $U_{rect}({\mathbb T}^2)$ which in fact is a Banach space in its own right with respect to the norm
$$
\|g\|_{U_{rect}} \ := \ \sup_{M,N} \, \|S_{M,N} g \|_{L^{\infty}({\mathbb T}^2)}
$$
of uniform rectangular convergence. We will also consider the subspace $U_{sq}({\mathbb T}^2)$
of continuous functions $g$ whose Fourier sereis converges uniformly with respect to {\it square sums}, 
$\|S_{N,N} g - g \|_{L^{\infty}} \to 0$ as $N\to \infty$. This is also a Banach space with the norm
$$
\|g\|_{U_{sq}} \ := \ \sup_{N} \, \|S_{N,N} g \|_{L^{\infty}({\mathbb T}^2)}
$$
of uniform square convergence. Of course we could also consider spherical summation (of various orders)
$U_{sph}({\mathbb T}^2)$ but this problem has a different character and we will investigate this
setting elsewhere.

We will only consider real-analytic mappings $\Phi : {\mathbb T}^2 \to {\mathbb T}^d$ and in
this category, it turns out that it is sometimes the case and sometimes not the case that $f\circ \Phi \in U_{rect}({\mathbb T}^2)$
whenever $f\in A({\mathbb T}^d)$. The same holds for square sums $U_{sq}({\mathbb T}^2)$. 
One of our  main goals will be to give a simple characteristion of those real-analytic mappings 
$\Phi : {\mathbb T}^2 \to {\mathbb T}^d$ which have the property
$$
f\circ \Phi \in U_{rect}({\mathbb T}^2) \ \ {\rm whenever} \ \ f \in A({\mathbb T}^d). \eqno{(\Phi)_{rect}}
$$ 
We will denote by $(\Phi)_{sq}$ the analogous property but with respect to square sums.

It is easy to see that any continuous mapping $\Phi : {\mathbb T}^k \to {\mathbb T}^d$ can be described as
follows: if ${\vec t} = (t_1, \ldots, t_k) \in {\mathbb R}^k$ parameterises a point
$P({\vec t})  = (e^{2\pi i t_1}, \ldots, e^{2\pi i t_k}) \in {\mathbb T}^k$ on the $k$-torus, then 
$$
\Phi(P({\vec t})) \ = \ (e^{2\pi i [\phi_1({\vec t}) + {\vec L_1}\cdot {\vec t}]}, \ldots, 
e^{2\pi i [\phi_d({\vec t}) + {\vec L_d}\cdot {\vec t}]})
$$
for some $d$-tuple ${\overline{\phi}}\,({\vec t}) =  (\phi_1({\vec t}), \ldots, \phi_d({\vec t}))$
of 1-periodic functions on ${\mathbb R}^k$ and some $d$-tuple
${\overline{L}} = ({\vec L_1}, \ldots, {\vec L_d})$ of lattice points in ${\mathbb Z}^k$. Hence
every map 
$$
\Phi \ = \ \Phi_{{\overline{\phi}}, \ {\overline{ L}}} : \ {\mathbb T}^k \ \to \ {\mathbb T}^d
$$ 
has a periodic part ${\overline{\phi}}$ and a linear part ${\overline{L}}$. The mapping $\Phi$ is
said to be {\it affine} precisely when the periodic part ${\overline{\phi}} = (c_1, \ldots, c_d)$
is constant. 

Now let us restrict our attention to $k=2$ so that the periodic part ${\overline \phi} (s,t)$
is a $d$-tuple of analytic, 1-periodic functions of two variables. We put these $d$ periodic functions
together as a single periodic function $\psi(s,t, \omega) := \omega \cdot {\overline \phi} (s,t)$
where $\omega \in {\mathbb S}^{d-1}$ is a point on the $(d-1)$-dimensional unit sphere
${\mathbb S}^{d-1}$. Hence $\psi$ defines a real-analytic map on the compact manifold
${\mathbb T}^2 \times {\mathbb S}^{d-1}$. For any compact, analytic manifold $M$ we say
that an analytic function $\psi$ on ${\mathbb T}^2 \times M$ satisfies the {\it factorisation hypothesis}
(FH)  on $M$ if
at every point $P = (s,t,\omega) \in {\mathbb T}^2 \times M$, the 2nd order partial derivatives
$$
\psi_{ss} = \frac{{\partial}^2 \psi}{\partial s^2 }, \ 
\psi_{tt} = \frac{{\partial}^2 \psi}{\partial t^2}, \
\psi_{st} = \frac{{\partial}^2 \psi}{\partial s \partial t}
$$
of $\psi(s,t,\omega)$ satisfy the factorisation identities
$$
\psi_{st}(s,t, \omega) \ = \ K(s,t,\omega) \, \psi_{ss}(s,t,\omega)  \ \ \ {\rm and} \  \ \
\psi_{st}(s,t,\omega) \ = \ L(s,t, \omega) \, \psi_{tt}(s,t,\omega) 
$$
for analytic functions $K$ and $L$ in some neighbourhood of $P$. This condition can be compactly
expressed in terms of germs of analytic functions. 

\begin{definition} A real-analytic map $\psi : {\mathbb T}^2 \times M \to {\mathbb R}$
satsifies the factorisation hypothesis if at every point in ${\mathbb T}^2 \times M$, the
divisibility conditions
$$
\psi_{ss}, \ \psi_{tt} \ \ | \ \ \psi_{st} \eqno{(FH)}
$$
hold as germs of analytic functions.
\end{definition}

Our main result is the following.

 \begin{theorem}\label{main} Let $\Phi: {\mathbb T}^2 \to {\mathbb T}^d$ be a real-analytic
map. Suppose that ${\overline \phi}$ is the periodic part of $\Phi$ and set
$\psi(s,t,\omega) = \omega \cdot {\overline{\phi}}(s,t)$. Then $(\Phi)_{rect}$
holds if and only if $\psi$ satisfies $(FH)$ on $M = {\mathbb S}^{d-1}$.
\end{theorem}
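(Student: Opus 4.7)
The plan is to characterise $(\Phi)_{rect}$ via a uniform estimate on a family of two-dimensional oscillatory integrals and to show that $(FH)$ is the exact condition for that estimate.

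\emph{Step 1: Reduction.} By the closed graph theorem applied to the linear map $f \mapsto f \circ \Phi : A({\mathbb T}^d) \to U_{rect}({\mathbb T}^2)$, together with density of trigonometric polynomials in $A({\mathbb T}^d)$ and the fact that each $e^{2\pi i n \cdot \Phi}$ is real-analytic (so its own rectangular partial sums converge uniformly for fixed $n$), the property $(\Phi)_{rect}$ is equivalent to
$$
\sup_{n \in {\mathbb Z}^d}\, \sup_{M,N \ge 1}\, \bigl\| S_{M,N}(e^{2\pi i n \cdot \Phi}) \bigr\|_{L^\infty({\mathbb T}^2)}\ <\ \infty.
$$
Writing $n \cdot \Phi(s,t) = \lambda \psi(s,t,\omega) + {\vec L_n} \cdot (s,t)$ with $\lambda = |n|$, $\omega = n/|n|$ and ${\vec L_n} = \sum_j n_j {\vec L_j} \in {\mathbb Z}^2$, the integer shift ${\vec L_n}$ merely translates the rectangle of summation. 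Splitting $D_M(s) = [e^{2\pi i(M+1/2)s} - e^{-2\pi i(M+1/2)s}]/(2i\sin\pi s)$ and similarly $D_N$ and absorbing the high-frequency exponentials into the phase reduces the problem to a uniform bound
$$
\Bigl| \int\!\!\!\int e^{2\pi i [\lambda \psi(u,v,\omega) - \alpha u - \beta v]}\, \frac{du\,dv}{\sin \pi(s-u) \sin \pi(t-v)} \Bigr| \ \le\ C
$$
(in principal-value sense) uniform in $\lambda \ge 0$, $\omega \in {\mathbb S}^{d-1}$, $\alpha, \beta \in {\mathbb Z}$, and $(s,t) \in {\mathbb T}^2$.

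\emph{Step 2: Sufficiency.} Assume $(FH)$ and bound the above integral by iterated one-dimensional estimates. For each fixed $v$, the inner $u$-integral is $O(1)$ uniformly in $\lambda,\alpha$ by Kaufman's method (the same argument that proves Proposition \ref{alpar-ext}): real-analyticity of $\psi(\cdot,v,\omega)$ gives only finitely many critical points in $u$, each contributing boundedly even against the $1/\sin\pi(s-u)$ singularity. The main difficulty is uniform control as $v$ varies. The $(FH)$ identity $\psi_{st} = K\psi_{ss}$ is used crucially here: a critical point $u_*(v)$ of the inner phase satisfies $\Psi_u(u_*(v),v)=0$, and formal differentiation in $v$ gives $u_*'(v) = -\psi_{st}/\psi_{ss} = -K$, so $(FH)$ provides an honest analytic parameterisation $v \mapsto u_*(v)$ that extends across the zero set $\{\psi_{ss}=0\}$ where the implicit function theorem would fail. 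This lets one read off the analytic structure of the critical-value function $v \mapsto \lambda\psi(u_*(v),v,\omega) + \tilde\alpha u_*(v) + \tilde\beta v$ and run a second one-dimensional estimate in $v$; the symmetric identity $\psi_{st} = L\psi_{tt}$ gives the analogue when the order of integration is reversed, which is necessary since unrestricted rectangular summation forces $M,N \to \infty$ independently. Combining this with a finite partition of unity on ${\mathbb T}^2 \times {\mathbb S}^{d-1}$ subordinate to the analytic varieties $\{\psi_{ss}=0\}, \{\psi_{tt}=0\}$, together with a \L{}ojasiewicz-type inequality making $(FH)$ quantitative, assembles the uniform bound.

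\emph{Step 3: Necessity and main obstacle.} If $(FH)$ fails --- say $\psi_{ss}\nmid\psi_{st}$ at some $P_0 = (s_0,t_0,\omega_0)$ --- then near $P_0$ the critical curve $u_*(v)$ has a fold singularity as $v$ crosses $t_0$: $\psi_{ss}(P_0)=0$ while $\psi_{st}(P_0) \ne 0$, forcing the inner stationary phase in $u$ to be degenerate of order $\ge 2$ at $v=t_0$ while the critical point drifts with $v$ at a non-vanishing rate. Choose a sequence $n_k \in {\mathbb Z}^d$ with $n_k/|n_k| \to \omega_0$, integers $\alpha_k,\beta_k$ making $(s_0,t_0)$ critical for the modified phase, and rectangular parameters $M_k,N_k$ tuned to the resulting Airy scaling; a direct computation of the Airy-type asymptotics then gives unboundedness of $\|S_{M_k,N_k}(e^{2\pi i n_k \cdot \Phi})\|_{L^\infty}$, contradicting Step 1 and falsifying $(\Phi)_{rect}$. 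The hardest step is sufficiency: translating $(FH)$, a germwise algebraic condition, into oscillatory-integral estimates uniform in $\omega \in {\mathbb S}^{d-1}$ and in the independent rectangle parameters $M,N$, which requires global \L{}ojasiewicz bounds on analytic varieties and careful bookkeeping of the Dirichlet-kernel singularities through the iterated stationary phase.
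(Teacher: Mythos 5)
Your Step 1 matches the paper's reduction, but Steps 2 and 3 each contain a genuine gap.

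In Step 2, the strategy of bounding the inner $u$-integral by $O(1)$ for each fixed $v$ and then running ``a second one-dimensional estimate in $v$'' cannot work as stated: Fefferman's example $e^{2\pi i \lambda s t}$ already shows that the inner singular integral can be uniformly bounded in the outer variable while the double integral grows like $\log\lambda$. The whole difficulty is the \emph{interaction} of the two Calder\'on--Zygmund singularities with the mixed term $\psi_{st}\,st$, which is invisible to iterated one-dimensional stationary phase; your critical-curve heuristic $u_*'(v)=-K$ correctly identifies why $\psi_{st}=K\psi_{ss}$ is the relevant algebra, but it does not produce a bound. The paper instead performs a genuinely two-parameter dyadic decomposition of $1/(st)$, compares the phase on each region $\{|s|^{n}\lesssim |t|^{m}\}$ with an ``almost polynomial'' phase $P_{x,y}(s,t)+\psi(x+s,y)+\psi(x,y+t)$, and converts (FH) --- via Lemma \ref{relations}, the structural Proposition \ref{Q} on the coefficients $Q^{k,\ell}_j$, and Propositions \ref{key-prop} and \ref{main-prop} --- into the quantitative statement that weighted mixed derivatives $2^{-pk-q\ell}|\partial^{k,\ell}\psi|$ are dominated by weighted pure derivatives with an exponential gain $2^{-\delta(pn-qm)}$ off the critical dyadic line. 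That gain, summed over dyadic rectangles, is what replaces your appeal to a \L{}ojasiewicz inequality, and the residual polynomial-phase integrals are handled by the Stein--Wainger estimate (for rational powers), not by stationary phase.

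In Step 3 the proposed mechanism is also not the right one. At a point where $\psi_{ss}=0$ but $\psi_{st}\neq0$, a degenerate (fold/Airy) stationary point in $u$ would only produce \emph{decay} of order $\lambda^{-1/3}$ in the inner integral; the unboundedness actually comes from the logarithmic divergence of the double Hilbert transform against the surviving hyperbolic oscillation $\psi_{st}\,st$ after an anisotropic rescaling ($s\to\delta^3 s$, $t\to\delta^2 t$, $\lambda=\delta^{-5}$), exactly as in Fefferman's construction. Moreover, your plan to ``choose $\alpha_k,\beta_k$ making $(s_0,t_0)$ critical'' silently assumes both components of $\omega_0\cdot[\nabla\overline{\phi}(x_0,y_0)+\overline{L}]$ are nonzero; the paper exhibits $\phi(s,t)=\sin(2\pi s)\cos(2\pi t)$ at $(0,1/4)$ where this vector vanishes and the relevant supremum is in fact \emph{finite} at that point, so one must perturb the base point to where the gradient condition holds and $0<|\psi_{tt}|\le\varepsilon$, prove a lower bound of size $\log(1/\varepsilon)$ with all error terms uniform in $\varepsilon$, and finally use the multidimensional Dirichlet principle to approximate $\omega_0$ by lattice directions $\overline{n}/\|\overline{n}\|$. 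Without these steps the necessity argument does not close.
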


The hypothesis (FH) on $M= {\mathbb S}^{d-1}$ 
with $\psi(s,t,\omega) = \omega \cdot {\overline{\phi}} (s,t)$ is satsified if
${\overline{\phi}} (s,t) = {\overline{f}}(s) + {\overline{g}}(t)$ 
or if ${\overline{\phi}} (s,t) = {\overline{f}}(k s + \ell t)$ where
${\overline{f}}, {\overline{g}}$
are both $d$-tuples of real-analytic, 1-periodic functions of one variable and $k, \ell$
are integers. Furthermore this factorisation hypothesis remains true under certain
small analytic perturbations; namely, 
if $\psi$ satisfies (FH), then for any real-analytic function
$K$, there is an $\epsilon>0$ such that $\zeta  = \psi + \epsilon K (\psi_{ss})^3 (\psi_{tt})^3$
also satsfies (FH).\footnote{We thank Sandy Davie for pointing this out to us.} On the other hand,
for any two real-analytic, 1-periodic, nonconstant functions $f$, $g$ of one variable, the
function $\psi(s,t) = f(s) g(t)$ will never satisfy (FH). 

Note that in Theorem \ref{main}, the necessary and sufficient condition for $(\Phi)_{rect}$
to hold only depends on the periodic part ${\overline{\phi}}$ and does {\it not} depend on the
linear part ${\overline{L}}$ of the mapping $\Phi : {\mathbb T}^2 \to {\mathbb T}^d$.  In particular, if (FH) fails for
$\psi(s,t,\omega) = \omega \cdot {\overline{\phi}}(s,t)$, then $(\Phi)_{rect}$ fails for all
$\Phi = \Phi_{{\overline{\phi}}, \, {\overline{L}}}$ with periodic part ${\overline{\phi}}$. The same
is not true for the square sum $(\Phi)_{sq}$ problem, at least when $d=1$. 

\begin{theorem}\label{square-sum} Let $\phi(s,t)$ by any real-analytic, 1-periodic function of two variables.
Then there exists infinitely many lattice points $L = (k,\ell) \in {\mathbb Z}^2$ such that $(\Phi)_{sq}$
holds where $\Phi = \Phi_{\phi, L} : {\mathbb T}^2 \to {\mathbb T}$. Furthermore there exist
real-analytic maps $\Phi = \Phi_{\phi, L} : {\mathbb T}^2 \to {\mathbb T}$ such that $(\Phi)_{sq}$
fails for some lattice point $L = (k,\ell)$.
\end{theorem}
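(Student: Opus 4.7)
The plan is to begin with the standard functional-analytic reduction: by the closed graph theorem applied to $f\mapsto f\circ\Phi: A(\mathbb{T})\to U_{sq}(\mathbb{T}^2)$, together with the fact that $C^\infty$ functions have uniformly convergent square Fourier series (so the operator is well defined for each individual $f$), $(\Phi)_{sq}$ is equivalent to the uniform bound
$$
\sup_{n\in\mathbb{Z}}\,\sup_{N\ge 0}\;\|S_{N,N}\Psi_n\|_{L^\infty(\mathbb{T}^2)}\;<\;\infty, \qquad \Psi_n(s,t):=e^{2\pi i n[\phi(s,t)+ks+\ell t]}.
$$
Non-stationary-phase integration by parts, exploiting the analyticity of $\phi$, gives $|\widehat{\Psi_n}(p,q)|\le C_K(1+\mathrm{dist}((p,q),n\mathcal{A}))^{-K}$ uniformly in $n$, where $\mathcal{A}:=(k,\ell)+\nabla\phi(\mathbb{T}^2)\subset\mathbb{R}^2$ is compact. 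Thus the spectrum of $\Psi_n$ concentrates on the $n$-dilate of $\mathcal{A}$, and both directions of Theorem~\ref{square-sum} are reduced to comparing the square $[-N,N]^2$ with $n\mathcal{A}$.

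\textbf{Positive direction.} Let $M:=\|\nabla\phi\|_\infty$. I claim every lattice point $L=(k,\ell)$ with $\min(|k|,|\ell|)>M$ satisfies $(\Phi)_{sq}$; this furnishes infinitely many $L$. For such $L$ the set $\mathcal{A}$ lies in a single open quadrant, at distance at least $\min(|k|,|\ell|)-M>0$ from each coordinate axis, so $n\mathcal{A}$ sits at distance $\gtrsim n$ from both axes. Split the estimate on $\|S_{N,N}\Psi_n\|_\infty$ into three regimes. For $N<n(\min(|k|,|\ell|)-M)$ the square $[-N,N]^2$ is disjoint from $n\mathcal{A}$ and the rapid-decay estimate controls the partial sum uniformly. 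For $N>n(\max(|k|,|\ell|)+M)$ the square contains a neighbourhood of $n\mathcal{A}$ and $\|S_{N,N}\Psi_n-\Psi_n\|_\infty=o(1)$. The delicate window is $N\sim n$, where one edge of the square grazes $n\mathcal{A}$: using the factorisation $S_{N,N}=S_N^{(s)}S_N^{(t)}$, freezing the transverse coordinate, and applying the uniform (parameter-dependent) form of the Alp\'ar--Kaufman argument underlying Proposition~\ref{alpar-ext} to the analytic family of one-dimensional maps $s\mapsto \phi(s,t)+ks$ indexed by $t$ (and symmetrically in the other variable), one obtains an $L^\infty$ bound independent of $n,N,t$.

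\textbf{Negative direction and main obstacle.} For the bad pair, take a genuinely two-variable example for which $(FH)$ fails, e.g.\ $\phi(s,t)=\cos(2\pi s)\cos(2\pi t)$ (so $(\Phi)_{rect}$ already fails by Theorem~\ref{main}), and $L=(0,0)$. Now $\mathcal{A}=\nabla\phi(\mathbb{T}^2)$ straddles the origin, $n\mathcal{A}$ meets every quadrant, and the geometric separation powering the positive direction breaks down. Adapting the Beurling--Helson/Kaufman divergence construction used in the only-if direction of Theorem~\ref{main}, I would choose $n_j\to\infty$, $N_j\sim cn_j$, and $(s_j,t_j)\in\mathbb{T}^2$ so that the square Dirichlet projection simultaneously excites the two one-dimensional divergences attached to the curves of degenerate Hessian $\{\sin(2\pi s)=0\}\cup\{\sin(2\pi t)=0\}$, which cross transversally on $\mathbb{T}^2$; this produces $|S_{N_j,N_j}\Psi_{n_j}(s_j,t_j)|\to\infty$, contradicting the uniform bound. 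The hard step is the intermediate-frequency regime of the positive direction: one cannot iterate the one-dimensional Alp\'ar bound naively, since $\|S_N\|_{L^\infty\to L^\infty}\sim\log N$ and a direct iteration loses a logarithm, so the necessary uniformity in the frozen parameter must be extracted carefully from the analyticity of $\phi$ via two-dimensional stationary phase.
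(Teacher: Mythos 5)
Your opening reduction (closed graph theorem, uniform bounds on $\sup_{n,M}\|S_{M,M}e^{2\pi i n g}\|_{\infty}$, and the picture of the spectrum of $e^{2\pi i ng}$ concentrating near $n\bigl((k,\ell)+\nabla\phi(\mathbb{T}^2)\bigr)$) matches the paper, but the positive direction contains a genuine error, not merely a deferred technicality. The class of lattice points you claim is good, $\min(|k|,|\ell|)>\|\nabla\phi\|_{\infty}$, contains bad ones. The danger sits exactly in the ``delicate window'' you postpone: since square summation has only \emph{one} free parameter, $M$ can be chosen so that both edges of the square graze the spectrum simultaneously, i.e. $n(\partial_s\phi(x_0,y_0)+k)-M=O(1)$ and $n(\partial_t\phi(x_0,y_0)+\ell)-M=O(1)$ for the same $M$. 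This happens whenever $\partial_s\phi(x_0,y_0)-\partial_t\phi(x_0,y_0)=\ell-k$ and, if at such a point (FH) fails, Proposition \ref{example} yields a $\log n$ lower bound. For the paper's example in Section 8.2 the relevant difference of first partials is $0$ at the degenerate point, so \emph{every} pair $(k,k)$ with $k$ arbitrarily large is bad, even though it satisfies your condition. This is precisely why Proposition \ref{square-prop} takes the components extremely lopsided, $L_2>10^3L_1$ and $L_1\ge 10^2\|\phi\|_{C^2}$: the Claim there shows that for every $M$ at least one of $A=n(\partial_t\phi+L_2)\pm M$, $B=n(\partial_s\phi+L_1)\pm M$ is $\gtrsim\max(n,|M|)$ and dominates $n\|\phi\|_{C^2}$, so one integrates by parts nontrivially in that variable and invokes the one-dimensional Proposition \ref{osc-est-III} in the other; simultaneous resonance is excluded by the choice of $L$, not by any uniformity in a frozen parameter. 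As you yourself observe, iterating the one-dimensional Alp\'ar bound loses a logarithm; no refinement can close this, because the statement your argument would prove is false.

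The negative direction is also not in order. The divergence is not produced by ``exciting two one-dimensional divergences'' on the degenerate curves; it is the Fefferman hyperbolic mechanism: after $M$ cancels both linear frequencies one is left with $\int\!\!\int \sin(c\,n\,st)\,ds\,dt/(st)\sim\log n$, and this requires the simultaneous resonance above, hence the arithmetic condition that $\partial_s\phi-\partial_t\phi$ be an integer at a point where (FH) fails, together with $\partial_s\phi(x_0,y_0)+k=\partial_t\phi(x_0,y_0)+\ell\neq 0$. Your candidate $\phi(s,t)=\cos(2\pi s)\cos(2\pi t)$ with $L=(0,0)$ fails these: on the degeneracy locus the two first partials agree only where both vanish, so the vector $\vec v=\nabla\phi+L$ is zero there --- exactly the degenerate situation of Section 7.1, where the paper exhibits a closely analogous example whose suprema remain \emph{bounded}. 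You would need to choose $L$ so as to realign the gradient at the degenerate point, as the paper does.
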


Hence we see that there exist real-analytic, 1-periodic functions
$\phi(s,t)$ of two variables such that $(\Phi)_{sq}$ holds for some maps $\Phi = \Phi_{\phi, L}$
{\it and} $(\Phi)_{sq}$ fails for other maps $\Phi = \Phi_{\phi, L'}$, both maps having the same
periodic part. Thus the linear part $L = (k,\ell)$
of the map $\Phi(e^{2\pi i s}, e^{2\pi i t}) = e^{2\pi i [\phi(s,t) + ks +\ell t]}$ 
can determine whether or not the property $(\Phi)_{sq}$ holds. This is not the case for $(\Phi)_{rect}$.

\subsection{Outline of the paper}
In the next section, we give some preliminary reductions and we give the proof of Proposition \ref{alpar-ext}.
In Section \ref{consequences} we discuss some consequences of the factorisation hypothesis (FH). Sections \ref{robust}
and \ref{prelude-I}
develop a more robust one dimensional theory which will be needed in, as well as giving a prelude to, the proof of the main results.
We prove Theorem \ref{main} in Sections 6 and \ref{main-fail} and the proof of Theorem \ref{square-sum} is given
in Section 8.
In the final section we discuss the situation for mappings $\Phi : {\mathbb T}^k \to {\mathbb T}^d$ from
higher dimensional tori. 

\subsection{Notation}

Uniform bounds for oscillatory integrals lie at the heart of this paper. Keeping track of constants
and how they depend on the various parameters will be important for us. For the most part, constants $C$
appearing in inequalities $A \le C B$ between positive quantities $A$ and $B$ will be {\it absolute} or
{\it uniform} in that they can be taken to be independent of the parameters of the underlying problem. 
We will use $A \lesssim B$ to denote $A \le C B$.
When
we allow the constant $C$ to depend on a parameter (or parameters), say $\omega \in {\mathbb S}^{d-1}$,
we will write $A \le C_{\omega}B$ or $A \lesssim_{\omega} B$. For quantities $Q$ and $B$
where $B$ is nonnegative , we write $Q = O(B)$ (or $Q = O_{\omega}(B)$) to denote
$|Q| \le C B$ (or $|Q| \le C_{\omega} B$).
Also $P \sim Q$ or $P \sim_{\omega} Q$ will denote $C^{-1} |P| \le |Q| \le C |P|$ or
$C_{\omega}^{-1} |P| \le |Q| \le C_{\omega} |P|$.

We will use multi-index notation
$$
\frac{\partial^{k+\ell} \phi}{\partial s^k \, \partial t^{\ell}} (s,t) \ =: \ \partial^{k,\ell} \phi (s,t)
$$
for partial derivatives but also when
$k$ or $\ell = 0$, we adopt the shorthand notation $\partial^n_s \phi = \partial^{n,0}\phi$
and $\partial^m_t \phi = \partial^{0,m}\phi$. Finally we will also use $\phi_{ss}, \phi_{tt}$
and $\phi_{st}$ to denote the second order derivatives which play a special role.

\section{Preliminary reductions and the proof of Proposition \ref{alpar-ext}}\label{preliminary}

For the proof of Proposition \ref{alpar-ext}, or to establish $(\Phi)_{rect}$, or $(\Phi)_{sq}$, 
we need to show that whenever $f\in A({\mathbb T}^d)$, we have $f\circ \Phi \in U({\mathbb T})$,
$f\circ \Phi \in U_{rect}({\mathbb T}^2)$ or $f\circ \Phi \in U_{sq}({\mathbb T}^2)$, respectively. 
A simple application of the closed graph theorem shows that this is equivalent to proving that
$f \to f\circ \Phi$ is a continuous map from $A({\mathbb T}^d)$ to $U({\mathbb T})$, 
$U_{rect}({\mathbb T}^2)$ or $U_{sq}({\mathbb T}^2)$; that is, these
three problems are equivalent to establishing the apriori bounds
\begin{equation}\label{basic-est}
\|f\circ \Phi \|_{U({\mathbb T})}, \ \|f\circ \Phi\|_{U_{rect}({\mathbb T}^2)} \ \ {\rm or} \ \
\|f\circ \Phi \|_{U({\mathbb T}^2)} \ \le
\ C \, \|f\|_{A({\mathbb T}^d)}.
\end{equation}
Let $U_{*}$ denote either $U({\mathbb T}), U_{rect}({\mathbb T}^2)$ or $U_{sq}({\mathbb T}^2)$.
If 
$$
f({\vec{x}}) \ = \  \sum_{{\overline{n}}\in {\mathbb Z}^d} {\widehat f}({\overline{n}})  \
e^{2\pi i {\overline{n}}\cdot {\vec{x}}} \ \in \ A({\mathbb T}^d), \ \ {\rm then} \ \ 
f\circ \Phi  \ = \ \sum_{{\overline{n}}\in {\mathbb Z}^d} {\widehat f}({\overline{n}}) \ 
e^{2\pi i {\overline{n}} \cdot {\overline{g}}}
$$ 
where ${\overline{g}}({\vec{t}}) = {\overline{\phi}}({\vec t}) + {\overline{L}}  \cdot {\vec t}$
is the vector-valued function on ${\mathbb R}^k$ parameterising the mapping $\Phi : {\mathbb T}^k \to {\mathbb T}^d$.
By the triangle inequality, we have
$$
\|f \circ \Phi \|_{U_{*}} \ \le \ \sum_{{\overline{n}} \in {\mathbb Z}^d} |{\widehat f}({\overline{n}})| \
\| e^{2\pi i {\overline{n}} \cdot {\overline{g}}} \|_{U_{*}}
$$
and so \eqref{basic-est} holds if and only if the $U_{*}$ norms of the oscillations $e^{2\pi i {\overline{n}}\cdot {\overline{g}}}$
are uniformly bounded in ${\overline{n}}\in {\mathbb Z}^d$; that is,
\begin{equation}\label{main-est}
\sup_{{\overline{n}}\in {\mathbb Z}^d} \ \|e^{2\pi i {\overline{n}}\cdot {\overline{g}}} \|_{U_{*}} \ < \ \infty.
\end{equation}
A similar reduction can be made for the Beurling-Helson theorem but now the $U_{*}$ norms are
replaced with the Weiner norm $\|f\| = \sum_n |{\widehat{f}}(n)|$ on $A({\mathbb T})$.

\subsection{Reduction when $k=1$; for mappings $\Phi : {\mathbb T} \to {\mathbb T}^d$}

In the case $k=1$, we have  
${\overline{g}}(t) = {\overline{\phi}}(t) + {\overline{L}} t$ where ${\overline{\phi}}$ is a $d$-tuple
of real-analytic, 1-periodic real-valued functions of a single variable and ${\overline{L}} \in {\mathbb Z}^d$. Hence 
if $S_N f(x) = \sum_{|n|\le N} {\widehat f}(n) e^{2\pi i n x}$ denotes the classical $N$th Fourier
partial sum of a function $f\in C({\mathbb T})$, we have 
$$
\|e^{2\pi i {\overline{n}}\cdot {\overline{g}}} \|_{U({\mathbb T})} = 
\sup_N \| S_N (e^{2\pi i {\overline{n}} \cdot {\overline{g}}})\|_{L^{\infty}({\mathbb T})} =
\sup_{N,x} \ \Bigl| \int_{{\mathbb T}} e^{2\pi i {\overline{n}} \cdot [{\overline{\phi}}(x - t) + {\overline{L}} (x-t)]} D_N(t) \, dt\Bigr|.
$$ 
Since the Dirichlet kernel 
$$
D_N(t) \ = \
\frac{\sin(2\pi(N+1/2) t)}{\sin(\pi t)} = \frac{e^{2\pi i (N+ 1/2) t} - e^{-2\pi i (N+1/2) t}}{\sin(\pi t)} 
$$
\begin{equation}\label{Dirichlet} 
= \
 \frac{e^{2\pi i (N+ 1/2) t} - e^{-2\pi i (N+1/2) t}}{\pi t}  \ \ + \ \  O(1),
\end{equation}
for $t\in [-1/2, 1/2)$, 
we see that to establish \eqref{main-est} in the case of Proposition \ref{alpar-ext}, it suffices to obtain
(writing ${\overline{n}} = \lambda \omega$ for some $\omega \in {\mathbb S}^{d-1}$) the uniform bound
\begin{equation}\label{osc-est}
\sup_{\lambda, x, \rho \in {\mathbb R}, \omega \in {\mathbb S}^{d-1}} \
\Bigl| p.v. \int_{-\frac{1}{2}}^{\frac{1}{2}} e^{i [\lambda \omega \cdot {\overline{\phi}}(x-t) + \rho t]} \, \frac{dt}{t} \Bigr| 
\ < \ \infty.
\end{equation}

\subsection{Reduction when $k=2$; for mappings $\Phi: {\mathbb T}^2 \to {\mathbb T}^d$}\label{k=2}

In the case $k=2$, we have 
${\overline{g}}(s,t) = {\overline{\phi}}(s,t) + {\vec {L}_1} s + {\vec L_2} t$ where ${\overline{\phi}}$ is a $d$-tuple
of real-analytic, 1-periodic real-valued functions of two variables 
and ${\overline{L}} = ({\vec L_1}, {\vec L_2})$ is a pair of lattice points in ${\mathbb Z}^d$.
 Hence 
if $S_{M,N} f(x,y) = \sum_{|k|\le M, |\ell|\le N} {\widehat f}(k, \ell) e^{2\pi i k x + \ell y}$ denotes an
rectangular Fourier
partial sum of a function $f\in C({\mathbb T}^2)$, we have 
$\|e^{2\pi i {\overline{n}}\cdot {\overline{g}}} \|_{U_{rect}({\mathbb T}^2)} =$
$$ 
\sup_{M,N} \| S_{M,N} (e^{2\pi i {\overline{n}} \cdot {\overline{g}}})\|_{L^{\infty}({\mathbb T})} =
\sup_{M,N,x,y} \ \Bigl| \int\!\!\!\int_{{\mathbb T}^2} e^{2\pi i {\overline{n}} 
\cdot [{\overline{\phi}}(x - s, y - t) + {\overline{L}}\cdot (s,t)]} D_M(s) D_N(t) \, ds dt\Bigr|.
$$ 
From the formula and properties of the Dirichlet kernel displayed in \eqref{Dirichlet} we see that
the integral above underpinning the norms $\|e^{2\pi i {\overline{n}}\cdot {\overline{g}}} \|_{U_{rect}({\mathbb T}^2)}$
splits into a sum of eight integrals, four of these have the form
\begin{equation}\label{form-double}
 \int\!\!\!\int_{{\mathbb T}^2} e^{2\pi i [{\overline{n}} 
\cdot {\overline{\phi}}(x - s, y - t) + A_{\pm} s + B_{\pm}  t]} \, \frac{ds}{s} \frac{dt}{t}
\end{equation}
where $A_{\pm} = {\overline{n}}\cdot {\vec L_1} \pm (M+1/2)$ and 
$B_{\pm} = {\overline{n}} \cdot {\vec L_2} \pm (N+1/2)$. The other four integrals
have the form 
$$
\int\!\!\!\int_{{\mathbb T}^2} e^{2\pi i [{\overline{n}} 
\cdot {\overline{\phi}}(x - s, y - t) + A_{\pm} s + C  t]} \, \frac{ds}{s} dt, \ \ {\rm and} \ \
 \int\!\!\!\int_{{\mathbb T}^2} e^{2\pi i [{\overline{n}} 
\cdot {\overline{\phi}}(x - s, y - t) + D s + B_{\pm}  t]} \, \frac{dt}{t} ds,
$$
each multiplied by an $O(1)$ term. Here $C = {\overline{n}} \cdot {\vec L_2}$ and $D = {\overline{n}}\cdot {\vec L_1}$.
These last four integrals will be uniformly bounded if we can establish a more robust estimate than
\eqref{osc-est}; namely,
\begin{equation}\label{osc-est-II}
S(\omega) \ := \ \sup_{\lambda, \rho \in {\mathbb R}, (x,\tau) \in {\mathbb T}^2} \
\Bigl| p.v. \int_{-\frac{1}{2}}^{\frac{1}{2}} e^{i [\lambda \omega \cdot {\overline{\phi}}(x-s, \tau)
+ \rho s} \, \frac{ds}{s} \Bigr| 
\end{equation}
is a bounded function on ${\mathbb S}^{d-1}$, 
together with an similar bound for the oscillatory integral with the roles of the variables $s$ and $t$
interchanged. Indeed we will show that $\sup_{\omega\in {\mathbb S}^{d=1}} S(\omega) < \infty$ 
holds when the factorisation hypothesis
(FH) holds on ${\mathbb S}^{d-1}$ for $\psi(s,t,\omega) = \omega \cdot {\overline{\phi}}(s,t)$.
Furthermore, even when (FH) does not hold, we still have an estimate for the oscillatory integral
in \eqref{osc-est-II} which is uniform in the parameters $\lambda, x, \tau$ and $\rho$ but not
necessarily in $\omega$; that is, $S(\omega) < \infty$ for all $\omega \in {\mathbb S}^{d-1}$.

\subsection{The work of C. Fefferman} 

In \cite{CF}, Fefferman constructed a continuous function $f \in C({\mathbb T}^2)$ such that
the Fourier series of $f$ with respect to unrestricted rectangular summation diverges everywhere;
more precisely, 
the limit of the rectangular Fourier partial sums $S_{M,N}f(x,y)$ of $f$ does not exist for any
point $(x,y) \in {\mathbb T}^2$. At the heart of the construction is the observation that
if $g_{\lambda}(s,t) = e^{2\pi i\lambda s t}$, then for large $\lambda> 0$ and every $(x,y) \in {\mathbb T}^2$
with $x,y \not= 0$,
$$
\sup_{M,N} |S_{M,N} g_{\lambda}(x,y)| \ = \ \sup_{M,N}
\Bigl| \int\!\!\!\int_{{\mathbb T}^2} e^{2\pi i \lambda (x-s)(y-t)} D_M(s) D_N(t) \, ds dt\Bigr| \ \ge \ 
C \log \lambda
$$
for some $C>0$. To see this, the integral above splits
into four integrals plus some $O(1)$ terms (as in \eqref{form-double} and \eqref{osc-est-II}). The
four integrals have the form
$$
\int\!\!\!\int_{{\mathbb T}^2} e^{2\pi i [\lambda s t +  E_{\pm} s + F_{\pm}  t]} \, \frac{ds}{s} \frac{dt}{t}
$$
where $E_{\pm} = - \lambda y \pm (M+1/2)$ and $F_{\pm} = - \lambda x \pm (N+1/2)$. Depending on
the signs of $\lambda y$ and $\lambda x$, we can choose positive integers $M,N$ such that for one out
of the four choices of $(\pm, \pm)$ we have $E_{\pm}, F_{\pm} = O(1)$ and for the other
three choices, either $|E_{\pm}| \sim |\lambda|$ or $|F_{\pm}| \sim |\lambda|$ (or both). For the integral
with both $E_{\pm}, F_{\pm} = O(1)$, we have
\begin{equation}\label{CF}
 \int\!\!\!\int_{{\mathbb T}^2} e^{2\pi i [\lambda s t +  E_{\pm} s + F_{\pm}  t]} \, \frac{ds}{s} \frac{dt}{t} \ = \
\int_0^{1/2}\!\!\!\int_0^{1/2} \sin(\lambda s t) \frac{ds}{s} \frac{dt}{t} \ + \ O(1).
\end{equation}
Changing variables $\sigma = \lambda s t$ in the $s$ integral 
and interchanging the order of integration shows that the sine integral above is equal to
$c \, \log \lambda + O(1)$ as $\lambda \to \infty$. The other three
integrals are easily seen to be $O(1)$ for large $\lambda$.

Although the function $g_{\lambda}(s,t) = e^{2 \pi i\lambda s t}$ can never be of the form
$e^{i \lambda \omega\cdot {\overline{\phi}}(s,t)}$ since the phase $s \, t$ is not a periodic
function of $(s,t)$, the above observation of Fefferman does give a strong indication that
the estimates \eqref{main-est} may fail for the norms $U_{rect}({\mathbb T}^2)$. 

Let us analyse a little further the relationship between the hyperbolic phase $\lambda s t$
appearing in Fefferman's work and the phase 
$\lambda \omega \cdot {\overline{\phi}}(x-s,y-t) =: \lambda \psi(x-s,y-t, \omega)$
arising from the norms $U_{rect}({\mathbb T}^2)$. Expanding $\psi$ by its Taylor series at $(x,y)$, we have
$$
\psi(x+s,y+t, \omega) \ = \ \sum_{k,\ell=0}^{\infty} \frac{1}{k!\ell!} \partial^{k,\ell} \psi(x,y, \omega) \,  s^k t^{\ell}.
$$

We see that the hyperbolic oscillation $s  t$ arises when 
$\psi_{st}(x,y,\omega) =  \partial^{1,1} \psi (x,y,\omega)$ is nonzero. The only way that this term in
the Taylor expansion vanishes for all points $(x,y,\omega) \in {\mathbb T}^2 \times {\mathbb S}^{d-1}$
is when $\psi(s,t, \omega) = \omega \cdot {\overline{f}}(s) + \omega\cdot {\overline{h}}(t)$ for a pair of
$d$-tuple real-analytic functions ${\overline{f}}$ and ${\overline{h}}$ implying that the norm
$$
\|e^{2\pi i {\overline{n}}\cdot {\overline{g}}} \|_{U_{rect}({\mathbb T}^2)} \ = \ 
\|e^{2\pi i {\overline{n}}\cdot {\overline{g_1}}} \|_{U({\mathbb T})} \cdot 
\|e^{2\pi i {\overline{n}}\cdot {\overline{g_2}}} \|_{U({\mathbb T})}
$$ 
factors into a product of two norms on $U({\mathbb T})$ whose uniform boundedness is the content
of Proposition \ref{alpar-ext}. Here ${\overline{g_1}}(s) = {\overline{f}}(s) + {\vec  L_1} s$
and ${\overline{g_2}}(t) = {\overline{h}}(t) + {\vec L_2} t$. The  example ${\overline{\phi}}(s,t) =
{\overline{f}}(s) + {\overline{h}}(t)$ satisfies the factorisation hypothsis (FH) on ${\mathbb S}^{d-1}$
and so the fact that the associated mapping $\Phi : {\mathbb T}^2 \to {\mathbb T}^d$ transports
$A({\mathbb T}^d)$ to $U_{rect}({\mathbb T}^2)$ is covered by Theorem \ref{main}. Of course
this also follows from two applications of Proposition \ref{alpar-ext}.

Even if the hyperbolic term $\psi_{st} \, s  t $ arises, the other two quadratic terms
$\psi_{ss} s^2$ and $\psi_{tt} t^2$ can conspire against it to ameliorate its effect. For instance,
while the oscillatory integral
$$
p.v. \int\!\!\!\int_{{\mathbb T}^2} e^{2\pi i \lambda s t} \, \frac{ds}{s} \frac{dt}{t} \ = \ c \, \log(\lambda) + O(1)
$$
for large $\lambda > 0$ and some $c \not= 0$, we have the uniform bound
$$
\sup_{\lambda \in {\mathbb R}} \ \Bigl| p.v.
\int\!\!\!\int_{{\mathbb T}^2} e^{2\pi i \lambda (a s + b t)^2} \, \frac{ds}{s} \frac{dt}{t}  \Bigr| \ < \ \infty
$$
 for any choice of $a, b$. Such conspiracy can happen for mappings $\Phi : {\mathbb T}^2 \to {\mathbb T}^d$
where the periodic part ${\overline{\phi}}(s,t) = {\overline{f}}(k s + \ell t)$ and ${\overline{f}}$ is
a $d$-tuple of real-analytic, periodic functions of one variable and $k, \ell \in {\mathbb Z}$. Of course
this family of examples satisfies the factorisation hypothesis (FH) and so Theorem \ref{main} implies
that $\Phi$ transports $A({\mathbb T}^d)$ to $U_{rect}({\mathbb T}^2)$ via composition. It is no longer
the case that the positive result for this family of mappings follows from Proposition \ref{alpar-ext}. In fact, 
its study was the inspiration to find the argument to establish the part of Theorem \ref{main}
when the factorisation hypothesis (FH) holds.

\subsection{Proof of Proposition \ref{alpar-ext}}

Let us set $\psi(t, \omega) := \omega \cdot {\overline{\phi}}(t)$ which is a real-valued, real-analytic function
on the compact manifold ${\mathbb T} \times {\mathbb S}^{d-1}$. First suppose there exists a point
$(t_0, \omega_0) \in {\mathbb T} \times {\mathbb S}^{d-1}$ such that $\partial^k_t \psi (t_0, \omega_0) = 0$
for all $k\ge 2$. Hence $\psi(t, \omega_0) = \psi(t_0, \omega_0) + \partial_t \psi (t_0, \omega_0) (t - t_0)$
for $t$ near $t_0$ and hence for all $t \in {\mathbb T}$ since both sides of the equality are analytic functions
on the connected manifold ${\mathbb T}$. But $\psi(t,\omega_0)$ is periodic in $t$ and this forces
$\partial_t \psi(t_0, \omega_0) = 0$ since otherwise the function 
$g(t) := \psi(t_0,\omega_0) +  \partial_t \psi(t_0, \omega_0) (t - t_0)$ is not periodic in $t$. This implies
that $\omega_0 \cdot {\overline{\phi}}(t) \equiv constant$ and so one of the functions $\phi_j(t)$
in the $d$-tuple of analytic functions ${\overline{\phi}}(t) = (\phi_1(t), \ldots, \phi_d(t))$ can be written
as a linear combinatation of the other $d-1$ functions which (after a simple re-writing of the oscillatory integral
appearing in \eqref{osc-est}) reduces matters to establishing \eqref{osc-est} with $d$ replaced by $d-1$.

Therefore by a simple induction on dimension argument, we  may assume without loss of generality that for every point 
$(t,\omega) \in {\mathbb T} \times {\mathbb S}^{d-1}$, there is an integer $k = k(t,\omega) \ge 2$ such
that $\partial^k_t \psi(t, \omega) \not= 0$. Furthermore there exists an $\epsilon = \epsilon(t,\omega) >0$
such that 
\begin{equation}\label{derivative-bound}
\bigl| \partial^k_t \psi (t', \omega') \bigr | \ \ge \ \frac{1}{2} \bigl| \partial^k_t \psi(t, \omega) \bigr|
\end{equation}
for every $(t', \omega') \in B_{2\epsilon}(t,\omega)$ in a ball of radius $2\epsilon$ centred at $(t,\omega)$.
We cover ${\mathbb T} \times {\mathbb S}^{d-1}$ with balls $B_{\epsilon}$ of radius $
\epsilon(t,\omega) >0$ and use the compactness of ${\mathbb T}\times {\mathbb S}^{d-1}$
to extract a finite subcover 
$\{ B_j \}_{j=0}^M$ of  ${\mathbb T} \times{\mathbb S}^{d-1}$ and reduce matters to establishing
a uniform bound for the oscillatory integral in \eqref{osc-est} for parameters $\lambda, \rho \in {\mathbb R}$
and $(x,\omega) \in B_j$ for some $0\le j \le M$. For such parameters, we write the integral in \eqref{osc-est}
as
$$
p.v. \int_{-\frac{1}{2}}^{\frac{1}{2}} e^{i [\lambda \omega \cdot {\overline{\phi}}(x-t) + \rho t]} \, \frac{dt}{t} \ = \
p.v. \int_{|t|< \epsilon_j} e^{i [\lambda \omega \cdot {\overline{\phi}}(x-t) + \rho t]} \, \frac{dt}{t} \ + \ 
O(\log(1/\epsilon_j).
$$
For convenience we take $j=0$.
The ball $B_0$ has radius $\epsilon_0$ with centre $(x_0, \omega_0)$, say, so that the paramaters 
$(x,\omega)$ lie in the ball $B_{\epsilon_0}(x_0, \omega_0)$. Furthermore by \eqref{derivative-bound},
the phase of the oscillatory integral
$\varphi(t) := \lambda \, \omega \cdot {\overline{\phi}}(x-t) + \rho t$ satisfies
$$
\bigl|\varphi^{(k_0)}(t)\bigr| \ \ge \  |\lambda| \, \bigl| \partial^{k_0}_t \psi(x_0, \omega_0) \bigr| / 2 \ =: \ 
c_0 |\lambda|
 $$
for all $|t| < \epsilon_0$. Here $k_0 = k_0(x_0, \omega_0) \ge 2$ is the integer which makes $c_0$
above nonzero. By van der Corput's lemma (see page 332 in \cite{S}), together with an integration by parts
argument, we see that
$$
\Bigl| \int_{|\lambda|^{-1/k_0} \le |t| \le \epsilon_0} e^{i [\lambda \omega \cdot {\overline{\phi}}(x-t)  + \rho t]}
\frac{dt}{t} \Bigr| \  \le \ C(k_0, c_0).
$$
We remark that ensuring $k_0\ge 2$ has two advantages. First the linear term $\rho \, t$ in the
phase $\varphi(t)$ is killed off when we compute the $k_0$ derivative (thus making it possible to obtain
a bound from below on $\partial^{k_0}_t  \varphi (t)$. And second, when employing van der Corput's lemma we
do not need to ensure the additional condition
that $\partial_t \varphi (t)$ is monotone (a task which could be difficult to achieve) which is needed when $k_0 = 1$.

In the complimentary range $|t| \le |\lambda|^{-1/k_0}$, we compare the phase $\varphi(t)$
with its $k_0$ Taylor polynomial 
$$
P^{k_0}_{x,\omega}(t) = \sum_{\ell=0}^{k_0 -1} \frac{1}{\ell!} \varphi^{(\ell)}(0) \,  t^{\ell} \ 
= \ \varphi(0) + \varphi'(0) t + \lambda
\sum_{\ell=2}^{k_0-1} \frac{(-1)^{\ell}}{\ell!} (\omega \cdot {\overline{\phi}})^{(\ell)}(x) \, t^{\ell}
$$
so that
$$
|\varphi(t) - P^{k_0}_{x,\omega}(t)| \ \le \  \|{\overline{\phi}}\|_{C^{k_0}({\mathbb T})} \, |\lambda| |t|^{k_0}  
$$
and therefore
$$
 \Bigl| \int_{ |t| \le |\lambda|^{-1/k_0}} e^{i [\lambda \omega \cdot {\overline{\phi}}(x-t)  + \rho t]}
\frac{dt}{t} - \int_{|t| \le |\lambda|^{-1/k_0}} e^{i P^{k_0}_{x,\omega}(t)}
\frac{dt}{t} \Bigr| \ \le \ C |\lambda| \int_0^{|\lambda|^{-1/k_0}} t^{k_0 - 1} dt \ \le \ C 
$$
where $C = C(k_0, {\overline{\phi}})$. Hence we are left with estimating the oscillatory integral
\begin{equation}\label{key-reduction-1d}
\int_{|t| \le |\lambda|^{-1/k_0}} e^{i P^{k_0}_{x,\omega}(t)} \ \frac{dt}{t}
\end{equation}
where $P^{k_0}_{x,\omega}$ is a polynomial of degree at most $k_0$. Here we can appeal to a result of
Stein and Wainger \cite{SW} which states that for any $d\ge 0$, there is a universal constant
$C_d$ such that for any real polynomial $P(t) = a_d t^d + \cdots + a_1 t + a_0$ of degree at most $d$,  and for
any $0\le a < b$,
\begin{equation}\label{SW}
\Bigl| \int_{a\le |t| \le b} e^{i [a_d t^d + \cdots + a_1 t + a_0]} \ \frac{dt}{t} \Bigr| \ \le \ C_d.
\end{equation}
In particular the bound is uniform in the coefficients $\{a_0, \ldots, a_d\}$ and the
interval $a\le|t|\le b$ of integration. This completes the proof \eqref{osc-est} and
therefore Proposition \ref{alpar-ext}.

\subsection{Developing a perspective}\label{perspective}  Before we move on to truly higher dimensional variants of
Kahane's problem, we pause here to make a few comments on the argument presented above
and indicate some of the issues which lie ahead when we turn to mappings from ${\mathbb T}^2$ to ${\mathbb T}^d$.

First of all, matters are quickly reduced to proving a uniform bound for a certain integral, namely \eqref{osc-est}.
This integral contains an oscillatory factor $e^{i \varphi(t)}$ with a real-valued phase
$\varphi(t) = \lambda \, \omega \cdot {\overline{\phi}}(x-t) + \rho t$ (depending on a number 
of parameters $\lambda, \omega, x$ and $\rho$) which is integrated against a Calder\'on-Zygmund
kernel, the classical Hilbert transform kernel $1/t$. The task is to find enough cancellation in the integral to
overcome the logarithmic divergence of the integral $\int 1/|t|$ and do so obtaining a bound which is
uniform in all the parameters. Cancellation arises in two ways: $(1)$ from rapid oscillation of $e^{i\varphi(t)}$
which is detected by some derivative of the phase $\varphi$ being large and (2) from
the singular kernel $1/t$ which is detected by the fact that the Fourier transform of the principal-valued
distribution
$p.v. 1/t$,
\begin{equation}\label{FT}
\bigl( {\widehat{p.v.1/t}} \bigr) (\xi) \ = \ p.v. \int e^{-2\pi i \xi t} \, \frac{dt}{t}
\end{equation}
defines a {\it bounded} function in $\xi \in {\mathbb R}$. Equivalently, convolution with $p.v 1/t$,
the classical Hilbert transform, is bounded on $L^2({\mathbb R})$. 

These two types of cancellation often compliment each other perfectly giving us a successful
bound on an oscillatory integral. For instance, for the oscillatory integral in \eqref{osc-est},
we now give a slightly altered argument, using only the types of cancellation described above
(and in particular, avoiding the robust result of Stein and Wainger \eqref{SW}). We begin as
before and first take care of the
degenerate case to reduce ourselves to the situation where there is a nonzero derivative
$\partial^k_t$ of $\psi$ at every point $(x,\omega) \in {\mathbb T} \times {\mathbb S}^{d-1}$.

Now suppose
we chose the integer $k = k(x,\omega) \ge 2 $ to be {\it minimal} with respect to the property
that $\partial^{k}_t \psi (x,\omega) \not= 0$. Then the $k$th
Taylor polynomial 
$$
P^k_{x,\omega} (t) \ = \ \sum_{\ell=0}^{k-1} \frac{1}{\ell!} \varphi^{(\ell)}(0) \, t^{\ell} \ = \
\varphi(0) \ +  \  \varphi'(0) \, t
$$
of $\varphi(t) = \lambda \omega \cdot {\overline{\phi}}(x-t) + \rho \, t$
is a linear polynomial. For this fixed point $(x,\omega)$ we could choose $\delta = \delta(x,\omega) > 0$
such that $|\partial^k_t \varphi (t) | \ge (1/2) |\lambda| |\partial^k_t \psi (x,\omega)|$ for all $|t| < \delta$.
Hence as above, using the fact that the $k$th derivative of the phase $\varphi$ is large, we have the bound
$$
\Bigl| \int_{|\lambda|^{-1/k} \le |t| \le 1/2} e^{i [\lambda \omega \cdot {\overline{\phi}}(x-t)  + \rho t]}
\frac{dt}{t} \Bigr| \  \le \ C(k, c) \ + \ O(\log(1/\delta))
$$
where $c = |\partial^k_t \varphi(x,\omega)| > 0$. For the complimentary integral, we have
$$
 \int_{ |t| \le |\lambda|^{-1/k}} e^{i [\lambda \omega \cdot {\overline{\phi}}(x-t)  + \rho t]}
\frac{dt}{t} \ =
$$
$$
 \int_{ |t| \le |\lambda|^{-1/k}} \Bigl[ e^{i [\lambda \omega \cdot {\overline{\phi}}(x-t)  + \rho t]}
-  e^{i P^{k}_{x,\omega}(t)} \Bigr]
\frac{dt}{t} \ + \ A \, \int_{|t| \le |\lambda|^{-1/k}} e^{i \xi t} \frac{dt}{t} \ =: \ I + II
$$
where $A = e^{i \varphi(0)} = e^{i \lambda \omega \cdot {\overline{\phi}}(x)}$ and
$\xi = \varphi'(0)$.
The first term satisfies $|I| \le 2 \|{\overline{\phi}}\|_{C^k({\mathbb T})}$ as before and $II$ is uniformly bounded
by a slightly more robust version of \eqref{FT}. So we have managed to estimate
\eqref{osc-est} without appealing to the result \eqref{SW}  of Stein and Wainger. 

Although we obtain in this way a bound which is uniform in the parameters $\lambda, \rho \in {\mathbb R}$, the
bound depends on the point $(x,\omega) \in {\mathbb T} \times {\mathbb S}^{d-1}$. In fact the bound
depends on $k, \delta^{-1}$ and $c^{-1}$ which all depend on $(x,\omega)$. In particular, if we slightly perturb
$(x,\omega)$ to $(x',\omega')$, the associated quantities $\delta$ and $c$ could vanish; we have made
no provisions to prevent this from happening. This is why in the original argument, we employed a compactness
argument to stablise the integer $k$ and most importantly, we stablised the derivative bound of the phase
from below. This is the content of \eqref{derivative-bound}. {\it However} we are unable to stablise
the minimality property of $k$ used in the above, alternative argument. The minimality property of $k$
is highly unstable.
If we perturb slightly the point
$(x,\omega)$ to $(x',\omega')$, we can still achieve a bound from  below on 
$\partial^{k}_t (\omega' \cdot {\overline{\phi}})(x'-t)$
with $k = k(x,\omega)$ BUT it is no longer the case that all lower order derivatives 
$\partial^{\ell} (\omega' \cdot {\overline{\phi}}) (x')$ vanish for $2\le \ell \le  k-1$. Generically,
these derivatives will become nonzero.

For these reasons we are led to employ the result of Stein and Wainger in \eqref{SW}. One way
to view \eqref{SW} is to observe that the $L^2$ boundedness of convolution with the distribution $p.v. \, 1/t$
(this is the content of \eqref{FT}) remains true if we multiply the Hilbert transform kernel $1/t$ by
any polynomial oscillation; that is, convolution with $p.v.  e^{i P(t)}/t$ is bounded on $L^2$
and moreover, the $L^2$ operator norm is independent of the coefficients of $P$, depending only
on the degree of $P$. This is precisely the content of \eqref{SW}.\footnote{The bound \eqref{SW} in fact
says a little more -- this $L^2$ boundedness remains true after truncation of the kernel $e^{iP(t)}/t$
to an interval $a \le |t| \le b$ with bounds uniform in the truncation.} This observation has been extended
to higher dimensions 
by Stein in \cite{S-B} and by Ricci and Stein \cite{RS}
to the nontranslation-invariant setting.
This extension of
Calder\'on-Zygmund theory, the stability of $L^p$ mapping properties of classical Calder\'on-Zygmund
singular integral operators by multiplication of the CZ kernel by a general polynomial oscillation is intimately connected
to the theory of Singular Radon Transforms, see \cite{RS}. Hence there is an interesting connection
between Kahane's problem and the theory of Singular Radon Transforms -- these matters are discussed
in a nice survey paper by Wainger, \cite{W}. The connections remain when we move to higher dimensions
but in our case, we will need to move from one parameter to multiparameter Calder\'on-Zygmund theory.

When we move to higher dimensions ($k=2$), the $U_{rect}({\mathbb T}^2)$ and $U_{sq}({\mathbb T}^2)$
norms involve a product of Dirichlet kernels $D_M(s) D_N(t)$ which will lead to two dimensional variants
of integrals in \eqref{osc-est} but now we will be integrating an oscillation against a product-type 
Calder\'on-Zygmund kernel which is in fact the quintessential  product-type CZ kernel, the double
Hilbert transform kernel $1/s t$. We will be confronted with oscillatory integrals of the form
$$
p.v. \int\!\!\!\int_{{\mathbb T}^2} e^{i\psi(s,t)} \, \frac{ds}{s} \frac{dt}{t}
$$
where $\psi$ is some real-valued, real-analytic phase depending a several parameters and we will need
to obtain bounds for this integral which are uniform with respect to these parameters. Again one of
these parameters will be a point $\omega \in {\mathbb S}^{d-1}$ on the unit sphere and so
$\psi(s,t) = \psi_{\omega}(s,t)$.  Here the nondegenerate
case is when for all points $(s,t,\omega) \in {\mathbb T}^2 \times {\mathbb S}^{d-1}$, there
are integers $m = m(s,t,\omega)\ge 2$ and $n = n(s,t,\omega) \ge 2$ such that both derivatives
$\partial^{(m)}_t \psi_{\omega}(s,t)$ and $\partial^{(n)}_s \psi_{\omega}(s,t)$ are nonzero. This will be
much more difficult to achieve (and in fact we will not be able to get ourselves exactly in this situation). 

Once in the nondegenerate case (or some variant), we can use a compactness argument to stablise
the integers $m$ and $n$ and corresponding derivative bounds from below. The minimality of 
$k = k(x,\omega)$ from the discussion above is replaced by the so-called {\it Newton diagram}
of the phase function $\psi$, based at some point $(x,y,\omega) \in {\mathbb T}^2 \times {\mathbb S}^{d-1}$. 
Newton diagrams are highly unstable and can dramatically change under small perturbations of the
phase function. Proceeding in some analgous way to the proof of Proposition \ref{alpar-ext} given above,
we will be led to enquire whether multiparameter Calder\'on-Zygmund theory (at least the part of the
theory concerned with $L^2$ mapping properties of convolution with product-type Calder\'on-Zygmund
kernels) is stable under multiplication of the kernel by general polynomial oscillations. Fortunately this
has already been considered in the context of singular Radon transforms (see \cite{CWW} and \cite{Street})
but unfortunately the answer is NO in general. The theory is not stable under multiplication by
general polynomial oscillations and part of our endeavour here is to understand to what extent (or
for which polynomial oscillations) it is stable. This explains in part why there will be many analytic
mappings $\Phi : {\mathbb T}^2 \to {\mathbb T}^d$ which do NOT carry $A({\mathbb T}^d)$ to
$U_{rect}({\mathbb T}^2)$.

\section{Consequences of  the factorisation hypothesis (FH)}\label{consequences}

Let $M$ be an real-analytic manifold. Suppose
that $\psi$ is an analytic function on ${\mathbb T}^2 \times M$ which satisfies the
factorisation hypothesis (FH)  on $M$; that is  every point of ${\mathbb T}^2 \times M$ has
a neighbourhood $U$ on which
$$
\partial^{1,1} \psi(s,t,\omega) = L(s,t,\omega) \partial^2_t \psi(s,t,\omega) \ \ {\rm and} \ \
\partial^{1,1} \psi(s,t,\omega) = K(s,t,\omega) \partial^2_s \psi(s,t,\omega) 
$$
for some real-analytic functions $L$ and $K$ on $U$. From these two relations on $U$, it
is a simple matter to see that every partial derivative $\partial^{k,\ell}\psi (s,t)$ with $k, \ell\ge 1$,
can be written as some linear combination of the pure derivatives $\partial^{j}_t \psi$ and $\partial^j_s \psi$
with coefficients depending on $L$ and $K$.

\begin{lemma}\label{relations} For every $k,\ell\ge 1$, we have
$$
\partial^{k,\ell}\psi \  \ = \ \ \sum_{j=2}^{k+\ell} Q^{k,\ell}_j \ \partial^j_t \psi
$$
on $U$. 
Furthermore we have the following relationships.
\begin{align}\label{ell=1}
 {\underline{j=2}} \ \ \ \ \ \ &Q^{k,1}_2 \ = \  \partial_s Q^{k-1,1}_2 + \sum_{j=2}^k Q^{k-1,1}_j \partial^{j-1}_t L \nonumber\\
 {\underline{3\le j \le k}} \ \ \ \ \ \ &Q^{k,1}_j \ = \ \partial_s Q^{k-1,1}_j + \sum_{r=j-1}^k {{r-1}\choose{j-2}} Q^{k-1,1}_r
\partial^{r+1-j}_t L \\
{\underline{j=k+1}} \ \ \ \ \ \ &Q^{k,1}_{k+1} \ = \ L^k . \nonumber
\end{align}
When $k=1$, the three formulae above collapse to a single formula; namely  $Q^{1,1}_2 = L$. 
When $\ell\ge 2$, we have
\begin{align}\label{ell = 2}
{\underline{j=2}} \ \ \ \ \ \ \ \ \ \ \ \ &Q^{k,\ell}_2 \ \ = \ \  \partial_t Q^{k,\ell-1}_2  \nonumber\\
{\underline{3\le j \le k+\ell - 1}} \ \ \ \ \ \ \ \ \ \ \ \  &Q^{k,\ell}_j \ \ = \ \ \partial_t Q^{k,\ell-1}_j +  Q^{k,\ell-1}_{j-1}\\
{\underline{j=k+\ell}} \ \ \ \ \ \ \ \ \ \ \ \  &Q^{k,\ell}_{k+\ell} \ \ = \ \ L^k . \nonumber
\end{align}
We also have $\partial^{k,\ell} \psi = \sum_{j=2}^{k+\ell} R^{k,\ell}_j \partial^j_s \psi$ where the coefficients
$R^{k,\ell}_j$ satisfy similar relations.
\end{lemma}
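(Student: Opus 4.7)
The plan is to proceed by a straightforward induction: first fix $\ell=1$ and induct on $k$, then for arbitrary $\ell \ge 2$ induct on $\ell$ with $k$ fixed. The base case $k=\ell=1$ is the factorisation hypothesis itself: $\partial^{1,1}\psi = L\partial^2_t\psi$ immediately gives $Q^{1,1}_2 = L$, which is the ``collapsed'' case flagged in the statement.

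For the inductive step at $\ell = 1$, assume $\partial^{k,1}\psi = \sum_{j=2}^{k+1} Q^{k,1}_j \partial^j_t \psi$ holds on $U$ and differentiate both sides with respect to $s$:
\begin{equation*}
\partial^{k+1,1}\psi \ = \ \sum_{j=2}^{k+1} \partial_s Q^{k,1}_j \cdot \partial^j_t \psi \ + \ \sum_{j=2}^{k+1} Q^{k,1}_j \cdot \partial^{1,j}\psi.
\end{equation*}
The crux is to re-expand the mixed derivatives $\partial^{1,j}\psi$ that appear. Since $\partial^{1,j}\psi = \partial^{j-1}_t(\partial^{1,1}\psi) = \partial^{j-1}_t(L\partial^2_t\psi)$, the Leibniz rule yields
\begin{equation*}
\partial^{1,j}\psi \ = \ \sum_{r=2}^{j+1}\binom{j-1}{r-2}\partial^{j+1-r}_t L \cdot \partial^r_t \psi.
\end{equation*}
Substituting this back and collecting the coefficient of each $\partial^r_t\psi$ reproduces the three recursions in (\ref{ell=1}) verbatim; in particular the top term $r=k+2$ receives a contribution only from $j=k+1$, which by the inductive hypothesis $Q^{k,1}_{k+1} = L^k$ produces $L^k \cdot L = L^{k+1}$.

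For $\ell \ge 2$ I would induct on $\ell$ with $k$ fixed by applying $\partial_t$ to the identity $\partial^{k,\ell-1}\psi = \sum_{j=2}^{k+\ell-1} Q^{k,\ell-1}_j \partial^j_t \psi$. This step is much easier: $\partial_t$ simply sends $\partial^j_t\psi$ to $\partial^{j+1}_t\psi$ without any further invocation of (FH), so matching coefficients yields (\ref{ell = 2}) directly, with $Q^{k,\ell}_{k+\ell}$ inheriting the value $L^k$ from $Q^{k,\ell-1}_{k+\ell-1}$. The analogous identities for $R^{k,\ell}_j$ follow by symmetric reasoning, using the other half $\partial^{1,1}\psi = K\partial^2_s\psi$ of (FH) with the roles of $s$ and $t$ interchanged. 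The only real obstacle is bookkeeping: one must reindex the double sum produced by Leibniz so that the binomial coefficient $\binom{r-1}{j-2}$ appears exactly as stated, and must verify that the endpoint cases ($j=2$, $j=k+1$ at $\ell=1$, and $j=k+\ell$ at $\ell \ge 2$) separate out correctly. There is no analytic content beyond one invocation of (FH) per inductive step.
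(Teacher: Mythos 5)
Your proposal is correct and follows essentially the same route as the paper: the identity $\partial^{1,j}\psi = \sum_{r=2}^{j+1}\binom{j-1}{r-2}(\partial^{j+1-r}_t L)\,\partial^r_t\psi$ obtained by Leibniz is exactly the paper's ``useful formula,'' and the two inductions (on $k$ for $\ell=1$ via $\partial_s$, then on $\ell$ via $\partial_t$) match the paper's argument step for step, including the identification $Q^{k,1}_{k+1}=L^k$ and the symmetric treatment of the $R^{k,\ell}_j$.
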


\begin{proof} First we observe that by differentiating $j-1$ times the factorisation hypothesis
$\partial^{1,1} \psi = L \partial^2_t \psi$ with respect to $t$, we have the useful formula
\begin{equation}\label{useful-identity}
\partial^{1,j} \psi \ = \ \sum_{r=2}^{j+1} {{j-1}\choose{r-2}} (\partial^{j+1-r}_t L) \, \partial^r_t \psi.
\end{equation}
This can be verified by a simple induction argument on $j$.

Next we will establish \eqref{ell=1} and we will do so by induction on $k$.
The case $k=1$ follows immediately from the factorisation hypothsis $\partial^{1,1} \psi = L \partial_t \psi$
and so $Q^{1,1}_2 = L$. Suppose now that \eqref{ell=1} holds for some $k-1 \ge 1$. In particular 
$\partial^{k-1,1}\psi = \sum_{j=2}^k Q^{k-1,1}_j \partial^j_t \psi$ and by differentiating this relation with
respect to $s$, we see that
$$
\partial^{k,1} \psi \ = \ \sum_{j=2}^k \Bigl[ \partial_s Q^{k-1,1}_j \, \partial^j_t \psi + Q^{k-1,1}_j \partial^{1,j} \psi\Bigr]
$$
Applying the useful formula \eqref{useful-identity} to the second term, we have
$$
\sum_{j=2}^k  Q^{k-1,1}_j \partial^{1,j} \psi \ = \ \sum_{j=2}^k Q^{k-1,1}_j \sum_{r=2}^{j+1} {{j-1}\choose{r-2}}
\partial^{j+1-r}_t L \, \partial^r_t \psi
$$
$$
= \ \partial^2_t \psi \sum_{j=2}^k Q^{k-1,1}_j \partial^{j-1}_t L \ + \ \sum_{r=3}^{k+1} \Bigl[
\sum_{j=r-1}^k {{j-1}\choose{r-2}} Q^{k-1,1}_j \partial^{j+1-r}_t L \Bigr] \, \partial^r_t \psi
$$
and therefore $\partial^{k,1} \psi = \sum_{j=2}^{k+1} Q^{k,1}_j \partial^j_t \psi$ where
the $\{Q^{k,1}_j\}_{j=1}^{k+1}$ satisfy the three relations given in \eqref{ell=1}. 
Note that in particular, the above gives $Q^{k,1}_{k+1} = Q^{k-1,1}_k L$ and  iterating this $k$ times, 
we obtain $Q^{k,1}_{k+1} = L^k$.

Let us now
turn to \eqref{ell = 2} which we will establish by induction on $\ell$. Suppose that
$$
\partial^{k,\ell-1} \psi \ = \  \sum_{j=2}^{k+\ell-1} Q^{k,\ell-1}_j \ \partial^j_t \psi
$$
holds for some $\ell - 1\ge 1$ and all $k\ge 1$. Differentiating this identity with respect to $t$ gives us
$$
\partial^{k,\ell} \psi \ = \ \sum_{j=2}^{k+\ell -1} \Bigl[ \partial_t Q^{k,\ell-1}_j \, \partial^j_t  \psi \ + \ 
Q^{k,\ell -1}_j \partial^{j+1}_t \psi \Bigr] \ = \ \partial_t Q^{k,\ell-1}_2 \partial^2_t \psi 
$$
$$
\ + \ \sum_{j=3}^{k+\ell - 1}
\Bigl[ \partial_t Q^{k,\ell-1}_j \, \partial^j_t  \psi \ + \ 
Q^{k,\ell -1}_{j-1} \partial^{j}_t \psi \Bigr] \ + \  Q^{k,\ell-1}_{k+\ell-1} \partial^{k+\ell}_t \psi.  
$$
Therefore $\partial^{k,\ell} \psi = \sum_{j=1}^{k+ \ell} Q^{k,\ell}_j \partial^j_t \psi$
where $\{Q^{k,\ell}_j \}_{j=2}^{k+\ell}$ satisfy the relations \eqref{ell = 2}. 

To be clear, for the final relation in \eqref{ell = 2}, we have
$Q^{k,\ell}_{k+\ell} = Q^{k,\ell-1}_{k+\ell-1}$ and upon iterating we obtain
$Q^{k,\ell}_{k+\ell} = Q^{k,1}_{k+1} = L^k$ by \eqref{ell=1}.
\end{proof}

As an immediate consequence of Lemma \ref{relations}, we have the following observation
which will be useful to us in the next section when we turn to developing a more robust one dimensional theory.

\begin{corollary}\label{degenerate-FH} Let $\psi$ be a real-analytic function on ${\mathbb T}^2 \times M$
satisfying the factorisation hypothesis (FH) on $M$. Suppose that there exists a point
$(x_0,y_0,\omega_0) \in {\mathbb T}^2 \times M$ such that 
$$
\partial^m_t \psi(x_0,y_0,\omega_0) \ = \ 0 \ \ \ {\rm for \ all} \ m\ge 2.
$$
Then $\psi(x,y,\omega_0) \equiv \psi(x,y_0, \omega_0)$.
\end{corollary}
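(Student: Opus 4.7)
The plan is to show that the real-analytic function
\[
f(x,y) \ := \ \psi(x,y,\omega_0) - \psi(x,y_0,\omega_0)
\]
vanishes identically on $\mathbb{T}^2$. Since $\mathbb{T}^2$ is connected, the identity theorem for real-analytic functions reduces this to checking that $f$ vanishes on some open neighbourhood of $(x_0,y_0)$, which in turn amounts to showing that every mixed partial derivative $\partial^{k,\ell}\psi(x_0,y_0,\omega_0)$ with $\ell \ge 1$ equals zero.

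First I would upgrade the hypothesis from ``$m\ge 2$'' to ``$m\ge 1$'' at the base point. The function $t \mapsto \psi(x_0,t,\omega_0)$ is real-analytic and $1$-periodic on $\mathbb{R}$, and by hypothesis all of its $t$-derivatives of order $\ge 2$ vanish at $t=y_0$. Its Taylor expansion at $y_0$ therefore collapses to the linear polynomial $\psi(x_0,y_0,\omega_0) + \partial_t\psi(x_0,y_0,\omega_0)(t-y_0)$, which by analyticity and connectedness must coincide with $\psi(x_0,t,\omega_0)$ for all $t\in\mathbb{R}$; periodicity then forces the slope to vanish, giving $\partial_t\psi(x_0,y_0,\omega_0)=0$. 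This is precisely the trick used at the start of the proof of Proposition~\ref{alpar-ext}.

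Next I would invoke Lemma~\ref{relations}, which, on the neighbourhood $U$ of $(x_0,y_0,\omega_0)$ on which the (FH) divisibility conditions hold, yields the pointwise identities
\[
\partial^{k,\ell}\psi \ = \ \sum_{j=2}^{k+\ell} Q^{k,\ell}_j \, \partial^j_t \psi \qquad (k,\ell \ge 1).
\]
Evaluating at $(x_0,y_0,\omega_0)$ and using the hypothesis to annihilate each $\partial^j_t\psi$ on the right-hand side yields $\partial^{k,\ell}\psi(x_0,y_0,\omega_0) = 0$ for all $k,\ell\ge 1$.

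Combining the hypothesis, the periodicity argument, and the output of Lemma~\ref{relations}, every Taylor coefficient of $f$ at $(x_0,y_0)$ is zero. Hence $f$ vanishes on a neighbourhood of $(x_0,y_0)$, and then, by real-analyticity and connectedness of $\mathbb{T}^2$, on all of $\mathbb{T}^2$, which is exactly the desired conclusion. I do not anticipate a serious obstacle here: the only mildly delicate step is the $(k,\ell)=(0,1)$ case, which is not covered by Lemma~\ref{relations} and has to be handled by the periodicity argument in $t$ at the fixed slice $x=x_0$.
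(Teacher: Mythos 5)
Your proof is correct and uses exactly the same ingredients as the paper's: Lemma~\ref{relations} to annihilate the mixed derivatives $\partial^{k,\ell}\psi(x_0,y_0,\omega_0)$ for $k,\ell\ge 1$, the periodicity of the slice $t\mapsto\psi(x_0,t,\omega_0)$ to kill the residual linear term $\partial_t\psi(x_0,y_0,\omega_0)$, and the Taylor expansion plus analyticity and connectedness to propagate the identity to all of ${\mathbb T}^2$. The only difference is that you run the periodicity step before the Taylor expansion rather than after, which is immaterial.
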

 
\begin{proof} Lemma \ref{relations} implies $\partial^{k,\ell} \psi(x_0, y_0, \omega_0) = 0$ for all $k,\ell \ge 1$.
Hence
$$
\psi(x,y,\omega_0) \ = \ \sum_{k,\ell\ge 0} \frac{1}{k!\ell!} \partial^{k,\ell} \psi(x_0, y_0, \omega_0) (x - x_0)^k (y-y_0)^{\ell}
$$
\begin{equation}\label{y}
= \ \psi(x,y_0,\omega_0) \ + \ \partial_t \psi(x_0,y_0,\omega_0) (y - y_0)
\end{equation}
for $(x,y)$ near $(x_0,y_0)$ and hence for all $(x,y)$ by analyticity. However we also have
$$
\psi(x_0,y,\omega_0) \ = \ \sum_{\ell=0}^{\infty} \frac{1}{\ell!} \partial^{\ell} \psi (x_0,y_0,\omega_0) (y - y_0)^{\ell}
\ = \ \psi(x_0, y_0,\omega_0) + \partial_t \psi(x_0,y_0,\omega_0) (y - y_0)
$$
for $y$ near $y_0$ and hence for all $y$ by analyticity. This forces $\partial_t \psi(x_0,y_0, \omega_0) = 0$
since $\psi(x_0,y, \omega_0)$ is a periodic function in $y$. Plugging this back into \eqref{y} shows
$\psi(x,y,\omega_0) = \psi(x, y_0, \omega_0)$, completing the proof of the corollary.
\end{proof}

Another immediate consequence of Lemma \ref{relations} which is needed in the
next key proposition is the following.

\begin{corollary}\label{max}  Let $\psi$ be a real-analytic function on ${\mathbb T}^2 \times M$ satisfying
the factorisation hypothsis (FH) on $M$. Suppose that 
there is a point $(x,y,\omega) \in {\mathbb T}^2 \times M$ and a pair of integers $m,n\ge 2$ such that
$$
\partial^k_s \psi(x, y, \omega)  \ = \ 0 \ \ {\rm for} \ 2\le k < n \ \ \ {\rm and} \ \ 
\partial^{\ell}_t \psi(x, y, \omega)  \ = \ 0 \ \ {\rm for} \ 2\le \ell < m.
$$
If $m$ or $n$ is equal to 2, the corresponding condition above is vacuous.
Then $\partial^{k,\ell} \psi (x, y, \omega) = 0$ for all $k,\ell\ge 1$ satisfying $k+\ell < \max(m,n)$.
\end{corollary}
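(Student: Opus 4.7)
The plan is to derive Corollary \ref{max} as a direct consequence of Lemma \ref{relations}, which already provides two complementary representations of every mixed partial derivative $\partial^{k,\ell}\psi$ (with $k,\ell\ge 1$): one as a combination of pure $t$-derivatives and one as a combination of pure $s$-derivatives. The idea is that under the vanishing hypothesis at the point $P=(x,y,\omega)$, whichever of the two pure-derivative towers has a longer stretch of zeros will kill the corresponding expansion once the total order $k+\ell$ is small enough.

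More concretely, I would first invoke Lemma \ref{relations} to write, on a neighbourhood of $P$,
\[
\partial^{k,\ell}\psi \;=\; \sum_{j=2}^{k+\ell} Q^{k,\ell}_j\, \partial^j_t \psi
\;=\; \sum_{j=2}^{k+\ell} R^{k,\ell}_j\, \partial^j_s \psi.
\]
These identities hold as germs of analytic functions because the coefficients $Q^{k,\ell}_j$ and $R^{k,\ell}_j$ are built from $L$ and $K$ (and their derivatives) via the recursions in \eqref{ell=1} and \eqref{ell = 2}, each of which is valid on the common neighbourhood $U$ on which (FH) factorises.

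Next I would evaluate at $P$ and split into two cases according to whether $\max(m,n)=m$ or $\max(m,n)=n$. Suppose first that $m=\max(m,n)$, and let $k,\ell\ge 1$ with $k+\ell<m$. Every index $j$ in the first sum satisfies $2\le j\le k+\ell<m$, so by the hypothesis $\partial^j_t\psi(P)=0$ for each such $j$, whence $\partial^{k,\ell}\psi(P)=0$. Symmetrically, if $n=\max(m,n)$ and $k+\ell<n$, then every $j$ in the second sum satisfies $2\le j\le k+\ell<n$, so $\partial^j_s\psi(P)=0$ and again $\partial^{k,\ell}\psi(P)=0$. In either case the conclusion follows, and the condition that $m$ or $n$ equals $2$ is handled trivially since the corresponding hypothesis is vacuous but the relevant sum is still killed by the other tower of vanishing derivatives.

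There is no real obstacle here: the content of the corollary is entirely encoded in Lemma \ref{relations}, and the only step requiring any care is observing that the hypothesis is applied to the pure-derivative expansion coming from the \emph{larger} of $m,n$, so that the range $2\le j\le k+\ell$ is covered by the vanishing. I would close by remarking that this is exactly the structural input needed later when stabilising the Newton diagram, since it says (FH) forces the Newton polyhedron of $\psi$ at any point to avoid the mixed region below $\max(m,n)$.
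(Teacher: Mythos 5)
Your proof is correct and is essentially identical to the paper's: both arguments simply evaluate the two pure-derivative expansions from Lemma \ref{relations} at the point and observe that when $k+\ell<\max(m,n)$ every index $j\le k+\ell$ in the expansion corresponding to the larger of $m,n$ falls in the range where the pure derivatives vanish. No further comment is needed.
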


Before we give the proof, let us see how to interpret this corollary if $m=n=2$. In this case,
$\max(m, n) = 2$ and there are no pairs of integers $k, \ell \ge 1$ satisfying the condition $k+\ell < 2$.
Hence the conclusion is vacuous and there is nothing to prove in this case. 

\begin{proof} If $\max(m,n) = m$ and $k,\ell\ge 1$ satisfies $k+ \ell < m$, then
$$
\partial^{k,\ell} \psi (x, y, \omega) \ = \ \sum_{j=2}^{k+\ell} Q^{k,\ell}_j \partial^j_t \psi(x,y,\omega) \ = \ 0.
$$
Also if $\max(m,n) = n$ and $k,\ell\ge 1$ satisfies $k+ \ell < n$, then
$$
\partial^{k,\ell} \psi (x, y, \omega) \ = \ \sum_{j=2}^{k+\ell} R^{k,\ell}_j \partial^j_s \psi(x,y,\omega) \ = \ 0.
$$
This completes the proof of the corollary.
\end{proof}

At the heart of the proof of Theorem \ref{main} when we are assuming that the factorisation
hypothesis holds is the following proposition.

\begin{proposition}\label{key-prop} Let $\psi$ be a real-analytic function on ${\mathbb T}^2 \times M$ satisfying
the factorisation hypothsis (FH) on $M$. Suppose at a point $(x,y,\omega) \in {\mathbb T}^2 \times M$,
there exist integers $m,n\ge 2$ such that $\partial^m_t \psi(x,y,\omega) \not= 0, \ \partial^n_s \psi(x,y,\omega) \not= 0$
but
$$
\partial^k_s \psi(x, y, \omega)  \ = \ 0 \ \ {\rm for} \ 2\le k < n \ \ \ {\rm and} \ \ 
\partial^{\ell}_t \psi(x, y, \omega)  \ = \ 0 \ \ {\rm for} \ 2\le \ell < m.
$$
Fix $r\ge 0$ and suppose $m+r < n$.
Then
$\partial^{k,\ell} L (x,y,\omega) = 0$ for all nonnegative $k,\ell$ satisfying $k+\ell \le r$.
\end{proposition}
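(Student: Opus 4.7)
The plan is to exploit the factorisation identity $\psi_{st} = L \, \psi_{tt}$, which holds in a neighbourhood of $(x,y,\omega)$ by (FH), differentiate it via Leibniz, and then peel off information by evaluating at the point. Specifically, for fixed $k, \ell \ge 0$ with $k+\ell \le r$, I would apply $\partial^{k, \ell+m-2}$ to both sides to obtain the identity
\[
\partial^{k+1,\,\ell+m-1}\psi \ = \ \sum_{i=0}^{k}\sum_{j=0}^{\ell+m-2} \binom{k}{i}\binom{\ell+m-2}{j}\, \partial^{i,j}L \cdot \partial^{k-i,\,\ell+m-j}\psi,
\]
and then establish $\partial^{k,\ell}L(x,y,\omega)=0$ by extracting the $(i,j)=(k,\ell)$ term. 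I would proceed by induction on $s := k+\ell$, with inductive hypothesis that $\partial^{i,j}L(x,y,\omega)=0$ for all $i+j < s$.

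The first observation is that the left-hand side vanishes at $(x,y,\omega)$: since $k+1 \ge 1$ and $\ell+m-1 \ge 1$, the dual $R$-expansion from Lemma \ref{relations} applies, giving $\partial^{k+1,\ell+m-1}\psi = \sum_{u=2}^{k+\ell+m} R^{k+1,\ell+m-1}_u \partial^u_s\psi$; since $k+\ell+m \le r+m < n$, all $\partial^u_s\psi(x,y,\omega)$ in this sum vanish by hypothesis. The second step is to show that on the right-hand side every term except $(i,j)=(k,\ell)$ vanishes at the point, splitting into three cases. If $i<k$, then $k-i\ge 1$ and $\ell+m-j\ge 2$, so the $R$-expansion applies to $\partial^{k-i,\ell+m-j}\psi$ and, since $(k-i)+(\ell+m-j)\le k+\ell+m < n$, this factor vanishes. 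If $i=k$ and $j>\ell$, then $\partial^{k-i,\ell+m-j}\psi = \partial^{\ell+m-j}_t\psi$ with $2 \le \ell+m-j \le m-1$, which vanishes by the hypothesis on pure $t$-derivatives. If $i=k$ and $j<\ell$, then $i+j < k+\ell = s$, and the inductive hypothesis delivers $\partial^{i,j}L(x,y,\omega)=0$.

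What remains is the single term $(i,j)=(k,\ell)$, namely $\partial^{k,\ell}L(x,y,\omega) \cdot \partial^m_t\psi(x,y,\omega)$, and since $\partial^m_t\psi(x,y,\omega)\ne 0$ the conclusion follows. The base case $s=0$ (which gives $L(x,y,\omega)=0$) corresponds exactly to $k=\ell=0$ in this scheme and needs no separate argument. The case $m=2$ is compatible, because then the range $j>\ell$ in the second case becomes empty (since $j\le \ell+m-2=\ell$) and $\partial^2_t\psi\ne 0$ by hypothesis.

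The main obstacle I anticipate is bookkeeping at the boundary of the Leibniz sum: one must verify that every problematic pair $(i,j)$ genuinely falls into one of the three case-analyses, and in particular that the combined condition $m+r<n$ is strong enough to force both the LHS and every "cross" term $\partial^{k-i,\ell+m-j}\psi$ (with $i<k$) to vanish via the $R$-expansion simultaneously. The algebraic heart of the argument is that the top coefficient $Q^{k,\ell}_{k+\ell}=L^k$ from Lemma \ref{relations} is, after one differentiation of the factorisation identity, promoted to a clean occurrence of $\partial^{k,\ell}L$ multiplied by the nonvanishing $\partial^m_t\psi$; identifying this precise Leibniz slot is the key computation.
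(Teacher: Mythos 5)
Your argument is correct, and it takes a genuinely different and more economical route than the paper. The paper proves this proposition by fully expanding $\partial^{k,m+r-k}\psi$ in pure $t$-derivatives via the coefficients $Q^{k,\ell}_j$ of Lemma \ref{relations}, and then invoking the bi-homogeneous structure of those coefficients (Proposition \ref{Q}) to show that, modulo the inductive hypothesis, the only surviving monomial in $Q^{k,m+r-k}_m$ is $c\,\partial^{k-1,r-k+1}L$ with $c$ strictly positive -- the positivity being the hard combinatorial content. You instead apply $\partial^{k,\ell+m-2}$ directly to the single identity $\psi_{st}=L\,\psi_{tt}$ and evaluate at the point: the left-hand side and every mixed-derivative factor $\partial^{k-i,\ell+m-j}\psi$ with $k-i\ge 1$ die by Corollary \ref{max} (equivalently the $R$-expansion, since all indices involved have total order at most $k+\ell+m\le r+m<n$), the pure $t$-derivative factors of order between $2$ and $m-1$ die by hypothesis, the terms with $i=k$, $j<\ell$ die by the induction on $k+\ell$, and what remains is $\binom{\ell+m-2}{\ell}\,\partial^{k,\ell}L\cdot\partial^m_t\psi$ with a manifestly positive binomial coefficient in place of the paper's $c^{k,\ell,j,k}_\alpha>0$. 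Your case analysis is exhaustive (including the degenerate situations $m=2$ and $k=\ell=0$), and the inequality $m+r<n$ enters in exactly the right place, namely to guarantee that every mixed derivative of total order at most $m+r$ can be expanded in the vanishing pure $s$-derivatives. The one thing your route does not buy is a substitute for Proposition \ref{Q} itself, which the paper still needs for Corollary \ref{cor-key-prop} and Proposition \ref{main-prop}; but as a proof of Proposition \ref{key-prop} alone, your Leibniz argument is a genuine simplification.
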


Let us try to get some feeling for what this proposition says in the case $r=0$. In this case the condition $m + r < n$ 
simply says $m<n$. 
The only pair
of nonnegative integers $k,\ell$ satisfying $k+\ell \le 0$ is $k = \ell = 0$ and so we would like to conclude that
$L(x,y,\omega) = 0$ under our conditions on $\partial^k_s \psi$ and $\partial^{\ell}_t \psi$ at the
point $(x,y,\omega)$.

Corollary \ref{max} implies that $\partial^{k,\ell} \psi(x, y, \omega) = 0$ for
all integers $k,\ell \ge 1$ satisfying $k+\ell < \max(m,n) = n$. In particular if we choose
$k=1$ and $\ell = m-1 \ge 1$ (since $m\ge 2$), then $\partial^{1,m-1} \psi (x,y,\omega) = 0$
since $k+ \ell = 1 + m - 1 = m  <  n = \max(m,n)$.  
But on the other hand, by Lemma \ref{relations}, we have
$$
\partial^{1,m-1} \psi (x,y,\omega) \ = \ \sum_{j=2}^{m} Q^{1,m-1}_j \partial^j_t \psi (x,y,\omega)  \ = \
Q^{1,m-1}_m \, \partial^m_t \psi(x,y,\omega)
$$
since we are asumming $\partial^{j}_t \psi(x,y,\omega) = 0$ for all $2\le j < m$. Since
$Q^{1,m-1}_m = L$ by \eqref{ell=1} and \eqref{ell = 2} , we have
$$
0 \ = \ \partial^{1,m-1} \psi (x,y,\omega) \ = \ L(x,y,\omega) \, \partial^m_t \psi (x,y, \omega)
$$
and this
forces $L(x,y,\omega) = 0$ since we are assuming $\partial^m_t \psi(x,y,\omega) \not= 0$.

Unfortunately matters become more complicated for larger values of $r$ and it will be necessary to
have some knowledge of the coefficients $Q^{k,\ell}_j$ which are clearly certain polynomials
in $L$ and the various derivatives $D^{\beta} L = \partial^{\beta_1, \beta_2} L$ of $L$.
In fact we will need to have a fairly good understanding about the structure of these polynomials.

We organise the polynomial $Q^{k,\ell}_j$ in terms of  powers of $L$; it  is clear that the largest
power of $L$ which arises is $L^k$. We write
$$
Q^{k,\ell}_j \ = \ \sum_{r=0}^k \, P^{k,\ell,j}_r \, L^{k-r}
$$
where $P^{k,\ell,j}_r$ is a polynomial whose variables are the derivatives of $L$;
$$
Y_{\beta} \ = \ D^{\beta} L \ = \ \partial^{\beta_1,\beta_2} L, \ \  \beta = (\beta_1, \beta_2) \ \ {\rm with} \ 
|\beta| := \beta_1 + \beta_2 \ge 1.
$$
For convenience we enumerate the variables $\{Y_{\beta}\} = \{X_j\}_{j\ge1}$ sequentially; namely,
$X_j = Y_{\beta_j}$ where 
$$
\beta_1 = (1,0), \ \beta_2 = (0,1), \ \beta_3 = (2,0), \ \beta_4 = (1,1), \ \beta_5 = (0,2), \ \beta_6 = (3,0), \ {\rm etc}...
$$
Precisely, for each $n\ge 2$ and  $n(n-1)/2 \le j < n(n+1)/2$, we  define
$$
 \beta_j \ = \ \bigl(\frac{(n-1)(n+2)}{2} - j, \, j - \frac{n(n-1)}{2} \bigr).
$$
Then $P^{k,\ell,j}_r \in {\mathbb Z}[X_1, X_2, X_3, \ldots ]$. We will use the multi-index notation
$X^{\alpha} = X_1^{\alpha_1} \cdots X_n^{\alpha_n}$ for a multi-index 
$\alpha = (\alpha_1, \ldots, \alpha_n) \in {\mathbb N}^n$ of length $n$ and size $|\alpha| := \alpha_1 + \cdots + \alpha_n$.
It turns out that the  polynomials
$$
P^{k, \ell, j}_r \ = \ \sum_{\alpha} \, c_{\alpha}^{k,\ell,j, r} \, X^{\alpha}
$$
arising as the coefficients of $L^{k-r}$ in the expression for $Q^{k,\ell}_j$ possess
two different kinds of homogeneity. To formulate this we introduce two weights defined
on each multi-index $\alpha = (\alpha_1, \ldots, \alpha_n)$; namely, 
$$
h(\alpha) \ := \ \sum_{u=1}^n \alpha_u |\beta_u| \ \ \ {\rm and} \ \ \ 
w(\alpha) \ := \ \sum_{u=1}^n \alpha_u (\beta_u^1 + 1).
$$
Here $\beta_u = (\beta_u^1, \beta_u^2) \in {\mathbb N}^2$ with $|\beta_u| = \beta_u^1 + \beta_u^2$.

It will be helpful to use the notation $e^q = (0,0,\ldots,1,0.\ldots )$ to denote the multi-index of any length with all zeros
except for 1 in the $q$th place.

The following proposition contains the structural information we need about the polynomials $Q^{k,\ell}_j$
in order to carry out the proof of Proposiition \ref{key-prop}.

\begin{proposition}\label{Q} Each coefficient $Q^{k,\ell}_j$ arising in Lemma \ref{relations} can be written as
$$
\sum_{r=0}^k P^{k,\ell, j}_r L^{k-r} \ \ {\rm where} \  \
P^{k,\ell,j}_r \ =\!\!\!\!\!\!\!\sum_{\begin{array}{c}\scriptstyle
         \alpha:  \, w(\alpha) = r\\
         \vspace{-5pt}\scriptstyle h(\alpha) = k+\ell - j
         \end{array}}\!\!\!  c_{\alpha}^{k,\ell,j,r} X^{\alpha} \ \in \ {\mathbb Z}[X_1, X_2, \ldots].
$$
Furthermore for any $k,\ell \ge 1$ and $2\le j \le \ell + 1$,
the coefficient $c^{k,\ell,j,k}_{\alpha}$ of $X^{\alpha} = \partial^{k-1, \ell+1-j} L$
 is strictly positive.
\end{proposition}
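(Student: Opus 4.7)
The plan is to establish both parts of the proposition by a simultaneous induction on $(k,\ell)$ using the recursions of Lemma \ref{relations}. The base case is $Q^{1,1}_2 = L$, and the terminal identities $Q^{k,\ell}_{k+\ell} = L^k$ are immediate; the inductive step splits according to whether one applies the $\ell=1$ recursion (involving $\partial_s$ and multiplication by $\partial_t^{m} L$) or the $\ell\ge 2$ recursion (involving $\partial_t$).

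For the polynomial structure and the two homogeneity conditions $w(\alpha) = r$, $h(\alpha) = k+\ell-j$, I would track the effect of each elementary operation on the triple (level $r$, weight $w$, weight $h$). The operation $\partial_t$ either hits a variable factor $X_q$, producing $X_{q'}$ with $\beta_{q'} = \beta_q + (0,1)$ (incrementing $h$ by $1$ and fixing $w$), or hits $L^{k-r}$, producing $X_2 \cdot L^{k-r-1}$ (incrementing $h$, $w$, and the level each by $1$, since $\beta_2^1 + 1 = 1$). The operation $\partial_s$ acts analogously, but since $\beta_1^1 + 1 = 2$, hitting $L^{k-r}$ with $\partial_s$ raises $w$ by $2$ while the level advances only by $1$; this is reconciled by the relabeling of the $L$-power when passing from $Q^{k-1,1}_j$ to $Q^{k,1}_j$. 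Finally, multiplication by $\partial_t^m L = X_q$ with $\beta_q = (0,m)$ adds $m$ to $h$ and $1$ to $w$, with a matching level shift from the $L$-relabeling. In each case the bookkeeping confirms the bi-homogeneity, and the integer coefficients (binomial factors and multiplicities $k-r$) ensure $Q^{k,\ell}_j \in {\mathbb Z}[L, X_1, X_2, \ldots]$.

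For the positivity of the coefficient of $X^{e^q} = \partial^{k-1,\ell+1-j} L$ in $P^{k,\ell,j}_k$, I would isolate the level-$k$ contribution. For $\ell \ge 2$ the recursion at level $k$ reduces to
\[
P^{k,\ell,j}_k \ = \ \partial_t P^{k,\ell-1,j}_k + P^{k,\ell-1,j-1}_k,
\]
so the second summand inherits from the induction hypothesis the positive coefficient of $\partial^{k-1, (\ell-1)+1-(j-1)} L = \partial^{k-1, \ell+1-j} L$. For $\ell = 1$ (where only $j=2$ is relevant in the stated range), the analogous extraction from the $\ell=1$ recursion shows that $\partial_s P^{k-1,1,2}_{k-1}$ transports the inductively positive coefficient of $\partial^{k-2,0} L$ to that of $\partial^{k-1,0} L$, while all other contributions at this level are nonnegative. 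The base cases $Q^{2,1}_2 = X_1 + L X_2$ and $Q^{1,2}_2 = X_2$ exhibit the targeted monomials with coefficient $1$.

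The main obstacle is the careful weight bookkeeping. The $+1$ in the definition $w(\alpha) = \sum_u \alpha_u (\beta_u^1 + 1)$ is exactly what aligns $w$ with the level $r$ after a $\partial_s$ operation consumes an $L$-factor; ensuring this alignment across all three branches of the recursion (the $\partial_s$, $\partial_t$, and multiplication-by-$\partial_t^m L$ branches) is the only genuine calculation. Once this is verified, positivity propagates without cancellation because every coefficient appearing in the recursion is nonnegative, so the inductive transport of a positive coefficient from a predecessor to its successor is automatic.
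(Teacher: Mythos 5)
Your approach is the same as the paper's: a double induction through the recursions of Lemma \ref{relations}, verifying the bi-homogeneity by tracking how each elementary operation ($\partial_s$, $\partial_t$, multiplication by $\partial_t^v L$ or $\partial_s L$) shifts $h$, $w$ and the power of $L$, and then transporting the positive coefficient along the recursion. The weight bookkeeping is correct and matches the paper's computations, and your observation that all recursion coefficients are nonnegative (so that an identified positive contribution cannot be cancelled) is the right way to organise the positivity step.

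There is, however, a gap in that step. Your level-$k$ identity for $\ell\ge 2$ should read
$$
P^{k,\ell,j}_k \ = \ \partial_t\bigl(P^{k,\ell-1,j}_k\bigr) \ + \ P^{k,\ell-1,j}_{k-1}\,\partial_t L \ + \ P^{k,\ell-1,j-1}_k,
$$
where the middle term comes from $\partial_t$ hitting the power of $L$ and the last term is \emph{absent when $j=2$}, since $Q^{k,\ell}_2=\partial_t Q^{k,\ell-1}_2$ has no second summand. Omitting the middle term is harmless for positivity, but for $j=2$ and $\ell\ge2$ the summand $P^{k,\ell-1,j-1}_k$ on which your whole argument rests does not exist, so that case of your induction fails as written. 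The fix is the derivative-transport mechanism you already use for $\ell=1$: the induction hypothesis gives a strictly positive coefficient of $\partial^{k-1,\ell-2}L$ in $P^{k,\ell-1,2}_k$, and $\partial_t$ applied to that monomial produces $\partial^{k-1,\ell-1}L$ with the same positive multiplicity. One further corner case deserves mention (the paper treats it explicitly): when $k=1$ and $j=\ell$, the transport through $\partial_t(P^{k,\ell-1,j}_k)$ degenerates because $Q^{1,\ell-1}_{\ell}=L$; there the positive contribution is supplied by the middle term $P^{1,\ell-1,\ell}_{0}\,\partial_t L=\partial_t L$ (or, for $\ell\ge3$, by the summand $P^{1,\ell-1,\ell-1}_1$ as in your argument).
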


\begin{proof} The proof will proceed by induction utilising the relations among the $Q^{k,\ell}_j$
described in \eqref{ell=1} and \eqref{ell = 2}. With this in mind we will need to understand the effect
of certain operations on any particular monomial 
$$
X^{\alpha} \ = \ X_1^{\alpha_1} X_2^{\alpha_2} \cdots X_n^{\alpha_n} \ = \
Y^{\alpha_1}_{\beta_1} Y_{\beta_2}^{\alpha_2} \cdots Y_{\beta_n}^{\alpha_n};
$$
namely, $\partial_s X^{\alpha}, \partial_t X^{\alpha}, (\partial^v_t L)  X^{\alpha}$ and $(\partial_s L) X^{\alpha}$. 

${\underline{(\partial_s L) X^{\alpha}}}$: \  Since
$\partial_s  L = Y_{\beta_{1}}$,we have
$(\partial_s L) X^{\alpha}  =  X^{{\tilde{\alpha}}}$ where
\begin{equation}\label{hw}
{\tilde{\alpha}} = \alpha + e^{1} \ \ {\rm and} \ \
h({\tilde{\alpha}}) \ = \ h(\alpha) + 1, \ w({\tilde{\alpha}}) \ = \  w(\alpha) + 2.
\end{equation}
 
${\underline{(\partial^v_t L) X^{\alpha}}}$: \ Since
$\partial^v_t  L = Y_{\beta_{q(v)}}$ where $q(v) = (v+1)(v+2)/2 - 1$ so that 
$\beta_{q(v)}= (0, v)$, we have
$(\partial^v_t L) X^{\alpha}  =  X^{\alpha^{*}}$ where
\begin{equation}\label{h-w}
\alpha^{*} = \alpha + e^{q(v)} \ \ {\rm and} \ \
h(\alpha^{*}) \ = \ h(\alpha) + v, \ w(\alpha^{*}) \ = \  w(\alpha) + 1.
\end{equation}

${\underline{\partial_t X^{\alpha}}}$: \  We have
\begin{equation}\label{t}
\partial_t X^{\alpha} = \sum_{u=1}^n \alpha_u X^{\alpha^u} \ {\rm and} \
X^{\alpha^u}  = Y_{\beta_1}^{\alpha_1} \cdots \bigl(Y_{\beta_u}^{\alpha_u -1} Y_{(\beta_u^1, \beta_u^2 + 1)} \bigr)
\cdots Y_{\beta_n}^{\alpha_n}
\end{equation}
 where $\alpha^u = (\alpha_1, \ldots, \alpha_u - 1, \ldots, \alpha_n) + e^{q_{*}}$ and $q_{*} = q_{*}(u)$ satisfies
$\beta_{q_{*}} = (\beta_u^1, \beta_u^2 + 1)$. 
Hence
$$
h(\alpha^u) \ = \ \sum_{v\not= u} \alpha_v |\beta_v| + (\alpha_u - 1)|\beta_u| + |\beta_u| + 1 \ = \ h(\alpha) + 1
$$
and
$$
w(\alpha^u) \ = \ \sum_{v\not= u} \alpha_v (\beta_v^1 + 1) + (\alpha_u - 1)(\beta_u^1 + 1) + \beta_u^1 +1 \ = \
w(\alpha).
$$

${\underline{\partial_s X^{\alpha}}}$: \  We have
\begin{equation}\label{s}
\partial_s X^{\alpha} = \sum_{u=1}^n \alpha_u X^{{\tilde{\alpha}}^u} \ {\rm and} \
X^{{\tilde{\alpha}}^u}  = Y_{\beta_1}^{\alpha_1} \cdots \bigl(Y_{\beta_u}^{\alpha_u -1} Y_{(\beta_u^1 + 1, \beta_u^2)} \bigr)
\cdots Y_{\beta_n}^{\alpha_n}
\end{equation}
 where ${\tilde{\alpha}}^u = (\alpha_1, \ldots, \alpha_u - 1, \ldots, \alpha_n) + e^{q^{*}}$ and $q^{*}$ satisfies
$\beta_{q^{*}} = (\beta_u^1 + 1, \beta_u^2)$. 
Hence
$$
h({\tilde{\alpha}}^u) \ = \ \sum_{v\not= u} \alpha_v |\beta_v| + (\alpha_u - 1)|\beta_u| + |\beta_u| + 1 \ = \ h(\alpha) + 1
$$
and
$$
w({\tilde{\alpha}}^u) \ = \ \sum_{v\not= u} \alpha_v (\beta_v^1 + 1) + (\alpha_u - 1)(\beta_u^1 + 1) + \beta_u^1 +2 \ = \
w(\alpha) + 1.
$$

Let us now begin the induction argument and we start with $\ell =1$.

{\bf The case $\ell = 1$}: \ Here we want to show that for $2\le j \le k+1$,
\begin{equation}\label{case-ell-1}
Q^{k,1}_j =
\sum_{r=0}^k P^{k,1, j}_r L^{k-r} \ \ {\rm where} \  \
P^{k,1,j}_r \ =\!\!\!\!\!\!\!\sum_{\begin{array}{c}\scriptstyle
         \alpha:  \, w(\alpha) = r\\
         \vspace{-5pt}\scriptstyle h(\alpha) = k+1 - j
         \end{array}}\!\!\!  c_{\alpha}^{k,1,j,r} X^{\alpha}
\end{equation}
and when $j=2$, the coefficient $c^{k,1,2,k}_{\alpha}$ of $X^{\alpha} = \partial^{k-1}_s L$
is strictly positive. We will do this  by induction on $k$.

${\underline{k=1}}$: \ Since $Q^{1,1}_2 = L$, we see \eqref{case-ell-1} holds and that
the coefficient $c^{1,1,2,1}_{\alpha}$ of $X^{\alpha} = L$ is equal to 1.

${\underline{k\ge 2}}$: \ Suppose now that \eqref{case-ell-1} holds for some $k' = k-1 \ge 1$ and any $2\le j \le k' +1$.

From \eqref{ell=1}, see see that when $2\le j \le k$, $Q^{k,1}_j$ is equal to $\partial_s Q^{k-1,1}_j$ plus
a sum of terms of the form $Q^{k-1,1}_{h} (\partial_t^v L)$ for $2\le h \le k$. 
Since by induction
we are assuming \eqref{case-ell-1} holds for $Q^{k-1,1}_j$ when $2\le j \le k' + 1 = k$, we have
$$
\partial_s Q^{k-1,1}_j \ = \
\sum_{r=0}^{k-1} \Bigl[ \partial_s(P^{k-1,1, j}_r) L^{k-1-r} + (k-1-r) P^{k-1,1,j}_r L^{k-2-r} (\partial_s L) \Bigr].
$$
This sum splits into two parts. Using \eqref{s}, we see that the first part is equal to 
$$
\sum_{r=0}^{k-1}\Bigl[\sum_{u=1}^n\!\!\!\!\!\!\!\sum_{\begin{array}{c}\scriptstyle
      \alpha:  \, w({\tilde{\alpha}}^u) = r +1\\
       \vspace{-5pt}\scriptstyle h({\tilde{\alpha}}^u) = k - j +1
      \end{array}}\!\!\!\!\alpha_u  c_{\alpha}^{k-1,1,j,r} X^{{\tilde{\alpha}}^u}\Bigr] L^{k-1-r} =
\sum_{r=1}^{k}\Bigl[\sum_{u=1}^n\!\!\!\!\!\!\!\sum_{\begin{array}{c}\scriptstyle
      \alpha:  \, w({\tilde{\alpha}}^u) = r\\
       \vspace{-5pt}\scriptstyle h({\tilde{\alpha}}^u) = k+1- j
      \end{array}}\!\!\!\!\alpha_u  c_{\alpha}^{k-1,1,j,r-1} X^{{\tilde{\alpha}}^u}\Bigr] L^{k-r} 
$$
which is of the form expressed in \eqref{case-ell-1}. Furthermore when $j=2$ and $r=k$, the coefficient of $\partial_s^{k-1} L$ in
the square bracket expression  is $c^{k-1,1,2,k-1}_{e^q}$ where $q = q(k-1,1,2)$ is the integer
satisfying $\beta_q = (k-2, 0)$. In this case, $k\ge 3$ necessarily and this part of $\partial_s Q^{k-1,1}_j$
does not arise when  $k=j=2$ and $\ell=1$ . The coefficient $c^{k-1,1,2,k-1}_{e^q}$ is strictly
positive by our induction hypothesis. 

 Using \eqref{hw}, we see that the second part is equal to 
$$
\sum_{r=0}^{k-1}(k-1-r) \Bigl[\!\!\!\!\!\!\!\sum_{\begin{array}{c}\scriptstyle
      \alpha:  \, w({\tilde{\alpha}}) = r +2\\
       \vspace{-5pt}\scriptstyle h({\tilde{\alpha}}) = k - j +1
      \end{array}}\!\!\!\!  c_{\alpha}^{k-1,1,j,r} X^{{\tilde{\alpha}}}\Bigr] L^{k-2-r} =
\sum_{r=2}^{k}(k+1-r)\Bigl[\!\!\!\!\!\!\!\sum_{\begin{array}{c}\scriptstyle
      \alpha:  \, w({\tilde{\alpha}}) = r\\
       \vspace{-5pt}\scriptstyle h({\tilde{\alpha}}) = k+1- j
      \end{array}}\!\!\!\! c_{\alpha}^{k-1,1,j,r-2} X^{{\tilde{\alpha}}}\Bigr] L^{k-r} 
$$
which is of the form expressed in \eqref{case-ell-1}.
When $j=2$ and $r=k$, the only way ${\tilde{\alpha}} = \alpha + e^1$
can equal $e^{q(k,1,2)}$ is when $\alpha = 0$ and $k=2$ and in this case,
since $\partial_s Q^{1,1}_2 = \partial_s L$, 
the coefficient of $\partial_s L$ in the above expression is $c^{1,1,2,0}_0 = 1$ which is strictly positive.

Altogether we see that $\partial_s Q^{k-1,1}_j$ is of the form in \eqref{case-ell-1}. Furthermore 
when $j=2$ and $r=k$, the coefficient of $\partial_s^{k-1} L$ is strictly positive.

Using \eqref{h-w} we have
$Q^{k-1,1}_h (\partial_t^v L) = $
$$\sum_{r=0}^{k-1} 
 \Bigl[\!\!\!\!\!\!\!\sum_{\begin{array}{c}\scriptstyle
      \alpha:  \, w(\alpha^{*}) = r + 1\\
       \vspace{-5pt}\scriptstyle h(\alpha^{*}) = k - h + v
      \end{array}}\!\!\!\!  c_{\alpha}^{k-1,1,j,r} X^{\alpha^{*}}\Bigr] L^{k-1-r} \ = \
\sum_{r=1}^{k} 
 \Bigl[\!\!\!\!\!\!\!\sum_{\begin{array}{c}\scriptstyle
      \alpha:  \, w(\alpha^{*}) = r \\
       \vspace{-5pt}\scriptstyle h(\alpha^{*}) = k - h + v
      \end{array}}\!\!\!\!  c_{\alpha}^{k-1,1,j,r-1} X^{\alpha^{*}}\Bigr] L^{k-r}
$$
and we see that the term $X^{\alpha^{*}} = Y_{(k-1, 0)} = \partial_t^{k-1} L$ does not
arise since each $\alpha^{*} = \alpha + e^{q(v)}$ is not of the form $e^q$ with $\beta_q = (k-1,0)$.

Since $Q^{k,1}_j$ for $2\le j \le k$  is equal to $\partial_s Q^{k-1,1}_j$ plus
a sum of terms of the form $Q^{k-1,1}_{h} (\partial_t^v L)$ we see that $Q^{k,1}_j$ satisfies
\eqref{case-ell-1} and the coefficient $c^{k,1,2,k}_{\alpha}$ of $X^{\alpha} = \partial_t^{k-1} L$ is strictly positive.

It remains to consider the case $j=k+1$ but here $Q^{k,1}_{k+1} = L^k$ and so \eqref{case-ell-1} clearly
holds and there is no coefficient of $X^{\alpha} = \partial^{k-1}_t L$ to consider.

{\bf The cases ${\ell\ge 2}$}: \ Here we will show that for any $\ell\ge 2, \, k\ge 1$ and $2\le j \le k+\ell$,
\begin{equation}\label{case-ell-2}
Q^{k,\ell}_j =
\sum_{r=0}^k P^{k,\ell, j}_r L^{k-r} \ \ {\rm where} \  \
P^{k,\ell,j}_r \ =\!\!\!\!\!\!\!\sum_{\begin{array}{c}\scriptstyle
         \alpha:  \, w(\alpha) = r\\
         \vspace{-5pt}\scriptstyle h(\alpha) = k+\ell - j
         \end{array}}\!\!\!  c_{\alpha}^{k,\ell,j,r} X^{\alpha}
\end{equation}
and when $j\le \ell + 1$, the coefficient $c^{k,\ell,j,k}_{\alpha}$ of $X^{\alpha} = \partial^{k-1, \ell + 1 -j} L$
is strictly positive. 

We will do this  by induction on $\ell$ and assume that \eqref{case-ell-2} holds 
for all $1\le \ell' \le \ell - 1, \ k\ge 1$ and $2 \le j\le k + \ell'$. Also we assume that the coefficient of
$X^{\alpha} = \partial^{k-1,\ell' + 1 - j} L$ is strictly positive.

When $2\le j \le k + \ell - 1$, the relations in \eqref{ell = 2} express $Q^{k,\ell}_j$ in terms of
$\partial_t Q^{k,\ell -1}_j$ which puts us in a position to use the induction hypothesis  and write
$$
\partial_t Q^{k,\ell-1}_j \ = \
\sum_{r=0}^{k} \Bigl[ \partial_t(P^{k,\ell-1, j}_r) L^{k-r} + (k-r) P^{k,\ell-1,j}_r L^{k-1-r} (\partial_t L) \Bigr].
$$ 
This sum breaks into two parts $I + II$. By \eqref{t},  
$$
I \ = \ \sum_{r=0}^k \partial_t P^{k,\ell-1,j}_r  L^{k-r} \ = \
\sum_{r=0}^k \Bigl[\sum_{u=1}^n\!\!\!\!\!\sum_{\begin{array}{c}\scriptstyle
      \alpha:  \, w(\alpha^u) = r \\
       \vspace{-5pt}\scriptstyle h(\alpha^u) = k + \ell - j
      \end{array}}\!\!\!\!\alpha_u  c_{\alpha}^{k,\ell-1,j,r} X^{\alpha^u}\Bigr] L^{k-r} 
$$
which is of the form \eqref{case-ell-2}. 

We now examine the coefficient of $\partial^{k-1, \ell + 1 - j} L$ arising in the square bracket above.
The term $\partial_s^{k-1} L$ clearly does not arise (in either $I$ or $II$) and so we may assume $2\le j \le \ell$.
In this case, the term $X^{\alpha^u}= \partial^{k-1, \ell + 1 - j} L$ is given by $\alpha = e^u$
where $q_{*}(u)$ satisfies $\beta_{q_{*}}(u) = (\beta_u^1, \beta_u^2 + 1) = (k-1, \ell + 1 - j)$.  
Therefore $\beta_u = (k-1, (\ell -1) + 1 - j)$ and so with $\alpha = e^u$,
the coefficient of $\partial^{k-1,\ell + 1 - j} L$ above is $c_{\alpha}^{k,\ell-1,j,k}$ which is strictly
positive by the induction hypothesis. Here we are implicitly assuming $k\ge 2$ when $j = \ell$
since otherwise the integer $u$ satisfying $\beta_u = (k-1,\ell-j) = (0,0)$ does not exist. In fact the
term $\partial^{k-1,\ell + 1 - j} L$ does not arise in $I$ when $k=1$ and $j=\ell$ since $Q^{k,\ell-1}_j = L^k$.

We now turn our attention to the sum $II$. By \eqref{h-w}, we have
$$
II \ = \
\sum_{r=0}^k (k-r) 
 \Bigl[\!\!\!\!\!\!\!\sum_{\begin{array}{c}\scriptstyle
      \alpha:  \, w(\alpha^{*}) = r + 1\\
       \vspace{-5pt}\scriptstyle h(\alpha^{*}) = k + \ell - j
      \end{array}}\!\!\!\!  c_{\alpha}^{k,\ell - 1,j,r} X^{\alpha^{*}}\Bigr] L^{k-1-r} 
$$
$$
\ = \ 
\sum_{r=1}^k (k-r +1) 
 \Bigl[\!\!\!\!\!\!\!\sum_{\begin{array}{c}\scriptstyle
      \alpha:  \, w(\alpha^{*}) = r \\
       \vspace{-5pt}\scriptstyle h(\alpha^{*}) = k + \ell - j
      \end{array}}\!\!\!\!  c_{\alpha}^{k,\ell - 1,j,r-1} X^{\alpha^{*}}\Bigr] L^{k-r} 
$$
 which again is of the form \eqref{case-ell-2}. The term $\partial^{k-1,\ell + 1 - j} L$
arises in the sum $II$ precisely when $\alpha = (\alpha_1, \alpha_2 + 1, \alpha_3, \ldots, ) = e^q$
with $\beta_q = (k-1,\ell + 1 - j)$; that is, when $\alpha =0, k=1$ and $j = \ell$. Hence in this case, the
coeffiicient of $\partial^{k-1, \ell + 1 - j} L = \partial_t L$ is $c^{1,\ell - 1, \ell, 0}_{0} = 1$ since
$Q^{1,\ell-1}_{\ell} = L$ and so $\partial_t Q^{1,\ell-1}_{\ell} = \partial_t L$.

Putting $I$ and $II$ together so that $\partial_t Q^{k,\ell-1}_j = I + II$, we see that
$\partial_t Q^{k,\ell -1}_t$ is of the form \eqref{case-ell-2} and the coefficient of
$\partial^{k-1,\ell + 1 - j} L$ is strictly positive. 

By \eqref{ell = 2}, we have $Q^{k,\ell}_2 = \partial_t Q^{k,\ell-1}_2$ and so we have
the desired conclusion \eqref{case-ell-2} in this case. Furthermore when $3\le j \le k+ \ell -1$, we have 
$Q^{k,\ell}_j = \partial_t Q^{k,\ell -1}_j + Q^{k,\ell-1}_{j-1}$ and so by the induction hypothesis,
we see that \eqref{case-ell-2} holds for $Q^{k,\ell}_j$. Furthermore (excluding the case $k=1$
and $j = \ell$) the coefficient of
$\partial^{k-1,\ell + 1 - j} L$ is equal to $c^{k,\ell-1,j,k}_{e^u} + c^{k,\ell -1, j-1, k}_{e^q}$
where $u$ and $q$ are the integers such that $\beta_u = (k-1, \ell - j)$ and $\beta_q = (k-1,\ell + 1 - j)$. 
Being the sum of two strictly positive integers, this coefficient is also strictly positive. In the
case $k=1$ and $j=\ell$, the coefficient is $1 + c^{1,\ell-1,\ell-1,1}_{e^2}$ which is strictly positive. 

Finally since $Q^{k,\ell}_{k+\ell} =  L^k$, we see that \eqref{case-ell-2} holds in all cases and this
completes the proof of the proposition. 
%
\end{proof}

We can now give the proof of Proposition \ref{key-prop}.

{\bf Proof of Proposition \ref{key-prop}} \ Recall that we are assuming $2\le m$ and $m+r < n$ for some $r\ge 0$.
Furthermore, we suppose
$\partial^m_t \psi(x,y,\omega) \not= 0, \ \partial^n_s \psi(x,y,\omega) \not= 0$
but
$$
\partial^k_s \psi(x, y, \omega)  \ = \ 0 \ \ {\rm for} \ 2\le k < n \ \ \ {\rm and} \ \ 
\partial^{\ell}_t \psi(x, y, \omega)  \ = \ 0 \ \ {\rm for} \ 2\le \ell < m.
$$
Under these assumptions we are trying to conclude that
$\partial^{k,\ell} L (x,y,\omega) = 0$ for any nonnegative $k,\ell$ satisfying $k+\ell \le r$.

We proceed by induction on $r$. The case $r=0$ was already treated immediately after
the statement of Proposition \ref{key-prop}. We assume $m + r < n$ and suppose that the conclusion of the proposition
holds for some $r-1 \ge 0$; that is, $\partial^{k,\ell} L(x,y,\omega) = 0$
for all $k,\ell$ satisfying $k+\ell \le r-1$.  

It suffices to show that $\partial^{k-1,r-k+1} L(x,y,\omega) = 0$
for any fixed $1\le k \le r+1$. We first note that $\partial^{k, m+r-k} \psi(x,y,\omega) = 0$
by Corollary \ref{max} and so by Lemma \ref{relations},
$$
0 \ = \ \partial^{k,m+r-k}\psi = \sum_{j=2}^{m+r} Q^{k,m+r-k}_j \partial^j_t \psi = Q^{k,m+r-k}_m \partial^m_t \psi
+ \sum_{j=m+1}^{m+r} Q^{k,m+r-k}_j \partial^j_t \psi.
$$
Here we used our assumption that $\partial^j_t \psi = \partial^j_t \psi (x,y,\omega) = 0$ for $2\le j \le m-1$. 

We now claim that $Q^{k,m+r-k}_j (x,y,\omega) = 0$ for every $j\ge m+1$. To see this, consider the monomial
$$
X^{\alpha}\ = \ Y^{\alpha_1}_{\beta_1} \cdots Y^{\alpha_n}_{\beta_n}
$$
for any multi-index $\alpha = (\alpha_1, \ldots, \alpha_n)$ satisfying $h(\alpha) = m+r - j \le r-1$. For any
$1\le u \le n$, this implies that $|\beta_u| \le h(\alpha) \le r-1$. However for any $\beta_u$ with $|\beta_u| \le r-1$, we have
$$
Y_{\beta_u}(x,y,\omega) \ = \ \partial^{\beta_u^1,\beta_u^2} L(x,y,\omega) = 0
$$
by assumption and so we conclude that every monomial $X^{\alpha}(x,y,\omega) = 0$ for any $\alpha$ with
$h(\alpha) = m+r-j$. Therefore when $j\ge m+1$,
$$
Q^{k,m+r-k}_j \ = \ \sum_{s=0}^k P^{k,m+r-k,j}_s L^{k-s} \ = P^{k, m+r-k,j}_k \ = \ 
\sum_{\begin{array}{c}\scriptstyle
         \alpha:  \, w(\alpha) = k\\
         \vspace{-5pt}\scriptstyle h(\alpha) = m+r - j
         \end{array}}\!\!\!  c_{\alpha}^{k,m+r-k,j,k} X^{\alpha} \ = \ 0.
$$
Hence
$$
0 \ = \ \partial^{k,m+r-k}\psi(x,y,\omega) \ = \ Q^{k,m+r-k}_m (x,y,\omega) \, \partial^m_t \psi(x,y,\omega)
$$
and so $Q^{k,m+r-k}_m = 0$ as well since we are assuming $\partial^m_t \psi(x,y,\omega) \not=0$. However
$$
Q^{k,m+r-k}_m \ = \ \sum_{s=0}^k P^{k,m+r-k,m}_s L^{k-s} \ = P^{k, m+r-k,m}_k \ = \ 
\sum_{\begin{array}{c}\scriptstyle
         \alpha:  \, w(\alpha) = k\\
         \vspace{-5pt}\scriptstyle h(\alpha) = r 
         \end{array}}\!\!\!  c_{\alpha}^{k,m+r-k,m,k} X^{\alpha}.
$$
We saw before that $X^{\alpha}$ vanishes if $\alpha = (\alpha_1, \ldots, \alpha_n)$
contiains any component $u$ with $|\beta_u| \le r-1$. Hence for $\alpha$ satisfying
$h(\alpha) = r$, then $X^{\alpha}$ being nonzero forces $\alpha = e^q$ for some $q$ so that
$X^{\alpha} = Y_{\beta_q}$ with $|\beta_q| = r$. If also $k = w(\alpha)$, then
$$
k = w(\alpha) = w(e^q) = \beta_q^1 + 1 \ \Rightarrow \ \beta_1^1 = k-1 \ \ {\rm and} \ \
\beta_q^2 = r - k +1.
$$
In other words, there is only one nonzero monomial $X^{\alpha}$ in the sum above for 
$Q^{k,m+ r - k}_m$ and in fact, we have $Q^{k,m+r-k}_m = c^{k,m+r-k,m,k}_{\alpha} \partial^{k-1, r-k+1} L$.

Since we saw above that $Q^{k,m+r-k}_m(x,y,\omega) = 0$ and since the coefficient $c^{k,m+r-k,m,k}_{\alpha}$
of $\partial^{k-1,r-k+1} L$ is strictly positive by Proposition \ref{Q}, we see that necessarily 
$\partial^{k-1, r-k+1} L(x,y,\omega)$ must be zero. The completes the proof of Proposition \ref{key-prop}.

Before leaving this section, we record one consequence of Proposition \ref{key-prop}.

\begin{corollary}\label{cor-key-prop} Let $\psi$ be a real-analytic function on ${\mathbb T}^2 \times M$ satisfying
the factorisation hypothsis (FH) on $M$. Suppose at a point $(x,y,\omega) \in {\mathbb T}^2 \times M$,
there exist integers $m,n\ge 2$ such that $\partial^m_t \psi(x,y,\omega) \not= 0, \ \partial^n_s \psi(x,y,\omega) \not= 0$
but
$$
\partial^k_s \psi(x, y, \omega)  \ = \ 0 \ \ {\rm for} \ 2\le k < n \ \ \ {\rm and} \ \ 
\partial^{\ell}_t \psi(x, y, \omega)  \ = \ 0 \ \ {\rm for} \ 2\le \ell < m.
$$
Suppose $2^{\mu} m < n$ for some $\mu \ge 0$. Then $Q^{k,\ell}_j (x,y,\omega) = 0$
for $\ell + 2^{-\mu-1} k < j$. 
\end{corollary}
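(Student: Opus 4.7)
The plan is to combine Proposition \ref{key-prop} with the structural information about $Q^{k,\ell}_j$ in Proposition \ref{Q}. Since the hypothesis gives $2^{\mu} m < n$, I can apply Proposition \ref{key-prop} with $r = (2^{\mu}-1)m$ (a nonnegative integer because $\mu \ge 0$ and $m \ge 2$), noting that $m + r = 2^{\mu} m < n$. The conclusion is that $\partial^{k',\ell'} L(x,y,\omega) = 0$ for every pair of nonnegative integers with $k' + \ell' \le (2^\mu - 1)m$. Writing $R := (2^{\mu}-1)m$, this means the variable $Y_\beta = D^\beta L$ vanishes at $(x,y,\omega)$ whenever $|\beta| \le R$; in particular $L(x,y,\omega)=0$.

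Next I would plug this information into the expansion given in Proposition \ref{Q},
\[
Q^{k,\ell}_j \ = \ \sum_{r=0}^{k} P^{k,\ell,j}_r \, L^{k-r}.
\]
Since $L$ vanishes at the base point, every term with $k-r \ge 1$ dies, and $Q^{k,\ell}_j(x,y,\omega) = P^{k,\ell,j}_k(x,y,\omega)$. The latter is a sum of monomials $X^\alpha$ subject to $w(\alpha)=k$ and $h(\alpha)=k+\ell-j$. For such a monomial to be nonzero at $(x,y,\omega)$, every factor $Y_{\beta_u}$ appearing with positive exponent $\alpha_u$ must satisfy $|\beta_u| \ge R+1$.

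The key step is then a weight comparison. From the definitions
\[
h(\alpha) = \sum_u \alpha_u |\beta_u|, \qquad w(\alpha) = \sum_u \alpha_u (\beta_u^1 + 1),
\]
and $\beta_u^1 + 1 \le |\beta_u| + 1$, one reads off $w(\alpha) \le h(\alpha) + |\alpha|$, hence
\[
|\alpha| \ \ge \ w(\alpha) - h(\alpha) \ = \ k - (k+\ell - j) \ = \ j-\ell.
\]
If additionally every $|\beta_u| \ge R+1$ for $\alpha_u>0$, then $h(\alpha) \ge |\alpha|(R+1) \ge (j-\ell)(R+1)$, and combining with $h(\alpha)=k+\ell-j$ gives
\[
k \ \ge \ (j-\ell)(R+2) \ = \ (j-\ell)\bigl((2^{\mu}-1)m + 2\bigr) \ \ge \ (j-\ell)\,2^{\mu+1},
\]
where the last inequality uses $m \ge 2$ (which actually yields equality when $m=2$). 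This is the contrapositive of the hypothesis $j > \ell + k/2^{\mu+1}$: under that hypothesis no multi-index $\alpha$ can produce a nonvanishing monomial $X^\alpha(x,y,\omega)$, so $P^{k,\ell,j}_k(x,y,\omega)=0$ and therefore $Q^{k,\ell}_j(x,y,\omega)=0$.

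The only non-routine step is the weight comparison yielding $|\alpha| \ge j-\ell$ and then the chain $h(\alpha) \ge |\alpha|(R+1)$; everything else is an invocation of the already-established Propositions \ref{key-prop} and \ref{Q}. No serious obstacle is anticipated, provided one is careful that $m \ge 2$ is used to convert $(2^{\mu}-1)m + 2 \ge 2^{\mu+1}$, and that the case $j \le \ell$ is excluded automatically by the quantitative hypothesis $\ell + 2^{-\mu-1}k < j$.
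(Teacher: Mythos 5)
Your argument is correct and follows essentially the same route as the paper: apply Proposition \ref{key-prop} to kill the low-order derivatives of $L$ (the paper takes $r=2(2^{\mu}-1)$ where you take $r=(2^{\mu}-1)m$; both satisfy $m+r<n$ since $m\ge 2$), reduce $Q^{k,\ell}_j$ to $P^{k,\ell,j}_k$, and then exploit the two homogeneities $w(\alpha)=k$, $h(\alpha)=k+\ell-j$ of the surviving monomials. Your bookkeeping via $|\alpha|\ge w(\alpha)-h(\alpha)$ and $h(\alpha)\ge|\alpha|(R+1)$ is just a repackaging of the paper's inequality $w(\alpha)\le \tfrac{2^{\mu+1}}{2^{\mu+1}-1}h(\alpha)$, so no substantive difference.
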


\begin{proof} Since $m\ge 2$, we have 
$$
m + 2(2^{\mu} - 1) \ \le \ m +  m (2^{\mu} -1)  \ = \ 2^{\mu} m \ < \ n
$$
and so we can apply Proposition \ref{key-prop} with $r = 2 (2^{\mu} -1)$ to conclude that
$Y_{\beta} = \partial^{\beta^1, \beta^2} L =  0$ whenever $|\beta| = \beta^1 + \beta^2 \le r$.
In particular we have $L(x,y,\omega) = 0$ and so
by Proposition \ref{Q}, we have
$$
Q^{k,\ell}_j = \sum_{r=0}^k P^{k,\ell,j}_r L^{k-r} \ = \ P^{k,\ell,j}_k =
\!\!\!\!\!\!\!\sum_{\begin{array}{c}\scriptstyle
         \alpha:  \, w(\alpha) = k\\
         \vspace{-5pt}\scriptstyle h(\alpha) = k+\ell - j
         \end{array}}\!\!\!  c_{\alpha}^{k,\ell,j,k} X^{\alpha}.
$$

If a multi-index $\alpha = (\alpha_1, \ldots )$ has a component $\alpha_u$ such that
$|\beta_u| \le r = 2 (2^{\mu}-1)$, then the monomial $X^{\alpha} = 0$. 
When $n(n-1)/2 \le u < n(n+1)/2$, 
$|\beta_u| = n-1$ and $|\beta_u| = n-1 \le 2(2^{\mu} - 1)$ precisely when $n \le 2^{\mu+1} -1$.
Hence, in order for $X^{\alpha}$ to be nonzero, then necessarily
the components $\alpha_u$ of $\alpha$ with $n(n-1)/2 \le u < n(n+1)/2$ and $n \le 2^{\mu+1} -1$
must vanish.

Recall that $h(\alpha) = \sum \alpha_u |\beta_u|$ and  $w(\alpha) = \sum \alpha_u (\beta_u^1 + 1)$. 
For any multi-index $\alpha$ and $n\ge 2$, we will denote
$$
N_n \ := \ \sum_{u = n(n-1)/2}^{n(n+1)/2} \alpha_u.
$$
Thus for any multi-index $\alpha$ with $X^{\alpha} \not= 0$, we have
$$
w(\alpha) \ = \ \sum_u \alpha_u (\beta_u^1 + 1) \ = \ \sum_{n\ge 2^{\mu +1}} \sum_{u+n(n-1)/2}^{n(n+1)/2} 
\alpha_u (\beta_u^1 + 1) \le \sum_{n\ge 2^{\mu +1}} n N_n 
$$
$$
= \ \frac{2^{\mu+1}}{2^{\mu+1} - 1} \sum_{n\ge 2^{\mu+1}} \frac{2^{\mu+1} -1}{2^{\mu+1}} 
n N_n \  \le \  \frac{2^{\mu+1}}{2^{\mu+1} - 1} \sum_{n\ge 2} (n-1) N_n \
= \ \frac{2^{\mu+1}}{2^{\mu+1} - 1} \sum_u \alpha _u |\beta_u| 
$$
and the sum in this last quantity is equal to $h(\alpha)$.
Hence if $w(\alpha) = k$ and $h(\alpha) = k+\ell -j$, 
$$
(2^{\mu +1} - 1)k  \ = \ (2^{\mu+1} -1) w(\alpha) \ \le \ 2^{\mu+1} h(\alpha) \ = \ 2^{\mu+1}  (k + \ell - j)
$$ 
or $j \le \ell + 2^{-\mu -1} k$ and this implies $Q^{k,\ell}_j \not= 0 \Rightarrow j \le \ell + 2^{-\mu-1} k$.
This is the desired conclusion.
\end{proof}

\section{A more robust one dimensional theory}\label{robust}

At the heart of Proposition \ref{alpar-ext} is the one dimensional oscillatory integral
estimate \eqref{osc-est}. When we move to the more interesting higher dimensional situations,
the heart of the matter will be higher dimensional oscillatory integrals such as the one appearing in \eqref{form-double}
from Section \ref{k=2},
\begin{equation}\label{form-double-again}
 \int\!\!\!\int_{{\mathbb T}^2} e^{2\pi i [\lambda  \omega 
\cdot {\overline{\phi}}(x + s, y + t) + \rho s + \eta  t]} \, \frac{ds}{s} \frac{dt}{t}.
\end{equation}
 In Section \ref{k=2} we saw quickly the need to understand more
robust versions of \eqref{osc-est}; namely \eqref{osc-est-II} where the one dimensional phase 
$\varphi(s) = \lambda \omega \cdot {\overline{\phi}}(x+s, \tau) + \rho s$ is a certain analytic pertubation
of the phase $\lambda \omega \cdot {\overline{\phi}}(x + s) + \rho s$ appearing in \eqref{osc-est}.
There are other reasons where a more robust estimate such as \eqref{osc-est-II} is needed.
For instance, as in the proof of Propsition \ref{alpar-ext}, we will want to localise the integral above to small $s$ and $t$,
\begin{equation}\label{form-double-local}
 \int\!\!\!\int_{|s|, |t| < \delta} e^{2\pi i [\lambda  \omega 
\cdot {\overline{\phi}}(x + s, y + t) + \rho s + \eta  t]} \, \frac{ds}{s} \frac{dt}{t}.
\end{equation}
for some $\delta>0$ depending on properties of the analytic function $\psi(s,t,\omega) = \omega 
\cdot {\overline{\phi}}(s,t)$ (for instance, perhaps certain derivative bounds of $\psi(x+s,y+t,\omega)$ are satisfied
for $(x,y, \omega)$ near some fixed point $(x_0,y_0, \omega_0)$ and all $s,t$ with $|s|,|t| < \delta$). 
In order to pass from \eqref{form-double-again} to \eqref{form-double-local}, two applications of \eqref{osc-est-II}
are needed with $O(1/\delta)$ error terms. 

Also after we reduced to the integral \eqref{form-double-local}, we
may run into a degenerate situation where $\omega \cdot {\overline{\phi}}(x+s, y+t)
\equiv \omega \cdot {\overline{\phi}}(x+s, y)$. In this case the integral in \eqref{form-double-local} splits
into a product of two oscillatory integrals, one trivial (in $t$) to bound and the other is an integral
of the form appearing in \eqref{osc-est-II}.  There may (and will) be other degenerate situations which arise
where the oscillation splits into a more complicated product of an $s$ oscillation and $t$ oscillation BUT
the region of integration may not be a product such as 
$|s|, |t| < \delta$, and so the integral no longer decomposes into a product of two 1 dimensional oscillatory
integrals. 

Specifically we will find ourselves examining an integral of the form \eqref{form-double-local}
but where $\lambda \omega \cdot {\overline{\phi}}(x+s,y+t) = \lambda_1 \mu \cdot {\overline{f}}(x+s, \tau) + 
\lambda_2 \nu \cdot {\overline{g}}(\sigma, y+t)$ and the region of integration is 
${\mathcal R} = \{|s|,|t| <\delta : A |s|^a < |t| < B |s|^b \}$; hence
$$
\int\!\!\!\int_{{\mathcal R}} e^{2\pi i [\lambda  \omega 
\cdot {\overline{\phi}}(x + s, y + t) + \rho s + \eta  t]} \, \frac{ds}{s} \frac{dt}{t} \ = 
$$
$$
\int_{|s|<\delta} e^{2\pi i [\lambda_1  \mu \cdot {\overline{f}}(x+s, \tau) +  \rho s]}  \frac{1}{s}\,
\left[\int_{{\mathcal R}(s)} e^{2\pi i [\lambda_2 \nu \cdot {\overline{g}}(\sigma, y+t) + \eta t]} \frac{dt}{t}\right] \, ds
$$
where ${\mathcal R}(s) = \{|t| < \delta : (s,t) \in {\mathcal R} \}$. If we denote by $F(s)$ the inner integral
above, we see that $F$ is an even function and for $s>0$, $|F'(s)| \le  C s^{-1}$ where $C$ depends on
$A,B, a$ and $b$. Of course if we hope
that \eqref{osc-est-II} holds, then we would hope that $F(s)$ defines a bounded function of $s$. This motivates
the following extension of Proposition \ref{alpar-ext} as encapsulated in \eqref{osc-est}.

\begin{proposition}\label{osc-est-III} Let
${\overline{\phi}}(s,t)$ be a $d$-tuple of real-analytic,
1-periodic functions of two variables and set $\psi(s,t,\omega) = \omega \cdot {\overline{\phi}}(s,t)$.
Suppose that $F(s)$ is an even function on $[-1,1]$, $|F(s)| \le A$ and $F \in C^1$ on $(0,1)$
such that $|F'(s)| \le B s^{-1}$ for $s\in (0,1)$. 

Then for every $\omega \in {\mathbb S}^{d-1}$,
$$
C(\omega) \ := \
\sup_{a,b<1, \lambda, \tau, \rho, x} \,
\int_{a<|s|<b} e^{2\pi i [\lambda \psi(x+s, \tau, \omega) +  \rho s]} F(s) \, \frac{ds}{s}  \Bigr|  \ < \ \infty
$$
where $C(\omega)$ depends on $A, B$ and ${\overline{\phi}}$ as well. 
Furthermore, if $\psi$ satisfies the factorisation hypothesis (FH)  on ${\mathbb S}^{d-1}$, 
then $sup_{\omega\in {\mathbb S}^{d-1}} C(\omega) < \infty$.
\end{proposition}

\begin{proof} We will give the proof assuming $F \equiv 1$ so that we replace the amplitude
${\mathcal A}(s) = F(s)/s$ with ${\mathcal A}(s) = 1/s$. As will be evident from the argument below, the only properties
of the amplitute ${\mathcal A}(s)$ we will be using is that it is an odd function, $|{\mathcal A}(s)| \lesssim |s|^{-1}$ and
$|{\mathcal A}'(s)| \lesssim |s|^{-2}$ which clearly holds for $F(s)/s$ by our assumptions. Hence, mainly for
notational ease, we assume $F \equiv 1$. 

We first fix $\omega \in {\mathbb S}^{d-1}$ and allow our estimates to depend
on $\omega$. Initially we put no restrictions on 
the $d$-tuple of analytic functions ${\overline{\phi}}$ and
in particular, the factorisation hypothesis is not assumed to hold. For notational convenience, we suppress
$\omega$ in what follows, writing $\psi(s,t,\omega) = \psi(s,t)$, etc...

{\bf Case 1}: \ There exists a point $(x_0,\tau_0) \in {\mathbb T}^2$ such that
$$
\partial^{k,\ell} \psi (x_0, \tau_0) \ := \ \frac{\partial^{k+\ell}\psi}{\partial s^k \partial t^{\ell}} (x_0,\tau_0) \ = \ 0
\ \ {\rm for \ all}  \  \ k \ge 2, \ \ell \ge 0.
$$
Then
$$
\psi(x_0+s, \tau) = \psi(x_0,\tau) + \partial_s \psi( x_0, \tau) s + \sum_{k=2}^{\infty} \frac{1}{k!} \partial^k_s \psi (x_0, \tau) s^k
\ \ \ {\rm for \ small} \ \ s.
$$
For $k\ge 2$,
$$
\partial^k_s \psi(x_0,\tau) \ = \ \sum_{\ell=0}^{\infty} \frac{1}{\ell!} \partial^{k,\ell} \psi(x_0,\tau_0) (\tau - \tau_0)^{\ell}
\ \equiv 0 \ \ {\rm for} \ \tau \ {\rm near} \ \tau_0.
$$
Hence $\partial^k_s \psi(x_0, \tau) \equiv 0$ for all $\tau$ by analyticity. Therefore
$\psi(x_0 + s, \tau) = \psi(x_0, \tau) + \partial_s \psi(x_0, \tau) s$ for small $s$ and hence all $s$ by
analyticity once again. This forces $\partial_s \psi(x_0, \tau) = 0$ since otherwise we would contradict the
periodicity of $\psi(x_0 + s, \tau)$ in $s$, arriving at the conclusion that for any $\tau \in {\mathbb T}$,
$\psi(x + s, \tau) \equiv \psi(x_0, \tau)$. Hence
$$
\int_{a<|s|<b} e^{2\pi i [\lambda \psi(x+s, \tau, \omega) +  \rho s]} \, \frac{ds}{s} \ = \ 
e^{2\pi i \lambda \psi (x_0, \tau)}\int_{a<|s|<b} e^{2\pi i \rho s} \ \frac{ds}{s}
$$ 
and this last integral is easily seen to be uniformly bounded in $a, b$ and $\rho$.

{\bf Case 2}: \ For every point $(x,\tau)\in  {\mathbb T}^2$, there exists $k = k(x,\tau)\ge 2$
and $\ell = \ell(x,\tau) \ge 0$ such that $\partial^{k,\ell} \psi(x, \tau) \not= 0$.

We choose $\ell = \ell(x,\tau)$ to be minimial with this property; that is, if $\ell < \ell(x,\tau)$, then
$\partial^{k,\ell} \psi(x,\tau) = 0$ for all $k\ge 2$. There is an $\epsilon = \epsilon(x,\tau) > 0$ such that
\begin{equation}\label{der-below}
\bigl| \partial^{k,\ell} \psi(x',\tau') \bigr| \ \ge \ \frac{1}{2} \bigl|\partial^{k,\ell} \psi(x,\tau) \bigr| \ \ 
{\rm for \ all} \ (x',\tau') \in B_{2\epsilon}(x,\tau). 
\end{equation}
We cover the compact ${\mathbb T}^2$ with the family of balls
$\{ B_{\epsilon(x,\tau)}(x,\tau)\}_{(x,\tau)\in {\mathbb T}^2}$ and extract a finite subcover 
$\{B_{\epsilon_j}(x_j,\tau_j)\}_{j=0}^M$,
reducing to establishing a uniform bound for
\begin{equation}\label{T}
 T \ := \ \int_{a< |s| < \epsilon_j }\!\!\!\!\!\! e^{2\pi i [\lambda \psi(x+s, \tau) +  \rho s]} \ \frac{ds}{s}  
\end{equation}
uniformly for $(x,\tau) \in B_{\epsilon_j}(x_j, \tau_j)$ (and also, $a,b, \lambda$ and $\rho$)
for each $0\le j \le M$. This follows from the simple observation that
$$
\int_{a<|s|<b}\!\!\!\!\!\!\! e^{2\pi i [\lambda \psi(x+s, \tau) +  \rho s]} \, \frac{ds}{s}  \ = \
\int_{a< |s| < \epsilon_j }\!\!\!\!\!\! e^{2\pi i [\lambda \psi(x+s, \tau) +  \rho s]} \, \frac{ds}{s} \ +  \ O(\log(1/\epsilon_j)).
$$

For convenience we take $j=0$ and denote $B_0 = B_{\epsilon_0}(x_0,\tau_0)$. 
We also write $k_0 = k(x_0,\tau_0)\ge 2$ and
$\ell_0 = \ell(x_0, \tau_0)\ge 0$ and note that from \eqref{der-below}, we have
\begin{equation}\label{Der-below-II}
\bigl|\partial^{k_0,\ell_0} \psi(x+s, \tau) \bigr| \ge \frac{1}{2} \bigl|\partial^{k_0,\ell_0} \psi(x_0, \tau_0) \bigr| \ =: \
c_0
\end{equation}
for all $(x,\tau) \in B_0$ and $|s| < \epsilon_0$. 

{\bf Claim}: For $(x,\tau) \in {\mathbb T}^2$ and $|s| < \epsilon_0$, we have
$\psi(x+s, \tau) =  P(\tau) + E(s,\tau)$ where $P(\tau) = P_{x_0, \tau_0}(\tau)$ is some polynomial in $\tau$
of degree at most $\ell_0 - 1$ and $E(s,\tau) =  E_{x,\tau_0}(s,\tau)$ satisfies the derivative bound
\begin{equation}\label{key-below}
\bigl| \partial^{k_0}_s E(s, \tau)| \ \ge \ c_0 |\tau - \tau_0|^{\ell_0}
\end{equation}
for all $|s| < \epsilon_0$. 

When $\ell_0 = 0$, we take $P = 0$ so that $E_{x,\tau_0}(s, \tau) = \psi(x+s, \tau)$ and the
claim simply states that $|\partial^{k_0}_s \psi(x+s, \tau)| \ge c_0$ whenever
$(x,\tau) \in B_0$ and $|s| < \epsilon_0$ which of course holds by \eqref{Der-below-II}. So in order
to prove the claim, it suffice to take $\ell_0 \ge 1$. By the minimality of $\ell_0$, we see that for any $\ell < \ell_0$, 
$\partial^{k,\ell} \psi (x_0, \tau_0) = 0$ for all $k\ge 2$ and hence
$$
\partial^{\ell}_t \psi(x_0 + s, \tau_0) = \partial^{\ell}_t \psi(x_0, \tau_0) + \partial^{1,\ell} \psi(x_0, \tau_0) s +
\sum_{k=2}^{\infty} \frac{1}{k!} \partial^{k,\ell} \psi(x_0,\tau_0) s^k 
$$
$$
\ = \  \partial^{\ell}_t \psi (x_0, \tau_0) + \partial^{1,\ell} \psi (x_0, \tau_0) s \ \ \ {\rm for \ small} \ \ s
$$
and therefore for all $s$ by analyticity. Again the periodicity of $\partial^{\ell}_t \psi (x_0 + s, \tau_0)$ in $s$
forces $\partial^{1,\ell} \psi (x_0, \tau_0) = 0$ and so 
\begin{equation}\label{ell}
\partial^{\ell}_t \psi (x + s, \tau_0) \ \equiv \ \partial^{\ell}_t \psi(x_0, \tau_0).
\end{equation}
Using \eqref{ell} with $\ell = 0$,  we see that for any $(x,\tau) \in B_0$ and $|s| < \epsilon_0$, 
$$
\psi(x+s, \tau) \ = \ \psi(x_0, \tau_0) \ + \ \psi(x+s,\tau) \ - \ \psi(x+s, \tau_0) 
$$
$$
\ = \ \psi(x_0, \tau_0) +
(\tau - \tau_0) \int_0^1 \partial_t \psi (x+s, \tau_0 + r (\tau - \tau_0) dr.
$$
Noting that $(x, \tau_0 + \tau') := (x, \tau_0 + r (\tau - \tau_0)) \in B_0$ for any $0<r<1$ and using
\eqref{ell} with $\ell = 1$, we reason similarly to see 
$$
\partial_t \psi (x+s, \tau_0 + \tau') \ = \ \partial_t \psi (x_0, \tau_0) \ + \
\partial_t \psi (x+s, \tau_0 + \tau') \ - \ \partial_t \psi(x+s, \tau_0)
$$
$$
\ = \ \partial_t \psi(x_0, \tau_0) +
(\tau - \tau_0) \int_0^1 \partial^2_t \psi (x+s, \tau_0 + u \tau') du.
$$
Hence
$$
\psi(x+s, \tau) \ = \ \psi(x_0, \tau_0) + \partial_t \psi(x_0,\tau_0) (\tau-\tau_0) + (\tau - \tau_0)^2
\int_0^1\!\!\!\int_0^1 \partial^2_t \psi(x + s, \tau_0 + r u (\tau - \tau_0)) r dr du.
$$
Iterating we conclude that $\psi(x+s, \tau) = P(\tau) + E(s, \tau)$ where
$$
P(\tau) \ = \ P_{x_0, \tau_0}(\tau) \ := \ 
\sum_{\ell=0}^{\ell_0 -1} \frac{1}{\ell!} \partial^{\ell}_t \psi(x_0, \tau_0) (\tau - \tau_0)^{\ell}
$$
and $E(s,\tau) =$
$$
E_{x,\tau_0}(s, \tau)  = 
 (\tau-\tau_0)^{\ell_0} \int_0^1\!\!\!\cdots\!\!\!\int_0^1 \partial^{\ell_0}_t 
\psi (x+s, \tau_0 + r_1\cdots r_{\ell_0} (\tau - \tau_0)) r_2 r_3^2 \cdots r_{\ell_0}^{\ell_0 - 1} dr_1 \cdots dr_{\ell_0}.
$$
Finally the derivative bound \eqref{Der-below-II} implies \eqref{key-below}, completing the
proof of the claim.

 Returning to the oscillatory integral in \eqref{T}, we have
$$
T = e^{2\pi i \lambda P_{x_0,\tau_0}(\tau)} \int_{a<|s|<\epsilon_0} e^{2\pi i [\lambda E(s,\tau) + \rho s]}  \
\frac{ds}{s} 
$$
and by van der Corput's lemma, together with an integration by parts argument, we see that
\eqref{key-below} implies that
$$
T^{>} \ := \ \Bigl|  \int_{|(\tau-\tau_0)^{\ell_0}\lambda|^{-1/k_0} < |s|<\epsilon_0} e^{2\pi i [\lambda E(s,\tau) + \rho s]}  \
\frac{ds}{s} \Bigr| \ \le \ C
$$
where $C$ only depends on $k_0$ and $c_0 = |\partial^{k_0,\ell_0} \psi(x_0,\tau_0)|/2$ (also on the constants $A$
and $B$ if our amplitude is the original $F(s)/s$). In the other region where $|s| <\kappa$ and
$\kappa = \min(|(\tau-\tau_0)^{\ell_0} \lambda|^{-1/k_0}, \epsilon_0)$, we will compare the
complimentary integral
$$
T^{<} :=  \int_{a< |s|<\kappa} e^{2\pi i [\lambda E(s,\tau) + \rho s]}  \
\frac{ds}{s}
$$
to an integral with a certain polynomial phase
$$
{\mathcal C} \ = \   \int_{a< |s|<\kappa} e^{2\pi i [\lambda E(s,\tau) + \rho s]}  \
\frac{ds}{s} \ - \
\int_{a< |s|<\kappa} e^{2\pi i [\lambda Q(s) + \rho s]}  \
\frac{ds}{s}.
$$
The polynomial $Q$ is simply the Taylor polynomial of $E(s,\tau)$ of order $k_0$, thought of as a function of $s$; 
$$
Q(s) \ = \ Q(s,x,x_0,\tau, \tau_0) \ = \ \sum_{k=0}^{k_0-1} \frac{1}{k!} \partial^k_s E(0, \tau) s^k.
$$
Hence $E(s,\tau)  = \ Q(s) \ + \ R_{k_0}(s)$ where the remainder term can be expressed as
$$
R_{k_0}(s) \ = \ s^{k_0} \int_0^1\!\!\!\cdots\!\!\!\int_0^1 \partial^{k_0}_s E(r_1 \cdots r_{k_0} s, \tau) r_2 r_3^2 \cdots
r_{k_0}^{k_0 -1} dr_1 \cdots dr_{k_0}.
$$
Using the formula for $E$ above, we see that the remainder term $R_{k_0}(s)$
satisfies the bound $|R_{k_0}(s)| \lesssim |\tau - \tau_0|^{\ell_0} |s|^{k_0}$ and therefore
$$
|{\mathcal C}| \ \lesssim \  \ |\lambda(\tau - \tau_0)^{\ell_0}| \int_{|s|<\kappa} |s|^{k_0 - 1} ds \ \lesssim \ 1
$$
since $\kappa \le |\lambda(\tau - \tau_0)^{\ell_0}|^{-1/k_0}$. This shows that, up to a uniformly bounded
error term, the oscillatory integral
$$
T \ = \ e^{2\pi i \lambda P_{x_0,\tau_0}(\tau)} \int_{a<|s|<\kappa} e^{2\pi i [\lambda Q(s) + \rho s]}  \
\frac{ds}{s} \ + \ O(1)
$$
and this last remaining oscillatory integral is uniformly bounded thanks to \eqref{SW}, the useful one dimensional
result of Stein and Wainger.

We now turn to discuss how to achieve uniformity in the parameter $\omega \in {\mathbb S}^{d-1}$ when 
$\psi(s,t,\omega) = \omega \cdot {\overline{\phi}}(s,t)$
satisfies the factorisation hypothesis (FH) on ${\mathbb S}^{d-1}$. Again we divide the analysis into two cases
but the cases are different.

First suppose there exists a point $(x_0,\tau_0, \omega_0) \in {\mathbb T}^2 \times {\mathbb S}^{d-1}$ such that
$$
\partial^k_s \psi (x_0, \tau_0, \omega_0)  \ = \ 0
\ \ \ {\rm for \ all}  \ \  k \ge 2.
$$
Then by Corollary \eqref{degenerate-FH}, we have $\psi(s,\tau,\omega_0) \equiv \psi(x_0, \tau, \omega_0)$
or
$\omega_0 \cdot {\overline{\phi}}(s,\tau) \equiv \omega_0 \cdot {\overline{\phi}}(x_0,\tau)$. Hence there exists
a $\phi_j$ in the $d$-tuple ${\overline{\phi}}$ which we can solve in terms of the others; namely,
$$
\phi_j(s,\tau) \ = \ \sum_{k\not= j} c_k \phi_k(s, \tau) \ + \ \omega_0 \cdot {\overline{\phi}}(x_0,\tau)
$$
and so (after re-writing the integral appearing in the statement of the proposition) we have reduced the dimension
by one, reducing matters to examining an oscillatory integral with $d-1$ real-analytic functions defining
the phase. 

Therefore by a simple induction on dimension argument, we  may assume without loss of generality that for every point 
$(x,\tau,\omega) \in {\mathbb T}^2 \times {\mathbb S}^{d-1}$,
there is a $k = k(x,\tau,\omega) \ge 2$ such that $\partial^k_s \psi (x,\tau,\omega) \not= 0$. Hence there is an
$\epsilon = \epsilon(x,\tau,\omega) >0$ such that 
\begin{equation}\label{der-below-II}
\bigl| \partial^k_s \psi(x',\tau', \omega') \bigr| \ge  \frac{1}{2} \bigl|\partial^k_s \psi(x,\tau, \omega) \bigr| \ 
{\rm for \ all} \ (x',\tau', \omega') \in B_{2\epsilon}(x,\tau,\omega). 
\end{equation}
So we are in a much better position than before, effectively we have reduced ourselves to the
{\bf Case 2} situation above but with $\ell = 0$ and even more, the derivative bound in \eqref{der-below-II}
is more robust than we had in \eqref{der-below} since it holds in an entire neighbourhood of $(x,\tau,\omega)$
and not just in a neighbourhood of $(x,\tau)$ for a fixed $\omega$.

The derivative bound \eqref{der-below-II} is the analogue of \eqref{derivative-bound}
but with one additional parameter $\tau$ and the argument to show that our oscillatory integral
$$
\int_{a<|s|<b} e^{2\pi i [\lambda \psi(x+s, \tau, \omega) +  \rho s]} \, \frac{ds}{s} 
$$
is uniformly bounded now proceeds exactly as in the proof of Proposition \ref{alpar-ext}, establishing
\eqref{osc-est}. The reader can readily check the details.
\end{proof}

\section{Prelude to the proof of Theorem \ref{main}}\label{prelude-I}

Theorem \ref{main} determines exactly when $(\Phi)_{rect}$ holds for any given
real-analytic map $\Phi : {\mathbb T}^2 \to {\mathbb T}^d$. 
In a previous section we saw that $(\Phi)_{rect}$ holds precisely when the oscillatory
integral 
\begin{equation}\label{double-int}
 \int\!\!\!\int_{{\mathbb T}^2} e^{2\pi i {\overline{n}} 
\cdot [{\overline{\phi}}(x + s, y + t) + {\overline{L}}\cdot (s,t)]} D_M(s) D_N(t) \, ds dt
\end{equation}
is uniformly bounded in the parameters ${\overline{n}}, x, y, M$ and $N$; see \eqref{main-est}
and the beginning of Section \ref{k=2}.

The proof of Proposition \ref{alpar-ext} used two standard types of cancellation,
as outlined in Section \ref{perspective}, to reduce the estimate of the one dimensional oscillatory integral
\eqref{osc-est} to the integral \eqref{key-reduction-1d}, an oscillatory integral with a polynomial
phase of bounded degree (but we have no control on the coefficients). At this point we appealed
to a result of Stein and Wainger \eqref{SW} which has no satisfactory counterpart in the multiparameter
setting,  when we move from oscillatory integrals involving a classical CZ kernel $1/t$ to ones involving
a product-type kernel, namely $1/s t$. 

In this section, we begin the analysis of the oscillatory integral \eqref{double-int}. In
a way analogous to what we did in the one dimensional setting, passing from \eqref{osc-est} to \eqref{key-reduction-1d},
 we will reduce the study of \eqref{double-int} to
an oscillatory integral with an {\it almost} polynomial phase of bounded degree.
By {\it almost polynomial} we mean a polynomial modulo a real-analytic part that is a function
of $s$ and $t$ separately; that is, the phase will be of the form $P(s,t) + f(s) + g(t)$ where
$P$ is a polynomial of bounded degree and $f$ and $g$ are real-analytic functions of a single variable.
In this section we will accomplish this under certain derivative bounds of the phase function. This is
the main goal of this section and to acheive this goal, 
we will follow the arguments in \cite{CWW} which dealt with a related problem in the
theory of multiparameter singular Radon transforms.

As we saw in a previous section, using properties of the Dirichlet kernel,  the study of the integral
\eqref{double-int} can be reduced to studying oscillatory integrals of the form
$$
 \int\!\!\!\int_{{\mathbb T}^2} e^{2\pi i [\lambda \omega \cdot {\overline{\phi}}(x +  s, y + t) + \rho s + \eta  t]} \, \frac{ds}{s} \frac{dt}{t}
$$
for some parameters $\lambda, \omega, x, y, \rho$ and $\eta$. Employing estimates for the one 
dimensional oscillatory integral
in Proposition \ref{osc-est-III} twice, 
matters are reduced to considering the oscillatory integral
$$
I \ := \ \int\!\!\!\int_{|s|,|t|< a} e^{i [\lambda \psi(x+ s, y + t, \omega)] + \rho s + \eta t} \frac{ds}{s} \frac{dt}{t}
$$
for some small $0<a<1$ depending on the $d$-tuple ${\overline{\phi}}$
of real-analytic functions.  Here $\psi (s,t, \omega) := \omega \cdot {\overline{\phi}}(s,t)$ is an analytic function
depending on $\omega$. The estimates we derive for $I$ above will
be uniform in $\omega \in {\mathbb S}^{d-1}$ as well as $x,y, \lambda, \rho$ and  $\eta$. They
will depend only on ${\overline{\phi}}$ and certain {\it assumed} derivative bounds of $\psi$ from below.
For notational convience, we suppress the dependence on $\omega$ for the rest of this section
and write $\psi(s,t)$ instead of $\psi(s,t,\omega)$. 

Suppose that there is an integer $m_0\ge 2$ such that 
\begin{equation}\label{derivative-hyp}
\bigl| \partial^{m_0}_t \psi (x+s, y + t)\bigr| =  \Bigl| \frac{\partial^{m_0} \psi}{\partial t^{m_0}} (x+s, y+t)\Bigr|  
\ge  A \ \ {\rm for \ all} \  |s|, |t| < a
\end{equation}
for some $A \lesssim 1$.
Under such a condition, we will show how to successfully compare parts of $I$ to an oscillatory integral
with an {\it almost} polynomial phase of bounded degree. We will consider those parts of $I$ where
the monomial $t^{m_0}$ dominates the majority of the monomials arising in the Taylor expansion
$$
\psi(x+s, y+t) \ = \ \sum_{k,\ell\ge 0} \frac{1}{k!\ell!} \partial^{k,\ell}\psi(x,y) \,  s^k t^{\ell}.
$$
In order to do this efficiently, it will be convenient to decompose $I$ dyadically, resolving the singularities
arising from the kernel $1/s t$. We introduce an appropriate smooth 
nonnegative function $\zeta$, supported in $[-2,-1] \times [1,2]$, and
write the integral $I$ as
$$
\sum_{p,q} \,  \int\!\!\!\int  e^{i [\lambda \psi(x+ s, y + t)] + \rho s + 
\eta t} \zeta(2^p s) \zeta(2^q t) \frac{ds}{s} \frac{dt}{t} \ =: \ \sum_{p,q} \, I_{p,q}
$$
where the sum is over large positive integers $(p,q)$ such that both $2^{-p}, 2^{-q} < a$.

For any pair $(n_1, m_1)$ with $n_1 > 0$ and $m_1 < m_0$, we will consider the part of $I$ where
the monomial $t^{m_0}$ is pointwise larger than $s^{n_1} t^{m_1}$; that is, we consider
$$
I^{>}_{m_0,m_1, n_1}  \ := \ \int\!\!\!\int_{|s^{n_1} t^{m_1}| < |t^{m_0}|} e^{i [\lambda \psi(x+ s, y + t)] + \rho s + \eta t} \frac{ds}{s} \frac{dt}{t} \ = \ \sum_{(p,q) \in {\mathcal R}} I_{p,q}
$$
where ${\mathcal R} = \{(p,q) \, {\rm large} : n_1 p - (m_0 - m_1) q \ge 0 \}$. 
In the region $|s^{n_1}t^{m_1}| <  |t^{m_0}|$,
the only monomials $s^k t^{\ell}$ which are not pointwise dominated by
$t^{m_0}$ are those $(k,\ell)$ satisfying  $k (m_0 - m_1) + \ell n_1 < m_0 n_1$. In other words the
part of the Taylor expansion of $\psi$ NOT controlled in some weak pointwise sense
by the monomial $t^{m_0}$ is
$$
\sum_{\begin{array}{c}\scriptstyle
         k, \ell \ge 0 \\
         \vspace{-5pt}\scriptstyle k (m_0 - m_1) + \ell n_1 < m_0 n_1
         \end{array}} \!\!\!\!\!\!\!\!\!\!\!\!\frac{1}{k!\ell!}\,  \partial^{k,\ell}\psi(x,y) \,  s^k t^{\ell}
$$
which is a polynomial whose degree only depends on $m_0, m_1$ and $n_1$. It will be necessary to
keep all the pure terms in the Taylor expansion (monomials of the form $s^k$ and $t^{\ell}$) and so
we compare $I^{>}_{m_0,m_1, n_1}$ to $II^{>}_{m_0,m_1, n_1} := \sum_{(p,q)\in {\mathcal R}} II_{p,q}$ where
$$
II_{p,q} \ := \  \int\!\!\!\int  e^{i [\lambda (P_{x,y}(s,t) + \psi(x+s,t) +  \psi(x,y+t) - \psi(0,0)) + \rho s + 
\eta t]} \zeta(2^p s) \zeta(2^q t) \frac{ds}{s} \frac{dt}{t} 
$$
where
$$
P_{x,y}(s,t) \ \ := \ \sum_{\begin{array}{c}\scriptstyle
         k,  \ell \ge 1 \\
         \vspace{-5pt}\scriptstyle k (m_0 - m_1) + \ell n_1 < m_0 n_1
         \end{array}} \!\!\!\!\!\!\!\!\!\!\!\!\frac{1}{k!\ell!}\,  \partial^{k,\ell}\psi(x,y) \,  s^k t^{\ell}.
$$

We set $D_{p,q} =  I_{p,q} - II_{p,q}$ and  make the changes of variables $s' = 2^p s$ and $t' = 2^q t$
in both integrals. We have the simple comparison bound
\begin{equation}\label{compare}
|D_{p,q}|   \lesssim\!\!\!\!\!\!\!\!\!\!\!\!\!\!\sum_{\begin{array}{c}\scriptstyle
         k \ge 1, \ell \ge 1 \\
         \vspace{-5pt}\scriptstyle k (m_0 - m_1) + \ell n_1 \ge m_0 n_1
         \end{array}} \!\!\!\!\!\!\!\!\!\!\!\!\!\!\! |\lambda| 2^{-pk - q\ell} \bigl| \frac{1}{k!\ell!}\,  \partial^{k,\ell}\psi(x,y)\bigr| \le  C
|\lambda| 2^{-\delta N} 2^{-m_0 q}.
\end{equation}
Here $C$ depends  only on ${\overline{\phi}}$ (and in particular independent of $\omega \in {\mathbb S}^{d-1}$),
$\delta = 1/n_1$ and $N := n_1 p - (m_0 - m_1) q \ge 0$. In fact for any $(k,\ell)$ satisfying $k \ge 1$, we have
$$
2^{-kp - \ell q} \ = \
2^{-\frac{k}{n_1}(n_1 p - (m_0 - m_1) q)} 2^{- q(\ell + \frac{m_0 - m_1}{n_1} k)}
\le 2^{-\delta N} 2^{- \frac{1}{n_1} q ( \ell n_1 +  k (m_0-m_1))} 
$$
and therefore $2^{-k p - \ell q} \le 2^{-\delta N} 2^{- m_0 q}$ if in addition, 
$k (m_0 - m_1) + \ell n_1 \ge m_0 n_1$.

The phase function in $I_{p,q}$, after making the changes of variables $s' = 2^p s$ and $t' = 2^q t$, is
given by 
$$
\varphi(s,t) \ = \
\lambda \psi(x+ 2^{-p} s,y+ 2^{-q} t) + \rho 2^{-p} s + \eta 2^{-q} t 
$$ 
and so the derivative bound in \eqref{derivative-hyp} implies that
that $|\partial^{m_0}_t \varphi(s,t)| \ge A |\lambda| 2^{-m_0 q}$ for $|s|, |t| \sim 1$. Hence van der Corput's
lemma implies that 
\begin{equation}\label{decay-I}
|I_{p,q}| \ \le \ C_{m_0} \bigl(A |\lambda| 2^{-m_0 q}\bigr)^{- 1/m_0}.
\end{equation}
Similarly, since the largest power of $t$ appearing in $P_{x,y}(s,t)$ is strictly less than $m_0$, we
see that 
\begin{equation}\label{decay-II}
|II_{p,q}| \ \le \ C_{m_0} \bigl(A |\lambda| 2^{-m_0 q}\bigr)^{- 1/m_0}.
\end{equation}
Taking a convex combination of the estimates \eqref{compare}, \eqref{decay-I} and \eqref{decay-II} (and
noting $A\lesssim 1$) we have
$$
|D_{p,q}| \ \le \ C \, 2^{-\delta N} A^{-\epsilon_0} \min(|\lambda| 2^{-q m_0}, (|\lambda| 2^{-q m_0})^{-\epsilon_1})
$$
for some exponents $\delta>0$ (possibly different), $\epsilon_0, \epsilon_1 >0$, depending only on $m_0$ and $n_1$.
The constant $C$ depends on $m_0, n_1$ and ${\overline{\phi}}$. Therefore
$$
\bigl| I^{>}_{m_0, m_1, n_1} - II^{>}_{m_0, m_1, n_1}\bigr| \ = \ | \sum_{(p,q) \in {\mathcal R}} D_{p,q} | \ \le \
C A^{-\epsilon_0}\!\!\!\!\!\!\!\mathop{\sum_{N\ge 0}\sum_{p,q:}}_{n_1 p - (m_0 - m_1) q = N}\!\!\!\!\! |D_{p,q}| 
$$
$$
\le \ C \, A^{-\epsilon_0}
\sum_{N\ge 0} 2^{-\delta N}\!\!\!\!\!\!\!\!\!\!\!\!\!\!\sum_{\begin{array}{c}\scriptstyle
         q: \\
         \vspace{-5pt}\scriptstyle n_1 p -  (m_0 - m_1)q = N 
         \end{array}}\!\!\!\!\!\!\!\!\! \min(|\lambda| 2^{-q m_0}, (|\lambda| 2^{-q m_0} )^{-\epsilon_1})
\ \le \ C \, A^{-\epsilon_0}.
$$

Summarising, under the derivative bound \eqref{derivative-hyp}, we have successfully compared 
$I^{>}_{m_0, m_1, n_1}$, the part 
of the integral $I$ where the
integration is taken over the region $R = \{|s|,|t| < a: |s^{n_1} t^{m_1}| < |t^{m_0}| \}$ to a corresponding
integral with an almost polynomial phase $\varphi(s,t) = P_{x,y}(s,t) + \psi(x+s,y) + \psi(x,y + t)$
of bounded degree;
$$
I^{>}_{m_0,m_1,n_1} \ = \ \int\!\!\!\int_{R} e^{i [\lambda \psi(x+ s, y + t) + \rho s + \eta t]} \frac{ds}{s} \frac{dt}{t}
\ = \ \int\!\!\!\int_R e^{i [\lambda \varphi(s,t) + \rho s + \eta t]} \frac{ds}{s} \frac{dt}{t}
+ O(A^{-\epsilon_0}).
$$
We exmphasise here that the constant appearing implicitly in the error bound
$O(A^{-\epsilon_0})$ is a uniform constant and can be taken to be independent of
$x,y,\lambda, \rho, \eta$ and $\omega$. Furthermore in many of our applications, the constant $A$ appearing
above and in a \eqref{derivative-hyp} is a uniform constant, depending on ${\overline{\phi}}$ but
otherwise independent on $x,y,\lambda,\rho, \eta$ and $\omega$. However there is one situation 
where $A$ is not uniform but we still need our bounds to be uniform; that is, independent of $A$.
We discuss this situation in Section \ref{main-fail} below.

\subsection{A small variant}\label{variant}
We note that the above argument still works if the phase function $\lambda \psi(x+s,y+t) + \rho s + \eta t$
 in $I$ or $I^{<}_{m_0,m_1,n_1}$ is replaced by $\lambda \psi(x+s,y+t) + H(s) + \rho s + \eta t$ where $H$
is any real-analytic function. The derivative bound \eqref{derivative-hyp} still implies the
corresponding bound for this new phase since the function $H(s)$ is killed off when computing any derivative in $t$.
Hence we still conclude that if
$$
I^{>}_{m_0,m_1,n_1} \ = \ \int\!\!\!\int_{R} e^{i [\lambda \psi(x+ s, y + t) + H(s) + \rho s + \eta t]} \frac{ds}{s} \frac{dt}{t},
$$
then
$$
I^{>}_{m_0,m_1,n_1} \ = \ 
 \int\!\!\!\int_R e^{i [\lambda \varphi(s,t) + H(s) + \rho s + \eta t]} \frac{ds}{s} \frac{dt}{t}
+ O(A^{-\epsilon_0}).
$$
where again $\varphi(s,t) = P_{x,y}(s,t) + \psi(x+s,y) + \psi(x,y + t)$ is the almost polynomial from before.

\subsection{The case $(n_1, m_1) = (n_0, 0)$} \label{00}
Let us apply the discussion above
to the case $(n_1, m_1) = (n_0, 0)$ to conclude that
$$
I^{>}_{m_0,n_0} \ := \ \int\!\!\!\int_{R} e^{i [\lambda \psi(x+ s, y + t)] + \rho s + \eta t} \frac{ds}{s} \frac{dt}{t}
\ = \ \int\!\!\!\int_R e^{i \lambda \varphi(s,t) + \rho s + \eta t} \frac{ds}{s} \frac{dt}{t}
+ O(A^{-\epsilon_0})
$$
for some $\epsilon_0 > 0$ depending on $m_0$ and $n_0$.  Here
$R = \{|s|,|t|<a : |s|^{n_0} < |t|^{m_0}\}$ and
$\varphi(s,t) = P_{x,y}(s,t) + \psi(x+s,y) + \psi(x,y + t)$ where now
\begin{equation}\label{Pxy}
P_{x,y}(s,t) \ = \ \sum_{\begin{array}{c}\scriptstyle
         k, \ell \ge 1 \\
         \vspace{-5pt}\scriptstyle k m_0  + \ell n_0 < m_0 n_0
         \end{array}} \!\!\!\!\!\!\!\!\!\!\!\!\frac{1}{k!\ell!}\,  \partial^{k,\ell}\psi(x,y) \,  s^k t^{\ell}.
\end{equation}
Now suppose we knew in addition to the derivative bound \eqref{derivative-hyp}, that the
derivative bound (this time with respect to $s$)
\begin{equation}\label{derivative-hyp-II}
\bigl| \partial^{n_0}_s \psi (x+s, y + t)\bigr| =  \Bigl| \frac{\partial^{n_0} \psi}{\partial s^{n_0}} (x+s, y+t) \Bigr|  \ge  B \ \ 
{\rm for \ all} \  |s|, |t| < a
\end{equation}
holds for some $B \lesssim 1$ but with the same $x,y$ (and implicitly for the same $\omega$ in the definition of 
$\psi(s,t) = \psi_{\omega}(s,t) = \omega \cdot {\overline{\phi}}(s,t)$).
For the complimentary integral
where we integrate over $R' = \{|s|,|t|<a: |t|^{m_0} \le |s|^{n_0}\}$, we argue similarly to conclude that
$$
I^{<}_{m_0,n_0} \ = \ \int\!\!\!\int_{R'} e^{i [\lambda \psi(x+ s, y + t)] + \rho s + \eta t} \frac{ds}{s} \frac{dt}{t}
\ = \ \int\!\!\!\int_{R'} e^{i [\lambda \varphi(s,t) + \rho s + \eta t]} \frac{ds}{s} \frac{dt}{t}
+ O(B^{-\epsilon_0})
$$
where $\varphi(s,t) = P_{x,y}(s,t) + \psi(x+s,y) + \psi(x,y+t)$ is the same as before.
Again
we exmphasise here that the constant appearing implicitly in the error bound
$O(B^{-\epsilon_0})$ is a uniform constant and can be taken to be independent of
$x,y,\lambda, \rho, \eta$ and $\omega$. 

Since the oscillatory integral
$$
I \ := \ \int\!\!\!\int_{|s|,|t|< a} e^{i [\lambda \psi(x+ s, y + t)] + \rho s + \eta t} \frac{ds}{s} \frac{dt}{t} \ = \
I^{<}_{m_0,n_0} + I^{>}_{m_0,n_0},
$$
we see that the above argument successfully reduces the study of $I$ to an oscillatory integral
with an almost polynomial phase of bounded degree {\it when} we have managed to reduce ourselves to certain derivative bounds
\eqref{derivative-hyp} and \eqref{derivative-hyp-II} holding. This was the main goal of the section.

\section{The proof of Theorem \ref{main} -- when (FH) holds}

In this section we will show that if $\psi(s,t,\omega) = \omega \cdot {\overline{\phi}}(s,t)$
satisfies the factorisation hypothesis (FH) on ${\mathbb S}^{d-1}$, then the oscillatory integral
\begin{equation}\label{main-double-int}
\int\!\!\!\!\int_{|s|,|t|<1/2} e^{2\pi i [\lambda \psi(x+s,y+t, \omega) + \rho s + \eta t]} \,
\frac{ds}{s} \frac{dt}{t} 
\end{equation}
is uniformly bounded in $\lambda, \rho, \eta \in {\mathbb R}, \, \omega \in {\mathbb S}^{d-1}$ and $(x,y)\in {\mathbb T}^2$.
From our discussion in Section \ref{preliminary}, this will give a proof of Theorem \ref{main} under the factorisation hypothesis.

First suppose that there exists an $\omega_0 \in {\mathbb S}^{d-1}$ and $(x_0,y_0) \in {\mathbb T}^2$ such that
$$
\partial_s^k \psi(x_0,y_0,\omega_0) = 0 \ \ {\rm for \ all} \ k\ge 2, \ \ \ {\rm and} \ \ \
\partial_t^{\ell} \psi(x_0,y_0,\omega_0) = 0 \ \ {\rm for \ all} \ \ell\ge 2.
$$
By two applications of Corollary \ref{degenerate-FH}, we see that $\psi(s,t,\omega_0) \equiv \psi(x_0,y_0,\omega_0)$
(in fact this follows easily by examining the Taylor expansion of $\psi$ and using periodicity -- one does not need
the factorisation hypothesis here). Hence $\omega_0 \cdot {\overline{\phi}}(s,t) \equiv constant$ and so
there exists
a $\phi_j$ in the $d$-tuple ${\overline{\phi}}$ which we can solve in terms of the others; namely,
$$
\phi_j(s,t) \ = \ \sum_{k\not= j} c_k \phi_k(s, t) \ + \ \omega_0 \cdot {\overline{\phi}}(x_0, y_0)
$$
and so (after re-writing the integral \eqref{main-double-int}) we have reduced the dimension
by one, reducing matters to examining an oscillatory integral with $d-1$ real-analytic functions defining
the phase. Note that after this reduction, the factorisation hypothesis still holds for the $(d-1)$-tuple of real-analytic functions. 

Therefore by a simple induction on dimension argument, we  may assume without loss of generality that for every point 
$(x,y,\omega) \in {\mathbb T}^2 \times {\mathbb S}^{d-1}$, either 
$$
(1) \ \ 
{\rm there \ exists \ an} \ \  n\ge 2 \ {\rm \ such \ that } \ \partial^n_s \psi(x,y,\omega) \not= 0; \ {\rm or}
$$
$$
(2) \ \ 
{\rm there \ exists \ an} \ \  m\ge 2 \ {\rm \ such \ that } \ \partial^m_t \psi(x,y,\omega) \not= 0.
$$
 For each $(x,y,\omega) \in {\mathbb T}^2 \times {\mathbb S}^{d-1}$, we define 
$\epsilon = \epsilon(x,y,\omega) > 0$ as follows: if (1) holds, choose $\epsilon>0$ such that 
$$
\bigl| \partial^n_s \psi (x',y',\omega') \bigr| \ \ge \ \frac{1}{2} \bigl|\partial^n_s \psi(x,y,\omega)\bigr| \ \ 
{\rm for\ all} \ (x',y',\omega') \in B_{2\epsilon}(x,y,\omega),
$$
and if $(1)$ does not hold (then (2) necessarily holds), we define $\epsilon>0$ analogously using
the $m$th derivative with respect to $t$. 

This gives us a covering $\{B_{\epsilon}(x,y,\omega)\}$
of the compact space ${\mathbb T}^2 \times {\mathbb S}^{d-1}$ from which we extract a finite
subcover $\{B_j\}_{j=0}^M$ and reduce matters to estimating the oscillatory integral in \eqref{main-double-int}
for $(x,y,\omega) \in B_j$ for some $0\le j \le M$. For convenience, we take $j=0$ and write
$B_0 = B_{\epsilon_0}(x_0,y_0,\omega_0)$. We will assume, without loss of generality, that 
(1) holds for $(x_0,y_0,\omega_0)$; that is, 
\begin{equation}\label{derivative-hyp-A}
|\partial^{n_0}_s \psi(x+s,y+t,\omega)|  \ge   A \ \ {\rm for \ all} \ \ (x,y,\omega) \in B_0 \ \ {\rm and} \ \ 
|s|, |t| <\epsilon_0
\end{equation}
where $A = (1/2) |\partial^{n_0}_s \psi(x_0,y_0,\omega_0)| > 0$ and $n_0 = n_0(x_0,y_0,\omega_0) \ge 2$.

Using Proposition \ref{osc-est-III} twice, we see that the integral in \eqref{main-double-int}
is equal to 
$$
\int\!\!\!\!\int_{|s|,|t|<\epsilon_0} e^{2\pi i [\lambda \psi(x+s,y+t, \omega) + \rho s + \eta t]} \,
\frac{ds}{s} \frac{dt}{t} \ + \ O(\log(1/\epsilon_0)).
$$
Here we are using the conclusion of Proposition \ref{osc-est-III} under the factorisation
hypothesis (FH) and so
the implicit constant appearing in the O term can be taken
to be independent of $\omega \in {\mathbb S}^{d-1}$ as well as $\lambda, \rho, \eta, x$ and $y$. 

For the next lemma we use the notation $\psi_e(s,t,\nu) := \nu_1 \phi_1(s,t) + \cdots + \nu_e \phi_e(s,t)$
when $1\le e \le d$ and $\nu = (\nu_1,\ldots, \nu_e) \in {\mathbb S}^{e-1}$. We note that if
$\psi = \psi_d$ satisfies (FH) on ${\mathbb S}^{d-1}$, then for each $1\le e \le d$, 
$\psi_e$ satisfies (FH) on ${\mathbb S}^{e-1}$.

\begin{lemma}\label{k-reduction} After a possible permutation of the $d$-tuple $\overline{\phi} = (\phi_1,\ldots, \phi_d)$,
there exists an $0\le e \le d$ such that 

(a) \ for every $(x,y,\nu) \in {\mathbb T}^2 \times {\mathbb S}^{e-1}$,
there is an $m\ge 2$ such that $\partial^m_t \psi_e(x,y,\nu) \not= 0$,  

{\rm and}

(b) \ there exists $\omega_1 \in {\mathbb S}^{d-1}, \omega_2 \in {\mathbb S}^{d-2}, \ldots, \omega_{d-e} \in {\mathbb S}^e$
and $\{y_1, y_2, \ldots, y_{d-e}\} \subset {\mathbb T}$ such that
\begin{align*}\label{bs}
 b_1 \ \ \ \ \ \ \ \ \ \ \ \ \ \ \ \psi_d(s,t,\omega_1) \ &\equiv \  \psi_d(s,y_1,\omega_1), \ \ \ \ \ \ \ \ \ \ \ \
\omega_{1,d} \not= 0 \\
 b_2  \ \ \ \ \ \ \ \ \ \ \ \ \psi_{d-1}(s,t,\omega_2) \ &\equiv \ \psi_{d-1}(s,y_2,\omega_2), \ \ \ \ \ \ \ \ \ 
 \omega_{2,d-1} \not= 0  \\
\ \ \ \ \ \ \ \ \ \ \ \ \ \ \ \ \ \ \ \ \ \ \ \ \ \ \ \ \ \ & \ \ \ \ \ \ \  \vdots \ \ \ \ \ \ \ \ \ \ \ \ \ \ \ \ \ \ \ \ \ \ \ \ \ \ \ \ \ \ \ \ \  \\
b_{d-e}\ \ \ \ \ \ \psi_{d-e}(s,t,\omega_{d-e}) \ &\equiv \ \psi_{d-e}(s,y_{d-e},\omega_{d-e} ), \ \ \  \omega_{d-e,e+1} \not= 0
\end{align*}
\end{lemma}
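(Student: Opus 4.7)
The plan is a downward induction on the dimension of the sphere parameter space, driven by Corollary \ref{degenerate-FH}. I will construct a finite decreasing sequence of indices $k = d, d-1, \ldots, e$ together with the data $(\omega_{d-k+1}, y_{d-k+1})$ appearing in (b), stopping as soon as condition (a) holds for $\psi_k$.

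\textbf{Set-up and inheritance of (FH).} At each stage I work with $\psi_k(s,t,\nu) = \sum_{i=1}^k \nu_i \phi_i(s,t)$ for $\nu \in \mathbb{S}^{k-1}$. The function $\psi_k$ is precisely the restriction of $\psi_d$ to the great sub-sphere $\{\nu \in \mathbb{S}^{d-1} : \nu_{k+1} = \cdots = \nu_d = 0\}$, so the germ identities $\psi_{ss}, \psi_{tt} \mid \psi_{st}$ defining (FH) for $\psi_d$ restrict to analogous identities for $\psi_k$ on any neighbourhood in $\mathbb{T}^2 \times \mathbb{S}^{k-1}$. Hence $\psi_k$ satisfies (FH) on $\mathbb{S}^{k-1}$, and Corollary \ref{degenerate-FH} is available at every stage.

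\textbf{Inductive step.} Start with $k = d$. If (a) already holds for $\psi_k$ on $\mathbb{S}^{k-1}$, stop and set $e := k$. Otherwise there is a point $(x^*, y^*, \omega^*) \in \mathbb{T}^2 \times \mathbb{S}^{k-1}$ with $\partial_t^m \psi_k(x^*, y^*, \omega^*) = 0$ for every $m \ge 2$. Corollary \ref{degenerate-FH} applied to $\psi_k$ yields $\psi_k(s, t, \omega^*) \equiv \psi_k(s, y^*, \omega^*)$. Since $\omega^*$ is a unit vector in $\mathbb{R}^k$, at least one of its components is nonzero; permute the tuple $(\phi_1,\ldots,\phi_k)$ (leaving $\phi_{k+1},\ldots,\phi_d$ untouched) so that after relabelling $\omega^*_k \neq 0$. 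Record $\omega_{d-k+1} := \omega^*$ and $y_{d-k+1} := y^*$, decrement $k$ by one, and repeat.

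\textbf{Termination and composition of permutations.} Because $k$ strictly decreases each time (a) fails, the process terminates after at most $d$ steps and produces some final index $e$ with $0 \le e \le d$ (when $e=0$ the statement (a) is vacuous). Each permutation used in the process acts only on the initial segment $(\phi_1,\ldots,\phi_k)$ available at that moment, so the composition of all these permutations is a single permutation of the original $d$-tuple $\overline{\phi}$ — this is the permutation promised in the statement. The recorded $(\omega_1, y_1), \ldots, (\omega_{d-e}, y_{d-e})$ then satisfy (b) by construction, while (a) holds for $\psi_e$ by the stopping criterion. The one step worth a moment of care is the inheritance of (FH) by each restricted $\psi_k$, but I do not expect any real obstacle there because (FH) is a pointwise germ condition on $\mathbb{T}^2 \times \mathbb{S}^{d-1}$ and $\mathbb{S}^{k-1}$ sits inside $\mathbb{S}^{d-1}$ as a real-analytic submanifold.
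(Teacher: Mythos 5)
Your proposal is correct and follows essentially the same route as the paper: a downward induction in which, whenever (a) fails for $\psi_k$, Corollary \ref{degenerate-FH} produces the identity $b_{d-k+1}$, a permutation arranges the nonvanishing component, and the index drops by one. Your explicit remarks on the inheritance of (FH) by the restricted $\psi_k$ and on composing the stage-by-stage permutations into a single permutation of $\overline{\phi}$ are points the paper only states or leaves implicit, but they match its intent.
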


We remark that the cases $e=0$ and $e=d$ are allowed. The case $e=0$ means that (a) is vacuous and (b) holds
with $d$ equations. The case $e=d$ means that (b) is vacuous and (a) holds with the original $d$-tuple,
$\psi_d = \psi$.

\begin{proof} If (a) holds with $e=d$, then we are done and so suppose there exists 
$(x_1,y_1, \omega_1) \in {\mathbb T}^d \times {\mathbb S}^{d-1}$ such that $\partial^m_t \psi(x_1,y_1,\omega_1) = 0$
for all $m\ge 2$. Then Corollary \ref{degenerate-FH} implies that $\psi(s,t,\omega_1) \equiv \psi(x,y_1,\omega_1)$;
that is, $b_1$ holds. Without loss of generality, we may assume $\omega_{1,d} \not= 0$ (otherwise
make a permutation). Hence (b) holds with $e = d-1$. If (a) holds with $e=d-1$, we are done.

If (a) does not hold with $e=d-1$, then there exists $(x_2,y_2,\omega_2) \in {\mathbb T}^2 \times {\mathbb S}^{d-2}$
such that $\partial^m_t \psi_{d-1}(x_2,y_2, \omega_2) = 0$ for all $m\ge 2$. Since $\psi_{d-1}$ satsifies
(FH) on ${\mathbb S}^{d-2}$, Corollary \ref{degenerate-FH} implies that 
$\psi_{d-1}(s,t,\omega_2) \equiv \psi_{d-1}(s,y_2,\omega_2)$. Without loss of generality, we may
assume that $\omega_{2,d-1}\not= 0$. Hence (b) holds with $e = d-2$. If (a) holds with $e=d-2$
we are done. 

If not, there exists $(x_3,y_3,\omega_3) \in {\mathbb T}^2 \times {\mathbb S}^{d-3}$ such that
$\partial^m_t \psi_{d-2}(x_3,y_3,\omega_3) = 0$ for all $m\ge 2$. And so on... 

A simple induction argument gives a complete proof of the lemma.
\end{proof}

Lemma \ref{k-reduction} implies that we may write 
$$\psi(s,t,\omega) \ = \ \omega \cdot {\overline{\phi}}(s,t) \ = \
\sum_{j=1}^e \omega_j \cdot \phi_j(s,t) + \omega_{e+1} \bigl(\sum_{j=1}^d c_j^{e+1} \phi_j(s,t) + G_{e+1}(s)\bigr)
$$
$$
+ \ \omega_{e+2} \Bigl(\sum_{j=1}^e c_j^{e+2} \phi_j(s,t) + c^{e+2}_{e+1} \Bigl[ \sum_{j=1}^e c_j^{e+1} \phi_j(s,t) +
G_{e+1}(s)\Bigr] + G_{e+2}(s)\Bigr) + \cdots
$$
\begin{equation}\label{pre-psi-form}
\ = \ \ \ \sum_{j=1}^e c_j(\omega) \phi_j(s,t) \ + \ \sum_{j=e+1}^d \omega_j F_j(s)
\end{equation}
for some real-analytic, 1-periodic functions $\{F_j \}_{j=e+1}^d$.  If ${\overline{c}}_e(\omega) := 
(c_1(\omega), \ldots, c_e(\omega)) \in {\mathbb R}^e \setminus \{0\}$, then 
\begin{equation}\label{psi-form}
\psi(s,t,\omega) \ =: \ \lambda_e(\omega) \psi_e(s,t, \nu_e(\omega)) \ + \ \sum_{j=e+1}^d \omega_j F_j (s)
\end{equation}
where this equation defines $\lambda_e(\omega) > 0$ and $\nu_e(\omega) \in {\mathbb S}^{e-1}$
as functions of $\omega \in {\mathbb S}^{d-1}$ {\it when} ${\overline{c}}_e(\omega) \not= 0$.

Lemma \ref{k-reduction} also implies that for every $(x,y,\nu) \in {\mathbb T}^2 \times {\mathbb S}^{e-1}$,
there is an $m = m(x,y,\nu) \ge 2$ such that $\partial^m_t \psi_e(x,y,\nu) \not= 0$. Hence for
every $(x,y,\nu) \in {\mathbb T}^2 \times {\mathbb S}^{e-1}$, we can choose a 
$\delta = \delta(x,y,\nu) >0$ such that
$$
\bigl| \partial^m_t \psi_e(x',y',\nu') \bigr| \ \ge \ \frac{1}{2} \bigl| \partial^m_t \psi_e(x,y,\nu) \bigr| \ \ 
{\rm for \ all} \ (x',y',\nu') \in B_{2\delta}(x,y,\nu).
$$
This gives us an open covering $\{B_{2\delta}(x,y,\nu)\}$ of the compact space ${\mathbb T}^2 \times {\mathbb S}^{e-1}$
from which we extract a finite subcover $\{B_j\}_{j=0}^M$ with the property that for
every $(x,y,\nu) \in {\mathbb T}^2 \times {\mathbb S}^{e-1}$, there is a $j = j(x,y,\nu) \in \{0,1,\ldots, M\}$
such that $(x,y,\nu) \in B_{2\delta_j}(x_j,y_j,\nu_j)$ and 
\begin{equation}\label{derivative-hyp-B}
|\partial^{m_j}_s \psi_e (x+s,y+t,\omega)|  \ge   B_j \ \ \ {\rm for \ all}  \ \  
|s|, |t| <\delta_j
\end{equation}
where $B_j = (1/2) |\partial^{m_j}_t \psi_e (x_j, y_j, \nu_j)| > 0$.

If $e=0$, then 
$$
\psi(s,t,\omega) \ = \ \omega \cdot {\overline{\phi}}(s,t) \ = \ \sum_{j=1}^d \omega_j F_j(s)  \ =: \
\omega \cdot {\overline{F}}(s)
$$ 
and so
$$
 \int\!\!\!\!\int_{|s|,|t|<\epsilon_0} e^{2\pi i [\lambda \psi(x+s,y+t, \omega) + \rho s + \eta t]} \,
\frac{ds}{s} \frac{dt}{t} \ = \ \int_{|t|<\epsilon_0} e^{2\pi i \eta t} \frac{dt}{t} \cdot
\int_{|s|<\epsilon_0} e^{2\pi i [\lambda \omega \cdot {\overline{F}}(s) + \rho s]} \frac{ds}{s}
$$
splits as  a product of two oscillatory integrals both of which are uniformly bounded by Proposition \ref{osc-est-III}.

We may therefore assume $e\ge 1$. We again apply Proposition \ref{osc-est-III} to reduce (up to error terms 
depending only on $\{\delta_j\}_{j=1}^M$) our analysis of the
oscillatory integral \eqref{main-double-int} to
$$
I \ := \  \int\!\!\!\!\int_{|s|,|t|<\kappa} e^{2\pi i [\lambda \psi(x+s,y+t, \omega) + \rho s + \eta t]} \,
\frac{ds}{s} \frac{dt}{t} 
$$
where $\kappa = \min(\epsilon_0, \delta_0, \ldots, \delta_M )$ and where the derivative bound
\eqref{derivative-hyp-A} holds. We fix $\omega \in {\mathbb S}^{d-1}$ and consider two cases.

{\bf Case 1}: \ ${\overline{c}}(\omega) = 0$. In this case, we see by \eqref{pre-psi-form},
$$
\psi(x+s,y+t,\omega) \ = \ \omega \cdot {\overline{\phi}}(x+s,y+t) \ = \ \sum_{j=e+1}^d \omega_j F_j(x+s)
$$
 and so
$$
I \ = \  \int_{|t|<\kappa} e^{2\pi i \eta t} \frac{dt}{t} \ \cdot \
\int_{|s|<\kappa} e^{2\pi i [\sum_{j=e+1}^d \lambda \omega_j F_j(x+s) + \rho s]} \frac{ds}{s}
$$
splits once again into a product of two oscillatory integrals both of which are uniformly bounded
by Proposition \ref{osc-est-III}.

{\bf Case 2}: \ ${\overline{c}}(\omega) \not= 0$. Then $\nu_e = \nu_e(\omega) \in {\mathbb S}^{e-1}$ and we see
by \eqref{psi-form},
$$
\psi(x+s,y+t,\omega) = \omega \cdot {\overline{\phi}}(x+s,y+t) = \lambda_{e} \psi_e(x+s,y+t,\nu_e) \ + \ 
\sum_{j=e+1}^d \omega_j F_j(x+s)
$$
where $\lambda_{e} = \lambda_e(\omega)$. Furthermore $(x,y,\nu_e(\omega)) \in B_{2\delta_j}(x_j, y_j, \nu_j)$
for some $0\le j \le M$. For convenience we assume $j=0$. Hence by \eqref{derivative-hyp-B},
$$
\bigl|\partial^{m_0}_t \psi_e (x+s,y+t,\nu_e(\omega))\bigr| \ \ge \ B_0 \ \ {\rm for \ all} \ \ |s|,|t| < \delta_0
$$
or
\begin{equation}\label{derivative-hyp-B-II}
\bigl| \partial^{m_0}_t \psi (x+s,y+t,\omega) \bigr| = \bigl| \partial^{m_0}_t (\omega \cdot {\overline{\phi}})(x+s,y+t)\bigr|
\ \ge \ B_0 \lambda_{e}
\end{equation}
holds for all $|s|,|t| < \delta_0$. We are now in a position to apply the arguments in Section \ref{prelude-I} to successfully
compare parts of the integral $I$ to oscillatory integrals with almost polynomial phases of bounded degrees. 

We split $I = I^{>}_{m_0,n_0} + I^{<}_{m_0,n_0} =: I^{>}_0 + I^{<}_0$ as in Section \ref{prelude-I} (more specifically, see
Section \ref{variant}). The treatment of these two integrals is similar and we choose to concentrate on
$$
I^{>}_{0} \ = \ \int\!\!\!\int_{R_0} 
e^{i [\lambda \psi(x+ s, y + t, \omega) + \rho s + \eta t]} \frac{ds}{s} \frac{dt}{t}
$$
where $R_0 = \{|s|,|t|<\kappa: |s|^{n_0} <|t|^{m_0}\}$. 
In fact $I^{>}_0$ is slightly more involved than the complimentary integral $I^{<}_{0}$.
We note that
$$
\lambda \psi(x+s,y+t,\omega) = (\lambda \lambda_e) \psi_e(x+s,y+t,\nu_e) + \sum_{j=e+1}^d \lambda \omega_j F_j(x+s)
$$
and that \eqref{derivative-hyp-B} holds with $j=0$. This puts us in a position to apply the estimate in Section
\ref{variant} to conclude that
$$
I^{>}_{0} \ = \ \int\!\!\!\int_{R_0} 
e^{i [(\lambda\lambda_e) {\tilde{\varphi}}(s,t) + \sum_{j=e+1}^d \lambda \omega_j F_j(x+s)
\rho s + \eta t]} \frac{ds}{s} \frac{dt}{t}
\ + \  O(1)
$$
where 
$${\tilde{\varphi}}(s,t)  \ = \
P^{*}_{x,y,\omega}(s,t) \ + \ \psi_e(x+s,y, \nu_e) \ + \ \psi_e(x,y + t, \nu_e) \ - \ \psi_e(0,0,\nu_e)
$$ 
and
\begin{equation}\label{Qxy}
P^{*}_{x,y,\omega}(s,t) \ = \ \sum_{\begin{array}{c}\scriptstyle
         k, \ell \ge 1 \\
         \vspace{-5pt}\scriptstyle k m_0  + \ell n_0 < m_0 n_0
         \end{array}} \!\!\!\!\!\!\!\!\!\!\!\!\frac{1}{k!\ell!}\,  \partial^{k,\ell}\psi_e(x,y,\nu_e(\omega)) \,  s^k t^{\ell}.
\end{equation}
Since $\partial^{k,\ell} \psi (x,y,\omega) = \lambda_e(\omega) \partial^{k,\ell} \psi_e (x,y,\nu_e(\omega))$
whenever $\ell \ge1$, we see that 
\begin{equation}\label{P=P*}
P_{x,y,\omega}(s,t) \ = \ \lambda_e P^{*}_{x,y,\omega}(s,t) 
\end{equation}
where $P_{x,y,\omega}$ is defined in \eqref{Pxy} with respect to $\psi(s,t) = \psi(s,t,\omega)$. 
Furthermore,
$$
\psi(x+s,y,\omega) + \psi(x,y+t,\omega)  - \psi(0,0,\omega) = \lambda_e  \bigl[ \psi_e(x+s,y, \omega) + \psi_e(x,y+t,\nu_e)
$$
\begin{equation}\label{psi=psie}
 - \psi_e(0,0,\nu_e) \bigr] \ + \  \sum_{j=e+1}^d \omega_j F_j(x+s) \ + \
\sum_{j=e+1}^d \omega_j (F_j(x)-F_j(0)) 
\end{equation}
and therefore
$$ 
I^{>}_{0} \ = \ e^{-2\pi i \lambda A(x,\omega)}\int\!\!\!\int_{R_0} 
e^{i [\lambda \varphi(s,t) + \rho s + \eta t]} \frac{ds}{s} \frac{dt}{t}
\ + \  O(1)
$$
where $A(x,\omega) = \sum_{j=e+1}^d \omega_j (F_j(x) - F_j(0))$ and 
\begin{equation}\label{varphi}
\varphi(s,t) \ = \ P_{x,y,\omega}(s,t) + \psi(x+s,y,\omega) + \psi(x,y + t,\omega) - \psi(0,0,\omega).
\end{equation}
Slightly more
straightforward reasoning leads to
$$
I^{<}_{0} \ = \ \int\!\!\!\int_{|t|^{m_0}<|s|^{n_0}} 
e^{i [\lambda \varphi(s,t) + \rho s + \eta t]} \frac{ds}{s} \frac{dt}{t}
\ + \  O(1).
$$

Therefore we are left with understanding
two oscillatory integrals with almost polynomial phases of bounded degree. We choose, without loss
of generality, to concentrate on
$$
{\mathcal I} \ := \ \int\!\!\!\int_{R_0} 
e^{i [\lambda \varphi(s,t) + \rho s + \eta t]} \frac{ds}{s} \frac{dt}{t}
$$
where
$\varphi = \varphi_{x,y,\omega}$ is the almost polynomial phase defined above in \eqref{varphi}.
Recall that we are fixing $x,y$ and $\omega$ in this discussion and we have deduced various properties 
of $\psi$ which hold at $(x,y,\omega)$. In particular we know that 
both $\partial^{n_0}_s \psi(x,y,\omega)$ and $\partial^{m_0}_t \psi(x,y,\omega)$
are nonzero,  see \eqref{derivative-hyp-A} and \eqref{derivative-hyp-B-II}. However the exponents
$m_0, n_0\ge 2$ may not be minimal with respect to this property. Let $m=m(x,y,\omega) \ge 2$ be minimal
with repect to $\partial^m_t \psi(x,y,\omega) \not= 0$. In particular we have
$\partial^{\ell}_t \psi(x,y,\omega) = 0$
for all $2\le \ell < m$. Hence $\partial^m_t \psi_e(x,y,\nu_e(\omega)) \not= 0$ and
$\partial^{\ell}_t \psi_e(x,y,\nu_e(\omega)) = 0$ for $2\le \ell < m$.
 
Likewise, let $n = n(x,y,\omega) \ge 2$
be minimal with respect to $\partial^n_s \psi(x,y,\omega) \not= 0$. 
By Corollary \ref{max}, we have $\partial^{k,\ell}\psi(x,y,\omega) = 0$ whenever $k+\ell < \max(m,n)$.
Hence we may write
$$
P_{x,y,\omega}(s,t) \ = \ \sum_{\begin{array}{c}\scriptstyle
         k + \ell \ge \max(m,n) \\
         \vspace{-5pt}\scriptstyle k m_0  + \ell n_0 < m_0 n_0
         \end{array}} \!\!\!\!\!\!\!\!\!\!\!\!\frac{1}{k!\ell!}\,  \partial^{k,\ell}\psi(x,y,\omega) \,  s^k t^{\ell}.
$$

We decompose ${\mathcal I} = {\mathcal I}'+ {\mathcal I}''$ where
$$
{\mathcal I}' \ := \  \int\!\!\!\int_{R'} 
e^{i [\lambda \varphi(s,t) + \rho s + \eta t]}\  \frac{ds}{s} \frac{dt}{t}
$$
where $R' = \{(s,t) \in R_0 : |s|^{n} < |t|^m \}$. We will show that
${\mathcal I}'$ is uniformly bounded. The argument for ${\mathcal I}''$ is similar although slightly
less notationally cumbersome. The first step is to reduce the region of integration to
$R = \{(s,t) \in R_0 : C |s|^n \le |t|^m\}$ for some large but absolute constant $C$ which may depend on the
$d$-tuple of analytic functions ${\overline{\phi}}$ but it will not depend on the other parameters,
$x,y,\lambda, \rho$ and $\eta$. 

\begin{proposition}\label{C}
The integral
$$
\int\!\!\!\int_{\{(s,t)\in R_0: C^{-1}|t|^m < |s|^n < |t|^m\}} 
e^{i [\lambda \varphi_{x,y,\omega}(s,t) + \rho s + \eta t]}\  \frac{ds}{s} \frac{dt}{t}
$$
is uniformly bounded in $x,y,\omega, \lambda, \rho$ and $\eta$ 
\end{proposition}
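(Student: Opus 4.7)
The region $\{C^{-1}|t|^m < |s|^n < |t|^m\}$ is one-dimensional in scale: for each $|t|$ the variable $|s|$ is confined to the bounded-ratio interval $(C^{-1/n}|t|^{m/n}, |t|^{m/n})$. My plan is to exploit this via a dyadic decomposition in $t$, with $|t| \sim 2^{-q}$, which forces $|s|$ onto the single dyadic scale $|s| \sim 2^{-qm/n}$. After rescaling $(s',t') = (2^{qm/n} s, 2^q t)$, each piece $I_q$ becomes an oscillatory integral over a bounded region in $(s',t')$ against the product kernel $ds'/s'\cdot dt'/t'$, with rescaled phase $\Phi_q(s',t') = \lambda \varphi(2^{-qm/n}s', 2^{-q}t') + \rho\,2^{-qm/n}s' + \eta\,2^{-q}t'$.

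On each rescaled piece, $|\Phi_q| = O(|\lambda|2^{-qm})$; moreover, the inequality $mk/n + \ell \ge m$ (valid for every monomial of $P_{x,y,\omega}$ by Corollary \ref{max} under $(FH)$) ensures that when one differentiates $\Phi_q$ in $t'$ exactly $m$ times, every mixed monomial of $P_{x,y,\omega}$ contributes a term of size $|\lambda|2^{-qm}\cdot 2^{-q\epsilon_0}$ for some $\epsilon_0 > 0$, while the pure $(t')^m$-monomial of $\psi(x,y+t,\omega)$ contributes the full leading piece $\lambda\,2^{-qm}\,\partial_t^m\psi(x,y,\omega)$. I would then bound each $I_q$ three ways: trivially by $O(1)$; by van der Corput of order $m$ in $t'$ using the lower bound $|\partial_t^m\psi(x,y,\omega)|\ge B_0 \lambda_e(\omega)$ supplied by \eqref{derivative-hyp-B-II}, yielding $|I_q| \lesssim (|\lambda|2^{-qm})^{-1/m}$; and, by comparison to a polynomial-phase integral in the spirit of Section \ref{prelude-I} (cf.\ \eqref{compare}) combined with the Stein--Wainger estimate \eqref{SW}, yielding $|I_q| \lesssim |\lambda|2^{-qm}$ in the small-amplitude regime. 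Taking the $\min$ of the three and summing geometrically over $q$ produces a constant independent of $\lambda,\rho,\eta,x,y,\omega$.

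The hard part will be making the derivative bound on $\partial_{t'}^m\Phi_q$ genuinely uniform in $(x,y,\omega)$: although the pure term carries the coefficient $\partial_t^m\psi(x,y,\omega)\neq 0$, the ratios of this coefficient to the mixed coefficients $\partial^{k,\ell}\psi(x,y,\omega)$ are not a priori bounded, so the threshold $q$ beyond which the pure term dominates could in principle depend on $(x,y,\omega)$. The absolute constant $C$ in the definition of the strip must be chosen sufficiently large, depending only on $\overline{\phi}$, to normalise the strip geometry; then, combined with the covering reductions of Section 6 (fixing $m_0,n_0$ with neighbourhood-wide lower bounds) and the factorisation through $\psi_e$ furnished by Lemma \ref{k-reduction}, the ratios are absorbed into uniform constants. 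This boundary-strip analysis parallels the treatment of marginal regions in the multiparameter singular Radon transform theory of \cite{CWW}, and it is the technical crux which enables the clean reduction of ${\mathcal I}'$ to the interior region $R = \{(s,t)\in R_0 : C|s|^n \le |t|^m\}$.
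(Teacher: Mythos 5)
Your opening move is the right one: the strip $C^{-1}|t|^m<|s|^n<|t|^m$ is a bounded union of diagonals $pn-qm=N$, $0\le N\le N_0$, so the dyadic sum is effectively one--dimensional, and this is exactly how the paper begins. But the core of your scheme --- bounding each rescaled piece $I_q$ by $\min\bigl(O(1),\,(|\lambda|2^{-qm})^{-1/m},\,|\lambda|2^{-qm}\bigr)$ and summing --- does not close, for two reasons. First, the ``small--amplitude'' bound $|I_q|\lesssim|\lambda|2^{-qm}$ cannot hold for $I_q$ itself: at $\lambda=0$ each piece is a genuine Dirichlet--type integral with linear phase $\rho s+\eta t$ of size comparable to $1$, and more generally the mean--zero gain of the kernel is destroyed by the unbounded linear frequencies $\rho,\eta$. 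Such a bound is only ever available for a \emph{difference} $I_q-II_q$ with a model integral $II_q$, and then the entire burden shifts to proving $\sum_q II_q=O(1)$. In the regime $|\lambda|2^{-qm}\le 1$ there are infinitely many $q$, each contributing $O(1)$ by Stein--Wainger applied piecewise, so the per--piece use of \eqref{SW} cannot substitute for a summation argument. This missing summation is the actual content of the proposition. The paper supplies it by first Taylor--expanding the two analytic one--variable parts to orders $n_0$ and $m_0$ --- precisely the orders at which the uniform lower bounds \eqref{derivative-hyp-A} and \eqref{derivative-hyp-B-II} hold, so that the Taylor--remainder bound $|D_{p,q}|\lesssim|\lambda|2^{-pn_0}$ pairs with a van der Corput bound $(|\lambda|2^{-pn_0})^{-1/n_0}$ at the \emph{same} order --- arriving at a purely polynomial phase ${\mathcal P}$; and then, crucially, by the change of variables $s\to s/|t|^{m/n}$ followed by an interchange of the order of integration, which decouples the double integral into a one--dimensional oscillatory integral whose phase involves \emph{rational} powers of $t$, handled by the rational--power version of the Stein--Wainger theorem. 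Nothing in your proposal plays the role of this decoupling step.

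Second, your van der Corput bound of order $m$ in $t'$ has no uniform constant: $m=m(x,y,\omega)$ is the \emph{minimal} order with $\partial_t^m\psi(x,y,\omega)\neq 0$, and the covering construction of Section 6 furnishes neighbourhood--wide lower bounds only at the orders $m_0$ and $n_0$, not at $m$. You correctly identify this as the hard point, but enlarging the constant $C$ defining the strip and re--invoking the covering does not produce the missing lower bound on $\partial_t^m\psi$; the coefficient can degenerate to zero as $(x,y,\omega)$ moves inside a single covering ball. (A smaller issue: the strict gain $2^{-q\epsilon_0}$ you claim for every mixed monomial via $mk/n+\ell\ge m$ requires strict inequality, which fails for instance when $m=n$ and $(k,\ell)=(1,m-1)$, a term that Corollary \ref{max} and Proposition \ref{key-prop} do not force to vanish.) The paper sidesteps both difficulties by never invoking a derivative bound at the minimal orders in this marginal region: all quantitative estimates are made at $(m_0,n_0)$, and whatever cannot be controlled there is pushed into the polynomial phase and absorbed by the coefficient--uniform Stein--Wainger estimate.
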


Fixing either $s$ or $t$, the integral in the other variable has a uniformly bounded logarithmic measure
and so one expects that this double oscillatory integral is effectively a single oscillatory integral and
we should then be appealing to Proposition \ref{osc-est-III} or some variant. However it is a slightly technical
argument to decouple the integration and reduce to a one dimensional integral. We choose to postpone the
proof of this proposition until the end of the section so as not to detract from the main line of argument.

By Proposition \ref{C} the study of ${\mathcal I}'$ is reduced to 
$$
{\mathcal I}_1 \ := \  \int\!\!\!\int_{R} 
e^{i [\lambda \varphi(s,t) + \rho s + \eta t]}\  \frac{ds}{s} \frac{dt}{t}.
$$
As we have done previously, we decompose ${\mathcal I}_1$ dyadically
${\mathcal I}_1 \ = \ \sum_{(p,q) \in {\mathcal R}_1} I_{p,q}$ where
$$
I_{p,q} \ = \  \int\!\!\!\int  e^{2\pi i [\lambda \varphi(s, t) + \rho s + 
\eta t]} \zeta(2^p s) \zeta(2^q t) \frac{ds}{s} \frac{dt}{t}
$$
where ${\mathcal R}_1 = \{(p,q) \in {\mathcal R} : C 2^{-pn}\le 2^{-qm} \} = \{(p,q) \in {\mathcal R} :
 pn-qm \ge N_0\}$ for some large absolute constant $C$, large integer $N_0$. 
Here ${\mathcal R}$ is as in Section \ref{prelude-I} which is defined as those
pairs $(p,q)$ with both $2^{-p}, 2^{-q} < \kappa$ such that $2^{-pn_0} \le 2^{-qm_0}$.

We will compare ${\mathcal I}_1$ to ${\mathcal I}_2$ where
$$
{\mathcal I}_2 \ = \ \int\!\!\!\int_{R} e^{2\pi i [\lambda (\psi(x+s,t,\omega) + \psi(x,y+t,\omega)) + \rho s + \eta t]}
\frac{ds}{s} \frac{dt}{t}.
$$
In fact we will show that ${\mathcal I}_1 = {\mathcal I}_2 + O(1)$. Since we can write
$$
{\mathcal I}_2 = \int_{|s|\le \kappa} e^{2\pi i [\lambda \psi(x+s,y,\omega) + \rho s]} G(s) \frac{ds}{s} \ \ 
{\rm where} \ G(s) = \int_{R(s)} e^{2\pi i [\lambda \psi(x,y+t,\omega) + \eta t]} \frac{dt}{t}
$$
where $R(s) = \{t: (s,t) \in R\}$. By Proposiiton \ref{osc-est-III}, we see that $G(s) = O(1)$. Also since
$|G'(s)| \lesssim |s|^{-1}$, we can appeal to Proposition \ref{osc-est-III} once again to conclude that
${\mathcal I}_2$ and hence ${\mathcal I}_1$ is $O(1)$ as desired.

As with ${\mathcal I}_1$ we will write ${\mathcal I}_2 =\sum_{(p,q) \in {\mathcal R}_1} II_{p,q}$ where
$$
II_{p,q} \ = \  \int\!\!\!\int  e^{2\pi i [\lambda (\psi(x+s, y,\omega) + \psi(x,y+t,\omega)) + \rho s + 
\eta t]} \zeta(2^p s) \zeta(2^q t) \frac{ds}{s} \frac{dt}{t}.
$$
We write $D_{p,q} = I_{p,q} - II_{p,q}$ and derive various estimates for $D_{p,q}$ which will
add up to a uniform estimate when we sum over $(p,q) \in {\mathcal R}_1$. 

 We make the changes of variables $s \to 2^{-p} s$
and $t\to 2^{-q} t$ in both integrals and after making these changes of variables, 
the difference of the
phases in $I_{p,q}$ and $II_{p,q}$ is $2\pi i \lambda P_{x,y,\omega}(2^{-p}s, 2^{-q}t)$. Hence the difference
$D_{p,q}$ has the bound
$$
|D_{p,q}| \ \le \ 2\pi |\lambda| \sum_{\begin{array}{c}\scriptstyle
         k + \ell \ge \max(m,n) \\
         \vspace{-5pt}\scriptstyle k m_0  + \ell n_0 < m_0 n_0
         \end{array}} \!\!\!\!\!\!\!\!\!\!\!\!\frac{1}{k!\ell!}\, \bigl| \partial^{k,\ell}\psi(x,y,\omega)\bigr| \,  
2^{-pk - q\ell}.
$$
Recall that for $k,\ell\ge 1$, we have $\partial^{k,\ell} \psi(x,y,\omega) = \lambda_e \partial^{k,\ell} \psi_e(x,y,\nu_e)$
where $\lambda_e = \lambda_e(\omega)$ and $\nu_e =\nu_e (\omega)$. Hence
\begin{equation}\label{Dpq}
|D_{p,q}| \ \le \ 2\pi |\lambda \lambda_e| \sum_{\begin{array}{c}\scriptstyle
         k + \ell \ge \max(m,n) \\
         \vspace{-5pt}\scriptstyle k m_0  + \ell n_0 < m_0 n_0
         \end{array}} \!\!\!\!\!\!\!\!\!\!\!\!\frac{1}{k!\ell!}\, \bigl| \partial^{k,\ell}\psi_e(x,y,\nu_e)\bigr| \,  
2^{-pk - q\ell}.
\end{equation}

At the heart of the argument is a comparison of the weighted mixed derivatives \\
$2^{-pk-q\ell} \partial^{k,\ell} \psi_e(x,y,\nu_e)$ to weighted pure derivatives 
$2^{-q\ell}\partial^{\ell}_t \psi_e(x,y,\nu_e)$
with an exponential decay gain. This is the analogue of controlling mixed vector fields by pure ones in the
work of Stein and Street, see \cite{BS}. The following proposition is the main proposition which gives this
comparison.

\begin{proposition}\label{main-prop} For
any $k,\ell$ satisfying $\max(m,n) \le k+\ell$ and $km_0 + \ell n_0 < m_0 n_0$, we have
\begin{equation}\label{Main-est}
2^{-pk-q\ell} \bigl| \partial^{k,\ell} \psi_e(x,y,\nu_e) \bigr| \lesssim 2^{-\delta (pn-qm)} 
\sum_{j=m}^{m_0} 2^{-q j} \bigl|\partial^{j}_t \psi_e(x,y,\nu_e)\bigr|
\end{equation}
where $\delta = 1/n_0 >0$.
\end{proposition}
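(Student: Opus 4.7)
The plan is to combine the identity from Lemma \ref{relations} with the vanishing information supplied by Corollary \ref{cor-key-prop}. Evaluating at $(x,y,\nu_e)$, minimality of $m$ forces $\partial^j_t\psi_e(x,y,\nu_e)=0$ for $2\le j<m$, so Lemma \ref{relations} collapses to
\[
\partial^{k,\ell}\psi_e(x,y,\nu_e)\;=\;\sum_{j=m}^{k+\ell} Q^{k,\ell}_j(x,y,\nu_e)\,\partial^j_t\psi_e(x,y,\nu_e),
\]
and the coefficients $|Q^{k,\ell}_j(x,y,\nu_e)|$ are uniformly bounded on ${\mathbb T}^2\times{\mathbb S}^{e-1}$, since Proposition \ref{Q} presents them as polynomials in $L$ and its derivatives, all real-analytic on the compact manifold where (FH) holds.

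Next I would exploit the minimality of $n$ together with (FH) on ${\mathbb S}^{e-1}$. In the main case $m<n$ I pick the largest $\mu\ge 0$ with $2^\mu m<n$; Corollary \ref{cor-key-prop} then kills every $Q^{k,\ell}_j(x,y,\nu_e)$ with $j>\ell+2^{-\mu-1}k$. By maximality of $\mu$, $2^{\mu+1}m\ge n$, hence $2^{-\mu-1}\le m/n$, so the surviving indices satisfy $m\le j\le \ell+km/n$. The core algebraic step is then to translate this range restriction into the target exponential decay: for each surviving $j$, write
\[
2^{-pk-q\ell}\;=\;2^{-(pk - q(j-\ell))}\cdot 2^{-qj},
\]
and the inequality $j-\ell\le km/n$ yields
\[
pk-q(j-\ell)\;\ge\;pk-qkm/n\;=\;\frac{k}{n}(pn-qm).
\]
Because the sum in \eqref{Dpq} is taken over $k,\ell\ge 1$, and because $n\le n_0$ (the minimality of $n$ combined with the derivative bound \eqref{derivative-hyp-A} at $(x,y,\omega)$ forces this), one has $k/n\ge 1/n_0=\delta$, producing $2^{-pk-q\ell}\le 2^{-\delta(pn-qm)}\,2^{-qj}$ for every surviving $j$. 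Summing over the admissible range of $j$ and absorbing the bounded coefficients yields \eqref{Main-est}.

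The main obstacle is the boundary behaviour. One must verify that each surviving $j$ actually lies in $[m,m_0]$ so that it is captured on the right of \eqref{Main-est}; this is where the Newton-polygon constraint $km_0+\ell n_0<m_0 n_0$ enters, bounding $\ell+km/n$ in terms of $m_0$ by comparing the slopes $m/n$ and $m_0/n_0$. The degenerate case $m\ge n$ must be handled by invoking the symmetric variant of Corollary \ref{cor-key-prop} obtained by interchanging the roles of $s$ and $t$ in Lemma \ref{relations} and Proposition \ref{Q}, which yields the analogous vanishing for the coefficients $R^{k,\ell}_j$; one then converts back to pure $t$-derivatives via the factorisation identities of (FH), which allow $L$ to intertwine the two pure second derivatives. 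Uniformity in $\omega$ is automatic because every implicit constant depends only on $L$ and its derivatives, which are bounded on the compact manifold.
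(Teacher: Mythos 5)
Your architecture is the paper's: collapse $\partial^{k,\ell}\psi_e$ via Lemma \ref{relations} using minimality of $m$, invoke Corollary \ref{cor-key-prop} when $m<n$ to kill the coefficients with $j>\ell+2^{-\mu-1}k$, and convert the surviving range $j\le \ell+km/n$ into the decay $2^{-pk-q\ell}\le 2^{-\delta(pn-qm)}2^{-qj}$ exactly as in \eqref{m/n}; that core computation is correct. But the two places you flag as "obstacles" are handled incorrectly. The serious one is the boundary: you claim the constraint $km_0+\ell n_0<m_0n_0$ forces the surviving $j$ into $[m,m_0]$ by "comparing the slopes $m/n$ and $m_0/n_0$". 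The constraint is equivalent to $\ell+k\,m_0/n_0<m_0$, so it bounds $\ell+km/n$ by $m_0$ only when $m/n\le m_0/n_0$, and nothing guarantees that (take $m=m_0=2$, $n=3$, $n_0$ large, $\ell=1$, $k$ near $n_0$: the surviving range reaches roughly $\ell+k/2\gg m_0$). Indices $j$ with $m_0<j\le k+\ell$ genuinely occur, and the right-hand side of \eqref{Main-est} stops at $j=m_0$, so they must be absorbed rather than excluded. The mechanism is the uniform lower bound \eqref{derivative-hyp-B}: for $j>m_0$,
$$
2^{-qj}\bigl|\partial^j_t\psi_e(x,y,\nu_e)\bigr| \ \lesssim \ B_0^{-1}\,2^{-qm_0}\bigl|\partial^{m_0}_t\psi_e(x,y,\nu_e)\bigr|,
$$
since $2^{-qj}\le 2^{-qm_0}$, the left derivative is $O(1)$, and $|\partial^{m_0}_t\psi_e|\ge B_0$. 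Without this your estimate does not close.

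The second, lesser, problem is the case $n\le m$. Your proposed detour through the symmetric coefficients $R^{k,\ell}_j$ followed by a "conversion back to pure $t$-derivatives via (FH)" is not obviously executable (rewriting $\partial^j_s\psi_e$ in terms of $\partial^j_t\psi_e$ would require lower bounds on $L$ that you do not have) and is in any case unnecessary: when $m/n\ge 1$, \eqref{m/n} already gives $2^{-pk-q\ell}\le 2^{-\delta(pn-qm)}2^{-q(k+\ell)}\le 2^{-\delta(pn-qm)}2^{-qj}$ for every $j\le k+\ell$, with no coefficient vanishing needed, and one concludes directly (again modulo the $j>m_0$ absorption above).
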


\begin{proof} The proof relies on the consequences of the factorisation hypothesis (FH)
we derived in Section \ref{consequences}. We will apply the results there to $\psi = \psi_e$
and $M = {\mathcal S}^{e-1}$, recalling that $\psi_e$ satisfies (FH) on ${\mathcal S}^{e-1}$.
By Lemma \ref{relations}, we have
$$
\partial^{k,\ell} \psi_e(x,y,\nu_e) \ = \ \sum_{j=2}^{k+\ell} Q^{k,\ell}_j(x,y,\nu_e) \partial^j_t \psi_e(x,y,\nu_e) \ = \
\sum_{j=m}^{k+\ell} Q^{k,\ell}_j \partial^j_t \psi_e
$$
since $\partial^j_t \psi_e(x,y,\nu_e) = 0$ for $2\le j <m$. 
Therefore
$$
2^{-pk-q\ell} \bigl| \partial^{k,\ell} \psi_e(x,y,\nu_e)\bigr| \le  \sum_{j=m}^{k+\ell}
2^{-pk-q\ell} \bigl|Q^{k,\ell}_j(x,y,\nu_e)\partial^j_t \psi_e(x,y,\nu_e)\bigr|.
$$
To show \eqref{Main-est}, it suffices to show 
\begin{equation}\label{step-2}
\sum_{j=m}^{k+\ell} 2^{-pk - q\ell} \bigl|Q^{k,\ell}_j \partial^j_t \psi_e(x,y,\nu_e)\bigr| \ \lesssim \ 
2^{-\delta N} \sum_{j=m}^{k+\ell} 2^{-q j} \bigl|\partial^j_t \psi_e(x,y,\nu_e)\bigr|.
\end{equation}
 where $N := pn - qm$. This is clearly the case if $k+\ell \le m_0$. However if $m_0 < k + \ell$,
then for any $m_0 < j \le k+\ell$, we have 
$$
2^{-q j} |\partial^j_t \psi_e(x,y,\nu_e)| \ \lesssim \ B_0^{-1} 2^{-q m_0} |\partial^{m_0}_t \psi_e(x,y,\nu_e)|
$$
by \eqref{derivative-hyp-B} with $j=0$. 

Since $k\ge 1$ and $n \le n_0$, we see that
\begin{equation}\label{m/n}
2^{-pk-q\ell} \ = \ 2^{-\frac{k}{n} (pn - qm)} 2^{-q(\ell + km/n)} \ \le \ 2^{-\delta N} 2^{-q(\ell + km/n)}
\end{equation}
and so
\begin{equation}\label{nlessm}
2^{-pk-q\ell} \ \le \ 2^{-\delta N} 2^{-q(\ell + k)} \ \ \ {\rm if} \ \ \  n \ \le \ m.
\end{equation}
Therefore when $n \le m$, we have $2^{-pk - q\ell} \le 2^{-\delta N} 2^{-q j}$ for all $m\le j \le k+\ell$
and this shows that \eqref{step-2} and hence \eqref{Main-est} holds.

We may therefore assume that $m<n$. In particular we have $\max(m,n) = n$. Our aim now is to show
that when $m<n$, certain coefficients $Q^{k,\ell}_j(x,y,\nu_e)$ necessarily vanish which then gives us some
hope in establishing \eqref{step-2}. 

Let $\mu$ be the largest integer such that $2^{\mu} m < n$. 
Since $m<n$, we see that $\mu \ge 0$ and $n \le 2^{\mu +1} m$.
We apply Corollary \ref{cor-key-prop} with this $\mu$ to conclude that 
$Q^{k,\ell}_j (x,y,\nu_e) = 0$ for every $j > \ell + 2^{-\mu -1} k$. 
Therefore we see  that several terms on the left hand side of \eqref{step-2} vanish and so 
we can update \eqref{step-2} and  reduce matters to showing
$$
\sum_{j=m}^{\ell + 2^{-\mu -1}k}\!\! 2^{-pk - q\ell} \bigl|Q^{k,\ell}_j \partial^j_t \psi_e(x,y,\nu_e)\bigr| \lesssim 
2^{-\delta N} \sum_{j=m}^{\ell + 2^{-\mu-1} k} 2^{-q j} \bigl|\partial^j_t \psi_e(x,y,\nu_e)\bigr|
$$
instead.
It therefore suffices to establish that $2^{-pk-q\ell} \le 2^{-\delta N} 2^{-qj}$ for all $m\le j \le \ell + 2^{-\mu -1} k$
and this would follow if we could improve \eqref{nlessm} to
$$
2^{-pk-q\ell} \ \le \ 2^{-\delta N} 2^{-q(\ell + 2^{-\mu -1}k)}.
$$
From \eqref{m/n}, we see that this is true since $n\le 2^{\mu + 1} m$.
\end{proof}

By \eqref{Dpq} and Proposition \ref{main-prop}, there is an absolute, uniform constant $C$ such that
\begin{equation}\label{Dpq-II}
|D_{p,q}| \ \le \ C |\lambda \lambda_e| 2^{-\delta N} \sum_{j=m}^{m_0} 2^{-qj} \bigl|\partial^j_t \psi_e(x,y,\nu_e)\bigr|  
\end{equation}
where we recall $N = pn - qm$. Let us denote the sum on the right hand side by ${\mathcal S}_q = {\mathcal S}_q(x,y,\nu_e)$; 
that is,
${\mathcal S}_q := \sum_{j=m}^{m_0} 2^{-qj} |\partial^j_t \psi_e(x,y,\nu_e)|$. Our goal now is to seek
a complimentary bound to \eqref{Dpq-II}; namely, we will show that
\begin{equation}\label{Ipq-IIpq}
|I_{p,q}|, \, |II_{p,q}| \ \le \ C \bigl(|\lambda \lambda_e| {\mathcal S}_q \bigr)^{-\epsilon}
\end{equation}
for some absolute, uniform constant $C$ and $\epsilon>0$.
If we can do this, then by taking a convex combination of the bounds in \eqref{Dpq-II} and \eqref{Ipq-IIpq}
we can conclude that
\begin{equation}\label{Dpq-III}
|D_{p,q} | \ \le \ C 2^{-\delta' N} \min( |\lambda \lambda_e| {\mathcal S}_q, (|\lambda \lambda_e| {\mathcal S}_q)^{-1} )^{\epsilon'}
\end{equation}
for some $\delta', \epsilon' > 0$. Hence
$$
|{\mathcal I}_1 - {\mathcal I}_2| \ \le \ \sum_{(p,q) \in {\mathcal  R}_1} |D_{p,q}| \ \le \ C \,
\sum_{N\ge 0} 2^{-\delta' N} \sum_{pn - qm = N} 
\min(|\lambda \lambda_e| {\mathcal S}_q, (|\lambda \lambda_e| {\mathcal S}_q)^{-1} )^{\epsilon'}.
$$
For fixed $N\ge 0$, the inner sum above is easily seen to be uniformly bounded. In fact, the sum is a sum only
over integers $q$ since $p$ is uniquely determined by $q$ once $N$ is fixed. Splitting this inner sum in $q$ further,
depending on which term $2^{-qj} |\partial^j_t \psi_e(x,y,\nu_e)|\}$ in the sum ${\mathcal S}_q$  is largest, and using the
complimentary estimates mainifest in \eqref{Dpq-III}, we see that the inner sum 
is uniformly bounded. This implies that ${\mathcal I}_1  =  {\mathcal I}_2
+ O(1)$ as was to be shown.

It remains to show \eqref{Ipq-IIpq}. We first concentrate on establishing \eqref{Ipq-IIpq} for $I_{p,q}$,
$$
I_{p,q} = \int\!\!\!\int  e^{2\pi i [\lambda \varphi(s, t) + \rho s + 
\eta t]} \zeta(2^p s) \zeta(2^q t) \frac{ds}{s} \frac{dt}{t}  =: 
\int\!\!\!\int e^{2\pi i \Phi(2^{-p}s, 2^{-q}t)} \zeta(s)\zeta(t) \frac{ds}{s} \frac{dt}{t}.
$$
Here $\Phi(2^{-p}s,2^{-q} t) =  \lambda \varphi(2^{-p}s,2^{-q}t) + \rho 2^{-p}s + \eta 2^{-q} t$ and
$$
\varphi(2^{-p}s, 2^{-q} t) =  P_{x,y,\omega}(2^{-p}s,2^{-q} t) + \psi(x+ 2^{-p}s,y,\omega) +
\psi(x,y+2^{-q}t,\omega) 
$$
We will think of $\Phi(2^{-p}s, 2^{-q}t)$ as a function of $t$, denoting it by $\Phi(t)$.
By \eqref{P=P*} and \eqref{psi=psie}, we have
$$
|\Phi^{(m_0)}(t)| \ = \ 2^{-qm_0} \bigl| (\lambda \lambda_e) \partial^{m_0}_t \psi_e (x, y+2^{-q} t, \nu_e) \bigr| 
\ \gtrsim \ 2^{-q m_0} |\lambda \lambda_e|
$$
by  \eqref{derivative-hyp-B}. Therefore if $2^{-q m_0} \ge \delta \sum_{j=m}^{m_0 - 1} 2^{-q j} |\partial^j_t \psi_e(x,y,\nu_e)|$ for some $\delta > 0$, then an application of van der Corput's lemma establishes  \eqref{Ipq-IIpq}.
Suppose instead that
\begin{equation}\label{d-small}
2^{-q m_0} \ \le \ \delta \sum_{j=m}^{m_0 -1 } 2^{-q j} \bigl|\partial^j_t \psi_e (x,y,\nu_e)\bigr| \ \ {\rm for \ some \ small} \ \
\delta > 0
\end{equation}
which we will choose small enough later. 
We write 
$$
\psi(x,y+2^{-q} t, \omega) = \sum_{\ell=m}^{m_0-1} \frac{1}{\ell!} \partial^{\ell}_t \psi(x,y,\omega) (2^{-q} t)^{\ell}
\ + \  E_q(t) \ =: \ Q_{x,y,\omega} (t) \ + \ E_q(t)
$$
where for any $2\le m' \le m_0 -1$, $|E^{(m')}_q(t)| \lesssim |\lambda_e| 2^{-q m_0}$. A simple equivalence of norms argument
shows there exists a $\delta_{m_0} > 0$ and an $m \le m' \le m_0 -1$ such that 
$$
|Q_{x,y,\omega}^{(m')}(t)| \ \ge \ \delta_{m_0} |\lambda_e| \ \sum_{j=m}^{m_0 -1} 2^{-q j} 
\bigl| \partial^j_t \psi_e(x,y,\nu_e)\bigr|
$$
where we continue to employ \eqref{psi=psie}. Choosing $\delta>0$ small enough in \eqref{d-small}, we have
$$
\bigl| \partial^{m'}_t \psi(x,y+t,\omega) \bigr| \ \ge \ (\delta_{m_0}/2) |\lambda_e| \ \sum_{j=m}^{m_0 -1}
2^{-q j} \bigl| \partial^j_t \psi_e(x,y,\nu_e)\bigr| \ \ge \ (\delta_{m_0}/4) |\lambda_e| \, {\mathcal S}_q.
$$
By \eqref{P=P*}, we have
$$
P_{x,y,\omega}(2^{-p} s, 2^{-q}t) \ = \ \lambda_e\!\!\!\!\!\!\!\!\!\! \sum_{\begin{array}{c}\scriptstyle
         k, \ell \ge 1 \\
         \vspace{-5pt}\scriptstyle k m_0  + \ell n_0 < m_0 n_0
         \end{array}} \!\!\!\!\!\!\!\!\!\!\!\!\frac{1}{k!\ell!}\,  \partial^{k,\ell}\psi_e(x,y,\nu_e(\omega)) \,  
(2^{-p} s)^k (2^{-q} t)^{\ell}
$$
and if we think of this as a polynomial in $t$, denoting it  by ${\mathcal P}(t)$, we see that
$$
\bigl| {\mathcal P}^{(m')}(t)\bigr| \ \le \ |\lambda_e|\!\!\!\!\!\!\!\!\!\!\! \sum_{\begin{array}{c}\scriptstyle
         k, \ell \ge 1 \\
         \vspace{-5pt}\scriptstyle k m_0  + \ell n_0 < m_0 n_0
         \end{array}} \!\!\!\!\!\!\!\!\!\!\!\!\frac{1}{k!\ell!}\,  2^{-pk -q\ell} 
\bigl|\partial^{k,\ell}\psi_e(x,y,\nu_e(\omega))\bigr| \ \le \ 2^{-\delta N} |\lambda_e| {\mathcal S}_q
$$
by Proposition \ref{main-prop}. Since $N\ge N_0$, we now choose $N_0$ large enough so that the 
bound 
$$
|\Phi^{(m')}(t)|  \ \ge  \ |\lambda| \bigl( |\partial^{m'}_t \psi(x,y+t,\omega)| \ -  |{\mathcal P}^{(m')}(t)| \bigr) \ \ge \ 
(\delta_{m_0}/8) |\lambda \lambda_e| {\mathcal S}_q
$$ 
holds. Another application of van der Corput's lemma establishes \eqref{Ipq-IIpq}. 

When we turn to $II_{p,q}$, we see that the only difference is that we replace $\Phi(t)$ with 
$$
\Psi(t) = \lambda \bigl(\psi(x+ 2^{-p}s,y,\omega) +
\psi(x,y+2^{-q}t,\omega) \bigr) + \rho 2^{-p} s + \eta 2^{-q} t,
$$
the difference being that 
the polynomial $P_{x,y,\omega}(s,t)$ is no longer present. So the
first half of the argument above shows that there is an $m\le m' \le m_0$ such that
$|\Psi^{(m')}(t)| \gtrsim |\lambda \lambda_e| {\mathcal S}_q$ and so van der Corput's lemma
establishes \eqref{Ipq-IIpq} in this case as well. This establishes \eqref{Ipq-IIpq} in all cases,
completing the proof of Theorem \ref{main} when the factorisation hypothesis (FH) holds.

\subsection{Proof of Proposition \ref{C}} Here we return to the proof of Proposition \ref{C} where we now establish
the uniform boundedness of 
$$
{\mathcal O}  := \ \int\!\!\!\int_{R_2} 
e^{i [\lambda \varphi(s,t) + \rho s + \eta t]}\  \frac{ds}{s} \frac{dt}{t}.
$$
Here $\varphi(s,t) = \varphi_{x,y,\omega}(s,t) = P_{x,y,\omega}(s,t) + \psi(x+s,y,\omega) + \psi(x,y+t,\omega)$
and $R_2  = \{(s,t)\in R_0: C^{-1} |t|^m < |s|^n < |t|^m\}$. In order to isolate the main one
dimensional oscillatory integral in ${\mathcal O}$ effectively, we will first reduce matters to the following
oscillatory integral with polynomial phase ${\mathcal P}(s,t)$,
$$
{\mathcal T} \ = \ \int\!\!\int_{R_2} e^{2\pi i [\lambda {\mathcal P}(s,t) + \rho s + \eta t]} \frac{ds}{s} 
\frac{dt}{t},
$$
showing that ${\mathcal O} = {\mathcal T} + O(1)$. Here 
$$
{\mathcal P}(s,t) \ = \ P_{x,y,\omega}(s,t) + \sum_{k=n}^{n_0 -1} \frac{1}{k!} \partial^k_s \psi(x,y,\omega) s^k
+ \sum_{\ell=m}^{m_0 -1} \frac{1}{\ell!}\partial^{\ell}_t \psi(x,y,\omega) t^{\ell}
$$
$$
= \ 
\sum_{\begin{array}{c}\scriptstyle
         k, \ell \ge 0 \\
         \vspace{-5pt}\scriptstyle k m_0  + \ell n_0 < m_0 n_0
         \end{array}} \!\!\!\!\!\!\!\!\!\!\!\!\frac{1}{k!\ell!}\,  \partial^{k,\ell}\psi(x,y,\omega) \,  
s^k t^{\ell}.
$$

We will accomplish this in two steps. We first compare ${\mathcal O}$ with
$$
{\mathcal T}_1 \ = \ \int\!\!\int_{R_2} e^{2\pi i [\lambda {\mathcal P}_1(s,t) + \rho s + \eta t]} \frac{ds}{s} 
\frac{dt}{t}
$$
where 
$$
{\mathcal P}_1(s,t) \ = \ P_{x,y,\omega}(s,t) + \sum_{k=n}^{n_0 -1} \frac{1}{k!} \partial^k_s \psi(x,y,\omega) s^k
+ \psi(x,y+t,\omega),
$$
and show ${\mathcal O} = {\mathcal T}_1 + O(1)$.

As with ${\mathcal I}_1$ we decompose these
integrals dyadically, writing ${\mathcal O} = \sum_{(p,q) \in {\mathcal R}_2} {\mathcal O}_{p,q}$
where 
$$
{\mathcal O}_{p,q} \ = \ \int\!\!\int e^{2\pi i [\lambda \varphi(2^{-p}s,2^{-q}t) + \rho 2^{-p}s + 
\eta 2^{-q} t]} \zeta(s) \zeta(t) \frac{ds}{s}
\frac{dt}{t}.
$$
and ${\mathcal T}_1 = \sum_{(p,q) \in {\mathcal R}_2} {\mathcal T}^1_{p,q}$ where
$$
{\mathcal T}^1_{p,q} \ = \ \int\!\!\int e^{2\pi i [\lambda {\mathcal P}_1(2^{-p}s,2^{-q}t) + \rho 2^{-p}s + 
\eta 2^{-q} t]} \zeta(s) \zeta(t) \frac{ds}{s}
\frac{dt}{t}.
$$
Here ${\mathcal R}_2 = \{(p,q) \in {\mathcal R}: 0 \le pn - qm \le N_0 \}$. Again we establish various estimates on the 
difference $D_{p,q} = {\mathcal O}_{p,q} - {\mathcal T}^1_{p,q}$ but our task is much easier now 
since the sum over $(p,q) \in {\mathcal R}_2$
is effectively a sum over $p$ alone since for each $0\le N \le N_0$, the integer $q$ is uniquely determined by $p$
once $N$ is fixed and there are only boundedly many $N$ to consider. 

Since the difference of phases appearing
in ${\mathcal O}_{p,q}$ and ${\mathcal T}^1_{p,q}$ is 
$$
2\pi i \lambda \Bigl[ \psi(x+2^{-p}s,y,\omega) \ - \ \sum_{k=0}^{n_0-1} \frac{1}{k!} \partial^k_s 
\psi(x,y,\omega) (2^{-p}s)^k\Bigr] \ = \
O(|\lambda| 2^{-pn_0}),
$$
we have $|D_{p,q}| \le  C |\lambda| 2^{-p n_0}$. But for both phases in ${\mathcal O}_{p,q}$
and ${\mathcal T}^1_{p,q}$ the $n_0$th derivative with respect to $s$ is bounded below by $|\lambda| 2^{-p n_0}$.
Hence
van der Corput's lemma gives us the complimentary estimate $|D_{p,q}| \le C (|\lambda| 2^{-p n_0})^{-1/n_0}$
and so for each $0\le N \le N_0$,
$$
\sum_{p: pn - qm = N} |D_{p,q}| \ \le \ C \sum_{p: pn - qm = N} \min(|\lambda| 2^{-p n_0}, (|\lambda| 2^{-p n_0})^{-1/n_0}) 
\ \le \ C_N.
$$ 
This shows that ${\mathcal O} = {\mathcal T}_1 + O(1)$. 

We now compare ${\mathcal T}_1$ to ${\mathcal T}$ and show ${\mathcal T}_1 = {\mathcal T} + O(1)$. As with
${\mathcal O}$ and ${\mathcal T}_1$, we decompose ${\mathcal T} = \sum_{(p,q) \in {\mathcal R}_2} {\mathcal T}_{p,q}$
dyadically where
$$
{\mathcal T}_{p,q} \ = \ \int\!\!\int e^{2\pi i [\lambda {\mathcal P}(2^{-p}s,2^{-q}t) + \rho 2^{-p}s + 
\eta 2^{-q} t]} \zeta(s) \zeta(t) \frac{ds}{s}
\frac{dt}{t}.
$$
In a similar way as above, we have the following bound for the difference $D_{p,q} = {\mathcal T}^1_{p,q}
- {\mathcal T}_{p,q}$;
$$
|D_{p,q}| \ \le \ C \min(|\lambda \lambda_e| 2^{-q m_0}, (|\lambda \lambda_e| 2^{-q m_0})^{-1/m_0})
$$
and so for each $0 \le N \le N_0$,
$$
\sum_{p: pn - qm = N} |D_{p,q}| \ \le \ C \sum_{q: pn - qm = N} 
\min(|\lambda\lambda_e| 2^{-q m_0}, (|\lambda\lambda_e| 2^{-q m_0})^{-1/m_0}) 
\ \le \ C_N.
$$ 
This shows that ${\mathcal T}_1 = {\mathcal T} + O(1)$ and hence ${\mathcal O} = {\mathcal T} + O(1)$,
as claimed.

We make the change of variables $s\to s/|t|^{m/n}$ in the $s$ integral so that
$$
{\mathcal T} = \int_{|t|<\kappa} e^{2\pi i \eta t} \frac{1}{t} \Bigl[\int_{T(t)} 
e^{2\pi i [\lambda ({\mathcal P}(|t|^{m/n} s, t)) + \rho |t|^{m/n} s]} 
\frac{ds}{s} \Bigr] dt
$$
where $T(t) = \{ C^{-1}<|s|<1, |s|^{n_0 n} \le |t|^{m_0 n - m n_0}\}$. We now interchange
the order of integration so that
$$
{\mathcal T} \ = \ \int_{C^{-1}<|s| < 1} \Bigl[ \int_{T} 
e^{2\pi i [\lambda ({\mathcal P}(|t|^{m/n} s, t)) + \rho |t|^{m/n} s + \eta t]} 
\frac{dt}{t} \Bigr] \frac{ds}{s}
$$
where $T = \{|t|<\kappa: a<|t|<b\}$ for some $a,b$, depending on $s$. It is possible, after a few
more changes of variables, to reduce the inner integral to one with a purely polynomial phase and then we can appeal
to the result of Stein and Wainger \eqref{SW}. However in their original paper \cite{SW},
Stein and Wainger in fact prove an oscillatory integral estimate for phases with rational powers
and in particular, their result  implies that the inner integral above is uniformly bounded in the coefficients of 
${\mathcal P}$,  $\lambda, \rho, \eta$ and $a,b$. The bound only depends on the degree of ${\mathcal P}$
which in turn depends only on $n_0$ and $m_0$. Hence ${\mathcal T} = O(1)$ and this in turn  implies
${\mathcal O} = O(1)$, completing the proof of Proposition \ref{C}.

\section{The proof of Theorem \ref{main} -- when (FH) fails}\label{main-fail}

We begin with a simple lemma about analytic functions whose proof is an application
of the Weierstrass Preparation Theorem. We only present a special case which serves
our purposes. The general case and the details of the proof can be found in \cite{O}.

\begin{lemma}\label{OW} Let $M$ be a real-analytic manifold and let $\psi$ be a real-analytic
function on ${\mathbb T}^2 \times M$. Then the factorisation hypothesis (FH) fails for $\psi$ on $M$ if and only if
there is a point $(x,y,\omega) \in {\mathbb T}^2 \times M$ such that $\psi_{st}(x,y,\omega) \not=0$
and either $\psi_{ss}(x,y,\omega) = 0$ or $\psi_{tt}(x,y,\omega) = 0$ (or both).
\end{lemma}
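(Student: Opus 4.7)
The plan is to prove the two directions separately, the forward ($\Leftarrow$) direction being essentially immediate and the reverse ($\Rightarrow$) direction requiring the Weierstrass Preparation Theorem to produce the desired point.

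For the direction that a point with $\psi_{st}(P_0) \ne 0$ and (say) $\psi_{ss}(P_0) = 0$ forces (FH) to fail: I would argue by contradiction. If (FH) held, there would be an analytic germ $K$ at $P_0$ with $\psi_{st} = K \psi_{ss}$. Evaluating at $P_0$ gives $\psi_{st}(P_0) = K(P_0)\psi_{ss}(P_0) = 0$, contradicting the assumption. The case $\psi_{tt}(P_0) = 0$ is identical (using the other divisibility relation). This direction is routine.

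For the reverse direction, suppose (FH) fails; then there is a point $P_0 = (x_0,y_0,\omega_0)$ where, without loss of generality, $\psi_{ss}$ does not divide $\psi_{st}$ as germs at $P_0$. First I would observe that $\psi_{ss}(P_0) \ne 0$ would make $\psi_{ss}$ a unit in the local ring of analytic germs at $P_0$, giving trivial divisibility; hence $\psi_{ss}(P_0) = 0$. If moreover $\psi_{st}(P_0) \ne 0$, then $P_0$ itself is the desired witness point and we are done. The remaining (hard) case is $\psi_{ss}(P_0) = \psi_{st}(P_0) = 0$: here the plan is to produce a \emph{nearby} point $P_1$ at which $\psi_{ss}(P_1)=0$ but $\psi_{st}(P_1) \ne 0$.

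For this last step I would apply the Weierstrass Preparation Theorem. After a linear change of coordinates making $\psi_{ss}$ regular of some order $n \geq 1$ in one variable (say $s$), one may write locally $\psi_{ss} = U \cdot W$ with $U$ a unit and $W$ a Weierstrass polynomial of degree $n$ in $s$ whose coefficients are analytic in the remaining variables. Divisibility $\psi_{ss}\mid\psi_{st}$ becomes $W \mid \psi_{st}$, and Weierstrass division writes $\psi_{st} = Q\cdot W + R$ with $R$ of degree $<n$ in $s$. Our failure of divisibility says $R \not\equiv 0$. The main obstacle is now the step of translating this algebraic non-vanishing of $R$ into the existence of a \emph{real} point near $P_0$ where $W = 0$ but $R \ne 0$ (and hence $\psi_{st} \ne 0$): for generic values of the transverse parameters $v$ the zero set $\{W(\cdot,v)=0\}$ consists of $n$ roots, and since $R(\cdot,v)$ is a nonzero polynomial of degree $<n$ for $v$ in an open dense set, at least one such root escapes $R$. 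The delicate point is ensuring a \emph{real} such root survives the perturbation; this is the technical heart of the argument and I would appeal to the discussion in \cite{O}. Once such a $P_1$ is produced, the reverse direction is complete.
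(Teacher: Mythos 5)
Your forward implication and the reduction of the reverse implication to the case $\psi_{ss}(P_0)=\psi_{st}(P_0)=0$ are correct, but the plan for that remaining case has a genuine gap, and it is not just the ``delicate point'' you defer to \cite{O} at the end: the local statement you are trying to prove is false. You look for a witness point \emph{near} the point $P_0$ at which germ-divisibility fails, but no such point need exist. Take $d=1$, $M$ a single point, and $\psi(s,t)=(1-\cos 2\pi s)(1-\cos 2\pi t)$. As germs at the origin, $\psi_{ss}=4\pi^{2}\cos(2\pi s)\,(1-\cos 2\pi t)=t^{2}\cdot({\rm unit})$ while $\psi_{st}=4\pi^{2}\sin(2\pi s)\sin(2\pi t)=st\cdot({\rm unit})$, so $\psi_{ss}$ does not divide $\psi_{st}$ at $(0,0)$ and (FH) fails there. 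Yet in a neighbourhood of the origin the real zero set of $\psi_{ss}$ is exactly the line $\{t=0\}$, on which $\psi_{st}$ vanishes identically: every real root of your Weierstrass polynomial $W$ is also a root of the remainder $R$, and no choice of the transverse parameters produces a nearby real point with $W=0$ and $R\neq 0$. The failure of divisibility here is a multiplicity phenomenon ($\psi_{ss}$ vanishes to order two along $\{t=0\}$, $\psi_{st}$ only to order one), which real points near $P_0$ simply cannot detect. The witness promised by the lemma sits at $(1/4,1/4)$, far from the point where the germ-divisibility breaks down.

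The reverse implication is therefore genuinely global: one must exploit the periodicity of $\psi$ in $s$ and $t$ (for each fixed $(t,\omega)$ the function $s\mapsto\psi_{ss}(s,t,\omega)$ integrates to zero over a period, hence changes sign unless it vanishes identically, and symmetrically in $t$) in order to locate a witness somewhere on ${\mathbb T}^{2}\times M$, not merely near $P_0$; without periodicity the statement is simply wrong, as the germ $\psi=s^{2}t^{2}/2$ already shows. Note also that the paper offers no proof of this lemma at all --- it is stated as a special case of a factorisation criterion, with the details referred to \cite{O} --- so there is no internal argument to measure yours against; but as structured, your proposal establishes only the trivial direction, and the route chosen for the substantive direction cannot be repaired by a more careful treatment of the Weierstrass division alone.
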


We now turn to the proof of Theorem \ref{main} when the factorisation hypothesis (FH) fails.
In this case it suffices to show that the norms $\|e^{2\pi i {\overline{n}} \cdot {\overline{g}}} \|_{U_{rect}({\mathbb T}^2)}$
are unbounded in ${\overline{n}} \in {\mathbb Z}^d$ where ${\overline{g}}(s,t) = {\overline{\phi}}(s,t) + {\vec{L}}_1 s
+ {\vec{L}}_2 t$ parametrises our map $\Phi : {\mathbb T}^2 \to {\mathbb T}^d$. Here
${\overline{\phi}} = (\phi_1, \ldots, \phi_d)$ is a $d$-tuple of real-analytic, periodic functions and 
${\overline{L}} := ({\vec{L}}_1, {\vec{L}}_2)$
is a pair of lattice points in ${\mathbb Z}^d$. 

Recall that
$$
\|e^{2\pi i {\overline{n}}\cdot {\overline{g}}} \|_{U_{rect}({\mathbb T}^2)} \ = \
\sup_{M,N} \| S_{M,N} (e^{2\pi i {\overline{n}} \cdot {\overline{g}}})\|_{L^{\infty}({\mathbb T})}
$$
$$
= \
\sup_{M,N,x,y} \ \Bigl| \int\!\!\!\int_{{\mathbb T}^2} e^{2\pi i {\overline{n}} 
\cdot [{\overline{\phi}}(x + s, y + t) + {\overline{L}}\cdot (s,t)]} D_M(s) D_N(t) \, ds dt\Bigr|
$$
where $D_M$ denotes the Dirichlet kernel of order $M$. Here 
${\overline{L}} \cdot (s,t) = {\vec{L}}_1 s + {\vec{L}}_2 t$.

As before we set $\psi(x,t,\omega) = \omega \cdot {\overline{\phi}}(s,t)$ which defines a real-analytic map
on ${\mathbb T}^2 \times {\mathbb S}^{d-1}$. If $\psi$ does not satsify (FH) on
${\mathbb S}^{d-1}$, then Lemma \ref{OW} implies that there is a point $(x_0,y_0, \omega_0) \in 
{\mathbb T}^2 \times {\mathbb S}^{d-1}$ such that (without loss of generality)
\begin{equation}\label{FH-fail}
\psi_{st}(x_0,y_0,\omega_0) \ \not= \ 0 \ \ \ {\rm and} \ \ \ \psi_{tt}(x_0,y_0,\omega_0) = 0.
\end{equation}

\subsection{An intuitive approach which does not quite work}
It seems  natural now to consider the above oscillatory integral 
$$
 \int\!\!\!\int_{{\mathbb T}^2} e^{2\pi i {\overline{n}} 
\cdot [{\overline{\phi}}(x_0 + s, y_0 + t) + {\overline{L}}\cdot (s,t)]} D_M(s) D_N(t) \, ds dt
$$
evalutated for $x=x_0, \, y=y_0$ and for ${\overline{n}} \in {\mathbb Z}^d$ with 
${\overline{n}} 
= \|{\overline{n}}\| \, \omega$ where $\omega$ is as close as possible to $\omega_0$. In general
$\omega_0 \not= {\overline{n}}/\|{\overline{n}}\|$ for any ${\overline{n}}\in {\mathbb Z}^d$ but
for this discussion, let us simply matters and assume we can take $\omega = \omega_0$.

With strategic
choices of $\lambda := \|{\overline{n}}\|$, $M$ and $N$, we would then like to show that the above
integral is unbounded as $\lambda \to \infty$. Using the property \eqref{Dirichlet} of  Dirichlet kernels,
together with Proposition \ref{osc-est-III}, we see that
$$
 \int\!\!\!\int_{{\mathbb T}^2} e^{2\pi i {\overline{n}} 
\cdot [{\overline{\phi}}(x_0 + s, y_0 + t) + {\overline{L}}\cdot (s,t)]} D_M(s) D_N(t) \, ds dt  \  = \
$$
$$
 \int\!\!\!\int_{|s|, |t|\le 1/2} e^{2\pi i \lambda [
\psi(x_0 + s, y_0 + t) + \omega_0 \cdot {\overline{L}}(s,t)]} \sin(Ms) \sin(Nt) \, \frac{ds}{s} \frac{dt}{t} \ + \ 
O_{\omega_0}(1)
$$
where $\omega_0 \cdot {\overline{L}}(s,t) := \omega_0 \cdot {\vec{L}}_1 \, s + \omega_0 \cdot {\vec{L}}_2 \, t$.

Let us denote by $S$ the integral above with the double Hilbert transform singularity $1/st$. We scale $S$
by making the change of variables $s \to \delta^3 s$ and $t \to \delta^2 t$. Hence
$$
S \ = \  \int_{|s|\le \delta^{-3}}\!\!\int_{|t|\le\delta^{-2}} e^{2\pi i \lambda [
\psi_{\delta}(s, t) + \omega_0 \cdot {\overline{L}} (\delta^3 s,\delta^2 t)]} \sin(M\delta^3 s) \sin(N\delta^2 t) \, \frac{ds}{s} \frac{dt}{t} 
$$
where 
$$
\psi_{\delta} (s,t) := \psi(x_0 + \delta^3 s, y_0 + \delta^2 t, \omega_0) = \psi(x_0, y_0,\omega_0) + 
\nabla \psi(x_0,y_0, \omega_0) \cdot (\delta^3 s, \delta^2 t) \ +
$$
$$
\delta^5 \bigl[ \psi_{st}(x_0,y_0.\omega_0) s t + \delta \psi_{ss}(x_0,y_0,\omega_0) s^2/2 + 
\sum_{k+\ell\ge 3} \frac{\delta^{3k+2\ell - 5}}{k!\ell!} \partial^{k,\ell}\psi(x_0,y_0,\omega_0) s^k t^{\ell} \bigr].
$$
The gradient $\nabla = \nabla_{s,t}$ is taken with respect to the variables $(s,t)$ on ${\mathbb T}^2$.

This scaling is suggested by the Newton diagram of $\psi(x,y,\omega_0)$ based at the point $(x_0, y_0)$
whose essential feature is the content of \eqref{FH-fail}.  By choosing $\lambda = \delta^{-5}$,
we see that 
$$
e^{2\pi i \lambda [\psi_{\delta}(s,t)  + \omega_0 \cdot {\overline{L}}(\delta^3 s,\delta^2 t)] } \ = \ C \,
 e^{2\pi i [ c s t + {\vec v}\cdot (\delta^{-2} s, \delta^{-3} t) + O(\delta)]}
$$
for some $|C| = 1$ and $c \not= 0$. Here ${\vec v} = {\vec{v}}(x_0,y_0,\omega_0)  =  \omega_0 \cdot {\overline{L}} + 
\nabla\psi(x_0,y_0,\omega_0)$ (recall $\omega_0 \cdot {\overline{L}} = (\omega_0 \cdot {\vec{L}}_1,
\omega_0\cdot {\vec{L}}_2)$).

Now if we choose
$(M,N)$ so that 
$$
{\vec v}(x_0,y_0,\omega_0) \cdot (\delta^{-2}, \delta^{-3}) \ \  
\sim \ \  (M\delta^3, N\delta^2)
$$ 
and take $\lambda$ large (or equivalently, $\delta$ small), 
we might hope that all the terms in the Taylor expansion of $(s,t) \to \psi(s,t,\omega_0)$ at $(x_0,y_0)$
of order 2 and larger become less and less significant compared to the 
the hyperbolic oscillation $\psi_{st}(x_0,y_0,\omega_0) s t$, putting us in an analogous situation
of \eqref{CF} arising in C. Fefferman's work \cite{CF}
 so that we can deduce that the oscillatory integral blows up like $\log \lambda = \log(1/\delta)$. 
Such scaling arguments were used in \cite{CWW} and with
some effort all this can be carried out here {\it IF} the vector
$$
{\vec{v}}(x_0,y_0,\omega_0)  \ = \ \omega_0 \cdot {\overline{L}} + 
\nabla\psi(x_0,y_0,\omega_0) \ = \ \omega_0 \cdot \bigl[\nabla {\overline{\phi}}(x_0, y_0) + {\overline{L}}\bigr]
$$
has the property that both its components are nonzero. 

Unfortunately it may happen that ${\vec{v}}$ vanishes. For example, consider the case $d=1$
and $\phi(s,t) = \sin(2\pi s) \cos(2\pi t)$ so that $\psi = \phi$. At the point $(x_0,y_0) = (0, 1/4)$, we have
$\phi_{tt}(0,1/4) = 0$ yet $\phi_{st}(0,1/4) = - 4 \pi^2$ and so the factorisation hypothesis (FH) fails
for this example. We take ${\overline{L}} = (L_1, L_2) = 0$ and so
in this case, ${\vec{v}} = \nabla \phi(0, 1/4) = 0$. On the other hand, it is not too difficult to show that
$$
\sup_{n,M,N} \ \Bigl| \int\!\!\!\int_{|s|, |t|\le 1/2} e^{2\pi i n [\sin(2\pi s)\cos(2\pi t + \pi/2) ]} \sin(Ms) \sin(Nt) \, \frac{ds}{s} \frac{dt}{t}\Bigr|
\ <\ \infty.
$$

\subsection{A rigorous approach}
Therefore we need to proceed in a slightly different way. We will perturb the point 
$(x_0,y_0) \in {\mathbb T}^2$ in an arbitrarily small
neighbourhood so that  the two components of 
$$
{\vec{v}}(x,y,\omega_0)  \ = \ \omega_0 \cdot \bigl[\nabla {\overline{\phi}}(x, y) + {\overline{L}}\bigr]
$$
are nonzero, 
$\psi_{st}(x,y,\omega_0) \not=0$ and $0<|\psi_{tt}(x,y,\omega_0)| \le \varepsilon$ is as small as we like. 
We will then show that there exists ${\overline{n}} \in {\mathbb Z}^d$ and choices for $M$ and $N$
such that the oscillatory 
$$
 \int\!\!\!\int_{{\mathbb T}^2} e^{2\pi i {\overline{n}} 
\cdot [{\overline{\phi}}(x + s, y + t) + {\overline{L}}\cdot (s,t)]} \sin(Ms) \sin(Nt) \, \frac{ds}{s} \frac{dt}{t}
$$
is bounded below by $\log(1/\varepsilon)$. This will establish that the mapping property $(\Phi)_{rect}$ fails
for $\Phi = \Phi_{{\overline{\phi}}, {\overline{L}}}$ as well as exhibit an inherent discontinuity in these
oscillatory integrals we are studying.

First we observe  that $\psi_{tt}(x,y,\omega_0)$ is not identically zero; otherwise
$\partial_t \psi(x,y,\omega_0) \equiv \partial_t \psi(x,y_0,\omega_0)$ which in turn implies
that $\psi_{st}(x_0,y_0,\omega_0) = 0$, contradicting \eqref{FH-fail}. Similarly $\psi_{ss}(x,y,\omega_0)$
is not identically zero.
Hence
the zero set 
\begin{equation}\label{zero}
Z \ =  \ \{(x,y) \in {\mathbb T}^2  : 
\psi_{tt}(x,y,\omega_0) = 0  \ {\rm or} \ \psi_{ss}(x,y,\omega_0) = 0\} 
\end{equation} 
is a set of measure zero
and in particular,
we can find points $(x,y)$ arbitrarily close to $(x_0,y_0)$ such that both 
$|\psi_{tt}(x,y,\omega_0)|, |\psi_{ss}(x,y,\omega_0)| >0$ and $|\psi_{tt}(x,y,\omega_0)|$
is as small as we like.

In the next lemma we will derive derivative bounds for $\psi$ in a neighbourhood $V$ of $(x_0,y_0)$.

\begin{lemma}\label{m,n>2} There exists a pair $m_0, n_0 \ge 2$ such  that for any sufficiently small neighbourhood $V$ of $(x_0,y_0)$, we have
\begin{equation}\label{m>2} 
A/2 \le \bigl| \partial^{m_0}_t\psi(x,y,\omega_0) \bigr| \le 2 A \ \ \ {\rm and} \ \ \ 
B/2 \le \bigl| \partial^{n_0}_s \psi(x,y,\omega_0) \bigr| \le  2 B
\end{equation}
for all $(x,y) \in V$.\footnote{In fact these bounds hold for all $(z,w) \in W$ in a complex neighbourhood 
$W \subset {\mathbb C}^2 = {\mathbb R}^2 + i {\mathbb R}^2$ of $(x_0, y_0) + i (0,0)$. This remark
will allow us to use standard Cauchy estimates on the derivatives of $\partial^{m_0}_t \psi$ and $\partial^{n_0}_s \psi$.} 
 Here $A = |\partial^{m_0}_t \psi(x_0,y_0,\omega_0)|$ or 
$A = | \partial^{1,m_0} \psi(x_0,y_0,\omega_0)| \, |x-x_0|$ depending (respectively) on whether or not
there is an $m\ge 3$ such that
$\partial^{m}_t \psi(x_0,y_0,\omega_0) \not= 0$.
Similarly $B =  |\partial^{n_0}_s \psi(x_0,y_0,\omega_0)|$ or 
$B = | \partial^{n_0, 1} \psi(x_0,y_0,\omega_0)| \, |y-y_0|$.

Furthermore since $\psi_{st}(x_0,y_0,\omega_0) \not= 0$, we can choose $V$ 
so that for all $(x,y) \in V$,
\begin{equation}\label{n>2}
D/2 \le \bigl|\psi_{st}(x,y,\omega_0) \bigr| \le  2 D \ \ {\rm where} \ \ D :=  \bigl| \psi_{st}(x_0,y_0,\omega_0)\bigr| .
\end{equation}

\end{lemma}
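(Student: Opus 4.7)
The bound \eqref{n>2} on $\psi_{st}$ is immediate: since $\psi_{st}(x_0,y_0,\omega_0)\neq 0$ and $\psi_{st}$ is continuous, any sufficiently small neighbourhood $V$ satisfies $D/2\leq |\psi_{st}(x,y,\omega_0)|\leq 2D$ throughout $V$. For the derivative bounds \eqref{m>2} I will treat only $\partial^{m_0}_t\psi$; the argument for $\partial^{n_0}_s\psi$ is identical with $s$ and $t$ interchanged. The plan is to split into two cases according to whether some pure derivative $\partial^m_t\psi(x_0,y_0,\omega_0)$ with $m\geq 3$ is nonzero.

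If some $\partial^m_t\psi(x_0,y_0,\omega_0)\neq 0$ with $m\geq 3$, I take $m_0$ to be any such integer and set $A = |\partial^{m_0}_t\psi(x_0,y_0,\omega_0)|$; the bound $A/2\leq |\partial^{m_0}_t\psi(x,y,\omega_0)|\leq 2A$ then follows from continuity after shrinking $V$. Otherwise, combined with $\psi_{tt}(x_0,y_0,\omega_0)=0$ from \eqref{FH-fail}, we obtain $\partial^m_t\psi(x_0,y_0,\omega_0)=0$ for every $m\geq 2$; expanding in $y$ about $y_0$ then gives, for each fixed $m_0\geq 2$,
$$
\partial^{m_0}_t\psi(x_0,y,\omega_0) = \sum_{\ell\geq 0}\frac{1}{\ell!}\partial^{m_0+\ell}_t\psi(x_0,y_0,\omega_0)(y-y_0)^\ell = 0
$$
for $y$ near $y_0$, and hence identically in $y$ by analyticity.

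The heart of the matter is to produce $m_0\geq 2$ with $\partial^{1,m_0}\psi(x_0,y_0,\omega_0)\neq 0$. If no such $m_0$ existed, then Taylor expansion of $\partial_s\psi(x_0,y,\omega_0)$ in $y$ about $y_0$ would collapse to
$$
\partial_s\psi(x_0,y,\omega_0) = \partial_s\psi(x_0,y_0,\omega_0) + \psi_{st}(x_0,y_0,\omega_0)(y-y_0)
$$
for $y$ near $y_0$, and hence for all $y$ by analyticity. Periodicity in $y$ would then force $\psi_{st}(x_0,y_0,\omega_0)=0$, contradicting \eqref{FH-fail}. Choosing the smallest such $m_0$ and using $\partial^{m_0}_t\psi(x_0,y,\omega_0)\equiv 0$, I rewrite
$$
\partial^{m_0}_t\psi(x,y,\omega_0) = (x-x_0)\int_0^1\partial^{1,m_0}\psi(x_0+r(x-x_0),y,\omega_0)\,dr,
$$
and continuity of $\partial^{1,m_0}\psi$ at $(x_0,y_0,\omega_0)$ allows me to shrink $V$ so that the integrand lies within a factor of $2$ of $\partial^{1,m_0}\psi(x_0,y_0,\omega_0)$, giving $A/2\leq |\partial^{m_0}_t\psi(x,y,\omega_0)|\leq 2A$ with $A = |\partial^{1,m_0}\psi(x_0,y_0,\omega_0)|\,|x-x_0|$. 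The symmetric argument handles $\partial^{n_0}_s\psi$; its degenerate subcase is closed by the same periodicity observation applied to $\partial_t\psi(x,y_0,\omega_0)$ as an analytic function of $x$. Since $\psi$ extends holomorphically to a complex neighbourhood of $(x_0,y_0)+i(0,0)$, the same estimates deliver the complex version stated in the footnote. The one nontrivial step is the periodicity argument producing the mixed derivative $\partial^{1,m_0}\psi(x_0,y_0,\omega_0)\neq 0$ in the degenerate case; everything else is elementary continuity.
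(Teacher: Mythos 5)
Your proof is correct and follows essentially the same route as the paper: split according to whether some pure $t$-derivative of order $\ge 3$ survives at $(x_0,y_0,\omega_0)$, and in the degenerate case use periodicity in $y$ together with $\psi_{st}(x_0,y_0,\omega_0)\neq 0$ to produce a nonvanishing mixed derivative $\partial^{1,m_0}\psi(x_0,y_0,\omega_0)$, then write $\partial^{m_0}_t\psi(x,y,\omega_0)$ as an integral of $\partial^{1,m_0}\psi$ from $x_0$ to $x$. The only cosmetic difference is that you deduce $\partial^{m_0}_t\psi(x_0,\cdot,\omega_0)\equiv 0$ directly from the Taylor expansion, while the paper first shows $\psi(x_0,\cdot,\omega_0)$ is constant; both are fine.
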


\begin{proof} We consider two cases. First suppose there exists an $m_0\ge 3$ such that
$\partial^{m_0}_t \psi(x_0,y_0,\omega_0) \not= 0$. In this case we set $A = |\partial^{m_0}_t \psi(x_0,y_0,\omega_0)|>0$
and choose by continuity an open neighbourhood $V$ of $(x_0,y_0)$ 
such that $ A/2 \le | \partial^{m_0}_t\psi(x,y,\omega_0) \bigr| \le  2 A$ for all 
$(x,y) \in V$.

 Next suppose
that $\partial^m_t \psi(x_0,y_0,\omega_0) = 0$ for all $m\ge 2$. Then 
$$
\psi(x_0,y,\omega_0) \ = \ \psi(x_0,y_0,\omega_0)  + \partial_t \psi(x_0,y_0,\omega_0) (y - y_0) 
$$
for all $y\in {\mathbb T}$ by analyticity. This forces $\partial_t \psi(x_0,y_0,\omega_0) = 0$ since
$y \to \psi(x_0,y,\omega_0)$ is a periodic function. Hence $\psi(x_0,y,\omega_0) \equiv constant$
and so $\partial^m_t \psi(x_0,y,\omega_0) \equiv 0$ for all $m\ge 1$. Thus for every $m\ge 1$,
$$
\partial^m_t \psi(x,y,\omega_0) \ = \ \int_{x_0}^x \partial^{1,m} \psi(u, y, \omega_0) \, du 
$$
and we claim there exists an $m_0 \ge 2$ such that $\partial^{1,m}\psi(x_0,y_0,\omega_0) \not= 0$. If
this is not the case, then
$$
\partial_s \psi(x_0,y,\omega_0) \ = \ \partial_s \psi(x_0,y_0,\omega_0) + \partial^{1,1}\psi(x_0,y_0,\omega_0)(y-y_0)
$$
for all $y\in {\mathbb T}$ by analyticity. But $\psi_{st}(x_0,y_0,\omega_0) = \partial^{1,1} \psi(x_0,y_0,\omega_0) \not= 0$
which implies that the right hand side is a nonconstant, nonperiodic function of $y$. This contradicts the 
periodicity in $y$ of the left hand side. Thus there exists an $m_0\ge 2$ such that 
$\partial^{1,m}\psi(x_0,y_0,\omega_0) \not= 0$ and hence a neighbourhood $V$ of $(x_0, y_0)$ such that
for all $(x,y) \in V$,
$$
\frac{1}{2}\bigl|\partial^{1,m_0} \psi(x_0,y_0,\omega_0)| \, |x-x_0|  \ \le \ 
|\partial^{m_0}_t \psi(x,y, \omega_0)| \ \le  \  2 \bigl|\partial^{1,m_0} \psi(x_0,y_0,\omega_0)| \, |x-x_0| .
$$
Therefore the first part of \eqref{m>2} holds with $A = |\partial^{1,m_0} \psi(x_0,y_0,\omega_0)| \, |x-x_0| $.

A similar argument shows the existence of an $n_0\ge 2$ such that the second part of \eqref{m>2}
holds.
\end{proof}

For convenience, we write the neighbourhood $V$ of $(x_0,y_0)$
as a ball $B_{r_0}(x_0,y_0)$ so that  Lemma \ref{m,n>2} holds for any sufficiently 
small $r_0 > 0$. We will determine later exactly how small we will take $r_0>0$ but then we
will fix $r_0$ and allow all our estimates to depend on $r_0$.  

Given any $\varepsilon>0$, we can certainly find an $r = r(\varepsilon)>0$
such that $0< r < r_0/2$ and
$|\psi_{tt}(x,y,\omega_0)| \le \varepsilon$ for all $(x,y) \in B_{2r}(x_0,y_0)$.
This simply follows from the fact that $\psi_{tt}(x_0,y_0,\omega_0) = 0$.

Since $B_{2r}(x_0,y_0) \subset B_{r_0}(x_0,y_0) = V$, 
we see that \eqref{m>2} holds with $(x,y)$ replaced by $(x+s,y+t)$ where
$(x,y) \in B_r(x_0,y_0)$ and $|s|,|t| < r_0/2$. Furthermore, 
\eqref{n>2} and $|\psi_{tt}(x,y,\omega_0)| \le \varepsilon$ hold for all $(x,y) \in B_r(x_0, y_0)$. 
Our goal  is to find a point $(x,y) \in B_r(x_0,y_0)$  with a few additional properties. 
But first we need some information about certain zero sets.

\begin{lemma}\label{vector-nonzero} Each component of the analytic function
$$
{\vec{v}}(x,y,\omega_0) \ = \ \nabla\psi(x,y,\omega_0) + \omega_0 \cdot {\overline{L}} \ = \                          
\omega_0 \cdot \bigl[\nabla {\overline{\phi}}(x,y) + {\overline{L}}\bigr] 
$$
is not identically equal to zero and so the zero sets
$$
Z_1 \ :=\  \bigl\{(x,y) \in {\mathbb T}^2 : 
\partial_s \psi(x,y,\omega_0) + \omega_0\cdot{\vec{L}}_1 = 0\bigr\}
$$
and
$$
Z_2 \ :=\ \bigl\{(x,y) \in {\mathbb T}^2 : 
\partial_t \psi(x,y,\omega_0) + \omega_0\cdot{\vec{L}}_2 = 0\bigr\}
$$
are sets of measure zero. 

\end{lemma}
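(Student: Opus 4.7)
The plan is to argue by contradiction, exploiting the standing assumption from \eqref{FH-fail} that $\psi_{st}(x_0,y_0,\omega_0)\neq 0$. Each component of $\vec{v}(x,y,\omega_0)$ is a real-analytic function on the connected manifold $\mathbb{T}^2$, so if it is not identically zero then its zero set automatically has two-dimensional Lebesgue measure zero; hence it suffices to rule out the case where a component vanishes identically.

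First I would handle the first component $v_1(x,y) := \partial_s\psi(x,y,\omega_0)+\omega_0\cdot\vec{L}_1$. Suppose $v_1\equiv 0$ on $\mathbb{T}^2$. Then $\partial_s\psi(x,y,\omega_0)\equiv -\omega_0\cdot\vec{L}_1$ is a constant function of $(x,y)$. Differentiating with respect to $t$ gives $\psi_{st}(x,y,\omega_0)\equiv 0$ on $\mathbb{T}^2$, which directly contradicts $\psi_{st}(x_0,y_0,\omega_0)\neq 0$. Therefore $v_1$ is a nonzero real-analytic function, and its zero set $Z_1$ has measure zero.

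The argument for the second component $v_2(x,y) := \partial_t\psi(x,y,\omega_0)+\omega_0\cdot\vec{L}_2$ is symmetric: if $v_2\equiv 0$, then $\partial_t\psi(\cdot,\cdot,\omega_0)$ is constant, so differentiating in $s$ yields $\psi_{st}(x,y,\omega_0)\equiv 0$, again contradicting \eqref{FH-fail}. Hence $v_2$ is a nonzero real-analytic function on $\mathbb{T}^2$ and $Z_2$ has measure zero.

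There is no real obstacle here; the lemma is essentially an immediate consequence of the fact that $\psi_{st}$ does not vanish at $(x_0,y_0,\omega_0)$, together with the standard fact that the zero set of a non-identically-vanishing real-analytic function on a real-analytic manifold has measure zero. The only mild care needed is to note that the linear terms $\omega_0\cdot\vec{L}_j$ are killed upon differentiation, so the nonvanishing of $\psi_{st}$ at a single point is genuinely enough to obstruct the identical vanishing of either component of $\vec{v}$.
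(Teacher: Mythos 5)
Your proof is correct and follows essentially the same route as the paper: assume a component of $\vec{v}$ vanishes identically, observe that the corresponding first derivative of $\psi$ is then constant, differentiate to force $\psi_{st}\equiv 0$, and contradict \eqref{FH-fail}. The paper's version inserts an extra (and for the contradiction, unnecessary) remark that periodicity forces the constant to be zero; your streamlined argument reaches the same conclusion directly.
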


\begin{proof} We treat each component of ${\vec{v}}$ separately. 
If $\partial_t \psi(x,y,\omega_0) \equiv - \omega_0 \cdot {\vec{L}}_2$, then 
$\omega_0 \cdot {\vec{L}}_2 = 0$ since 
derivatives of periodic functions must vanish somewhere. Hence
 $\psi_{st}(x_0,y_0,\omega_0) = 0$
which contradicts \eqref{FH-fail}.
\end{proof}

We are in a position to put our various observations together.


\begin{lemma}\label{point-properties} Given $\varepsilon>0$. There exist a positive $r = r(\varepsilon)$
with $0<r < r_0/2$ and a point $(x,y) \in B_r(x_0,y_0)$ such that 
for all $|s|, |t| < r_0/2$,
\begin{equation}\label{m>2-again}
A/2 \ \le \ 
|\partial^{m_0}_t \psi(x + s, y+t, \omega_0)| \ \le \ 2 A, \ \ {\rm and} \ \ A >  0,
\end{equation}
\begin{equation}\label{n>2-again}
B/2 \ \le \ 
 |\partial^{n_0}_s \psi(x + s, y+t, \omega_0)| \ \le \  2 B, \ \ {\rm and} \ \ B > 0 
\end{equation}
hold. Also
$D/2 \le |\partial_{st} \psi(x, y, \omega_0)| \le  2 D$ and 
$|\psi_{tt}(x,y,\omega_0)| \le \varepsilon$ hold. Finally, 
$0 <|\psi_{tt}(x,y,\omega_0)|, |\psi_{ss}(x,y,\omega_0)|$ and
both components of the vector
$$
{\vec{v}}(x,y,\omega_0) \ = \ \nabla\psi(x,y,\omega_0) + \omega_0\cdot {\overline{L}} \ = \                          
\omega_0 \cdot \bigl[\nabla {\overline{\phi}}(x,y) + {\overline{L}}\bigr] 
$$
are nonzero.  
\end{lemma}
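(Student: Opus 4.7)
The plan is to combine four ingredients that have essentially been established already: (i) the derivative lower bounds from Lemma~\ref{m,n>2}, which give \eqref{m>2-again} and \eqref{n>2-again}; (ii) the bound \eqref{n>2}, which gives $D/2 \le |\psi_{st}(x,y,\omega_0)| \le 2D$ throughout the neighbourhood $V$; (iii) the fact that $\psi_{tt}(x_0,y_0,\omega_0) = 0$ by \eqref{FH-fail}, which together with continuity forces $|\psi_{tt}(\cdot,\cdot,\omega_0)|\le \varepsilon$ in a sufficiently small ball around $(x_0,y_0)$; and (iv) the measure-zero nature of the ``bad'' sets $Z$ from \eqref{zero} and $Z_1, Z_2$ from Lemma~\ref{vector-nonzero}, which removes only a negligible subset of $\mathbb T^2$.

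The implementation will proceed as follows. First I fix the neighbourhood $V = B_{r_0}(x_0,y_0)$ on which Lemma~\ref{m,n>2} and \eqref{n>2} hold; this determines $r_0>0$. Given $\varepsilon>0$, by the continuity of $\psi_{tt}(\cdot,\cdot,\omega_0)$ and the vanishing at $(x_0,y_0)$, I can pick $r = r(\varepsilon) \in (0, r_0/2)$ so that $|\psi_{tt}(x,y,\omega_0)|\le\varepsilon$ for every $(x,y)\in B_{2r}(x_0,y_0)$. For any $(x,y)\in B_r(x_0,y_0)$ and $|s|,|t|<r_0/2$ we have $(x+s,y+t)\in B_{r_0}(x_0,y_0) = V$, so \eqref{m>2-again} and \eqref{n>2-again} are inherited directly from Lemma~\ref{m,n>2}, the $\psi_{st}$ bound is inherited from \eqref{n>2}, and the smallness condition on $\psi_{tt}$ is secured by the choice of $r$. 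At this stage every one of the four open-type conditions holds on all of $B_r(x_0,y_0)$.

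It only remains to secure the non-vanishing requirements: $\psi_{tt}(x,y,\omega_0)\ne 0$, $\psi_{ss}(x,y,\omega_0)\ne 0$, and both components of $\vec v(x,y,\omega_0)$ nonzero. These four conditions cut out the union $Z \cup Z_1 \cup Z_2$, where $Z$ is as in \eqref{zero} and $Z_1,Z_2$ are as in Lemma~\ref{vector-nonzero}. Since $\psi_{ss}(\cdot,\cdot,\omega_0)$ and $\psi_{tt}(\cdot,\cdot,\omega_0)$ were shown (in the discussion following \eqref{zero}) to be nonzero real-analytic functions on $\mathbb T^2$, and Lemma~\ref{vector-nonzero} provides the analogous statement for the components of $\vec v$, each of $Z, Z_1, Z_2$ is the zero set of a nonzero real-analytic function on $\mathbb T^2$ and therefore has Lebesgue measure zero. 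Hence $B_r(x_0,y_0)\setminus(Z\cup Z_1\cup Z_2)$ has positive measure and in particular is nonempty; any $(x,y)$ in this complement satisfies every conclusion of the lemma. The only subtlety to keep an eye on, rather than a genuine obstacle, is that each ``bad'' set really does come from a nonzero analytic function — a fact already verified for $Z_1,Z_2$ by Lemma~\ref{vector-nonzero} and for $Z$ in the paragraph surrounding \eqref{zero} — so the final selection step is a direct measure-theoretic statement with no further analysis required.
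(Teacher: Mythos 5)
Your proposal is correct and follows essentially the same route as the paper: choose $r$ so that $|\psi_{tt}(\cdot,\cdot,\omega_0)|\le\varepsilon$ on $B_{2r}(x_0,y_0)$, inherit the derivative bounds from Lemma \ref{m,n>2} and \eqref{n>2}, and then pick $(x,y)$ in $B_r(x_0,y_0)\setminus(Z\cup Z_1\cup Z_2)$, which is nonempty since each bad set is the zero set of a nonzero real-analytic function. The one point you gloss over is that the conditions $A>0$ and $B>0$ in \eqref{m>2-again}--\eqref{n>2-again} are not automatic from Lemma \ref{m,n>2} (in its degenerate cases $A$ and $B$ carry factors $|x-x_0|$ and $|y-y_0|$), so one must also ensure $x\neq x_0$ and $y\neq y_0$ as the paper does explicitly — though this is in fact already forced by avoiding $Z$, since in those degenerate cases the lines $x=x_0$ and $y=y_0$ lie in the zero sets of $\psi_{tt}$ and $\psi_{ss}$ respectively.
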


\begin{proof} We take $r = r(\varepsilon)$ as in the
discussion immediately prior to the statement of Lemma \ref{vector-nonzero}.
Since $B_{2r}(x_0,y_0) \subset B_{r_0}(x_0,y_0)$, we
see that \eqref{m>2-again} and \eqref{n>2-again} hold for all $(x,y) \in B_r(x_0,y_0)$ and $|s|,|t| < r_0/2$.
As long as we choose $(x,y)$ such that $x \not= x_0$ and $y \not= y_0$, then both $A$ and $B$ will
be nonzero.
Furthermore, 
since the union of the zero sets $Z \cup Z_1 \cup Z_2$ is a set
of measure zero, we see that $B_r(x_0,y_0) \setminus (Z \cup Z_1 \cup Z_2)$
is nonempty. By Lemma \ref{vector-nonzero}, we need only to choose any 
$(x,y) \in B_r(x_0,y_0) \setminus (Z \cup Z_1 \cup Z_2)$.
\end{proof}

The next proposition completes the proof
of Theorem \ref{main} when the factorisation hypothesis (FH) fails.

\begin{proposition}\label{logvar} Suppose that (FH) fails on ${\mathbb S}^{d-1}$ for 
$\psi(x,y,\omega) = \omega \cdot {\overline{\phi}}(x,y)$. Then for sufficiently small $\varepsilon>0$, we have
$$
\sup_{{\overline{n}}, M,N, x,y} \, 
 \Bigl| \int\!\!\!\int_{{\mathbb T}^2} e^{2\pi i {\overline{n}} 
\cdot [{\overline{\phi}}(x + s, y + t) + {\overline{L}}\cdot (s,t)]} \sin(Ms) \sin(Nt) \, \frac{ds}{s} \frac{dt}{t}\Bigr| \ \gtrsim \
\log(1/\varepsilon).
$$
\end{proposition}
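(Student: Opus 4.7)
The plan is to execute the scaling strategy outlined in the preceding heuristic discussion, but at the perturbed point $(x,y)$ supplied by Lemma \ref{point-properties} where the obstruction $\vec v(x,y,\omega_0)=0$ has been removed. At that point both components of $\vec v(x,y,\omega_0)$ are nonzero, $|\psi_{st}(x,y,\omega_0)|\sim D$ and $|\psi_{tt}(x,y,\omega_0)|\le\varepsilon$; these properties persist, with only slightly degraded constants, throughout a small neighbourhood on ${\mathbb T}^2\times{\mathbb S}^{d-1}$. This local persistence is what permits the lattice-direction approximation and the subsequent Fefferman-style argument to run.

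First I would approximate $\omega_0$ by a lattice direction: pick $\overline n\in{\mathbb Z}^d$ with $\lambda:=|\overline n|$ large and $\omega:=\overline n/|\overline n|$ close enough to $\omega_0$ that the conclusions of Lemma \ref{point-properties} remain valid at $(x,y,\omega)$. Writing $\sin(Ms)\sin(Nt)$ as a sum of four complex exponentials, for the sign pattern matching $-\mathrm{sign}(\vec v(x,y,\omega))$ I would choose integers $M,N$ so as to cancel the linear phase in $s$ and in $t$ up to $O(1)$; this is possible precisely because $v_1v_2\neq 0$ and $\lambda$ can be taken arbitrarily large. The other three sign patterns produce residual linear phases of size $\gtrsim\lambda$ in at least one variable, and their contributions are $O(1)$ by (iterated application of) Proposition \ref{osc-est-III}.

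After this cancellation the surviving term has phase $2\pi\lambda R(s,t)$, where $R$ is the second-order Taylor remainder of $\psi(x+\cdot,y+\cdot,\omega)$ at $(0,0)$. Rescale via $s=\delta^3 s',\, t=\delta^2 t'$ with $\lambda=\delta^{-5}$ and take $\delta=\varepsilon^{3/5}$. The scaled phase becomes
\[
2\pi\bigl[\psi_{st}(x,y,\omega)\,s't'+\tfrac12\psi_{ss}(x,y,\omega)\,\delta(s')^2+\tfrac12\psi_{tt}(x,y,\omega)\,\delta^{-1}(t')^2+\delta\,E(s',t')\bigr],
\]
where $E$ collects cubic and higher Taylor terms and is a polynomial in $(s',t')$ with coefficient bounds uniform in $\delta$. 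Restrict attention to the anisotropic sub-box $|s'|\le\delta^{-1/2}$, $|t'|\le\min(\sqrt{\delta/\varepsilon},\,\delta^{-1/3})$; with $\delta=\varepsilon^{3/5}$ both of the latter bounds equal $\varepsilon^{-1/5}$, and a direct check of the remaining monomials shows that each correction term is $O(1)$ on the sub-box. Thus the phase reduces there to $2\pi\psi_{st}(x,y,\omega)s't'+O(1)$, and Fefferman's computation \eqref{CF} gives a lower bound of order $\log(\psi_{st}\cdot\delta^{-1/2}\cdot\varepsilon^{-1/5})\sim\log(1/\varepsilon)$ for the integral over the sub-box.

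The main obstacle will be controlling the complementary region (where $|s'|$ or $|t'|$ exceeds the sub-box bounds but stays within the natural scaled range $|s'|\le(2\delta^3)^{-1}$, $|t'|\le(2\delta^2)^{-1}$), and showing that its contribution is $O(1)$ so that it cannot cancel the Fefferman gain. In that region one of the quadratic corrections $\delta\psi_{ss}(s')^2/2$ or $\delta^{-1}\psi_{tt}(t')^2/2$ produces a first derivative of the phase (in the appropriate variable) large enough to permit integration by parts; carried out dyadically to handle both the varying size of the phase gradient and the product singularity $1/(s't')$ (much in the spirit of the Stein--Wainger style estimates appearing in Proposition \ref{osc-est-III}), this yields the required $O(1)$ bound. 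Combining the sub-box lower bound with the complementary-region upper bound then gives $|I|\ge c\log(1/\varepsilon)-O(1)\gtrsim\log(1/\varepsilon)$, completing the proof.
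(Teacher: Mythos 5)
Your overall strategy --- perturb to the point supplied by Lemma \ref{point-properties}, match $M,N$ to the linear phase for one sign pattern, rescale anisotropically and extract a Fefferman-type logarithm --- is exactly the ``intuitive approach'' the paper itself sketches before abandoning it, and your perturbation step is the right fix for the vanishing of $\vec v$. But two gaps remain, one patchable and one not.

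First, on the sub-box the corrections to the hyperbolic phase are only $O(1)$ pointwise, not $o(1)$, and an $O(1)$ phase perturbation does not preserve a logarithmic lower bound against the kernel $\frac{ds'}{s'}\frac{dt'}{t'}$: the trivial comparison $|e^{i(a+b)}-e^{ia}|\le |b|$ applied to the correction $\delta\psi_{ss}(s')^2/2$ integrates to $O(1)\cdot\log T'\sim\log(1/\varepsilon)$, the same order as the main term, so the step ``the phase reduces to $2\pi\psi_{st}s't'+O(1)$, apply \eqref{CF}'' proves nothing as written. This can be repaired (shrink the box by logarithmic factors, or run the paper's dyadic comparison in which each difference $D_{p,q}$ is bounded both by its size and by van der Corput and a convex combination is summed), but it must be done.

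The fatal gap is the complementary region. You propose to control it by integration by parts using ``one of the quadratic corrections $\delta\psi_{ss}(s')^2/2$ or $\delta^{-1}\psi_{tt}(t')^2/2$''. But the point $(x,y)$ was chosen precisely so that $|\psi_{tt}(x,y,\omega_0)|=\varepsilon_2\le\varepsilon$, and $\varepsilon_2$ may be far smaller than $\varepsilon$; hence the $t'$-derivative contributed by the quadratic term, $\delta^{-1}\varepsilon_2 t'$, is at most $1$ throughout the range $1\le t'\le \delta/\varepsilon_2$, whose measure in $dt'/t'$ is $\log(\delta/\varepsilon_2)$ and can exceed any multiple of $\log(1/\varepsilon)$. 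In that range the higher pure $t$-derivatives are of no help either: Lemma \ref{m,n>2} guarantees a lower bound only on $\partial^{m_0}_t\psi$ for one (possibly large) $m_0$, while the intermediate derivatives $\partial^m_t\psi(x,y,\omega_0)$ for $2<m<m_0$ are completely uncontrolled. So neither stationary phase nor integration by parts is available there, and you have given no reason why this region cannot contribute a term of size $\log(1/\varepsilon_2)$, of unknown sign, that swamps the sub-box gain. This is exactly why the paper's argument replaces the single scaling by (i) a decomposition into regions according to which of the quantities $\varepsilon_m|t|^m$, $\sigma_n|s|^n$ dominates $|st|$, (ii) on the dominated regions a comparison of the full phase with the separated phase $\psi(x+s,y)+\psi(x,y+t)$ followed by the one-dimensional Proposition \ref{osc-est-III} (which uses the entire analytic structure of $t\mapsto\psi(x,y+t,\omega_0)$, not its quadratic Taylor term) to obtain an $\varepsilon$-independent $O(1)$ bound, and (iii) the extraction of the logarithm from the angular width $\log(1/(\varepsilon_2\sigma_2))$ of the surviving sector $\varepsilon_2 t\le s\le\sigma_2^{-1}t$ in the explicit $\sin(\lambda'st)/(st)$ computation of Section \ref{heart}. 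Without an argument of comparable strength for the complement, the proposal does not establish the proposition.
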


The proof will be carried out in several stages.

 For given $\varepsilon>0$, we fix an $r = r(\varepsilon) > 0$ and 
$(x,y) = (x_{\varepsilon}, y_{\varepsilon}) \in {\mathbb T}^2$ with the properties
given in Lemma \ref{point-properties}.

With this point choice $(x,y) \in {\mathbb T}^2$, our main task is to establish
\begin{equation}\label{Main-osc-est} 
\sup_{\lambda, M,N} \, 
 \Bigl| \int\!\!\!\int_{{\mathbb T}^2} e^{2\pi i \lambda  \varphi(s,t)} \sin(Ms) \sin(Nt) \, \frac{ds}{s} \frac{dt}{t}\Bigr|  \ \gtrsim \
\log(1/\varepsilon)
\end{equation}
where $\varphi(s,t) = \varphi_{\omega_0,x, y}(s,t)
:=  \omega_0 \cdot [{\overline{\phi}}(x + s, y + t) + {\overline{L}}\cdot (s,t)]$. 

Since $r_0$ is independent of $\varepsilon>0$, we see that by a couple applications of Proposition \ref{osc-est-III}, 
the oscillatory integral in \eqref{Main-osc-est} is equal to
$$
\int\!\!\!\int_{|s|,|t| < r_0/2} 
e^{2\pi i \lambda  \varphi(s,t)} \sin(Ms) \sin(Nt) \, \frac{ds}{s} \frac{dt}{t} \ + \ O_{r_0,\omega_0}(1),
$$
with the important observation that the bound in the $O(1)$ term is independent of $\varepsilon$.
Let us denote by ${\mathcal I}$ the oscillatory integral in the above displayed equation. Hence
\eqref{Main-osc-est} follows from the existence of a $\Lambda(\varepsilon)$ such that
whenever $\lambda > \Lambda(\varepsilon)$, we can find choices for $M$ and $N$ such that 
\begin{equation}\label{MOE-reduction}
|{\mathcal I}| \ \gtrsim \ \log(1/\varepsilon)
\end{equation}
for sufficiently small $\varepsilon>0$. 

Writing $\sin(t) = [e^{it} - e^{-it}]/2i$, we see that
${\mathcal I} = I + II + III + IV$ splits as a sum of four oscillatory integrals of the form
$$
{\mathcal J} \ = \ \int\!\!\!\int_{|s|,|t| < r_0/2} 
e^{2\pi i \lambda  \varphi(s,t) + \rho s + \eta t} \ \frac{ds}{s} \frac{dt}{t}
$$
where $\rho = \pm M$ and $\eta = \pm N$. We  now analyse ${\mathcal J}$, establishing
uniform bounds for various parts of ${\mathcal J}$ and eventually
reducing matters to the heart of ${\mathcal J}$ which
we will then bound from below. It is crucial that all our bounds from above are
independent of $\varepsilon$ and in particular independent of $x = x_{\varepsilon}$ 
and $y = y_{\varepsilon}$. Our bounds also need to be uniform in $\lambda, M$ and $N$
since we will eventually find a large $\Lambda(\varepsilon)$
so that whenever $\lambda > \Lambda(\varepsilon)$, we will find $M$ and $N$, depending on $\lambda$,
such that \eqref{MOE-reduction} holds. 
 Recall that this was the case when we passed from the oscillatory
integral in \eqref{Main-osc-est} to ${\mathcal I}$. Our bounds are allowed to depend
on $x_0, y_0, \omega_0, {\overline{\phi}}$ and $r_0$.

\subsection{Analysis of ${\mathcal J}$: reduction to a quadratic phase}

By our choce of the point $(x,y) \in {\mathbb T}^2$, we have by
Lemma \ref{point-properties} (see \eqref{m>2-again} and \eqref{n>2-again}) that for all $|s|,|t| < r_0/2$,
$$
A/2 \ \le \ |\partial^{m_0}_t \varphi(s,t)| \ \le \  2A, \ \ {\rm and} \ \ 
B/2  \ \le \ |\partial^{n_0}_s \varphi(s,t)| \ \le \ 2B, 
$$
where $A = |\partial^{m_0}_t \varphi(0,0) |$ or $A = |\partial^{1,m_0} \varphi(0,0) (x - x_0)|$.
Similarly for $B$. Note that $A$ and/or $B$ may depend on $x$ or $y$ which in turn depends on 
$\varepsilon$. We are in a position to employ the analysis in Section \ref{prelude-I}
and in particular Section \ref{00} since the required derivative bounds \eqref{derivative-hyp}
and \eqref{derivative-hyp-II} correspond to the above derivative bounds on $\varphi$. Unfortunately
the bounds derived in Section \ref{prelude-I} and Section \ref{00} depend on $A$ and $B$ which
may depend on $\varepsilon$ and 
this is not good for us.

Instead
we will  go a bit further than the argument developed in Section \ref{00} and show that
\begin{equation}\label{cal-J}
{\mathcal J} \ = \
\int\!\!\!\int_{R_{r_0}} 
e^{2\pi i \lambda  \varphi(s,t) + \rho s + \eta t} \ \frac{ds}{s} \frac{dt}{t} \ + \ O_{r_0,\omega_0}(1)
\end{equation}
where 
$$
R_{r_0} \ = \ \bigl\{(s,t): |s|,|t| < r_0/2, \ \, \max_{2\le m \le m_0} \varepsilon_m |t|^{m}, \,
\max_{2\le n \le n_0} \sigma_n |s|^n \,  \le |s\, t| \,  \bigr\}.
$$
Again the
important point here is that the bound $O(1)$  in \eqref{cal-J} can be taken to be independent of $\varepsilon$
(and therefore independent of $x,y, \lambda, M$ and $N$).

Here $\varepsilon_m := |\partial^m_t \varphi(0,0)|$ and $\sigma_n := |\partial^n_s \varphi(0,0)|$,
Note that 
$\varepsilon_m = |\partial^m_t \psi(x,y,\omega_0)|$ and 
$\sigma_n = |\partial^n_s \psi(x,y,\omega_0)|$ depend on $\varepsilon$
since $(x,y)$ depends on $\varepsilon$. In particular $\varepsilon_{m_0} \sim A$
and $\sigma_{n_0} \sim B$ are both nonzero. Also
$0<\varepsilon_2 = |\psi_{tt}(x,y,\omega)|\le \varepsilon$ and $0 < \sigma_2 = |\psi_{ss}(x,y,\omega_0)|$
by our choice of the point $(x,y)$ in Lemma \ref{point-properties}. These properties will be very important
towards the end of the analysis.

We will achieve \eqref{cal-J} in a couple of steps. First
we will obtain a uniform bound for part of the oscillatory integral ${\mathcal J}$; namely, we consider the part
where we integrate over the region 
$R_{r_0}^{*}  =  \{(s,t): \, |s|, |t| \le r_0/2, \  |s\, t| \ \le \ \max_{2\le m \le m_0} \varepsilon_m |t|^m \}$.

With the notation in Section \ref{prelude-I}, we write
$$
 \int\!\!\!\int_{R_{r_0}^{*}}  e^{i [\lambda \varphi(s,t) + \rho s + \eta t]} \frac{ds}{s} \frac{dt}{t} \ = \
\sum_{(p,q)\in {\mathcal R}_{0}^{*}} I_{p,q}
$$
where now 
$$
{\mathcal R}_{0}^{*} = \bigl\{(p,q): \, 2^{-p}, 2^{-q} \le r_0/2, \  \ 2^{-p-q} \ \le \ \max_{2\le m \le m_0 -1}  \varepsilon_m \, 
2^{-qm} \bigr\}.
$$
We
split ${\mathcal R}_{0}^{*}$ into at most $m_0$ subsets depending on which of the terms 
$\varepsilon_m \, 2^{-q m}$ is maximal among $2\le m \le m_0$. We will concentrate on
$$
\sum_{(p,q)\in {\mathcal R}_{0}^m} I_{p,q} \ = \ \sum_{(p,q)\in{\mathcal R}_{0}^m}
 \int\!\!\!\int e^{i [\lambda \varphi(s,t) + \rho s + \eta t]}  \zeta(2^p s) \zeta(2^q t) \, \frac{ds}{s} \frac{dt}{t} 
$$
where ${\mathcal R}_{0}^m$ consists those $(p,q)$ in ${\mathcal R}_{0}^{*}$ where
$\varepsilon_m \, 2^{-q m}$ is maximal. We set $N = p + q(m - 1)$
and define $L$ so that $2^L = \varepsilon_m^{-1}$. Hence the pairs $(p,q)$ in
${\mathcal R}_{0,1}^m$ satsify $N \ge L$.

We compare $I_{p,q}$ with
$$
II_{p,q} \ = \  \ c \, \int\!\!\!\int e^{i \lambda [\varphi_1(s) + \varphi_2(t)] + \rho s + \eta t} \zeta(2^p s) \zeta(2^q t) \,
 \frac{ds}{s} \frac{dt}{t}
$$
where $c = e^{\lambda \varphi(0,0)}$, $\varphi_1(s) = \varphi(s,0)$ and $\varphi_2(t) = \varphi(0,t)$.
We make the change of variables $s \to 2^{-p} s$ and $t \to 2^{-q} t$ and note that
$$
|D_{p,q}| \ := \ |I_{p,q} - II_{p,q}| \ := \ |\lambda| \,
\bigl|\sum_{k,\ell\ge 1} \frac{1}{k! \ell!} \partial^{k,\ell}\varphi(0,0) (2^{-p}s)^k (2^{-q} t)^{\ell}\bigr|
 \le \ C |\lambda| 2^{-p-q}
$$
\begin{equation}\label{difference}
= \ C |\lambda| 2^{-p-(m-1)q} \varepsilon_m^{-1} \varepsilon_m 2^{- q m} \ = \ 
 C |\lambda| 2^{- (N-L)} \varepsilon_m \, 2^{-m q}.
\end{equation}

{\bf Claim}: \ If $\Phi(t) := \varphi(2^{-p}s, 2^{-q}t) = \psi(x+2^{-p}s, y+2^{-q}t,\omega_0)$, then
there is a $2 \le m' \le m_0$ such that $|\Phi^{(m')}(t)| \gtrsim \,  \varepsilon_m 2^{-qm}$
for all $|t| \sim 1$. 

This is our replacesment for the derivative bound \eqref{derivative-hyp}.

To prove the claim, we first set 
$$
H(t) \ = \ \varphi(2^{-p} s,2^{-q} t) - \varphi(0,2^{-q} t) \ = \
2^{-p} s \int_0^1 \frac{\partial\varphi}{\partial s}(r 2^{-p} s, 2^{-q} t) \, dr
$$
and note that for any $2 \le m' \le m_0$, 
\begin{equation}\label{H}
|H^{(m')}(t)| \ \le \ C 2^{-p-2q} \ = \ C 2^{-q} 2^{-p-q}  \ \le \ 
C 2^{-q} \varepsilon_m 2^{-q m}.
\end{equation}
Since we can take $2^{-q} < r_0/2$ to be small, we will treat $H(t)$ as an error term.

Next we write $\varphi(0, 2^{-q}t) = g(t) + h(t)$ where 
$$
g(t) = \sum_{\ell=0}^{m_0 -1} \frac{1}{\ell!} \partial^{\ell}_t \varphi(0,0) (2^{-q} t)^{\ell} \ \ {\rm and} \ \
h(t) = (2^{-q}t)^{m_0} \int_0^1 \!\!\cdots\!\!\int_0^1 \partial^{m_0}_t \varphi(0,\sigma(r)\,  2^{-q} t)
d\mu
$$ 
Here $\sigma(r) = r_1 \cdots r_{m_0}$ and $d\mu = r_2  r_3^2 \cdots r_{m_0}^{m_0 -1}$.
By classical Cauchy estimates applied to the analytic function $z \to \partial^{m_0}_t \psi(x+s,z,\omega_0)$, we have 
for any $|s|, |t| < r_0/2$,
$$
\bigl|\partial^{\ell+m_0}\varphi(s,t)| = |\partial^{\ell}_t [\partial^{m_0}_t \psi](x+s, y+t,\omega_0)| \ \le
\ C_{\ell} A \ \lesssim \ A
$$
since $x,y \in B_{r_0/2}(x_0,y_0)$. This follows from Lemma \ref{m,n>2}, see
\eqref{m>2} (also see the footnote there). See also \eqref{m>2-again}. Hence for any $2\le m' \le m_0$, we have
\begin{equation}\label{h}
|h^{(m')}(t)| \ \lesssim \ A 2^{-q m_0} \ \lesssim \  \varepsilon_{m_0} 2^{-q m_0}.
\end{equation}
A simple equivalence of norms argument (applied to the space of polynomials of degree
at most $m_0 - 1$) shows the existence of a positive constant $\delta_{m_0} > 0$ and an
$2 \le m' \le m_0 -1$ such that 
\begin{equation}\label{g}
 |g^{(m')}(t)| \ \ge \ \delta_{m_0} \max_{2\le m'' \le m_0 - 1} |\partial^{m''}_t \varphi(0,0)| 2^{-q m''} 
\end{equation}
for all $|t| \sim 1$. We note that the right hand side of \eqref{g} is equal to $\delta_{m_0} \, \varepsilon_m 2^{-q m}$
if $m \le m_0 - 1$.

To complete the proof of the claim, we split the argument into two cases:

{\it Case 1}: $\varepsilon_{m_0} 2^{-q m_0} \ge \ K^{-1} \delta_{m_0} \varepsilon_m 2^{-q m}$ 
where $4K$ is the implicit constant appearing in the bound \eqref{h}.

In this case, we have by \eqref{m>2} or \eqref{m>2-again}
$$
\bigl|\Phi^{(m_0)}(t)\bigr| \ = \ 2^{-q m_0} |\partial^{m_0}_t 
\psi(x+ 2^{-p} s, y + 2^{-q} t, \omega_0)| \ge A 2^{-q m_0} / 2
$$ 
for all $|t| \sim 1$, completing the proof of the claim in this case since $A \sim \varepsilon_{m_0}$.

{\it Case 2}: $\varepsilon_{m_0} 2^{-q m_0} \le K^{-1} \delta_{m_0} \varepsilon_m 2^{-q m}$. 

In this case we see that $2\le m \le m_0 - 1$. We
use the existence of an $2 \le m' \le m_0 - 1$ such that \eqref{g} holds. Hence by
\eqref{H}, \eqref{h} and \eqref{g}, we have
$$
|\Phi^{(m')}(t)| \  \ge \ |g^{(m')}(t)| \ - \ |h^{(m')}(t)| \  - \  |H^{(m')}(t)| \ \ge
$$
$$
 \delta_{m_0} \varepsilon_m 2^{-q m} \  - \
K \varepsilon_{m_0} 2^{-q m_0} / 4  \ - \ C 2^{-q} \varepsilon_m 2^{-q m} \ \ge \ (\delta_{m_0}/2) \varepsilon_m 2^{-q m}
$$
since $C 2^{-q} \le C r_0$ and we can take $r_0 < C^{-1} \delta_{m_0}/4$. 
This completes the proof of the claim.

A similar but easier argument shows that $|\Psi^{(m')}(t)| \gtrsim \varepsilon_m 2^{-q m}$
for all $|t| \sim 1$, with the same $m'$ as in the Claim. Here $\Psi(t) = \psi(x, y + 2^{-q} t, \omega_0)$.
Hence an application of van der Corput's lemma shows that 
$$
|D_{p,q}| \ = \ |I_{p,q}- II_{p,q}| \ \le \ C  \, (|\lambda| \varepsilon_m 2^{-q m})^{-1/m'}.
$$

Taking a convex combination of this estimate with the estimate \eqref{difference},
we have
$$
|D_{p,q}| 
\ \le \ C \, 2^{-\delta (N-L)}  \min(|\lambda| \varepsilon_m 2^{-q m}, (|\lambda| \varepsilon_m 2^{-q m})^{-1})^{\epsilon}
$$
for some absolute exponents $\delta>0$ and $\epsilon >0$. Hence
$$
| \sum_{(p,q) \in {\mathcal R}_{0,1}^m} D_{p,q} | \lesssim
 \mathop{\sum_{N\ge L}\sum_{p,q:}}_{p - (m-1) q = N} |D_{p,q}| 
\lesssim
\sum_{N\ge L} 2^{-\delta (N-L)}\!\!\!\!\!\!\!\!\sum_{\begin{array}{c}\scriptstyle
         q: \\
         \vspace{-5pt}\scriptstyle p -  (m-1)q = N 
         \end{array}}\!\!\!\!\!\!\!\!\! \min(|\lambda| \varepsilon_m 2^{-q m}, (|\lambda| \varepsilon 2^{-q m} )^{-1})^{\epsilon_0}
$$
which implies that
$$
\int\!\!\!\int_{R_{r_0}^{*}}  e^{i [\lambda \varphi(s,t) + \rho s + \eta t]} \frac{ds}{s} \frac{dt}{t} \ = \
c \int\!\!\!\int_{R_{r_0}^{*}}  e^{i (\lambda [\varphi_1(s) + \varphi_2(t)] + \rho s + \eta t)} \frac{ds}{s} \frac{dt}{t} \ + \
O(1).
$$
The integral on the right can be written as
$$
\int_{|s|<r_0/2} e^{i [\lambda \varphi_1(s) + \rho s]} G(s) \, \frac{ds}{s} \ \ \ {\rm where} \ \ 
G(s) = \int_{R(s)} e^{i [\lambda \varphi_2(t) + \eta t]} \, \frac{dt}{t}
$$
where $R(s) = \{ |t| < r_0/2 : (s,t) \in R_{0}^{*} \}$ and
$G(s) = G_{x,y,\lambda, \eta}(s)$ depends 
on the parameters $x,y, \lambda$ and $\eta$ as well as depending on $\omega_0 \in {\mathbb S}^{d-1}$.  
An application of Proposition \ref{osc-est-III} shows
that $|G(s)| \le C$ for all $s$ with $|s|<r_0/2$ for some constant $C$ which may be taken to be independent
of $x,y,\lambda$ and $\eta$ (but may depend on $\omega_0$). 
Since $G$ is an even function and clearly, $|G'(s)| \lesssim |s|^{-1}$, another application of
Proposition \ref{osc-est-III} shows that
$$
\int_{|s|<r_0/2} e^{i [\lambda \varphi_1(s) + \rho s]} G(s) \, \frac{ds}{s} \ = \ O_{r_0, \omega_0}(1)
$$
and so 
$$
{\mathcal J} \ = \
\int\!\!\!\int_{R_{r_0, 1}} 
e^{2\pi i \lambda  \varphi(s,t) + \rho s + \eta t} \ \frac{ds}{s} \frac{dt}{t} \ + \ O_{r_0,\omega_0}(1)
$$
where 
$$
R_{r_0, 1} = \{(s,t): |s|,|t| < r_0/2, \ \, \max_{2 \le m \le m_0} \varepsilon_m |t^m|
\le  \ |s\, t| \}.
$$

In exactly the same way as above but now using the derivative bound \eqref{m>2} (or \eqref{n>2-again}) for 
$\partial^{n_0}_s \varphi (s,t) = \partial^{n_0}_s \psi(x+s,y+t,\omega_0)$ which is valid for
all $(s,t)$ with $|s|, |t| < r_0/2$, we conclude that
$$
\int\!\!\!\int_{R_{r_0, 2}} 
e^{2\pi i \lambda  \varphi(s,t) + \rho s + \eta t} \ \frac{ds}{s} \frac{dt}{t} \ = \ O_{r_0,\omega_0}(1)
$$
where 
$$
R_{r_0,2} \ = \ \{(s,t) : |s|, |t| \le r_0/2, \  \max_{2\le n \le n_0} \sigma_n |s|^n \le |s t| \le \max_{2\le m \le m_0} 
\varepsilon_m |t|^m \}.
$$
Hence we arrive at \eqref{cal-J}.

Next we compare the oscillatory integral in \eqref{cal-J} to 
$$
c  \int\!\!\!\int_{R_{r_0}} 
e^{2\pi i \lambda [{\mathcal A} s + {\mathcal B} t + {\mathcal C} s t] + \rho s + \eta t} \ \frac{ds}{s} \frac{dt}{t} 
$$
where $c = e^{i\lambda \psi(x,y,\omega_0)}, \, {\mathcal A} = \partial_s \varphi(0,0), \ 
{\mathcal B} = \partial_t \varphi(0,0)$ and ${\mathcal C} = \partial^{1,1}\varphi(0,0) =
\psi_{st}(x,y,\omega_0)$. Recall  that 
$$
\varphi(s,t) = \psi(x+s, y+t,\omega_0) + \omega_0 \cdot {\vec{L}}_1 s + \omega_0 \cdot {\vec{L}}_2 t
= \omega_0 \cdot [{\overline{\phi}}(x+s,y+t) + {\vec{L}}_1 s + {\vec{L}}_2 t].
$$ 

Again we write
$$
\int\!\!\!\int_{R_{r_0}} 
e^{2\pi i \lambda  \varphi(s,t) + \rho s + \eta t} \ \frac{ds}{s} \frac{dt}{t} \ = \ \sum_{(p,q) \in {\mathcal R}_{0}} 
I_{p,q}
$$
and
$$
c  \int\!\!\!\int_{R_{r_0}} 
e^{2\pi i \lambda [{\mathcal A} s + {\mathcal B} t + {\mathcal C} s t] + \rho s + \eta t} \ \frac{ds}{s} \frac{dt}{t} \ = \ 
\sum_{(p,q)\in {\mathcal R}_{0}} II_{p,q}
$$
where now $(p,q)\in {\mathcal R}_{0}$ satisfies
$$
2^{-p}, 2^{-q} < r_0/2, \ \ {\rm and} \ \ 
\max_{2 \le m \le m_0} \varepsilon_m 2^{-qm}, \,
\max_{2 \le n \le n_0} \sigma_n 2^{-p n}
\le  \ 2^{-p-q}.
$$

As before we make the change of variables $s \to 2^{-p}s$ and $t \to 2^{-q} t$ in both
integrals $I_{p,q}$ and $II_{p,q}$, consider te difference $D_{p,q} = I_{p,q} - II_{p,q}$,
first observing
$$
\varphi(2^{-p}s, 2^{-q}t) - \varphi(0,0) - \partial_s \varphi(0,0) 2^{-p} s - \partial_t \varphi(0,0) 2^{-q} t -
\frac{1}{2} \partial_{st}\varphi(0,0) (2^{-p}s)(2^{-q}t)
$$
$$
= \sum_{m=2}^{m_0-1} \frac{1}{m!} \varepsilon_m (2^{-q} t)^{m} \ + \ 
 \sum_{n=2}^{n_0-1} \frac{1}{n!} \sigma_n (2^{-p} s)^n  \ + \
\sum_{k,\ell\ge 2} \frac{1}{k!\ell!}
\partial^{k,\ell}\varphi(0,0) (2^{-p}s)^k (2^{-q} t)^{\ell}
$$
$ + \ h_q(t) + f_p(s)$ 
where $|h_{q}(t)| \lesssim \varepsilon_{m_0} 2^{-q m_0}$ and $|f_{p}(s)| \lesssim \sigma_{n_0} 2^{-p n_0}$.
Recall the formula for $h = h_q$, appearing shortly after \eqref{H},
$$
h(t) \ = \ 
 (2^{-q}t)^{m_0} \int_0^1 \!\!\cdots\!\!\int_0^1 \partial^{m_0}_t \varphi(0,\sigma(r)\,  2^{-q} t)
d\mu
$$ 
and so indeed the estimate $|h_q(t)| \lesssim A 2 ^{-q m_0} \lesssim \epsilon_{m_0} 2^{- q m_0}$ follows from \eqref{m>2}. Similarly
for $f_q(s)$. 

Therefore
$$
|D_{p,q}|  \ \lesssim \ 2^{-p-2q} + 2^{-2p - q} + 
 \sum_{m=2}^{m_0} \varepsilon_m 2^{- q m}  \ + \ \sum_{n=2}^{n_0}
 \sigma_n 2^{-p n} 
$$
where we recall $\varepsilon_m = \partial^m_t \varphi(0,0)$ and
$\sigma_n := \partial^n_s \varphi(0.0)$. We
split the sum over $(p,q) \in {\mathcal R}_{0}$  into various subsums,
depending on which of these four terms above is largest. The argument is similar
in all cases and so we shall only concentrate on one subsum; namely
we restrict those $(p,q) \in {\mathcal R}_{0}$ where the maximal term is
$\max_{2\le m \le m_0} \varepsilon_m 2^{-q m}$. As before we divide these
$(p,q)$ depending on which term in this maximum is maximal; say $\varepsilon_m 2^{-q m}$.
We will call this subcollection ${\mathcal R}_{0}^{*}$.
Therefore for $(p,q) \in {\mathcal R}_{0}^{*}$, we have
\begin{equation}\label{difference-Again}
|D_{p,q}| \ \lesssim \ \varepsilon_m 2^{- q m} \ = \  2^{-(N-L)} \, 2^{-p-q}
\end{equation}
where $N := q(m-1) - p$ and $2^L := \varepsilon_m$. We note that $L\le N$ or $\varepsilon_m \le 2^N$ since 
$\varepsilon_m 2^{-q m} \le 2^{-p-q}$ for $(p,q) \in {\mathcal R}_{0}$. 

We now aim to obtain a correspoinding decay bound for $I_{p,q}$ and $II_{p,q}$
whenever  $(p,q) \in {\mathcal R}_{0}^{*}$. We write the phase function
in $I_{p,q}$ as
$$
\lambda \varphi(2^{-p} s, 2^{-q} t) + \rho 2^{-p} s + \eta 2^{-q} t =  \lambda 2^{-p-q} \Phi(s,t) + E 2^{-p} s + F 2^{-q} t
$$
 where $\Phi(s,t) =$
$$
2^{p+q} \bigl[ \psi(x+2^{-p} s, y+ 2^{-q} t,\omega_0) - \psi(x,y,\omega_0) - \partial_s \psi(x,y,\omega_0)
2^{-p} s - \partial_t \psi(x,y,\omega_0) 2^{-q} t \bigr],
$$
$E = \lambda \partial_s \psi(x,y,\omega_0) + \rho = \lambda {\mathcal A} + \rho$ and $F = \lambda {\mathcal B} + \eta$.

The mixed second derivative of
the function $\Phi(s,t)$ is equal to
$$
\psi_{st} (x, y, \omega_0) +  O(2^{-\min(p,q)}) \ \ {\rm and \ so} \ \
|\Phi_{st}(s,t)| \  \ge \ |\psi_{st}(x,y,\omega_0)| - C r_0
$$ 
since
$2^{-\min(p,q)} \le r_0/2$ for $(p,q) \in {\mathcal R}_0$. Hence by Lemma \ref{point-properties}, we have
\begin{equation}\label{Phi-st}
|\Phi_{st}(s,t)| \ \ge \  D / 4 
\end{equation}
if we take $r_0$ small enough. Recall that we chose the parameter $r_0$ after we located the point
$(x_0,y_0, \omega_0)$ such that $D = |\psi_{st}(x_0,y_0,\omega_0)| > 0$ and $\psi_{tt}(x_0,y_0,\omega_0) = 0$,
see \eqref{FH-fail}. 

Since 
$$
I_{p,q} \ = \ \int\!\!\int e^{i[\lambda 2^{-p-q} \Phi(s,t) +  E 2^{-p} +- F 2^{-q} t]} \,  \zeta(s) \zeta(t) \, 
\frac{ds}{s} \frac{dt}{t},
$$
the derivative bound \eqref{Phi-st} puts us in a position to invoke a multidimensional version of van der Corput's
lemma. Such a variant can be found in \cite{S} (see Proposition 5 on page 342)
and implies
\begin{equation}\label{decay-st}
|I_{p,q}| \ \le \ C (|\lambda| 2^{-p-q})^{-1/2}
\end{equation}
 but the constant $C$ in the  bound here depends on the $C^3$ norm of 
$f(s,t) :=  \Phi(s,t) + (\lambda 2^{-p-q})^{-1} [ E 2^{-p} +  F 2^{-q} t ]$. 
This is in stark contrast to the one dimensional version of van der Corput's lemma
and it begs the question to what extent is there a multidimesional version of van der Corput's lemma
with all the appropriate uniformity. In any case,
an examination of the proof of Proposition 5 in \cite{S} shows in fact the bound
$C$ can be taken to depend only on the $L^{\infty}$ norms of the third order derivatives
of $f(s,t)$ and in particular, not on the first order derivatives. The third order derivatives of $f$ are the
same as those of $\Phi$.

From above, we have $\Phi(s,t) = 2^{p+q} \Psi(s,t)$ where $\Psi(s,t) =$
$$
\sum_{m=2}^{m_0-1} \frac{1}{m!} \varepsilon_m (2^{-q} t)^{m} \ + \ 
 \sum_{n=2}^{n_0-1} \frac{1}{n!} \sigma_n (2^{-p} s)^n  \ + \ O(2^{-p-q}) + O(\varepsilon_{m_0} 2^{-q m_0})
+ O(\sigma_{n_0} 2^{-p n_0}) 
$$
where the three O terms are stable under differentiation; for example, $\partial^{k,\ell} O( 2^{-p-q}) = O(2^{-p-q})$.
Therefore we have the following upper bound on the $C^3$ norm of $\Phi$,
$$
\|\Phi\|_{C^3} \lesssim 2^{p+q} \bigl[ \max_{2\le m \le m_0} \varepsilon_m 2^{-q m} + \max_{2\le n\le n_0} \sigma_n 2^{-p n}
\bigr]  + 
O(1) \ = \ O(1)
$$
since $(p,q) \in {\mathcal R}_0$. This implies that the constant $C$ in \eqref{decay-st} can be taken
to be an absolute constant, although depending on ${\overline{\phi}}$, it can be taken to be independent of $p,q, x, y, \lambda, \rho$ and $\eta$. 

A more elementary argument shows that $|II_{p,q}|\ \le C (|\lambda| 2^{-p-q})^{-1}$.
Taking a convex combination of \eqref{difference-Again} and \eqref{decay-st},
we have
$$
|D_{p,q}| 
\ \le \ C \, 2^{-\delta (N-L)}  \min(|\lambda| 2^{-p-q}, (|\lambda|  2^{-p-q})^{-1})^{\epsilon}
$$
for some absoluate exponents $\delta>0$ and $\epsilon >0$. Hence
$$
| \sum_{(p,q) \in {\mathcal R}_{0}} D_{p,q} | \lesssim
 \mathop{\sum_{N\ge L}\sum_{p,q:}}_{(m-1) q - p= N} |D_{p,q}| 
\lesssim
\sum_{N\ge L} 2^{-\delta (N-L)}\!\!\!\!\!\!\!\!\sum_{\begin{array}{c}\scriptstyle
         q: \\
         \vspace{-5pt}\scriptstyle (m-1)q - p= N 
         \end{array}}\!\!\!\!\!\!\!\!\! \min(|\lambda| 2^{-p-q}, (|\lambda| 2^{-p-q} )^{-1})^{\epsilon_0}
$$
which implies that
$$
{\mathcal J} \ = 
c  \int\!\!\!\int_{R_{r_0}} 
e^{2\pi i \lambda [{\mathcal A} s + {\mathcal B} t + {\mathcal C} s t] + \rho s + \eta t} \ \frac{ds}{s} \frac{dt}{t} \ + \
O_{r_0, \omega_0}(1).
$$
We have successfully reduced the analysis of ${\mathcal J}$  to an oscillatory integral with a quadratic phase.
We label the  oscillatory integral above with the quadratic phase as ${\mathcal J}'$. 

\subsection{Analysis of ${\mathcal J}'$: reduction to the heart of the matter}

Recall that our immediate main task \eqref{Main-osc-est} follows from  \eqref{MOE-reduction}, 
the logarithmic  bound
from below on the oscillatory integral ${\mathcal I}= I + II + III + IV$
where each of these terms is of the form ${\mathcal J}$ with $\rho = \pm M$ and $\eta = \pm N$. 
We rewrite the oscillatory integral ${\mathcal J}'$ above as 
$$
{\mathcal J}' \ = \ 
\int\!\!\!\int_{R_{r_0}} 
e^{2\pi i ( \lambda  {\mathcal C} s t +  [\lambda {\mathcal A} \pm M] s + [\lambda {\mathcal B} \pm N] t) } \ \frac{ds}{s} \frac{dt}{t}
$$
and rename the coefficients $\lambda' := \lambda {\mathcal C},  \, {\mathcal M} := \lambda {\mathcal A} \pm  M$
and ${\mathcal N} := \lambda {\mathcal B} \pm  N$. Since ${\mathcal C} = \psi_{st}(x,y,\omega_0)$, we have
$|\lambda'| \sim |\lambda|$. In fact by Lemma \ref{point-properties}, we have
$(D/2) |\lambda| \le |\lambda'| \le 2D |\lambda|$ . Furthermore
$$
{\mathcal M} \ = \ \lambda [\partial_s \psi(x,y,\omega_0) + \omega_0 \cdot {\vec{L}}_1] \pm M \ =: \ \lambda v_1(x,y,\omega_0) \pm M
$$
where $v_1(x,y,\omega_0)$ is the first component of the vector 
$$
{\vec{v}}(x,y,\omega_0) \ = \ \nabla \psi(x,y,\omega_0) + \omega_0 \cdot {\overline{L}} \ = \
\omega_0 \cdot \bigl[ \nabla {\overline{\phi}}(x,y) + {\overline{L}} \bigr],
$$
both of whose components are nonzero, see  Lemma \ref{point-properties}. Similarly for ${\mathcal N}$. 
 
To understand ${\mathcal J}'$ more precisely, we will decompose the region $R_{r_0}$ into various
subregions $R_{m,n}$ determined by where the maxima 
$$
\max_{2\le m'  \le m_0} \varepsilon_{m'} |t|^{m'} \ = \ \varepsilon_m |t|^m \ \ \ {\rm and} \ \ \ 
\max_{2\le n'  \le n_0} \sigma_{n'} |s|^{n'} \ = \ \sigma_n |s|^n
$$
occurs. We do this for every $2\le m \le m_0$ with $\varepsilon_m > 0$ and $2\le n \le n_0$ with
$\sigma_n > 0$. 
Hence
$$
R_{m,n} \ = \ \{(s,t) \in R_{r_0} : \, d_{1,m} \le |t| \le d_{2,m} \ \ {\rm and} \ \ c_{1,n} \le |s| \le c_{2,n} \}
$$
for some choice of exponents $d_{1,m}, d_{2,m}, c_{1,n}$ and $c_{2,n}$. In fact $d_{1,2} = c_{1,2}  = 0$
and $d_{2,m_0} = c_{2, n_0} = 1$. Otherwise when $\varepsilon_m >0$,
$$
d_{1,m} \ := \ \max_{m' < m} \, \Bigl( \frac{\varepsilon_{m'}}{\varepsilon_m}\Bigr)^{\frac{1}{m-m'}} \ \ {\rm and} \ \
d_{2,m} \ := \ \min_{m < m'} \, \Bigl( \frac{\varepsilon_{m}}{\varepsilon_{m'}}\Bigr)^{\frac{1}{m'-m}}
$$
and when $\sigma_n > 0$,
$$
c_{1,n} \ := \ \max_{n' < n} \, \Bigl( \frac{\sigma_{n'}}{\sigma_n}\Bigr)^{\frac{1}{n-n'}} \ \ {\rm and} \ \
c_{2,n} \ := \ \min_{n < n'} \, \Bigl( \frac{\sigma_{n}}{\sigma_{n'}}\Bigr)^{\frac{1}{n'-n}}.
$$
Importantly we note that when $m\ge 3$, we have $d_{1,m} > 0$ since $\varepsilon_2 >0$
and also when $n\ge 3$, we have $c_{1,n} > 0$ since $\sigma_2 >0$. This came from our 
choice of point $(x,y)$ from Lemma \ref{point-properties}. This bit of information is important enough to record;
\begin{equation}\label{important-info}
d_{1,m} \ \ {\rm and} \ \ c_{1,n} \ > 0 \ \ \ {\rm when} \ \ \ m, n \ge 3.
\end{equation}

Hence we can decompose ${\mathcal J}'$ into a sum of oscillatory integrals ${\mathcal J}_{m,n}$ where
$$
{\mathcal J}_{m,n} \ := \ 
\int\!\!\!\int_{R_{m,n}}
e^{2\pi i ( \lambda' s t +  {\mathcal M} s +  {\mathcal N}  t) } \ \frac{ds}{s} \frac{dt}{t}
$$
We split ${\mathcal J}_{m,n} = {\mathcal J}_{m,n}^1 + {\mathcal J}_{m,n}^2$ where for ${\mathcal J}_{m,n}^1$
the region of integration $R_{m,n}$ is restricted to $R_{m,n}^1 = \{(s,t) \in R_{m,n} : |t| \le |{\mathcal N}|^{-1} \}$. We
claim that
\begin{equation}\label{m,n-1}
{\mathcal J}_{m,n}^2 \ = \ 
\int\!\!\!\int_{R_{m,n}^2}
e^{2\pi i ( \lambda' s t +  {\mathcal M} s +  {\mathcal N}  t) } \ \frac{ds}{s} \frac{dt}{t} \ = \ O(1)
\end{equation}
and to achieve this we split ${\mathcal J}_{m,n}^2 = {\mathcal J}_{m,n}^{2,-} + {\mathcal J}_{m,n}^{2,+}$ 
further where
$R_{m,n}^2 = R_{m,n}^{2,-} \cup R_{m,n}^{2,+}$ and 
$R_{m,n}^{2,-} = \{(s,t)\in R_{m,n}^2 : |s| \le C |{\mathcal N}/\lambda'| \}$ for some large
constant $C$. We write
$$
{\mathcal J}_{m,n}^{2,-} \ = \ \int_{|t|\ge |{\mathcal N}|^{-1}} e^{2\pi i {\mathcal N} t} F(t) \frac{dt}{t} \ \ {\rm where} \ \
F(t) = \int_{s\in R_{m,n}^{2,-}(t)} e^{2\pi i [\lambda' t + {\mathcal M}]s} \frac{ds}{s}
$$
and $R_{m,n}^{2,-}(t) = \{s : (s,t) \in R_{m,n}^{2,-}\}$. Note that $F(t) = O(1)$ and
$$
F'(t) \ = \ 2 \pi i \lambda' \int_{s\in R_{m,n}^{2,-}(t)} e^{2\pi i [\lambda' t + {\mathcal M}] s} ds \ + \ 
O(|t|^{-1}).
$$
Therefore integration by parts shows that
$$
{\mathcal J}_{m,n}^{2,-} \ = \ - \frac{\lambda'}{{\mathcal N}} \int_{|t|\ge |{\mathcal N}|^{-1}} e^{2\pi i {\mathcal N} t}
\Bigl[\int_{s\in R_{m,n}^{2,-}(t)} e^{2\pi i [\lambda' t + {\mathcal M}]s} ds \Bigr] \frac{dt}{t} \ + \ O(1)
$$
$$
- \frac{\lambda'}{{\mathcal N}} \int_{|s|\le C |{\mathcal N}/\lambda'|} e^{2\pi i {\mathcal M} s}
\Bigl[\int_{t\in R_{m,n}^{2,-}(s)} e^{2\pi i [\lambda' s + {\mathcal N}]t} \frac{dt}{t}\Bigr] ds \ + \ O(1) \ = \ O(1)
$$
since the last inner integral is easily seen to be $O(1)$. Next we show that ${\mathcal J}_{m,n}^{2,+} = O(1)$. 
We write
$$
{\mathcal J}_{m,n}^{2,+} \ = \ \int_{|s|\ge C |{\mathcal N}/\lambda'|} e^{2\pi i {\mathcal M} s} G(s) \frac{ds}{s} \ \ {\rm where} \ \
G(s) = \int_{t\in R_{m,n}^{2,+}(s)} e^{2\pi i [\lambda' s + {\mathcal N}]t}\,  \frac{dt}{t}
$$
and $R_{m,n}^{2,+}(s) = \{t : (s,t) \in R_{m,n}^{2,+}\}$. Note that when $|s| \ge C |{\mathcal N}/\lambda'|$,
we have $|\lambda' s + {\mathcal N}| \sim |\lambda' s|$ and a simple integration by parts argument shows that
$|G(s)| \lesssim |{\mathcal N}/\lambda' s|$, implying that 
$$
|{\mathcal J}_{m,n}^{2,+}| \ \lesssim \ |{\mathcal N}/\lambda'| \, \int_{|s|\ge C |{\mathcal N}/\lambda'|} \frac{ds}{s^2} \ = \ 
O(1)
$$  
and so ${\mathcal J}_{m,n}^2 = O(1)$ as claimed. 

Since 
$$
{\mathcal J}_{m,n}^1 \ = \ \int_{|t| \le |{\mathcal N}|^{-1}} e^{2\pi i {\mathcal N} t} \, \Bigl[
\int_{s \in R_{m,n}^1(t)} e^{2\pi i [\lambda' t + {\mathcal M}]s} \frac{ds}{s}\, \Bigr] \frac{dt}{t},
$$
we see that
$$
{\mathcal J}_{m,n}^1 \ = \ \int\!\!\int_{R_{m,n}^1} e^{2\pi i [\lambda' st + {\mathcal M} s]} \, \frac{ds}{s} \frac{dt}{t} \ + \ O(1)
$$
since $|e^{2\pi i {\mathcal N} t} - 1 | \le 2 \pi |{\mathcal N} t|$. As we did with $R_{m,n}^2$,
we decompose $R_{m,n}^1 = R_{m,n}^{1,-} \cup R_{m,n}^{1,+}$ where
$R_{m.n}^{1,-} \{(s,t)\in R_{m,n}^1 : |s| \le |{\mathcal M}|^{-1}\}$. A similar but more straightforward
argument as above shows that ${\mathcal J}_{m,n}^{1,+} = O(1)$ and we are left with ${\mathcal J}_{m,n}^{1,-}$.

Again since
$$
{\mathcal J}_{m,n}^{1,-} \ = \ \int_{|s| \le |{\mathcal M}|^{-1}} e^{2\pi i {\mathcal M} s} \, \Bigl[
\int_{t \in R_{m,n}^{1,-}(s)} e^{2\pi i \lambda' s t} \frac{dt}{t}\, \Bigr] \frac{ds}{s},
$$
we see that
$$
{\mathcal J}_{m,n}^{1,-}  =  \int\!\!\int_{R_{m,n}^{1,-}} e^{2\pi i \lambda' st} \, \frac{ds}{s} \frac{dt}{t}  +  O(1)  = 
\int\!\!\int_{P_{m,n}^{1,-}} \frac{\sin(\lambda' s t)}{s t} \, ds dt + O(1)
$$
where $P_{m,n} = \{(s,t) \in R_{m,n}^{1,-} : s, t\ge 0 \}$ is the positive part of $R_{m,n}^{1,-}$.

Summarising, we see that
\begin{equation}\label{P-4}
{\mathcal J}' = \sum_{m=2}^{m_0}\sum_{n=2}^{n_0} {\mathcal J}_{m,n}  = 
\sum_{m=2}^{m_0}\sum_{n=2}^{n_0} \int\!\!\!\int_{P_{m,n}^{1,-}} \frac{\sin(\lambda' s t)}{s t} \, ds dt + O(1)
\end{equation}
but let us not forget that each integral above is a sum of 4 integrals, one for each choice of $\pm$
in the coefficients ${\mathcal M}$ and ${\mathcal N}$ which now only appear in the definition
of the region of integration $P_{m,n}^{1,-}$. 

\subsection{Getting to the heart of the matter}\label{heart}

Without of loss of generality, let us assume that both $\lambda, \lambda' >0$.
Recall that $\lambda \sim \lambda'$. 

Given $\lambda >0$ (which we choose to be large later, depending on $\varepsilon$) and our point choice $(x,y)$,
we now choose $M$ and $N$ so that $M - \lambda {\mathcal A} = O(1)$ and $N - \lambda {\mathcal B} = O(1)$.
Recall that $({\mathcal A}, {\mathcal B})  = {\vec{v}}(x,y,\omega_0)$ where both components
$$
{\mathcal A} \ = \ \partial_s \psi(x,y,\omega_0) + \omega_0
\cdot {\vec{L}}_1 \ \ {\rm and} \ \
{\mathcal B} \  = \ \partial_t \psi(x,y,\omega_0) + \omega_0
\cdot {\vec{L}}_2
$$
are nonzero, so we can certainly make such choices for $M$ and $N$.
We denote by ${\mathcal M}^{*} = \lambda {\mathcal A} + M$ and ${\mathcal N}^{*} =
\lambda {\mathcal B} + N$ so that both $|{\mathcal M}^{*}|, |{\mathcal N}^{*}| \sim_{\varepsilon} \lambda$. 
We will reserve ${\mathcal M}$ and ${\mathcal N}$ to denote $\lambda {\mathcal A} - M$
and $\lambda {\mathcal B} - N$, respectively, so that both ${\mathcal M}, {\mathcal N} = O(1)$. 
In fact, $|{\mathcal M}|, |{\mathcal N}| \le 1$. 

Let us now examine the integral
$$
{\mathcal S} \ := \ \int\!\!\!\int_{P_{m,n}^{1,-}} \frac{\sin(\lambda' s t)}{s t} \, ds dt 
$$
in the case where where we take ${\mathcal N}^{*}$ in the definition of $P_{m,n}^{1,-}$
and the other coefficient can either be ${\mathcal M}$ or ${\mathcal M}^{*}$. In this case
$$
{\mathcal S} \ = \ 
\int_{\begin{array}{c}\scriptstyle
         d_{1,m} \le t \le d_{2,m} \\
         \vspace{-5pt}\scriptstyle 0 \le t \le |{\mathcal N}^{*}|^{-1}
         \end{array}} \!\!\!\left[ \int_{s\in P_{m,n}^{1,-}(t)} \frac{\sin(\lambda' s t)}{s t} ds \right] \, 
\frac{dt}{t}.
$$
When $3 \le m \le m_0$, we have $d_{1,m} >  0$ from \eqref{important-info} and since 
$|{\mathcal N}^{*}| \sim_{\varepsilon} \lambda$,
we see that the interval of the $t$ integration is empty if we choose $\lambda > \Lambda(\varepsilon)$ and
$\Lambda(\varepsilon) > 0$ large enough. Hence ${\mathcal S} = 0$ in this case.

Next we consider the case $m=2$ so that $d_{1,2} = 0$ and hence (for large $\lambda$),
$$
{\mathcal S} \ = \ 
\int_0^{|{\mathcal N}^{*}|^{-1}}
         \left[ \int_{s\in P_{m,n}^{1,-}(t)} \frac{\sin(\lambda' s t)}{s} ds \right] \, 
\frac{dt}{t}
$$
but recall that for $s \in P_{m,n}^{1,-}$, we have the restriction $c_{1,n} \le s \le c_{2,n}$ as well as
$$
\varepsilon_2 t \ \le \ s \ \le \bigl(\frac{t}{\sigma_n}\bigr)^{1/(n-1)}.
$$
But this is an empty interval of integration in $s$ for every $0<t< |{\mathcal N}^{*}|^{-1}$ 
when $|{\mathcal N}^{*}|^{-1} \le c_{1,n}^{n-1} \sigma_n$ 
which will hold for large $\lambda$ whenever $n\ge 3$ since $c_{1,n} > 0$ by \eqref{important-info}
(recall that the regions $R_{m,n}$ only arise when both $\varepsilon_m, \sigma_n > 0$).
Hence ${\mathcal S} = 0$ in this case as well.

Finally we consider the case $m=n=2$. In this case, we have (again for large $\lambda$)
$$
{\mathcal S} = \int_0^{|{\mathcal N}^{*}|^{-1}} \frac{1}{t} \left[
\int_{\varepsilon_2 t}^{\sigma_2^{-1} t} \frac{\sin(\lambda' s t)}{s} ds \right] dt  = 
 \int_0^{|{\mathcal N}^{*}|^{-1}} \frac{1}{t} \left[
\int_{\varepsilon_2 \lambda' t^2}^{\sigma_2^{-1} \lambda' t^2} \frac{\sin(s)}{s} ds \right] dt 
$$
$$
= \int_0^{\varepsilon_2 \lambda'|{\mathcal N}^{*}|^{-2}} \frac{\sin(s)}{s} \log(B/A) \, ds
+  \int^{\sigma_2^{-1} \lambda' |{\mathcal N}^{*}|^{-2}}_{\varepsilon_2 \lambda'|{\mathcal N}^{*}|^{-2}} \frac{\sin(s)}{s} 
\log(C/A) \, ds
$$
where 
$$
A = \sqrt{\sigma_2 {\lambda'}^{-1} s}, \  B = \sqrt{(\varepsilon_2 \lambda')^{-1} |{\mathcal N}^{*}|^2 s}, \ \
{\rm and} \ \ 
C = |{\mathcal N}^{*}|^{-1}.
$$
Since $\log(B/A) = \log( \eta |{\mathcal N}^{*}|)$ where $\eta = 1/\sqrt{\varepsilon_2 \sigma_2}$
and $|{\mathcal N}^{*}| \sim_{\varepsilon} \lambda \sim \lambda'$, 
the first term is $O_{\varepsilon}(\log \lambda/\lambda)$. A similar estimate holds for the second term
and so in the case $m=n=2$, we have ${\mathcal S} = O_{\varepsilon}(\log \lambda/\lambda) =  O(1)$
if $\lambda > \Lambda(\varepsilon)$ and we choose $\Lambda(\epsilon) > 0$ large enough.

In a similar way, if $\lambda > \Lambda(\varepsilon)$ and  we choose $\Lambda(\varepsilon)> 0$ large enough,
the integral
$$
{\mathcal S} \ := \ \int\!\!\!\int_{P_{m,n}^{1,-}} \frac{\sin(\lambda' s t)}{s t} \, ds dt  \ = \ O(1)
$$
when we take ${\mathcal M}^{*}$ in the definition of $P_{m,n}^{1,-}$
and the other coefficient can either be ${\mathcal N}$ or ${\mathcal N}^{*}$.

This leaves us with examining ${\mathcal S}$ when we take both ${\mathcal M}$ and ${\mathcal N}$
(where we have $|{\mathcal M}|, |{\mathcal N}| \le 1$) as the two coefficients defining ${\mathcal S}$. 
The restrictions $0\le s \le |{\mathcal M}|^{-1}$ and $0\le t \le |{\mathcal N}|^{-1}$ 
for $(s,t) \in P_{m,n}^{1,-}$ are no longer restrictions since $1\le |{\mathcal M}|^{-1}, |{\mathcal N}|^{-1}$. 
Hence in this case, when both $\varepsilon_m, \sigma_n >0$, we see that $(s,t) \in P_{m,n}^{1,-}$
are precisely those nonnegative numbers such that $s,t < r_0/2$ and
$$
c_{1,n}\le s \le c_{2,m}, \ d_{1,m} \le t \le d_{2,m} \ \ {\rm and} \ \
\varepsilon_m t^{m-1} \le s \le (\sigma_n^{-1} t)^{1/(n-1)}.
$$ 

We write 
$$
{\mathcal S} \ = \ \int_{d_{1,m}}^{d_{2,m}} \frac{1}{t} \, \Bigl[ \int_{I_{m,n}(t)} \frac{\sin(\lambda' s t)}{s} \, ds\Bigr] \, dt
$$
where $I_{m,n}(t) = [c_{1,n}, \, c_{2,n}] \cap [\varepsilon_m t^{m-1}, \, (\sigma_n^{-1} t)^{1/(n-1)}]$.
We claim that
\begin{equation}\label{main-sin-I}
{\mathcal S} \ = \ O_{\varepsilon}(1/\lambda) \ \ {\rm whenever} \ \ m\ge 3 \ {\rm or} \ n\ge 3
\end{equation}
and when $m=n=2$, 
\begin{equation}\label{main-sin-II}
{\mathcal S} \ = \ O_{\varepsilon}(1/\lambda) \ + \ \frac{\pi}{4} \log1/(\varepsilon_2 \sigma_2).
\end{equation}
If so, we would be able to conclude that
$$
{\mathcal I} \ = \ O_{\varepsilon}(\log \lambda/\lambda) \ + \ \frac{\pi}{4} \log (\varepsilon_2^{-1})
$$
since $0< \sigma_2 \lesssim 1$. Therefore, since $0< \varepsilon_2 \le \varepsilon$, we can find a
$\Lambda(\varepsilon)$ such that whenever $\lambda > \Lambda(\varepsilon)$, $|{\mathcal I}| \gtrsim
\log(\varepsilon^{-1})$, establishing \eqref{MOE-reduction} and hence \eqref{Main-osc-est}. We now prove
\eqref{main-sin-I} and \eqref{main-sin-II}.

For a fixed $t \in [d_{1,m}, d_{2,m}]$, we make the change of variables $s \to (\lambda' t) s$ and write
$$
{\mathcal S} \ = \ \int_{d_{1,m}}^{d_{2,m}} \frac{1}{t} \, \Bigl[ \int_{{\mathcal I}_{m,n}(t)} 
\frac{\sin(s)}{s} \, ds\Bigr] \, dt
$$
where now 
${\mathcal I}_{m,n}= [\varepsilon_m \lambda' \, t^m, \, \sigma_n^{-1/(n-1)} \lambda'\,  t^{n/(n-1)}] \cap 
[c_{1,n} \lambda' \, t  , \,  c_{2,n} \lambda' \, t]$.

When $m\ge 3$, we have $d_{1,m}>0$ by \eqref{important-info} and so when we
interchange the order of integration, ${\mathcal S}$ has the form
$$
{\mathcal S} \ = \ \int_{C_{\varepsilon}\lambda'}^{D_{\varepsilon}\lambda'} \frac{\sin(s)}{s} \Bigl[ \int_{d_{1,m}}^g \frac{1}{t} \, dt \Bigr] \, ds
$$
where $g = O(d_{2,n})$ and $C_{\varepsilon} >0$. 
From this one can deduce that \eqref{main-sin-I} holds in this case. 

Suppose now that $m=2$ and $n\ge 3$. Hence $d_{1,2} = 0$ and $c_{1,n} > 0$ by \eqref{important-info}
and so
$$
{\mathcal S} \ = \ \int_{0}^{d_{2,2}} \frac{1}{t} \, \Bigl[ \int_{{\mathcal I}_{2,n}(t)} 
\frac{\sin(s)}{s} \, ds\Bigr] \, dt
$$
but if ${\mathcal I}_{2,n}(t)$ is nonempty, it must be the case that $\sigma_n c_{1,n}^{n-1} \le t$.
This puts us in the same position as before but with $d_{1,m}$ replaced with 
$\sigma_n c_{1,n}^{n-1} >0$ and so, as above, we see that \eqref{main-sin-I} holds in this case as well.
Hence this establishes \eqref{main-sin-I} whenever $m\ge 3$ or $n\ge 3$, as claimed.

Finally we turn to the case $m=n=2$ in which case $c_{1,2} = d_{1,2} = 0$. The same argument as above
shows that
$$
 \int_{\sigma_2 c_{2,2}}^{d_{2,2}} \frac{1}{t} \, \Bigl[ \int_{{\mathcal I}_{2,n}(t)} 
\frac{\sin(s)}{s} \, ds\Bigr] \, dt \ = \ O_{\varepsilon}(1/\lambda)
$$
and so we may assume $t \le \sigma_2 c_{2,2}$ in which case
${\mathcal I}_{2,2}(t) = [ \varepsilon_2 \lambda' t^2, \, \sigma_2^{-1} \lambda' t^2]$.
We are left with examining
$$
{\mathcal S}' \ = \
\int_0^{\min(d_{2,2}, \sigma_2 c_{2.2})}  \frac{1}{t} \, \Bigl[ \int_{\varepsilon_2 \lambda' t^2}^{\sigma_2^{-1} \lambda' t^2}
\frac{\sin(s)}{s} \, ds\Bigr] \, dt.
$$
Let us call the minimum in the outer limit of integration ${\mathfrak m}$. 
When we interchange the order of integration, ${\mathcal S}'$ splits into a sum $I + II =$
$$
\int_{\varepsilon_2 \lambda' {\mathfrak m}^2}^{\sigma_2^{-1} \lambda' {\mathfrak m}^2}
\frac{\sin(s)}{s} \, \Bigl[ \int_{\sqrt{\sigma_2 {\lambda'}^{-1} s}}^{{\mathfrak m}} \frac{1}{t} \, dt \Bigr] \, ds \ + \
\int_0^{\varepsilon_2 \lambda' {\mathfrak m}^2}
\frac{\sin(s)}{s} \, \Bigl[ \int_{\sqrt{\sigma_2 {\lambda'}^{-1} s}}^{  \sqrt{(\varepsilon_2 \lambda')^{-1} s}    } \frac{1}{t} \, dt \Bigr] \, ds
$$
of two integrals. As before $I = O_{\varepsilon}(1/\lambda)$ but now
$$
II \ = \ \frac{1}{2} \log(1/\varepsilon_2 \sigma_2) \, \int_0^{\varepsilon_2 \lambda' {\mathfrak m}^2}
\frac{\sin(s)}{s} \, ds \ = \ \frac{\pi}{4} \log(1/\varepsilon_2 \sigma_2) + O_{\varepsilon}(1/\lambda),
$$
establishing \eqref{main-sin-II}.

\subsection{The final step} We now have established \eqref{Main-osc-est}. In fact we have proved that
there exists a $\Lambda(\varepsilon)$ such that whenever $\lambda > \Lambda(\varepsilon)$, we can
find $M = M(\lambda)$ and $N = N(\lambda)$ so that
$$
 \Bigl| \int\!\!\!\int_{{\mathbb T}^2} e^{2\pi i \lambda  \omega_0 \cdot [{\overline{\phi}}(x+s, y+t) + {\overline{L}} \cdot (s,t)]}
\sin(Ms) \sin(Nt) \, \frac{ds}{s} \frac{dt}{t}\Bigr|  \ \gtrsim \
\log(1/\varepsilon)
$$
where $(x,y) = (x_{\varepsilon}, y_{\varepsilon})$ is the point we found from Lemma \ref{point-properties}.

Next, we appeal to the multidimensional Dirichlet principle which gives a sequence of
positive integers $q\to \infty$ and lattice points ${\overline{n}} = {\overline{n}}(q) \in {\mathbb Z}^d$ such that
\begin{equation}\label{multi-dimensional}
\| {\overline{n}} - q \, \omega_0 \| \ \le \ q^{-\epsilon_d}
\end{equation}
for some $\epsilon_d > 0$ depending only on $d$. We write ${\overline{n}} = \|{\overline{n}}\| \, \omega$
in polar coordinates
for some $\omega \in {\mathbb S}^{d-1}$. From \eqref{multi-dimensional}, we see that
$\omega \to \omega_0$  and $\|{\overline{n}}\|/q \to 1$ as $q\to\infty$. By the Lebesgue dominated convergence
theorem, fixing $M$ and $N$ and using \eqref{multi-dimensional}, we have
$$ 
\int\!\!\!\int_{{\mathbb T}^2} e^{2\pi i {\overline{n}} \cdot [{\overline{\phi}}(x+s, y+t) + {\overline{L}} \cdot (s,t)]}
\sin(Ms) \sin(Nt) \, \frac{ds}{s} \frac{dt}{t}  \ - \
$$
$$
 \int\!\!\!\int_{{\mathbb T}^2} e^{2\pi i q \omega_0 \cdot [{\overline{\phi}}(x+s, y+t) + {\overline{L}} \cdot (s,t)]}
\sin(Ms) \sin(Nt) \, \frac{ds}{s} \frac{dt}{t} \ \to \ 0 \ \ {\rm as} \ \ q \to \infty.
$$
Hence by taking $q$ large enough (and in particular $q > \Lambda(\varepsilon)$), we can then find
a lattice point ${\overline{n}} \in {\mathbb Z}^d$ and a pair $(M,N)$ 
such that
$$
\Bigl| \int\!\!\!\int_{{\mathbb T}^2} e^{2\pi i {\overline{n}} \cdot [{\overline{\phi}}(x+s, y+t) + {\overline{L}} \cdot (s,t)]}
\sin(Ms) \sin(Nt) \, \frac{ds}{s} \frac{dt}{t}\Bigr|  \ \gtrsim \
\log(1/\varepsilon),
$$
completing the proof of Proposition \ref{logvar} and hence Theorem \ref{main}.

\section{Proof of Theorem \ref{square-sum}}\label{Square-Sum}

In this section we consider mappings $\Phi = \Phi_{\phi, L} : {\mathbb T}^2 \to {\mathbb T}$
parameterised by  $g(s,t) = \phi(s,t) + L_1 s + L_2 t$ where $\phi$ is a real-analytic,
periodic function of two variables and $L = (L_1, L_2)$ is a pair of integers. We will give
a preliminary examination when the property $(\Phi)_{sq}$
holds for $\Phi$; that is, when does $f \circ \Phi $ have a uniformly convergent
Fourier series with respect to square sums for any $f \in A({\mathbb T})$? 
This boils down to showing whether or not 
the norms $\|e^{2\pi i n  g} \|_{U_{sq}({\mathbb T}^2)}$
are uniformly bounded in $n \in {\mathbb Z}$.

Recall that
$$
\|e^{2\pi i n g} \|_{U_{sq}({\mathbb T}^2)} \ = \
\sup_{M} \| S_{M,M} (e^{2\pi in g})\|_{L^{\infty}({\mathbb T})}
$$
$$
= \
\sup_{M,x,y} \ \Bigl| \int\!\!\!\int_{{\mathbb T}^2} e^{2\pi i n 
[\phi(x + s, y + t) + L_1 s + L_2t]} D_M(s) D_M(t) \, ds dt\Bigr|
$$
where $D_M$ denotes the Dirichlet kernel of order $M$. 

Two applications of Proposition \ref{osc-est-III} show that uniform bounds
for the above oscillatory integral are reduced to determining whether or not 
$$
{\mathcal D} \ := \ 
 \int\!\!\!\int_{|s|,|t| \le 1/2} e^{2\pi i n 
[\phi(x + s, y + t) + L_1 s + L_2t]} \frac{\sin(M s)}{s} \frac{\sin(M t)}{t} \, ds  dt
$$
is unformly bounded in the parameters $n, x,y$ and $M$.

\subsection{Proof of Theorem \ref{square-sum} -- existence of uniform bounds} Here we show that there exists
infinitely many pairs $(L_1, L_2)$ such that ${\mathcal D} = O(1)$. This follows from 
the following proposition.


\begin{proposition}\label{square-prop} For any pair $(L_1, L_2)$ of integers satisfying
$L_2 > 10^3 L_1$ and $L_1 \ge 10^2 \|\phi\|_{C^2}$, we have ${\mathcal D} = O(1)$.
\end{proposition}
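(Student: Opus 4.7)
The plan is as follows. First I would split each Dirichlet-like kernel using $\sin(Mu)=(e^{iMu}-e^{-iMu})/2i$, writing
$$
{\mathcal D} \;=\; -\tfrac{1}{4}\sum_{\sigma,\tau\in\{\pm 1\}}\sigma\tau\, I_{\sigma,\tau},\qquad I_{\sigma,\tau}= \mathrm{p.v.}\!\!\iint_{|s|,|t|\le 1/2}\!\! e^{2\pi i[n\phi(x+s,y+t)+A_\sigma s+B_\tau t]}\,\frac{ds\,dt}{st},
$$
with $A_\sigma = nL_1+\sigma M/(2\pi)$ and $B_\tau = nL_2+\tau M/(2\pi)$. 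The crucial consequence of the hypothesis $L_2>10^3 L_1$ is that, for each sign pattern, either $|B_\tau-A_\sigma|=n(L_2-L_1)$ or $|B_\tau+A_\sigma|=n(L_1+L_2)$ is bounded below by $nL_2/2$. Consequently $\max(|A_\sigma|,|B_\tau|)\ge nL_2/4$ in every case. Swapping $s\leftrightarrow t$ if necessary (the argument is symmetric), I may assume $|B_\tau|\ge nL_2/4$. Combined with $L_1\ge 10^2\|\phi\|_{C^2}$, this gives $|B_\tau|\ge 25 n L_1\ge 2500\, n\|\phi\|_{C^2}$, so the $t$-phase derivative satisfies $|\partial_t[2\pi(n\phi(x+s,y+t)+B_\tau t)]|\ge \pi|B_\tau|$ uniformly.

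Next I would exhibit $I_{\sigma,\tau}$ as an iterated principal-value integral
$$
I_{\sigma,\tau} \;=\; \mathrm{p.v.}\!\int_{|s|\le 1/2} e^{2\pi i A_\sigma s}\,\frac{Q(s)}{s}\,ds,\qquad Q(s) := \mathrm{p.v.}\!\int_{|t|\le 1/2} e^{2\pi i[n\phi(x+s,y+t)+B_\tau t]}\,\frac{dt}{t}.
$$
Proposition~\ref{osc-est-III}, applied in $t$ with $\lambda=n$, $\rho=B_\tau$, $\tau=x+s$, yields $|Q(s)|\le C$ uniformly in every parameter. I would then split
$$
I_{\sigma,\tau}\;=\;\int_{|s|\le 1/2} e^{2\pi i A_\sigma s}\,\frac{Q(s)-Q(0)}{s}\,ds\;+\;Q(0)\cdot\mathrm{p.v.}\!\int_{|s|\le 1/2}\frac{e^{2\pi i A_\sigma s}}{s}\,ds.
$$
The second term is $O(1)$ by the classical bound on the truncated Fourier transform of $\mathrm{p.v.}\,1/s$, uniformly in $A_\sigma$. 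Everything thus reduces to proving the key estimate
\begin{equation}\label{key}
|Q(s)-Q(0)|\;\le\; C_\phi\, |s|
\end{equation}
with a constant $C_\phi$ depending only on $\phi,L_1,L_2$ (and crucially not on $n$, $M$, $x$, $y$). Granted \eqref{key}, the first integral above is bounded by $C_\phi \int_{|s|\le 1/2}ds = O(1)$ and the proposition follows.

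The proof of \eqref{key} is the heart of the matter and the main obstacle. Using
$$
Q(s)-Q(0)\;=\;\mathrm{p.v.}\!\int_{|t|\le 1/2}\bigl[e^{2\pi i n\phi(x+s,y+t)}-e^{2\pi i n\phi(x,y+t)}\bigr]\,e^{2\pi i B_\tau t}\,\frac{dt}{t},
$$
I would separate the constant-in-$t$ piece $f(s,0):=e^{2\pi i n\phi(x+s,y)}-e^{2\pi i n\phi(x,y)}$ times $\mathrm{p.v.}\!\int e^{2\pi iB_\tau t}/t\,dt$ (whose contribution, once multiplied by $e^{2\pi i A_\sigma s}/s$ and integrated, is $O(1)$ by a direct application of Proposition~\ref{osc-est-III} in $s$), leaving a remainder of the form $\int [f(s,t)-f(s,0)]\,e^{2\pi iB_\tau t}\,dt/t$, which has no singularity at $t=0$. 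The naive pointwise bound $|f(s,t)|\le 2\pi n\|\phi\|_{C^1}|s|$ yields an unusable $O(n|s|\log n)$ estimate; to remove the $n$, I would integrate by parts in $t$, each step gaining $|B_\tau|^{-1}$ at the cost of one more $t$-derivative of $e^{2\pi i n\phi}$, which brings a factor $n\|\phi\|_{C^1}$. Since $n\|\phi\|_{C^1}/|B_\tau|\le 4\|\phi\|_{C^1}/L_2\ll 1$ by hypothesis, iterating (or summing the resulting geometric series after careful splitting at $|t|\sim 1/|B_\tau|$) converts the $n$-dependence into a convergent series, giving \eqref{key}. Keeping the bookkeeping clean through the p.v. structure and the bounded support, while ensuring the constant remains independent of $n$, is the delicate part.
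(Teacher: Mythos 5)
Your opening reductions are sound: splitting the sines, the dichotomy $\max(|A_\sigma|,|B_\tau|)\ge nL_2/4$ for each sign pattern, the bound $|M|\lesssim\max(|A_\sigma|,|B_\tau|)$, and the uniform bound $|Q(s)|\le C$ from Proposition \ref{osc-est-III} are all correct (this is essentially the Claim in the paper's proof, with the harmless omission of the $\nabla\phi(x,y)$ terms, which are dominated by $nL_1/100$). The gap is the key estimate $|Q(s)-Q(0)|\le C_\phi|s|$ with $C_\phi$ independent of $n$: it is false, and so is the repaired version in which the constant-in-$t$ piece is subtracted. The obstruction is structural. $Q(s)$ depends on $s$ only through the factor $e^{2\pi i n\phi(x+s,y+t)}$, which oscillates in $s$ at frequency $\sim n\|\phi\|_{C^1}$, and integration by parts in $t$ cannot remove this: each $t$-integration by parts trades one $t$-derivative for a factor $|B_\tau|^{-1}$, but the Lipschitz constant of $Q$ in $s$ is produced by an $s$-derivative, which is never paired with a $|B_\tau|^{-1}$. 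Concretely, for fixed $n$ and $s$ one has $Q(s)\to \pi i\,{\rm sgn}(B_\tau)\, e^{2\pi i n\phi(x+s,y)}$ as $|B_\tau|\to\infty$, so $Q(s)-Q(0)$ tends to a quantity of size $\min\bigl(2,\,2\pi n\|\phi\|_{C^1}|s|\bigr)$, which exceeds $C_\phi|s|$ throughout $1/n\lesssim|s|\lesssim 1/C_\phi$ once $n$ is large. After subtracting $f(s,0)\cdot p.v.\!\int e^{2\pi iB_\tau t}\,dt/t$, the remainder $R(s)$ is still of size $\sim |B_\tau|^{-1}\,n\,\min(1,n|s|)\sim L_2^{-1}\min(1,n|s|)$ — for instance, for $\phi(s,t)=\phi_1(s)\phi_2(t)$ its leading term is $c\,n\phi_2'(y)B_\tau^{-1}\bigl[\phi_1(x+s)e^{2\pi i n\phi_1(x+s)\phi_2(y)}-\phi_1(x)e^{2\pi i n\phi_1(x)\phi_2(y)}\bigr]$ — and therefore $\int_{|s|\le 1/2}|R(s)|\,ds/|s|\gtrsim L_2^{-1}\log n$. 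Since your plan is to bound the outer integral by taking absolute values of $(Q(s)-Q(0))/s$, this divergence is fatal: the surviving factor of $n$ can only be cancelled by exploiting oscillation in $s$, not by size estimates.

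This is precisely where the paper routes the argument the other way. Having identified which of the two effective frequencies is $\gtrsim nL_2$ (its quantity $B=n(\partial_s\phi(x,y)+L_1)\pm M$ in case (b)), it places \emph{that} variable on the outside, integrates by parts in it, and uses the gain $1/|{f_{\pm}}'(s)|\sim 1/|B|$ to absorb the $O(n)$ that appears when the derivative falls on the inner integral $G(s)$ (here $n/|B|=O(1)$ is exactly what the hypothesis $L_1\ge 10^2\|\phi\|_{C^2}$ buys); the leftover $s$-integrals, which still carry the oscillation $e^{2\pi i n\phi(x+s,y)}$, are then controlled by Proposition \ref{osc-est-III} rather than by absolute values, and the region $|s|\le|B|^{-1}$ is disposed of using $|M|\lesssim|B|$. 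To salvage your architecture you would need to arrange the iterated integral so that the outer variable carries the large frequency and replace the Lipschitz estimate by this integration by parts plus Proposition \ref{osc-est-III} — at which point you have reconstructed the paper's proof.
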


\begin{proof} Set $A := n (\partial_t \phi(x,y) + L_2) \pm M$ and $B := n (\partial_s \phi(x,y) +  L_1) \pm M$.

{\bf Claim}: \ One of the following two statements hold. Either
$$
(a) \ \ n, |M| \ \lesssim \ |A|, \ \ \ {\rm and} \ \ \ 10 n \|\partial^2_t \phi\|_{L^{\infty}} \ \le \ |A| 
$$
or
$$
(b) \ \ n, |M| \ \lesssim \ |B|, \ \ \ {\rm and} \ \ \ 10 n \|\partial^2_s \phi\|_{L^{\infty}} \ \le \ |B| .
$$

We consider three cases:

{\underline{{\it Case 1}}}: \ Either 
$\bigl| M + n (\partial_t \phi(x,y) + L_2) \bigr| \ge 10 n |\partial_t \phi(x,y) + L_2 |$ \ or
$$
\bigl| M  -  n (\partial_t \phi(x,y) + L_2) \bigr| \  \ge \ 10 n |\partial_t \phi(x,y) + L_2 |.
$$
Either situation implies that $|A| \sim |M|$. Furthermore
$$
n \|\partial^2_t \phi\|_{L^{\infty}} \ \le \ 10^{-5} L_2 n \ \le \ 10^{-3} n |\partial_t (x,y) + L_2 | \ \le \ |A|/10
$$
and this leads to (a). 

{\underline{{\it Case 2}}}: \ Either
$\bigl| M + n (\partial_t \phi(x,y) + L_2) \bigr| \le 10^{-1} n |\partial_t \phi(x,y) + L_2 |$ \ or
$$
\bigl| M  -  n (\partial_t \phi(x,y) + L_2) \bigr| \  \le \ 10^{-1}  n |\partial_t \phi(x,y) + L_2 |.
$$
Either situation implies that $|M| \sim n |\partial_t \phi(x,y) + L_2| \ge 10^2 |\partial_s \phi(x,y) + L_1|$. Hence
$$
|B| \ \sim \ |M| \ \ge \ 10^{-1} n L_2 \ \ge 10^2 n \|\partial^2_s\phi\|_{L^{\infty}}
$$
and this leads to (b). 

{\underline{{\it Case 3}}}: 
$$
10^{-1} n \, |\partial_t \phi(x,y) + L_2 | \ \le \ |M \pm n (\partial_t \phi(x,y) + L_2) | \ \le \ 10 n |\partial_t \phi(x,y) + L_2 |
$$
In this case, we have
$$
|A| \ \sim \ n |\partial_t \phi(x,y) +  L_2 | \ \ge \ 10^{-1} n L_2 \ \ge \ 10^2 n \|\partial^2_t \phi\|_{L^{\infty}}
$$
and so $|M| =$
$$
\bigl| M \pm n (\partial_t \phi(x,y) + L_2) \mp n (\partial_t \phi(x,y) + L_2)\bigr| \le |A| + n |\partial_t \phi(x,y) + L_2|
\lesssim |A|.
$$
This leads to (a), establishing the claim in all three cases.

Without loss of generality suppose that (b) holds. We split ${\mathcal D} = {\mathcal D}_1 + {\mathcal D}_2$
where
$$
{\mathcal D}_1 \ := \ 
 \int\!\!\!\int_{|B|^{-1} \le |s|} e^{2\pi i n [
\phi(x + s, y + t) +   L_1 s + L_2  t]} \,  \sin(Ms) \sin(M t) \, \frac{ds}{s} \frac{dt}{t}.
$$
Using $\sin(u) = (e^{i u} - e^{-iu})/2i$, we 
write ${\mathcal D}_1 = I^{+} + I^{-}$ where 
$$
I^{\pm} \ = \
\int_{\frac{1}{|B|} \le |s| \le 1/2} e^{ i f_{\pm}(s)} \frac{1}{s} \left[
\int_{|t|\le 1/2} e^{2\pi  i n[\phi(x+s, y+t) - \phi(x+s,y) + L_2 t]} \sin(M t) \frac{dt}{t}\right] \, ds
$$
where $f_{\pm}(s) = n (\phi(x+s, y) + L_1 s) \pm M s$. Let us denote by $G(s)$ the inner integral
of $I^{\pm}$ above. By Proposition \ref{osc-est-III}, $G(s) = O(1)$. 

 Integrating by parts in the $s$ integral, we have
$I^{\pm} = BT + II^{\pm} + III^{\pm}$ where $BT$ denote the boundary terms,
$$
II^{\pm} \ = \ 
i \int_{\frac{1}{|B|} \le |s| \le 1/2} e^{ i f_{\pm}(s)} \Bigl( \frac{1}{s {f_{\pm}}'(s)}\Bigr)'  G(s) \, ds
$$
and
$$
III^{\pm} \ = \
i \int_{\frac{1}{|B|} \le |s| \le 1/2} e^{ i f_{\pm}(s)} \frac{1}{s {f_{\pm}}'(s)} G'(s) \, ds
$$

Note that
$$
{f_{\pm}}'(s) \ = \  n (\partial_s \phi(x+s, y) + L_1) \pm M \ = \  B + n s\int_0^1 \partial^2_s \phi(x+ r s, y) dr
$$ 
so that
$$
\frac{1}{s {f_{\pm}}'(s)} \ = \ \frac{1}{s B} \ + \ \frac{1}{s} \frac{B - {f_{\pm}}'(s)}{B {f_{\pm}}'(s)} \ = \ \frac{1}{sB} + 
O\Bigl(\frac{n \|\partial^2_s \phi\|_{L^{\infty}}}{B^2}\Bigr).
$$

From this we see that $BT = O(1)$. Furthermore
$$
{f_{\pm}}''(s) \ = \ n \, \int_0^1 \partial^2_s \phi(x+ rs, y) dr \ + \ n s \, \int_0^1 \partial^3_s \phi(x+ r s, y) r \, dr
$$
and so
$$
\Bigl(\frac{1}{s {f_{\pm}}'(s)} \Bigr)'  =  - \frac{1}{s^2 {f_{\pm}}'(s)} -  \frac{{f_{\pm}}''(s)}{s {f_{\pm}}'(s)^2}  = 
O\Bigl(\frac{1}{|B| s^2} + \frac{n \|\partial^2_s \phi\|_{L^{\infty}}}{B^2 |s|} + 
\frac{n \|\partial^3_s \phi\|_{L^{\infty}}}{B^2} \Bigr).
$$

From this we see that
$$
II^{\pm} \ = \ O(1) \ + \ O(\log |B| / |B|) \ = \ O(1).
$$

Turning to $III^{\pm}$, we note that
$$
\partial_s \phi(x+s, y+t) - \partial_s \phi(x+s,y) \ = \ t \, \int_0^1 \partial^2_{st} \phi(x+s, y + u t) du
$$ 
and so $G'(s) \ =$
$$
2 \pi i n \int_0^1 \left[ \int_{|t|\le 1/2} e^{2\pi  i n[\phi(x+s, y+t) - \phi(x+s,y) + L_2 t]} 
\partial^2_{st} \phi(x+s, y + u t)  
\sin(M t) dt\right] du,
$$
showing that $G'(s) = O(n)$. Our formula for 
$(s {f_{\pm}}'(s))^{-1}$ shows that
$$
III^{\pm} \ = \ O(n^2/|B|^2) + \frac{n}{B} 
 \int_0^1
\int_{|t| \le 1/2} e^{2 \pi i n L_2 t} \sin(Mt) \, H(t) \, dt \, du
$$
where
$$
H(t) \ := \
 \int_{\frac{1}{|B|} \le |s| \le 1/2} e^{ 2 \pi i g_{\pm}(s,t)}  
\partial^2_{st} \phi(x+s, y+ u t) \, \frac{ds}{s}
$$ 
and $g_{\pm}(s,t) =  n (\phi(x+s, y+t) + L_1 s) \pm M s$.

We write
$$
\partial^2_{st} \phi(x+s, y + u t) = \partial^2_{st} \phi(x, y + u t) + s \int_0^1
\partial^{2,1}\phi(x + r s, y + u t) dr
$$
and so
$$
H(t) \ = \  \partial^2_{st} \phi(x, y + u t)
\int_{\frac{1}{|B|} \le |s| \le 1/2} e^{ 2 \pi i g_{\pm}(s,t)} \frac{ds}{s} \ + \ O(1) \ = \ O(1)
$$
by Proposition \ref{osc-est-III}. Hence $III^{\pm} = O(1)$ and so $I^{\pm} = O(1)$, showing
${\mathcal D}_1 = O(1)$. 

We are left with
$$
{\mathcal D}_2 \ = \
 \int\!\!\!\int_{|s| \le |B|^{-1}} e^{2\pi i n [
\phi(x + s, y + t) +   L_1 s + L_2  t]} \,  \sin(Ms) \sin(M t) \, \frac{ds}{s} \frac{dt}{t}
$$
which can be written as
$$
\int_{|s| \le \frac{1}{|B|}} e^{2\pi i n L_1 s} \frac{\sin(M s)}{s} \left[
\int_{|t|\le 1/2} e^{2\pi i n (\phi(x+s, y+t) + L_2 t)} \frac{\sin{M t}}{t} dt \right] ds.
$$
The inner integral is uniformly bounded by Proposition \ref{osc-est-III} and so
${\mathcal D}_2 = O(|M|/|B|) = O(1)$ by (b). This completes the proof of the proposition,
showing ${\mathcal D} = O(1)$. 
\end{proof}

\subsection{Examples when $(\Phi)_{sq}$ fails}

From Theorem \ref{main}, we know that $(\Phi)_{sq}$ can only fail for
maps $\Phi = \Phi_{\phi, L}$ whose periodic part $\phi$ does not satisfy
the factorisation hypothesis (FH). 

As mentioned at the outset of this section, the
property $(\Phi_{\phi, L})_{sq}$ fails if we can show that the oscillatory integral
$$
{\mathcal D} \ := \ 
 \int\!\!\!\int_{|s|,|t| \le 1/2} e^{2\pi i n 
[\phi(x + s, y + t) + L_1 s + L_2t]} \frac{\sin(M s)}{s} \frac{\sin(M t)}{t} \, ds  dt
$$
is unbounded  in the parameters $n, x,y$ and $M$. 

Suppose now  (FH) fails for $\phi$. Then there exists a point $(x_0,y_0) \in {\mathbb T}^2$ such that
\eqref{FH-fail} holds; that is, $\phi_{st}(x_0,y_0) \not= 0$ yet $\phi_{tt}(x_0,y_0) = 0$, say. 
In addition we will assume that we can find such a point such that the difference
$\partial_s \phi(x_0, y_0) - \partial_t \phi(x_0,y_0)$ is an integer. 
There are many such examples; for instance $\phi(x,y) = \cos(x) (-1 + \sin(y))$
has these properties for $(x_0, y_0) = (\pi/4, 0)$. 
We  fix the point $(x,y)$ in
${\mathcal D}$ above to be $(x_0, y_0)$.

We choose any pair $L = (L_1, L_2)$ of integers so that
$$
\partial_s \phi(x_0,y_0) + L_1  \ = \ \partial_t \phi(x_0, y_0) +  L_2  \ \not= \ 0.
$$
Finally for $n$ large we choose the integer $M = M(n)$ such that
$$
n (\partial_s \phi(x_0, y_0) + L_1)  -  M \ = \ n (\partial_t \phi(x_0,y_0) +  L_2 ) - M \ = \ O(1).
$$ 

\vskip 15pt

\begin{proposition}\label{example} With the point $(x_0, y_0) \in {\mathbb T}^2$ and pair
of  integers $L =  (L_1, L_2)$ described above, we can find an $N$ such that whenever
$n\ge N$, we have (with the above $M = M(n)$)
\begin{equation}\label{D} 
|{\mathcal D}| \ \gtrsim \ \log n.
\end{equation}
Hence $(\Phi)_{sq}$ fails for $\Phi = \Phi_{\phi, L}$.
\end{proposition}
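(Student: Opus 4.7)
The overall plan is to parallel the proof of Proposition \ref{logvar} in Section \ref{main-fail}, now at the \emph{unperturbed} point $(x_0,y_0)$ where $\phi_{tt}(x_0,y_0)=0$ exactly, with the single free parameter $n$ playing the role previously played by $1/\varepsilon$. The special feature of our choice of $(L_1,L_2)$ and $M=M(n)$ is that both linearized coefficients can be made $O(1)$ simultaneously by the same integer $M$, precisely because $\partial_s\phi(x_0,y_0)+L_1=\partial_t\phi(x_0,y_0)+L_2=:A\neq 0$.

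First, I expand $\sin(Ms)\sin(Mt)=-\tfrac14\sum_{\epsilon_1,\epsilon_2\in\{\pm1\}}\epsilon_1\epsilon_2 e^{i\epsilon_1 Ms+i\epsilon_2 Mt}$ to split ${\mathcal D}=\sum_{\epsilon_1,\epsilon_2}c_{\epsilon_1,\epsilon_2}{\mathcal J}_{\epsilon_1,\epsilon_2}$ into four double Hilbert-transform integrals. Taylor-expanding $\phi(x_0+s,y_0+t)$ at $(x_0,y_0)$ shows that the linearized phase of ${\mathcal J}_{\epsilon_1,\epsilon_2}$ carries coefficients $(nA+\epsilon_1 M,\,nA+\epsilon_2 M)$; by the definition of $M$, this is $(O(1),O(1))$ when $(\epsilon_1,\epsilon_2)=(-,-)$ and has at least one component of size $\asymp n|A|$ in the remaining three cases.

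I would then dispose of the three ``unbalanced'' integrals by adapting the integration-by-parts scheme of Proposition \ref{square-prop}: in the variable carrying the large coefficient $\asymp n$, integrate by parts once (the second-derivative $n\partial^2\phi$ is absorbed since the large coefficient dominates), then invoke Proposition \ref{osc-est-III} on the remaining one-dimensional integral to conclude uniform boundedness in $n$. For the balanced integral ${\mathcal J}_{-,-}$ I would execute the reduction from Section \ref{main-fail}: restrict to $|s|,|t|\le r_0/2$ via two uses of Proposition \ref{osc-est-III}, then compare the phase $n\phi(x_0+s,y_0+t)$ with its quadratic Taylor polynomial based at $(x_0,y_0)$. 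Since $\phi_{tt}(x_0,y_0)=0$, this quadratic polynomial is simply $n\bigl[\tfrac12\phi_{ss}(x_0,y_0)s^2+\phi_{st}(x_0,y_0)st\bigr]$. A dyadic decomposition of the cubic-and-higher Taylor remainder, bounded via van der Corput against the non-vanishing $\phi_{st}(x_0,y_0)$, should show that this replacement costs only $O(1)$ — this is the analogue of passing from ${\mathcal J}$ to ${\mathcal J}'$ in Section \ref{main-fail}.

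Finally I would extract the $\log n$ growth from
\[
\int\!\!\int_{|s|,|t|\le r_0/2} e^{2\pi i n[\phi_{ss}(x_0,y_0)s^2/2+\phi_{st}(x_0,y_0)st]+\mu s+\nu t}\,\frac{ds}{s}\frac{dt}{t},\qquad \mu,\nu=O(1),
\]
by performing the $t$-integral first: the $t$-phase is linear $(n\phi_{st}(x_0,y_0)s+\nu)t$, so the inner integral equals $\pi i\,\mathrm{sgn}(\phi_{st}(x_0,y_0)s)+O\bigl((n|s|)^{-1}\bigr)$ uniformly. The outer $s$-integral then reduces (up to $O(1)$) to $\pi i\int_{1/n\lesssim|s|\le r_0/2}\mathrm{sgn}(s)\,e^{2\pi i n\phi_{ss}(x_0,y_0)s^2/2}\,\frac{ds}{s}$, whose even part is $\asymp \log n$ — the pure $s^2$-factor contributes only a bounded perturbation by Proposition \ref{osc-est-III} applied to the real-analytic, $1$-periodic modification $\sin(\pi s)^2 \phi_{ss}(x_0,y_0)/(\pi)^2$-type phase. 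This mirrors the computation in equation \eqref{CF} of Fefferman's construction.

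The main obstacle is step 4: at $(x_0,y_0)$ the Newton-polygon regions $R_{m,n}$ of Section \ref{heart} degenerate because $\varepsilon_2:=|\phi_{tt}(x_0,y_0)|=0$ (so $d_{1,2}=d_{2,2}=0$), and one cannot plug in $\varepsilon_2=0$ into the final formula $\tfrac{\pi}{4}\log(1/\varepsilon_2\sigma_2)$. One must therefore replace the Newton-polygon bookkeeping by direct dyadic estimates in $(s,t)$, using $|\phi_{st}(x_0,y_0)|>0$ as the sole available mixed-derivative lower bound and carefully tracking uniform constants in $n$. Once the quadratic reduction is in hand, step 5 is a direct calculation.
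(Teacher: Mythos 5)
Your global strategy is the right one and matches the paper's: exploit the choice of $L$ and $M=M(n)$ so that in exactly one of the four sign combinations both linearised coefficients are $O(1)$, discard the other three, reduce the balanced piece to a hyperbolic model and read off $\log n$. The paper's proof is precisely this five--step scheme. But there is a genuine gap at your quadratic--comparison step. You propose to replace $n\phi(x_0+s,y_0+t)$ by its quadratic Taylor polynomial over the whole square $|s|,|t|\le r_0/2$, controlling the remainder ``via van der Corput against $\phi_{st}(x_0,y_0)$''. This fails on the sectors where the pure Taylor terms dominate the hyperbolic one. Concretely, let $m_0\ge 3$ be minimal with $\partial_t^{m_0}\phi(x_0,y_0)\neq0$ (note $m_0\ge 3$ because $\phi_{tt}(x_0,y_0)=0$) and $n_0\ge2$ minimal with $\partial_s^{n_0}\phi(x_0,y_0)\neq0$. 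On a dyadic rectangle $|s|\sim2^{-p}$, $|t|\sim2^{-q}$ with $2^{-p}\le 2^{-(m_0-1)q}$, the omitted pure term contributes $\asymp n2^{-m_0q}$ to the phase difference, a bound with \emph{no decay in $p$}, while the complementary van der Corput bound $(n2^{-p-q})^{-1/2}$ \emph{grows} as $p\to\infty$; no convex combination of the two is summable over the infinitely many scales $p$ in that sector, and likewise for $n2^{-n_0p}$ on $|t|\le|s|^{n_0-1}$. So the pure parts of the phase cannot be discarded by this comparison.

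The correct repair is the paper's Step 2: first run the comparison of Sections \ref{prelude-I}--\ref{variant} with $(m_1,n_1)=(1,1)$, whose target is the \emph{almost polynomial} phase $P_{x_0,y_0}(s,t)+\phi(x_0+s,y_0)+\phi(x_0,y_0+t)$ retaining the full pure parts. Since $P_{x_0,y_0}\equiv0$ when $(m_1,n_1)=(1,1)$, the comparison integral over the complement of $R=\{|t|^{m_0},\,|s|^{n_0}\le|st|\}$ splits into iterated one--dimensional integrals handled by two applications of Proposition \ref{osc-est-III}. Only on $R$ — where every omitted monomial \emph{is} dominated by $|st|$ — is the passage to the quadratic (in fact bilinear) phase legitimate, and this is also where your claim about the unbalanced sign combinations is verified. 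The restriction to $R$ changes the final computation: the model integral is $\int\!\!\int_{R^{+}}\sin(cnst)\,\frac{ds\,dt}{st}$ over $R^{+}=\{t^{m_0-1}\le s\le t^{n_0/(n_0-1)}\}$, which evaluates to $\tfrac{\pi}{2}\bigl(1-\tfrac{1}{m_0}-\tfrac{1}{n_0}\bigr)\log n+O(1)$ rather than your full--square answer; the positivity of the coefficient rests exactly on $m_0\ge3$, i.e.\ on $\phi_{tt}(x_0,y_0)=0$, a point your write--up uses only implicitly (your inner $t$--integral is linear in $t$ only because the $t^2$ term is absent). Your ``main obstacle'' paragraph correctly diagnoses that the Newton--polygon bookkeeping of Section \ref{heart} degenerates at $\varepsilon_2=0$, but the proposed replacement does not close the argument; the localisation to $R$ with the pure parts retained is the missing ingredient.
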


The proof of Proposition \ref{example} is much easier than the analysis required to show
 the corresponding unboundedness in Theorem \ref{main}
when (FH) fails to hold for $\phi$. Of course there we established unboundedness
for {\it all} pairs of integers $L = (L_1, L_2)$ (although we did have two integers
$M$ and $N$ to choose instead of a single integer $M$ in this case).
We no longer have to perturb the point $(x_0,y_0)$ and carry out such an intricate argument.
Much of the analysis, even though simpler, is the same as in Section \ref{main-fail} and so
we will be brief and only outline the steps.

{\bf Step 1}: \ Let $m_0\ge 3$ be an integer such that $\partial^{m_0}_t \phi (x_0,y_0) \not= 0$
and furthermore we take $m_0$ to be minimal with these properties. Recall that $\partial^2_t \phi(x_0,y_0) = 0$.
Also let $n_0 \ge 2$ be an  integer such that $\partial^{n_0}_s \phi (x_0,y_0) \not= 0$
and furthermore we take $n_0$ to be minimal with these properties.  It  may be the case that
$m_0$ and/or $n_0 = \infty$. We allow this possibility which only will exclude certain successive steps
from occuring.

{\bf Step 2}: \ Reduction to the oscillatory integral
$$
 \int\!\!\!\int_{R} e^{2\pi i n 
[\phi(x + s, y + t) + L_1 s + L_2t]} \frac{\sin(M s)}{s} \frac{\sin(M t)}{t} \, ds  dt
$$
where
$$
R \ = \ \{(s,t) : |t|^{m_0}, \ |s|^{n_0} \ \le \ |s t| \, \}
$$
The reduction is carried out in Section \ref{prelude-I} with $(m_1, n_1) = (1,1)$.
Recall in the general $(m_1, n_1)$ case, we reduced matters to an oscillatory integral
with {\it almost polynomial} phase $P_{x_0,y_0}(x+s, y+t) + \phi(x+s, y) + \phi(x, y+t)$.
When $(m_1, n_1) = (1,1)$, we see that $P_{x_0, y_0} \equiv 0$ and so an application
of Proposition \ref{osc-est-III} allows us to complete the analysis over the
region $\{ |s t| \le |t|^{m_0} \} \cup \{ |s t| \le |s|^{n_0}\}$, reducing matters to the above integral.

Here it
is essential that $m_0 \ge 3$. Otherwise if $m_0 = n_0 = 2$, then $R = \emptyset$.

{\bf Step 3}: \ Reduction to an oscillatory integral with quadratic phase
$$
 \int\!\!\!\int_{R} e^{2\pi i n 
[(\partial_s \phi(x_0, y_0) + L_1) s + (\partial_t \phi(x_0,y_0)  + L_2)t + \partial_{st}\phi(x_0,y_0) st]} \frac{\sin(M s)}{s} \frac{\sin(M t)}{t} \, ds  dt
$$
 A more complicated
reduction is used in the proof of Theorem \ref{main} as carried out in Section \ref{main-fail}.

{\bf Step 4}: \ Reduction to
$$
 \int\!\!\!\int_{R^{+}} \frac{\sin( c n s t)}{s t} \, ds dt
$$
where $c = \partial_{st} \phi(x_0, y_0)$ and $R^{+} = \{s,t\ge 0: t^{m_0 -1} \le s \le t^{n_0/(n_0 - 1)} \}$.

This reduction is similar but much easier than what we did in Section \ref{main-fail};
see in particular the analysis leading up to Section \ref{heart}. It is here where our choice of $L = (L_1, L_2)$
(which depends on $\nabla \phi (x_0, y_0)$) and $M = M(n, L, \nabla \phi(x_0,y_0))$ comes into play.

{\bf Step 5}: \ 
$$
 \int\!\!\!\int_{R^{+}} \frac{\sin( c n s t)}{s t} \, ds dt \ = \ \frac{\pi}{2} \log n^{1 - \frac{1}{n_0} - \frac{1}{m_0}} \ +
\ O(1).
$$
This computation is similar but easier than what we did in Section \ref{heart}. As we said before, we allow
for $m_0$ and/or $n_0 = \infty$. This does not prevent the logarithmic blow up in $n$. 

These steps give a proof  \eqref{D} and therefore  Proposition \ref{example}.

\section{Higher dimensional tori ${\mathbb T}^k$}

Recall that a map $\Phi : {\mathbb T}^k \to {\mathbb T}^d$ is parametrised by $d$ periodic functions
${\overline{\phi}} = (\phi_1, \ldots, \phi_d)$ of $k$ variables ${\vec{t}} = (t_1, \ldots, t_k)$ and
$d$ lattice points ${\overline{L}} = ({\vec{L}}_1, \ldots, {\vec{L}}_d)$ in ${\mathbb Z}^k$. Explicitly,
if $P = (e^{2\pi i t_1}, \ldots, e^{2\pi i t_k})$ is a point in ${\mathbb T}^k$, then
$$
\Phi(P) \ = \ \bigl( e^{2\pi i [\phi_1({\vec{t}}) + {\vec{L}}_1 \cdot {\vec{t}}]}, \ldots,
e^{2\pi i [\phi_d({\vec{t}}) + {\vec{L}}_d \cdot {\vec{t}}]} \bigr).
$$
Freezing one of the $k$ variables, say $t_j$, gives a map $\Phi_j : {\mathbb T}^{k-1} \to {\mathbb T}^d$.

Furthermore if $(\Phi)_{rect}$ holds, then $(\Phi_j)_{rect}$ holds for every $1\le j\le k$. This follows easily
from the definition of {\it unrestricted} rectangular convergence. We illustrate this when $k=2$: if $f\in C({\mathbb T}^2)$,
then $f$ has a uniformly convergent Fourier series with respect to rectangular convergence if
$$
\sup_{x,y} \bigl| \sum_{|k|\le M, |\ell| \le N} {\hat f}(k,\ell) e^{2\pi i (k x + \ell y)} \ - \ f(x,y) \bigr| \ \to \ 0
\ \ {\rm as} \ \ M, N \to \infty.
$$
In particular (letting $N\to \infty$) this says that for any $y \in {\mathbb T}$,
$$
\sup_x \bigl|  \sum_{|k|\le M} {\hat f_y}(k) e^{2\pi i kx } \ - \ f_y(x) \bigr| \ \to \ 0
\ \ {\rm as} \ \ M\to \infty
$$
where $f_y \in C({\mathbb T})$ is defined as $f_y(x) = f(x,y)$. That is, $f_y$ has a uniformly convergent
Fourier series for every $y\in {\mathbb T}$. 

Hence whatever conditions we impose on $\Phi$ to guarantee that $(\Phi)_{rect}$ holds, then necessarily
$(\Phi_j)_{rect}$ holds and in particular any necessary conditions we happen to know regarding $(\Phi_j)_{rect}$
give us necessary conditions on $\Phi$. For instance for mappings $\Phi : {\mathbb T}^3 \to {\mathbb T}^d$
such that $(\Phi)_{rect}$ holds, then
necessarily 
$$
\Phi_1 (e^{2\pi i t}, e^{2\pi i u}) = \Phi(e^{2\pi i s}, e^{2\pi i t}, e^{2\pi i u}) = 
\Phi_2 (e^{2\pi i s}, e^{2\pi i u}) = 
\Phi_3 (e^{2\pi i s}, e^{2\pi i t}) 
$$
satisfy $(\Phi_1)_{rect}, (\Phi_2)_{rect}$  and $(\Phi_3)_{rect}$, respectively. Threfore if
${\overline{\phi}}(s,t,u)$ parametrises the periodic part of $\Phi$,
Theorem \ref{main} implies that for every $u \in {\mathbb T}$, $\psi_u(s,t,\omega) := 
\omega \cdot {\overline{\phi}}(s,t,u)$ satisfies the factorisation hypothesis (FH) on
${\mathbb S}^{d-1}$. Likewise $\psi_s$ and $\psi_t$ necessarily satsifies (FH). These conditions
allow us to control the particular partial derivatives
$$
\frac{\partial^{k+\ell}\psi}{\partial^k s \, \partial^{\ell} t}, \ \ 
\frac{\partial^{k+\ell}\psi}{\partial^k s \,  \partial^{\ell} u}, \ \
\frac{\partial^{k+\ell}\psi}{\partial^k t \, \partial^{\ell} u}
$$
for any $k,\ell \ge 1$
in terms of pure derivatives. This is the content of Lemma \ref{relations}. For our argument it is essential 
that we are able to control all mixed derivatives by pure derivatives. So when $k=3$, it remains
to control the third order mixed derivative $\partial_{stu} \psi(s,t,u,\omega)$. This leads to our 
characterisation of when $(\Phi)_{rect}$ holds when $k=3$.

\begin{theorem}\label{k=3} Let $\Phi : {\mathbb T}^3 \to {\mathbb T}^d$ be a real-analytic
map and suppose ${\overline{\phi}}$ is its periodic part. Set $\psi(s,t,u) = \omega \cdot {\overline{\phi}}(s,t,u)$.
Then $(\Phi)_{rect}$ holds if and only if
$$
\psi_{ss}, \, \psi_{tt} \ | \ \psi_{st}, \ \ \psi_{ss}, \, \psi_{uu} \ | \ \psi_{su}, \ \
\psi_{tt}, \, \psi_{uu} \ | \ \psi_{tu}
$$
and
$$
\psi_{sss}, \,  \psi_{ttt},  \, \psi_{uuu} \ | \ \psi_{stu}
$$
as germs of real-analytic functions of ${\mathbb T}^3 \times {\mathbb S}^{d-1}$.
\end{theorem}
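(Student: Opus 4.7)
\textbf{Proof proposal for Theorem \ref{k=3}.}
The plan is to mirror the structure of the proof of Theorem \ref{main}, with an additional layer to account for the third-order mixed term $\psi_{stu}$. As in Section \ref{preliminary}, matters reduce to showing that the triple oscillatory integral
\[
 \int\!\!\!\int\!\!\!\int_{|s|,|t|,|u|<1/2} \!\!\! e^{2\pi i[\lambda\omega\cdot\overline{\phi}(x+s,y+t,z+u) + \rho s + \eta t + \sigma u]} \frac{ds}{s}\frac{dt}{t}\frac{du}{u}
\]
is bounded uniformly in all parameters precisely when the stated divisibility conditions hold. The Dirichlet kernel decomposition produces $3^3 = 27$ such integrals (plus error terms handled by Proposition \ref{osc-est-III} and its obvious three-variable extension, which is proven by iterating the two-variable case), so the statement of $(\Phi)_{rect}$ really is equivalent to uniform bounds on oscillatory integrals of the above form together with their lower-dimensional degenerations.

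For necessity, I would argue in two stages. First, fixing any one variable yields a real-analytic map $\Phi_j:\mathbb{T}^2\to\mathbb{T}^d$ whose property $(\Phi_j)_{rect}$ is forced by $(\Phi)_{rect}$ via unrestricted rectangular convergence (as noted in the section preamble). Applying Theorem \ref{main} to each of the three slices gives the three pairwise factorisation hypotheses. Second, to force $\psi_{sss},\psi_{ttt},\psi_{uuu}\mid\psi_{stu}$, I would adapt the Fefferman-style argument of Section \ref{main-fail}: if at some point $(x_0,y_0,z_0,\omega_0)$ the third-order pure derivatives all vanish but $\psi_{stu}\neq 0$, a scaling analysis modelled on the C.~Fefferman construction for $e^{2\pi i\lambda stu}$ (whose associated oscillatory integral against $1/(stu)$ produces a $(\log\lambda)^2$ divergence) should yield unboundedness, after the same perturbation trick used in Section \ref{main-fail} to handle the possible vanishing of $\nabla\psi + \omega\cdot\overline{L}$.

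For sufficiency, the divisibility hypotheses propagate to a three-variable analogue of Lemma \ref{relations}: every mixed partial $\partial^{k,\ell,m}\psi$ with $k,\ell,m\geq 0$ and at least two of the indices positive can be written as a linear combination of the pure derivatives $\partial_s^j\psi$, $\partial_t^j\psi$, $\partial_u^j\psi$ with coefficients that are analytic polynomials in the quotients and their derivatives. Here the new ingredient beyond the $k=2$ case is that $\psi_{stu}$, being divisible by each of $\psi_{sss},\psi_{ttt},\psi_{uuu}$, is controlled by pure derivatives; this in turn controls all higher mixed derivatives involving all three variables. With this algebraic input in hand, one then repeats the almost-polynomial reduction of Sections \ref{prelude-I} and the analogue of Proposition \ref{main-prop} for three variables, whose weighted comparison of mixed to pure derivatives lets one replace the phase by a sum of three one-variable phases up to uniformly bounded errors, after which three applications of Proposition \ref{osc-est-III} close the argument.

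The main obstacle will be the combinatorial explosion of scaling regimes in the three-dimensional comparison step. In the $k=2$ proof, one compares the mixed weight $2^{-pk-q\ell}$ to a single pure-derivative weight $2^{-qj}$; for $k=3$ one must compare $2^{-pk-q\ell-rm}$ against weights coming from all three pure-derivative families and handle several extremal orderings of $(p,q,r)$ simultaneously. Setting up the right Newton-polytope bookkeeping to generalise Proposition \ref{Q} to three variables, and verifying that the divisibility conditions are exactly what is needed to make the corresponding coefficients $Q^{k,\ell,m}_{j_1,j_2,j_3}$ vanish in the regimes where pointwise weight comparison fails, is where the bulk of the technical work lies. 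Once that algebraic backbone is in place, the analytic estimates are essentially those already developed.
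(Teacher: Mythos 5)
First, a point of reference: the paper does not actually prove Theorem \ref{k=3}. Section 9 only derives the necessity of the three second-order conditions by freezing one variable and invoking Theorem \ref{main}, observes that the remaining mixed derivative to control is $\partial_{stu}\psi$, and then explicitly defers the proof as ``much more technical''. So your proposal can only be measured against that outline, and with it your plan agrees: slicing for the pairwise conditions, a three-variable analogue of Lemma \ref{relations} to control all mixed derivatives by pure ones, and the machinery of Sections \ref{prelude-I}--\ref{main-fail} for the two implications. One further caveat on the slicing step: freezing $u$ only yields the divisibility $\psi_{ss},\psi_{tt}\,|\,\psi_{st}$ as germs in $(s,t,\omega)$ for each fixed $u$, which is a priori weaker than divisibility as germs on ${\mathbb T}^3\times{\mathbb S}^{d-1}$ as the theorem asserts; this discrepancy needs to be addressed.

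Within the outline there are two concrete gaps. For necessity of the third-order condition: by the Weierstrass preparation lemma (the analogue of Lemma \ref{OW}), the germ divisibility $\psi_{sss},\psi_{ttt},\psi_{uuu}\,|\,\psi_{stu}$ already fails when there is a point with $\psi_{stu}\neq 0$ and just \emph{one} of the three pure third derivatives vanishing; your construction assumes all three vanish, which is only the fully degenerate case where the model $\int\!\!\int\!\!\int e^{i\lambda stu}\,\frac{ds\,dt\,du}{stu}$ and its $(\log\lambda)^2$ blow-up apply. When, say, only $\psi_{uuu}$ vanishes at the bad point, a different anisotropic scaling (the analogue of $s\to\delta^3 s$, $t\to\delta^2 t$, $\lambda=\delta^{-5}$ from Section \ref{main-fail}) is required, together with the perturbation of the base point to make the components of $\nabla\psi+\omega_0\cdot\overline{L}$ nonzero and the Dirichlet-principle approximation of $\omega_0$ by lattice directions; none of this is set up in your sketch. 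For sufficiency, ``three applications of Proposition \ref{osc-est-III}'' cannot close the argument: after the comparison step the inner integral in two of the variables is a \emph{double} oscillatory integral over a non-product slice region, so you need a two-parameter robust statement (uniform bounds for such double integrals, stable under multiplication by bounded amplitudes with $|F'(s)|\lesssim |s|^{-1}$), i.e.\ a genuine two-variable analogue of Proposition \ref{osc-est-III}, in addition to the three-variable versions of Propositions \ref{Q} and \ref{main-prop} with their weight bookkeeping. That is precisely the technical core the paper declines to carry out, so what you have is a reasonable plan rather than a proof.
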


Proceeding iteratively we arrive at a characterisation of real-analytic mappings $\Phi$ from ${\mathbb T}^k$
to ${\mathbb T}^d$ such that $(\Phi)_{rect}$ holds for any $k\ge 1$. The proof is much more technical than
Theorem \ref{main} but essentially all the ideas are present in the $k=2$ case and this is the reason we have
decided to carry out the analysis only in this case. We may present the details in the general case at a later time.

\end{document}